\newcommand\toup{\nearrow}
\newcommand\todown{\searrow}
\newcommand\eps{\varepsilon}
\newcommand{\del}[1]{}
 \definecolor{refkey}{red}{.75}%
 \definecolor{labelkey}{blue}{.75}%
 \def\SK@refcolor{\color{refkey}}%
 \def\SK@labelcolor{\color{labelkey}}}}
\theoremstyle{plain}
\newtheorem{theorem}{Theorem}[section]
\theoremstyle{remark}
\newtheorem{remark}[theorem]{Remark}
\theoremstyle{plain}
\newtheorem{corollary}[theorem]{Corollary}
\newtheorem{lemma}[theorem]{Lemma}
\newtheorem{proposition}[theorem]{Proposition}
\newtheorem{definition}[theorem]{Definition}
\newtheorem{assumption}[theorem]{Assumption}
\numberwithin{equation}{section}
\def\R{{\mathbb R}\,}
\def\C{\mathbb{C}\,}
\def\E{\mathbb{E}\,}
\newcommand{\RR}{{\mathbb R}}
\newcommand{\tg}{\tilde{g}}
\def\cR{\mathcal{R}\,}
\newcommand{\rH}{{\rm H}}
\newcommand{\rK}{{\rm K}}
\newcommand{\rE}{{\rm E}}
\newcommand{\rR}{{\rm R}}
\renewcommand{\Re}{\mathrm{Re}\,}
\newcommand{\n}{\,\hbox{\vrule width 1pt height 8ptdepth 2pt}\,}
\newcommand{\embed}{\hookrightarrow}
\def\old#1{}
\def\text#1{{\rm #1}}
\def\newold#1{}
\def\dim{{\rm dim}\,}
\def\tr{\mathrm{tr}\,}
\begin{document}

\baselineskip 12pt

\title[Stochastic Strichartz estimates]
{On the stochastic Strichartz estimates and the stochastic nonlinear Schr{\"o}dinger
equation on a compact riemannian  manifold}
\author[Z. Brze{\'z}niak]{Z. Brze{\'z}niak}
\address{Department of Mathematics\\
The University of York\\
Heslington, York YO10 5DD, UK} \email{zb500@york.ac.uk}
\author[A. Millet]{A. Millet}
\address{SAMM, EA  4543 (and PMA, UMR 7599)\\
 Universit\'e  Paris 1\\
90 Rue de Tolbiac, 75634 Paris Cedex 13, France} \email{amillet@univ-paris1.fr}
\date{\today}




\begin{abstract}
We prove the existence and the uniqueness of a solution to the stochastic NSLEs on a
two-dimensional compact riemannian manifold. Thus we generalize (and improve) a recent work by Burq et all
 \cite{Burq+G+T_2003} and a  series of papers by de Bouard and Debussche,
 see  e.g. \cite{deBouard+Deb_1999,deBouard+Deb_2003} who have examined similar questions
 in the case of the flat euclidean space.\\
We prove the existence and the uniqueness of a local maximal solution to stochastic nonlinear Schr\"odinger equations
 with multiplicative noise on a compact $d$-dimensional  riemannian manifold.
Under more regularity on the noise, we prove that the solution is global when the nonlinearity is of defocusing or of focusing type,  $d=2$
and the initial data belongs to the finite energy space. Our  proof is based on improved stochastic Strichartz inequalities.
\end{abstract}

\maketitle

\section{Introduction}\label{sec-intro}

The  aim is this paper is twofold. The first one is to  generalise Theorem 2 from Burq et all \cite{Burq+G+T_2004} on the global existence
 of nonlinear Schr\"odinger equation (NLS) to a stochastic setting. The second one is to prove a general version of a Strichartz type
 inequality for stochastic convolutions. This inequality is
the main technical improvement of our results as compared to papers by ~de Bouard and ~Debussche
\cite{deBouard+Deb_1999} and
 \cite{deBouard+Deb_2003}.  In the  deterministic case our existence result is comparable with
  \cite[Theorem 2]{Burq+G+T_2004} and our stochastic Strichartz inequality is comparable with
  \cite[Corollary 2.10]{Burq+G+T_2004}.  Some versions of the stochastic Strichartz inequalities were implicitly
 formulated in  \cite{deBouard+Deb_1999}, see Lemma 3.1 and Corollary 3.2 or \cite{deBouard+Deb_2003}.
 The   proof in  \cite{deBouard+Deb_1999} and \cite{deBouard+Deb_2003}  is  based on the dispersive estimates for the
free Schr\"odinger group. However, see \cite[Remark 2.6]{Burq+G+T_2004},
such estimates are not valid in the case of a compact riemannian manifold;
 hence we had to rely on different methods. Thus,  not only the proof but also the
result differs from the corresponding result from  \cite{deBouard+Deb_2003} but
for stochastic integrands our result is stronger than the previous ones.
The local existence of the solution to the stochastic NLS equation
can be obtained with some non linear diffusion coefficient.
  Another extension involves
both the regularity of the noise and the form of the diffusion coefficient which ensure that blow-up does not occur
in finite time.

As in the above cited papers by de Bouard and Debussche, the global existence result which is a consequence of the  conservation of the $L^2(M)$
norm and the control of a certain  Lyapounov function, requires our problem to be in the  Stratonovich form.
To provide a uniform treatment of equations in both the It\^o and the Stratonovich forms, we assume that all
 vector spaces are over the field $\mathbb{R}$ of real numbers. In particular,
the space $L^2(M,\mathbb{C})$ is considered as a real vector space of all (equivalence classes)
 from $M$ to $\mathbb{R}^2$.  As a consequence, by a linear/bilinear  map we understand a linear/bilinear
 map over the field $\mathbb{R}$.   Similarly, we speak only about $\mathbb{R}$-differentiability.
 In general, the spaces of all $\mathbb{R}$-linear, resp. bilinear,  bounded maps from $E$,
 resp. $E\times E$, to $X$,  where $E$ and $X$  are two real Banach spaces, will be denoted
 by $\mathcal{L}(E,X)$, resp. $\mathbb{L}_2(E,X)$.
\smallskip

Let us briefly describe the main results obtained in literature preceding our article. In \cite{deBouard+Deb_1999}, see Theorem 2.1,
de Bouard and Debussche proved the  existence of a continuous  $L^2({\mathbb R}^d)$-valued global solution for the NSLEs   when  the initial data  was an $L^2(\mathbb{R}^d)$-valued random variable.
 In the subsequent paper \cite{deBouard+Deb_2003}, the same authours  proved the existence of a continuous $H^{1,2}(\mathbb{R}^d)$-valued local and global solution  (depending on the nonlinearity)
when  the initial data was  an $H^{1,2}(\mathbb{R}^d)$-valued random variable.

Now let us briefly describe the content of the current article.
 In section \ref{sec-Nemytski} we study properties of the Nemytski operators. In particular we generalize the results from \cite{Brz+Elw_2000}
 which we proved various properties of the Nemytski operators in the Sobolev-Slobodetski spaces $W^{\theta,q}(D)$,
where $D \subset \mathbb{R}^d$, $\theta>\frac{d}{q}$, to the spaces $W^{\theta,q}(D)\cap L^\infty(D)$, without any restriction on $\theta$.
 This is quite important since later on we work with spaces $W^{\theta,2}(D)$, where $d=2$ and $\theta \leq q$.
Section \ref{sec-Strichartz} is devoted to the study of the stochastic Strichartz inequality.
 We recall the homogenous and inhomogeneous Strichartz estimates from \cite{Burq+G+T_2004}.
Then we prove our main result from this section: a  stochastic Strichartz inequality. This inequality is a generalization
of the inhomogeneous Strichartz estimates from \cite{Burq+G+T_2004} and our argument is somehow motivated by the proof of the latter.
 However, we use the Burkholder inequality in the space $M^p(0,T;E)$, where $E$ is a $2$-smooth Banach space, and the Kahane-Khinchin inequality.
 Somehow related results have been obtained in very concrete setting by De Bouard and Debussche in
 \cite{deBouard+Deb_1999,deBouard+Deb_2003} for the stochastic Schr\"odinger equation and by Ondrej\'at \cite{Ondrejat_2010_JDE} for the
 stochastic wave  equation. We believe that our result is new and that the approach has potential other applications.
In the following section \ref{sec-abstract-local} we formulate an abstract result about the existence and uniqueness
 of the  maximal local  solution  for stochastic evolution equations of NLS type, that is
\[ i du(t) + \Delta u(t) = f(u(t)) dt + g(u(t)) dW(t).\]
 As in \cite{deBouard+Deb_1999,deBouard+Deb_2003}
we study the existence and uniqueness of solutions to appropriate approximated problems. The proof
of the main result from this section, Theorem \ref{thm_maximal-abstract}, which states the existence of a maximal solution
is then given in section \ref{sec-local-sol}. Note that even if the lifetime $\tau_\infty$ of the solution is defined in terms of the sum
of two norms, we prove that the $H^{1,2}$ norm of $u(t)$ explodes as $t\toup \tau_\infty <\infty$.
In Section  \ref{sec-Stratonovich} we describe an abstract formulation
of the NLS equation in Stratonovich form. As in \cite{deBouard+Deb_1999,deBouard+Deb_2003} this is needed  to obtain a global solution since
the $L^2$-norm of the solution is preserved in this formulation.

We then restrict the framework as follows. We consider a $2$-dimensional compact riemannian manifold $M$, a regular function  $\tilde{g} : {\mathbb R}\to {\mathbb R}$
 and the diffusion coefficient of the form $g(u)=\tilde{g}(|u|^2) u$.
In  section \ref{sec-local}  we establishe the local existence and uniqueness of the
solution to the specific NLS equation in Stratonovich form
\[ i du(t) + \Delta u(t) = f(u(t)) \,dt + \tilde{g}(|u(t)|^2) u(t) \circ dW(t).\]
 Note that unlike the usual parabolic case, see \cite{Brz+Elw_2000},
the Stratonovich correction term does not contain the derivative of $g$ or of $\tilde{g}$.
This is somehow similar to the case of stochastic  wave equation  where the  Stratonovich correction term is equal to $0$;
see for instance \cite{Millet_Sanz} and \cite{Brz+Ondrejat_2011}.

Finally, in  section \ref{sec-global}  we deal with the existence and uniqueness of a global solution
for  2-dimensional manifold, an initial condition $u_0\in H^{1,2}$,
when the drift and the  diffusion coefficients have the specific form $f(u)=\tilde{f}(|u|^2) u$
and $f(u)=\tilde{g}(|u|^2) u$ respectively (see Theorem \ref{th_global}).
 This is the natural framework to extend the deterministic well-posedeness proved in Burq et al in \cite{Burq+G+T_2004}.     The function $\tilde{f}$ is either defocusing, that is
 a polynomial with a positive leading coefficient or  $\tilde{f}(r)=C r^\sigma$
with $C>0$ and $\sigma \in [ \frac{1}{2},\infty)$, or  $\tilde{f}$ is focusing, that is
$\tilde{f}(r) = -C r^\sigma$ with $\sigma \in [\frac{1}{2},1)$
and $C>0$.
The stochastic integral is defined in Stratonovich form and depending on the
nonlinearity, some more conditions have to be imposed on the noise $W$. More precisely, $W$ has to take values
 in a sub algebra $H^{1,2}(M)\cap H^{1,\alpha}(M)$
of the space $H^{1,2}(M) \cap W^{\hat{s},q}(M)$, with $\hat{s}=1-\frac{1}{p}$ and $2/p+2/q=1$,
for some $p$ chosen from the non linear term $f$.
Note that this extends both the previous results of de Bouard and Debussche  \cite{deBouard+Deb_2003}
 who imposed $\tilde{g}=1$ and some more smoothness on the noise,
and also the deterministic global existence in \cite{Burq+G+T_2004}.

Unlike  \cite{Burq+G+T_2004} in the deterministic case,  we did not try to shift  the regularity of the initial condition
and that of the solution.
Note that several other problems were not considered here, such as the three dimensional global
 existence which was solved in the
deterministic case for a cubic nonlinearity, and the finite time blow-up which was proven by
de Bouard and Debussche \cite{deBouard+Deb_2005}
in the flat
case of ${\mathbb R}^d$ with multiplicative noise. These will  addressed in fortcoming papers.

 \textbf{Notation:} 
Unless otherwise stated, all vector spaces are complex but treated as real vector spaces.
To ease notations, we will denote  $C$ a generic constant which can change from one line to the next.

\section{Nemytski operator}\label{sec-Nemytski}

 This section relies heavily on ideas from reference \cite{Brz+Elw_2000}.
Assume that we are given two real numbers $q\in [1,\infty )$ and $\theta\in (0,1)$.
Let $M$ be  a compact $d$-dimensional
 riemannian manifold (without boundary).
For $x_1,x_2\in M$, $ \vert x_2-x_1\vert$ we will denote the riemannian distance between
 $x_1$ and $x_2$. By $\mu$ we will denote the riemannian volume measure on $M$ but
  the integration with respect to $\mu$ we will denote by $dx$.
By $L^q(M)$, $q\in [1,\infty]$ we will denote the classical real Banach space  of all
[equivalence classes of] $\mathbb{C}\cong\mathbb{R}^2$-valued $q$-integrable functions on $M$,
endowed with the classical norm which will be denoted by $\vert \cdot \vert_q$
(or sometimes, in danger of ambiguity by $\vert \cdot \vert_{L^q}$).

Let us  recall a definition of the classical Sobolev spaces $H^{\theta,q}(M)$
and of the Besov-Slobodetski space $W^{\theta ,q}(M)$. The former space is defined as
 the complex interpolation space\\ $[L^q(M),H^{k,q}(M)]_{\frac{\theta}k}$, where $k$ is a natural
number bigger that $\theta$. It can be shown that  $ H^{\theta,q}(M):=D((-\Delta_q)^{\theta/2})$,
 where $\Delta_q$ is the Laplace-Beltrami operator on $L^q(M)$, i.e. the infinitesimal generator
of the heat semigroup on the space $L^q(M)$. The latter space,  defined by
\begin{equation}
W^{\theta ,q}(M):=\left\{f\in L^q(M): \n f\n_{\theta,q}^q:=\int_M\int_M\frac {
\vert f(x_2)-f(x_1)\vert^q}{\vert x_2-x_1\vert^{d+\theta q}}\, dx_1\,dx_2
<\infty\right\} ,
\label{nem-1}
\end{equation}
is endowed with the norm $\Vert f\Vert_{W^{\theta ,q}(M)}:= \vert f\vert_{L^q(M)}  +  \n f\n_{\theta ,q}$.
Note that  $W^{\theta,2}(M)=H^{\theta,2}(M)$, whereas  $H^{\theta,q}(M)\subset W^{\theta,q}(M)$
(or $H^{\theta,q}(M)\supset W^{\theta,q}(M)$) depending whether $q>2$ (or $q<2$).
Let us recall that  the Besov-Slobodetski spaces with fractional
order $\theta \in (0,1)$ are equal to the  real interpolation
spaces of order $\theta$ between the spaces
$L^q(M)$ and $H^{1,q}(M)=W^{1,q}(M)$; see for instance \cite{Triebel_1978}. We will also use the notation
\[\n f\n_{1,q}^q:= \int_{M} \vert \nabla f \vert^q\, dx.
\]
Thus, $f\in W^{1,q}(M)$ iff $f\in L^q(M)$ and $\n f\n_{1,q}<\infty$.

The space $\mathbb{R}^2$ can be replaced by any real separable Banach space $Y$
for the $L^q$ spaces and similarly, the Besov-Slobodetski
$W^{\theta,q}$ spaces can naturally
be defined for vector valued functions; see  for instance \cite{Simon_1990}.
Given a real Banach separable  space $Y$, we denote by $L^q(M,Y)$ and $W^{\theta,q}(M,Y)$
the corresponding set of functions $f:M\to Y$. When no confusion arises and to ease notations,
we will  denote by $|f|_q$ and $\|f\|_{\theta,q}$ the corresponding $L^q(M,Y)$ and $W^{\theta,q}(M,Y)$
 norms.
However the Sobolev spaces $H^{\theta,q}$ can only be
 defined  for functions with values in complex vector spaces and their treatments is a more delicate.
Whenever we use these spaces in our paper we do this for $\mathbb{C}\cong\mathbb{R}^2$-valued functions.

For two separable Banach spaces $Y$ and $ Y_1$, and  a locally Lipschitz continuous function $f:
Y\to Y_1 $, we will denote by $F$ the corresponding  Nemytski map  defined by
\begin{equation}
F(\gamma ):=f\circ\gamma\;,\qquad\gamma\in W^{\theta
,q}(M,Y).
\label{nem-4}
\end{equation}

We will at first study  the  regularity properties
of the Nemytski operator $F$. Let us first note that in general
$F$ does not  map the space $W^{\theta ,p}(M,Y)$ into $W^{\theta ,q}(M,Y_1)$.
 However, it does if either the function $f$ is  globally Lipschitz
 or  if $\frac dq<\theta <1$,  see \cite{Brz+Elw_2000}.

 Given a real separable Banach space $Y$,  $\theta \in [0,1]$ and $q\in [1,\infty)$, let
\begin{equation}\label{eqn_R(Y)}
\mathcal{R}^{\theta,q}(Y)=W^{\theta,q}(M,Y)\cap L^\infty(M,Y) ,
\end{equation}
 endowed with the norm
$\|u\|_{\mathcal{R}^{\theta,q}(Y)}=\|u\|_{W^{\theta ,q}(M,Y)} + |u|_{L^\infty(M,Y)}$.
Once more to ease notations, let $\mathcal{R}^{\theta,q}=\mathcal{R}^{\theta,q}(M,\mathbb{C})$ where $\mathbb{C}
\equiv {\mathbb R}^2$.

Fix $\theta \in (0,1)$ and $q\in [1,\infty)$.  We will show    $F$
  is map   from a Banach space $\mathcal{R}^{\theta,q}(Y)$ to $\mathcal{R}^{\theta,q}(Y_1)$
if $f:Y\to Y_1$ is locally Lipschitz.
  Note  that if  $\frac dq<\theta <1$,  then by the Sobolev embedding Theorem
 $W^{\theta ,q}(M,Y)\subset L^\infty(M,Y)$ so that in that case the result from \cite{Brz+Elw_2000}
is a special case of the one below.
Let $Y$ and $Y_1$ be separable Banach spaces, $j$ be an integer larger than one
 and $f:Y\to Y_1$ be of class $C^{j-1}$ such that $f^{(j-1)}$ is Lipschitz on balls or everywhere differentiable.  Given
$R>0$ set
\begin{equation} \label{def-K}
K_j(f,R)= \sup_{\vert x \vert, \vert y \vert \leq R} \frac{\vert f^{(j-1)}(y)- f^{(j-1)}(x) \vert }
{\vert y-x\vert }, \;\;\tilde{K}_j(f,R)= \sup_{\vert x \vert \leq R} \vert f^{(j)}(x)\vert.
\end{equation}

\begin{proposition} \label{prop-Nemytski} Fix  $\theta \in (0,1]$ and $q\in [1,\infty)$.
Let  $Y$ and $Y_1$ be real separable Banach spaces and
 $f:Y\to Y_1$ be Lipschitz on balls when $\theta<1$, or, everywhere differentiable  when $\theta=1$.
Then the Nemytski map $F$ corresponding to $f$ (and  defined in (\ref{nem-4})) maps
$\mathcal{R}^{\theta,q}(Y)$ to $\mathcal{R}^{\theta,q}(Y_1)$.
More precisely, when $\theta <1$, then for all $\gamma\in  \mathcal{R}^{\theta,q}(Y)$,
\begin{equation}\label{nem-7}
\Vert F(\gamma)\Vert_{\theta,q} \leq | f(0)| \mathrm{vol}(M)
 +K_1(f, |\gamma|_{\infty}) \Vert \gamma\Vert_{\theta,q}.
 \end{equation}
 When $\theta =1$, inequality \eqref{nem-7} holds with $K_1$ being replaced by $\tilde{K}_1$.
In particular, $F$ is of linear growth either if $f$ is globally Lipschitz, i.e. $K_1(f):=\sup_{R>0}K_1(f,R)$ is finite
 when $\theta<1$, or if $f^\prime$ is bounded, i.e. $\tilde{K}_1(f):=\sup_{R>0}\tilde{K}_1(f,R)<\infty$   when $\theta=1$.  \\
The above statements remain true
if the space $\mathcal{R}^{\theta,q}(Y)$ is replaced by
$\tilde{\mathcal{R}}^{\theta,q}_{s,p}(Y)=H^{1,2}(M)\cap W^{s,p}(M)$,  provided that  $1>s >\frac{d}p$.
\end{proposition}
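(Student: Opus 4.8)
The plan is to reduce the whole statement to two elementary pointwise estimates on $f$, exploiting that membership of $\gamma$ in $\mathcal{R}^{\theta,q}(Y)$ forces $\gamma$ to be essentially bounded, so that only the behaviour of $f$ on the closed ball of radius $R:=|\gamma|_\infty$ ever enters. When $\theta<1$ and $f$ is Lipschitz on balls, for $\mu\otimes\mu$-a.e.\ pair $(x_1,x_2)$ both values $\gamma(x_1),\gamma(x_2)$ lie in that ball, whence
\[
|f(\gamma(x_2))-f(\gamma(x_1))|\le K_1(f,R)\,|\gamma(x_2)-\gamma(x_1)|,
\]
and, comparing $\gamma(x)$ with $0$, also $|F(\gamma)(x)|\le |f(0)|+K_1(f,R)\,|\gamma(x)|$. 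These two inequalities are the only places where the structure of $f$ is used.

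First I would treat the Gagliardo seminorm, which is the heart of the matter. Substituting the displayed Lipschitz bound into the double integral defining $\n F(\gamma)\n_{\theta,q}^q$ in \eqref{nem-1}, the constant $K_1(f,R)^q$ factors out and the remaining integral is exactly $\n\gamma\n_{\theta,q}^q$; taking $q$-th roots gives $\n F(\gamma)\n_{\theta,q}\le K_1(f,R)\,\n\gamma\n_{\theta,q}$. For the $L^q$ part I would integrate the second pointwise bound, producing $|F(\gamma)|_{L^q}\le |f(0)|\,\mathrm{vol}(M)^{1/q}+K_1(f,R)\,|\gamma|_{L^q}$, the first term being the $L^q$-norm of the constant $f(0)$; the same bound read in $L^\infty$ shows $F(\gamma)\in L^\infty$. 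Adding the two contributions yields \eqref{nem-7} and membership in $\mathcal{R}^{\theta,q}(Y_1)$.

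The case $\theta=1$ is where I expect the only genuine difficulty. Here $W^{1,q}=H^{1,q}$ carries the gradient seminorm $\n\cdot\n_{1,q}$, and the replacement for the pointwise Lipschitz bound is the chain rule $\nabla F(\gamma)=Df(\gamma)\,\nabla\gamma$ a.e., giving $|\nabla F(\gamma)|\le\tilde K_1(f,R)\,|\nabla\gamma|$ and hence $\n F(\gamma)\n_{1,q}\le\tilde K_1(f,R)\,\n\gamma\n_{1,q}$. The hard part will be justifying this chain rule when $f$ is merely everywhere differentiable rather than $C^1$ and $\gamma$ only Sobolev: I would approximate $\gamma$ by smooth functions bounded in $\mathcal{R}^{1,q}$, apply the classical chain rule to the approximants, and pass to the limit using the uniform bound $\tilde K_1(f,R)$ together with the superposition-operator arguments of the type developed in \cite{Brz+Elw_2000}. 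The linear-growth assertion is then immediate from \eqref{nem-7} on letting $R\to\infty$.

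Finally, the extension to $\tilde{\mathcal{R}}^{\theta,q}_{s,p}(Y)=H^{1,2}(M)\cap W^{s,p}(M)$ is a corollary rather than a new argument. The point is that the hypothesis $s>\frac{d}{p}$ makes the Sobolev embedding $W^{s,p}(M,Y)\hookrightarrow L^\infty(M,Y)$ available, so any $\gamma$ in this space is automatically bounded with $|\gamma|_\infty$ controlled by its $W^{s,p}$ norm; this supplies precisely the $L^\infty$ ingredient the earlier parts required. I would then apply the already-proved Proposition twice: with $(\theta,q)=(s,p)$, using the Lipschitz-on-balls case (legitimate since $s<1$), to get $F(\gamma)\in W^{s,p}(M,Y_1)$, and with $(\theta,q)=(1,2)$, using the differentiable case, to get $F(\gamma)\in H^{1,2}(M,Y_1)$, since $\gamma\in W^{1,2}\cap L^\infty=\mathcal{R}^{1,2}(Y)$. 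Everywhere differentiability with derivative bounded on balls furnishes both hypotheses at once, as the mean value inequality gives $K_1(f,R)\le\tilde K_1(f,R)$. Intersecting the two conclusions shows $F(\gamma)\in H^{1,2}\cap W^{s,p}$, with the norm bound inherited from the two instances of \eqref{nem-7}.
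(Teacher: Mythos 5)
Your proposal is correct and follows essentially the same route as the paper: the same pointwise estimates $|f(y_1)-f(y_2)|\le K_1(f,R)|y_1-y_2|$ on the ball $R=|\gamma|_\infty$ fed into the Gagliardo double integral for $\theta<1$, and the chain rule $\nabla(f\circ\gamma)=(f'\circ\gamma)\nabla\gamma$ with $|f'\circ\gamma|\le\tilde K_1(f,R)$ for $\theta=1$. You are in fact somewhat more careful than the paper on the two points it glosses over (justifying the chain rule for a merely everywhere-differentiable $f$, and the final $\tilde{\mathcal{R}}^{\theta,q}_{s,p}$ case via the embedding $W^{s,p}\hookrightarrow L^\infty$), and your constant $|f(0)|\,\mathrm{vol}(M)^{1/q}$ is the accurate form of the paper's $|f(0)|\,\mathrm{vol}(M)$.
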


\begin{proof}
We at first check that we may assume that  $f(0)=0$. 
Indeed, the function $\tilde{f}:Y\to Y_1$ defined by   $\tilde{f}(y):=f(y)-f(0)$, $y\in Y$,
 satisfies the same assumptions as $f$
and the corresponding  Nemytski operator $\tilde{F}$  is defined by
$\tilde{F}(\gamma)=\tilde{f}\circ \gamma$, i.e.
$\tilde{F}(\gamma)=F(\gamma)-f(0)1_M$.
Since $\mathrm{vol}(M)<\infty$, $1_M$ belongs to   $W^{\theta ,q}(M)$ and
  $\n F(\gamma )\n_{\theta ,q}=\n \tilde{F}(\gamma )\n_{\theta ,q}$ for each $\gamma:M\to Y$.
Hence  the operator
$F$ maps $\mathcal{R}^{\theta,q}(Y)$ into $\mathcal{R}^{\theta,q}(Y_1)$ if and only if $\tilde{F}$ does.

 Suppose that $f(0)=0$  and $\theta <1$. Let  $\gamma\in \mathcal{R}^{\theta,q}(Y)$
and  set  $R:=|\gamma|_{\infty}$.
 By
assumption $f$ is  Lipschitz on the ball $B(0,R):=\{x\in Y: \vert x\vert \leq R\}$ and
\begin{equation}\vert f(y_1)-f(y_2)\vert\leq K_1(f,R)\vert y_1-y_2\vert
,\;\;  y_1,y_2\in B(0,R).
\label{nem-8}\end{equation}

Thus, since $f(0)=0$,  $\vert f(y)\vert\leq K_1(f,R)\vert y\vert$ for $
\vert y\vert\leq R$ and
since $\vert\gamma (x)\vert\leq |\gamma|_\infty$ for a.a.  $x\in {M}$, we infer that
$| F(\gamma )|_{L^q(M,Y_1)}\leq K_1(f,|\gamma|_\infty)|\gamma|_{L^q(M,Y)}$.
Similarly  (\ref{nem-8})  yields
\[\vert f(\gamma (x_1))-f(\gamma (x_2))\vert\leq K_1(f,|\gamma|_\infty )\vert\gamma (
x_2)-\gamma (x_1)\vert ,\quad \mbox{ for a.a. }  x_1,x_2\in M, \]
which yields
$  \n F(\gamma )\n_{\theta ,q}\leq K_1(f,|\gamma|_\infty )\n\gamma\n_{\theta ,q}$.
Since the part of the result corresponding the $L^\infty$ norm is obvious,
the last inequality concludes the proof of \eqref{nem-7} if $\theta <1$.

If $\theta =1$, $\gamma \in {\mathcal R}^{1,q}(Y)$, we have $|f(\gamma(x))|=\big|\int_0^1 f'(s\gamma(x))\gamma(x) ds\big|
 \leq \tilde{K}_1(f,|\gamma|_\infty) |\gamma|_\infty$. Furthermore, $(f\circ \gamma)' = (f'\circ \gamma) \gamma'$, and since
 $|f'\circ \gamma|\leq \tilde{K}_1(f, |\gamma|_\infty)$ we deduce that
\[\int_M |\nabla(f\circ \gamma)|^q dx \leq \tilde{K}_1(f, |\gamma|_\infty) \int_M |\nabla \gamma(x)|^q dx,\]
which proves \eqref{nem-7} when $\theta=1$.
\end{proof}

Let us formulate the following important (but simple) consequence
  of Proposition \ref{prop-Nemytski}.

\begin{corollary}
\label{cor-algebra}
Let    $\theta \in (0,1]$   and $q\in [1,\infty)$.
Then   $ \mathcal{R}^{\theta,q} $
is an   algebra (with pointwise multiplication) and
  there exists a constant $C>0$ such that for $\sigma, \gamma \in W^{\theta,q}(M)\cap L^\infty(M)$,
\begin{eqnarray}
\label{eqn-multiplication-1}
\n \sigma \gamma \n_{\theta ,q}
&\leq&
\Vert \sigma \gamma \Vert_{\theta ,q}
\leq  \vert \sigma \vert_{L^\infty}
 \Vert \gamma \Vert_{\theta ,q} + \vert \gamma \vert_{L^\infty}
 \Vert \sigma \Vert_{\theta ,q}
\leq  C^\prime  \|
\sigma \|_{W^{\theta,q}\cap L^\infty} \vert \gamma \vert_{W^{\theta,q} \cap L^\infty},
\end{eqnarray}
\end{corollary}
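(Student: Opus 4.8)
The plan is to establish the three inequalities in \eqref{eqn-multiplication-1} from left to right, the middle one being the only substantial step; the algebra property then follows at once, since the estimate shows that $\sigma\gamma$ again lies in $W^{\theta,q}(M)\cap L^\infty(M)$ (note $|\sigma\gamma|_{L^\infty}\leq |\sigma|_{L^\infty}|\gamma|_{L^\infty}$, which gives membership in $L^\infty$). The first inequality $\n\sigma\gamma\n_{\theta,q}\leq \|\sigma\gamma\|_{\theta,q}$ is immediate from the definition $\|f\|_{\theta,q}=|f|_{L^q}+\n f\n_{\theta,q}$. The last inequality is a direct consequence of the definition of the intersection norm, using $|\sigma|_{L^\infty}\leq \|\sigma\|_{W^{\theta,q}\cap L^\infty}$, $\|\gamma\|_{\theta,q}\leq \|\gamma\|_{W^{\theta,q}\cap L^\infty}$ and the two symmetric bounds, so that the constant $C'=2$ works.

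For the middle inequality I would split the full norm as $\|\sigma\gamma\|_{\theta,q}=|\sigma\gamma|_{L^q}+\n\sigma\gamma\n_{\theta,q}$ and treat the two summands separately. For the $L^q$ part, H\"older's inequality gives $|\sigma\gamma|_{L^q}\leq |\sigma|_{L^\infty}|\gamma|_{L^q}$. For the Gagliardo seminorm I would use the pointwise Leibniz decomposition
\[
\sigma(x_2)\gamma(x_2)-\sigma(x_1)\gamma(x_1)
=\sigma(x_2)\big[\gamma(x_2)-\gamma(x_1)\big]+\big[\sigma(x_2)-\sigma(x_1)\big]\gamma(x_1),
\]
whence $|\sigma(x_2)\gamma(x_2)-\sigma(x_1)\gamma(x_1)|\leq |\sigma|_{L^\infty}|\gamma(x_2)-\gamma(x_1)|+|\gamma|_{L^\infty}|\sigma(x_2)-\sigma(x_1)|$. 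Dividing by $|x_2-x_1|^{(d+\theta q)/q}$ and applying Minkowski's inequality in $L^q(M\times M,\,dx_1\,dx_2)$ to the resulting sum of two difference-quotient functions yields $\n\sigma\gamma\n_{\theta,q}\leq |\sigma|_{L^\infty}\n\gamma\n_{\theta,q}+|\gamma|_{L^\infty}\n\sigma\n_{\theta,q}$. Adding the $L^q$ bound and regrouping $|\sigma|_{L^\infty}\big(|\gamma|_{L^q}+\n\gamma\n_{\theta,q}\big)=|\sigma|_{L^\infty}\|\gamma\|_{\theta,q}$ gives exactly the claimed middle inequality, the only surplus being a harmless nonnegative term $|\gamma|_{L^\infty}|\sigma|_{L^q}$ which is absorbed into $|\gamma|_{L^\infty}\|\sigma\|_{\theta,q}$.

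There is essentially no serious obstacle here; the only point requiring a little care is that the seminorm is an $L^q$-norm of a difference quotient over the product $M\times M$, so the pointwise triangle inequality for $|\cdot|$ must be promoted to Minkowski's inequality in that product $L^q$ space rather than applied slot by slot. Alternatively, one could deduce the algebra property qualitatively from Proposition~\ref{prop-Nemytski} applied to the $C^\infty$ map $f(z)=z^2$, together with the polarization identity $\sigma\gamma=\tfrac14\big[(\sigma+\gamma)^2-(\sigma-\gamma)^2\big]$; however this route does not produce the explicit Leibniz-type constant appearing in \eqref{eqn-multiplication-1}, so I would retain the direct computation above as the main argument.
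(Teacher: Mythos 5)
Your proof is correct. Note that the paper itself gives no written proof of this corollary: it only announces it as a ``simple consequence of Proposition~\ref{prop-Nemytski}'', which, as you yourself observe, would most naturally go through $f(z)=z^2$ and polarization and would then only deliver the qualitative algebra property (with constants depending on the balls involved), not the explicit Leibniz-type bound $\Vert\sigma\gamma\Vert_{\theta,q}\leq|\sigma|_{L^\infty}\Vert\gamma\Vert_{\theta,q}+|\gamma|_{L^\infty}\Vert\sigma\Vert_{\theta,q}$ actually displayed in \eqref{eqn-multiplication-1}. Your direct computation --- H\"older for the $L^q$ part, the pointwise Leibniz splitting $\sigma(x_2)\gamma(x_2)-\sigma(x_1)\gamma(x_1)=\sigma(x_2)[\gamma(x_2)-\gamma(x_1)]+[\sigma(x_2)-\sigma(x_1)]\gamma(x_1)$, and Minkowski in $L^q(M\times M)$ for the Gagliardo seminorm --- is the right way to obtain the stated inequality with an explicit constant ($C'=2$ for the intersection norm as defined in \eqref{eqn_R(Y)}), and you correctly close the algebra property by noting $|\sigma\gamma|_{L^\infty}\leq|\sigma|_{L^\infty}|\gamma|_{L^\infty}$. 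Your caveat about promoting the pointwise triangle inequality to Minkowski on the product space is exactly the one point worth flagging. In short, your argument supplies the quantitative content that the paper's one-line attribution to Proposition~\ref{prop-Nemytski} does not.
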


Now we will formulate the promised generalisation of Proposition \ref{prop-Nemytski}.

\begin{theorem} \label{thm-Nemytski-Lip}
Fix $\theta\in (0,1]$ and $q\in [1,\infty)$ and let $Y$, $Y_1$ be  real separable Banach spaces.
Assume that a function $f:Y\to Y_1$  is of ${\mathcal C}^1$ class and its Fr\'echet derivative  $f':Y\to \mathcal{L}(Y,Y_1)$
is Lipschitz on balls.   Then the corresponding
Nemytski map $F$ defined in (\ref{nem-4}) is Lipschitz continuous on
balls in $\mathcal{R}^{\theta,q}(Y) $. More precisely
 for any $K>0$, and all  $\gamma ,\sigma\in \mathcal{R}^{\theta,q}(Y)$ with $\Vert\gamma\Vert_{\mathcal{R}^{\theta,q}(Y)}
\vee \Vert\sigma\Vert_{\mathcal{R}^{\theta,q}(Y)}\leq K$, we have:
\begin{eqnarray}
\vert F(\gamma )-F(\sigma )\vert_q
&\leq&  K_1(f,|\gamma|_{\infty}\vee |\sigma|_{\infty})\; \vert
\gamma -\sigma\vert_q ,
\label{nem-13b}
\\
\nonumber
\n F(\gamma )-F(\sigma )\n_{\theta ,q} &\leq& K_2(f,|\gamma|_{\infty}\vee | \sigma|_{\infty}) \;
 \vert\gamma -\sigma\vert_{\infty} \big[\n \sigma\n_{\theta ,q}
+\frac 12\n\gamma
-\sigma\n_{\theta ,q}\big]\\
&& + K_1(f,|\gamma|_{\infty}\vee |\sigma|_{\infty})\;  \n\gamma -\sigma\n_{\theta ,q}.
\label{nem-14b}
\end{eqnarray}
The above statements remain true
if the space $\mathcal{R}^{\theta,q}(Y)$ is replaced by
$\tilde{\mathcal{R}}^{\theta,q}_{s,p}(Y)=H^{1,2}(M)\cap W^{s,p}(M)$,  provided that  $1>s >\frac{d}p$.
\end{theorem}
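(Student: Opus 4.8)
The plan is to prove the two estimates \eqref{nem-13b} and \eqref{nem-14b} by writing the difference $F(\gamma)-F(\sigma)$ as an integral of the derivative along the segment joining $\sigma$ to $\gamma$, and then applying the algebra/multiplication structure of $\mathcal{R}^{\theta,q}$ established in Corollary \ref{cor-algebra}. First I would handle the $L^q$ bound \eqref{nem-13b}, which is immediate: since $\Vert\gamma\Vert,\Vert\sigma\Vert\le K$ gives pointwise bounds $|\gamma(x)|,|\sigma(x)|\le K$, the mean value / Lipschitz-on-balls hypothesis applied pointwise yields $|f(\gamma(x))-f(\sigma(x))|\le K_1(f,|\gamma|_\infty\vee|\sigma|_\infty)\,|\gamma(x)-\sigma(x)|$, and integrating the $q$-th power over $M$ gives the claim. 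This requires no seminorm estimate, only the pointwise Lipschitz constant on the common ball of radius $|\gamma|_\infty\vee|\sigma|_\infty$.

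The substantive estimate is the seminorm bound \eqref{nem-14b}. The natural device is the fundamental theorem of calculus along the segment: writing $\gamma_t=\sigma+t(\gamma-\sigma)$ for $t\in[0,1]$, one has
\[
F(\gamma)-F(\sigma)=\int_0^1 f'(\gamma_t)\,(\gamma-\sigma)\,dt,
\]
so that for two points $x_1,x_2\in M$,
\[
\big[F(\gamma)-F(\sigma)\big](x_1)-\big[F(\gamma)-F(\sigma)\big](x_2)
=\int_0^1 \Big( f'(\gamma_t(x_1))(\gamma-\sigma)(x_1)-f'(\gamma_t(x_2))(\gamma-\sigma)(x_2)\Big)dt.
\]
I would then insert and subtract a cross term to split the integrand into $f'(\gamma_t(x_1))\big[(\gamma-\sigma)(x_1)-(\gamma-\sigma)(x_2)\big]$ plus $\big[f'(\gamma_t(x_1))-f'(\gamma_t(x_2))\big](\gamma-\sigma)(x_2)$. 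Bounding $|f'(\gamma_t(x))|\le K_1(f,|\gamma|_\infty\vee|\sigma|_\infty)$ (the derivative is the Lipschitz constant of $f$) controls the first term and produces the final term $K_1(\cdots)\,\n\gamma-\sigma\n_{\theta,q}$ of \eqref{nem-14b}. For the second term I use that $f'$ is Lipschitz on balls, so $|f'(\gamma_t(x_1))-f'(\gamma_t(x_2))|\le K_2(f,|\gamma|_\infty\vee|\sigma|_\infty)\,|\gamma_t(x_1)-\gamma_t(x_2)|$, together with $|(\gamma-\sigma)(x_2)|\le|\gamma-\sigma|_\infty$; since $\gamma_t=\sigma+t(\gamma-\sigma)$, the increment $|\gamma_t(x_1)-\gamma_t(x_2)|$ is bounded by $|\sigma(x_1)-\sigma(x_2)|+t\,|(\gamma-\sigma)(x_1)-(\gamma-\sigma)(x_2)|$, and integrating $t$ over $[0,1]$ produces exactly the factor $\n\sigma\n_{\theta,q}+\tfrac12\n\gamma-\sigma\n_{\theta,q}$ after dividing by $|x_2-x_1|^{(d+\theta q)/q}$ and taking the $L^q(M\times M)$ norm in $(x_1,x_2)$.

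The main obstacle I expect is bookkeeping the $t$-integral correctly inside the Gagliardo double integral so that the Minkowski integral inequality applies in the right order: one must pull the $dt$ integral outside the $L^q(M\times M)$ seminorm norm before estimating, which is legitimate since $t\mapsto\gamma_t$ is continuous into $\mathcal{R}^{\theta,q}(Y)$. A secondary care point is keeping the vector-valued structure honest, i.e. that $f'(\gamma_t(x))\in\mathcal{L}(Y,Y_1)$ acts on the increment, so the submultiplicative operator-norm bound $|f'(\gamma_t(x))\,v|\le|f'(\gamma_t(x))|\,|v|$ is what lets the pointwise estimates go through; the Banach-space setting changes nothing essential here beyond replacing absolute values of scalars with norms. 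Finally, for the last sentence of the theorem I would observe that the only structural facts used are the pointwise bounds afforded by $L^\infty$ control and the Gagliardo seminorm, both of which are available in $\tilde{\mathcal{R}}^{\theta,q}_{s,p}(Y)=H^{1,2}(M)\cap W^{s,p}(M)$ once $s>d/p$ guarantees (via Sobolev embedding) that its elements are bounded; thus the identical argument applies verbatim with $\Vert\cdot\Vert_{\mathcal{R}^{\theta,q}}$ replaced by the norm of $\tilde{\mathcal{R}}^{\theta,q}_{s,p}(Y)$.
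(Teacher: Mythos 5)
Your proposal is correct and follows essentially the same route as the paper: the Taylor/segment representation $F(\gamma)-F(\sigma)=\int_0^1 f'(\sigma+t(\gamma-\sigma))(\gamma-\sigma)\,dt$, the same cross-term splitting into a ``$f'$ bounded times increment of $\gamma-\sigma$'' piece and a ``Lipschitz increment of $f'$ times $|\gamma-\sigma|_\infty$'' piece, the same $t$-integration producing the factor $\tfrac12$, and Minkowski to move the $dt$ integral outside the Gagliardo seminorm. The only cosmetic difference is which of $x_1,x_2$ carries the evaluated $f'$ in each piece, which does not affect the estimates.
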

\begin{proof}
We only consider the case $\theta<1$ since the proof in the case $\theta=1$  is  easier.\\
Take $\gamma ,\sigma\in \mathcal{R}^{\theta,q}(Y)$  
and put $R=|\gamma|_{\infty} \vee |\sigma|_{\infty}$.
 For  $x\in M$ the Taylor formula yields
\begin{equation}
f(\gamma (x))-f(\sigma (x))=\int^1_0f^\prime(y_s(x))(\gamma (x)
-\sigma (x))\,ds,\label{nem-11}
\end{equation}
 where we  let
\begin{equation}
y_s(x):=\sigma (x)+s[\gamma (x)-\sigma (x)],\quad s\in [0 ,1].
\label{nem-12}
\end{equation}
\noindent As in the proof of \eqref{nem-7}, observe that since $|\gamma(x)|\vee |\sigma(x)|\leq R$ for a.a. $x$ in  $M$,
\[
\vert F(\gamma )-F(\sigma )\vert_{q} \leq K_1(f,R)
\vert \gamma -\sigma\vert_q ,\quad
| F(\gamma )-F(\sigma )|_{\infty}\leq  K_1(f,R) | \gamma -\sigma| _{\infty}.
\]
\noindent
Thus,  it suffices to check \eqref{nem-14b}.

Let $x_1,x_2\in M$ be such that $|\gamma(x_1)|,|\gamma(x_2)|, |\sigma(x_1)|,|\sigma(x_2)|\leq R$ and let $y_s(x_i)$ be defined in
(\ref{nem-12}). Then $|y_s(x_i)|\leq R$ for all $s\in [0,1]$ and
\[
\left[f(\gamma (x_1))-f(\sigma (x_1))\right]-\left[f(\gamma (x_2))
-f(\sigma (x_2))\right]= I_1(x_1,x_2)+I_2(x_1,x_2),\]
where
\begin{eqnarray*}
I_1(x_1,x_2)&=&\int^1_0\left[f^\prime(y_s(x_1))-f^\prime(y_s(x_2))\right](\gamma (x_1)-\sigma
(x_1))\,ds  , \nonumber\\
I_2(x_1,x_2)&=&\int^1_0f^\prime(y_s(x_2))\left[\gamma (x_1)-\sigma (x_1)-(\gamma (x_2)-\sigma (x_2))\right]\,ds .  
\end{eqnarray*}
Therefore, by the Minkowski inequality and \eqref{nem-1} 
we infer that
$\n F(\gamma )-F(\sigma )\n_{\theta ,q} \leq J_1+J_2$,
 where
\[
J_1 =
\left\{\int_M\int_
M\frac {\vert I_1(x_1,x_2)\vert^q}{\vert x_1-x_2\vert^{d +\theta q}}
dx_1d\,x_2\right\}^{\frac 1q} , \quad
J_2 =  \left\{\int_M\int_M\frac {\vert I_2(x_
1,x_2)\vert^q}{\vert x_1-x_2\vert^{d+ \theta q}}dx_1dx_2\right\}^{\frac
1q}.
\]
Thus,  the local Lipschitz property of $f^\prime$ implies
\begin{align*}
\Vert f^\prime(y_s(x_1))&-f^\prime(y_s(x_2))\Vert_{{\mathcal L}(Y,Y_1)} \leq K_2(f,R)\vert y_s(x_1)-y_s(x_2)
\vert\\
&\leq K_2(f,R)\left\{\vert\sigma (x_1)-\sigma (x_2)\vert +s\vert\gamma
(x_1)-\sigma (x_1)-(\gamma (x_2)-\sigma (x_2))\vert\right\},
\end{align*}
and hence
\begin{eqnarray*}
\vert I_1(x_1,x_2)\vert&\leq&
 K_2(f,R)\vert\sigma (x_1)-\sigma (x_2)\vert
 \, \vert\gamma (x_1)-\sigma (x_1)\vert
\\
&& +  \frac 12K_2(f,R)\vert
(\gamma -\sigma )(x_1)-(\gamma -\sigma )(x_2)\vert\,
\vert\gamma (x_1)-\sigma (x_1)\vert .
\nonumber
\end{eqnarray*}
Since for $\phi\in W^{\theta,q}(M,Y_1)$ we have
  \[
\left\{\int_M\int_M\frac {\vert\phi (x_1)-\phi (x_2)\vert^q\,
\vert\gamma (x_1)-\sigma (x_1)\vert^q}{\vert x_1-x_2\vert^{d+ \theta q }}\,dx_1\,dx_2\right\}^{\frac 1q}\leq\vert\gamma -\sigma
\vert_{\infty}\n\phi\n_{\theta ,q},
\]
   we  infer that
\begin{equation}J_1\leq K_2(f,R)\vert\gamma
-\sigma\vert_{\infty} \Big[ \n\sigma\n_{\theta ,q}+\frac 12\n\gamma -\sigma\n_{\theta ,q}\Big].
\label{nem-18}
\end{equation}
Let us observe that
$
\vert I_2(x_1,x_2)\vert\leq K_1(f,R)\vert (\gamma -\sigma )(x_1)-(\gamma
-\sigma )(x_2)\vert$.
Therefore,
\begin{equation}J_2\leq K_1(f,R)\n\gamma -\sigma\n_{\theta ,q}.\label{nem-19}
\end{equation}
Summing up, the inequalities (\ref{nem-18}) and
(\ref{nem-19})
yield \eqref{nem-14b}, which completes the proof.
\end{proof}

\begin{corollary}
Under the assumptions of Proposition \ref{prop-Nemytski} the
map $F: \mathcal{R}^{\theta,q}(Y) 
\to \mathcal{R}^{\theta,q}(Y_1)$ is measurable.
\end{corollary}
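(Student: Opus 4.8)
The plan is to realise $F$ as a pointwise limit of \emph{continuous} Nemytski maps and then invoke the standard fact that a pointwise limit of Borel measurable maps with values in a metric space is again Borel measurable. The point of the corollary is that under the hypotheses of Proposition \ref{prop-Nemytski} the datum $f$ is only Lipschitz on balls (not necessarily $C^1$), so $F$ need not be continuous; by contrast, under the stronger hypotheses of Theorem \ref{thm-Nemytski-Lip} the map $F$ is Lipschitz on balls, hence continuous. I would therefore approximate $f$ by data $f_n$ to which Theorem \ref{thm-Nemytski-Lip} applies.

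I would concentrate on the case $\theta<1$, the case $\theta=1$ being treated analogously via the chain-rule expression for the seminorm. Choose functions $f_n:Y\to Y_1$ of class $C^1$ whose derivatives $f_n'$ are Lipschitz on balls, with $f_n\to f$ uniformly on every ball of $Y$ and with uniformly controlled local Lipschitz constants, i.e. $\sup_n K_1(f_n,R)<\infty$ for every $R>0$. By Theorem \ref{thm-Nemytski-Lip}, each associated Nemytski map $F_n:\mathcal{R}^{\theta,q}(Y)\to\mathcal{R}^{\theta,q}(Y_1)$ is Lipschitz on balls, in particular continuous, hence Borel measurable (continuity yields Borel measurability independently of any separability of the domain).

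Next I would fix $\gamma\in\mathcal{R}^{\theta,q}(Y)$, set $R=|\gamma|_\infty$, and show $F_n(\gamma)\to F(\gamma)$ in $\mathcal{R}^{\theta,q}(Y_1)$ by handling the three parts of the norm separately. For the $L^q$ and $L^\infty$ parts, uniform convergence of $f_n$ to $f$ on $B(0,R)$ gives $|f_n(\gamma(x))-f(\gamma(x))|\le \sup_{|y|\le R}|f_n(y)-f(y)|\to 0$ uniformly in $x$, whence both parts tend to $0$. For the Gagliardo seminorm defined in \eqref{nem-1} I would argue by dominated convergence: writing $h_n=f_n-f$, the integrand
\[
\frac{|h_n(\gamma(x_1))-h_n(\gamma(x_2))|^q}{|x_1-x_2|^{d+\theta q}}
\]
tends to $0$ for a.e. $(x_1,x_2)$ since $h_n\to 0$ pointwise, while the uniform local Lipschitz bound $|h_n(\gamma(x_1))-h_n(\gamma(x_2))|\le C\,|\gamma(x_1)-\gamma(x_2)|$, with $C=\sup_n K_1(h_n,R)<\infty$, dominates it by $C^q|\gamma(x_1)-\gamma(x_2)|^q/|x_1-x_2|^{d+\theta q}$, which is integrable precisely because $\gamma\in W^{\theta,q}(M,Y)$. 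Hence $\|F_n(\gamma)-F(\gamma)\|_{\theta,q}\to 0$, and since $\mathcal{R}^{\theta,q}(Y_1)$ is metric, $F$ is Borel measurable as a pointwise limit of the Borel maps $F_n$.

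The main obstacle is the approximation step. In the relevant finite-dimensional case $Y=Y_1=\mathbb{C}\cong\mathbb{R}^2$ one simply takes $f_n=f\ast\rho_n$ for a standard mollifier $\rho_n$: this is smooth, does not increase the local Lipschitz constants, and converges to $f$ uniformly on bounded sets, so all the requirements above are met. For general separable Banach spaces $Y$ the existence of $C^1$ approximations with Lipschitz-on-balls derivatives and uniformly bounded local Lipschitz constants is genuinely delicate and requires a smooth-approximation result adapted to the geometry of $Y$; this is where the argument must be supplemented. The same scheme, combined with the replacement statements recorded in Proposition \ref{prop-Nemytski} and Theorem \ref{thm-Nemytski-Lip}, also covers the case in which $\mathcal{R}^{\theta,q}(Y)$ is replaced by $\tilde{\mathcal{R}}^{\theta,q}_{s,p}(Y)$.
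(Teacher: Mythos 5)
Your proposal follows exactly the paper's own argument: approximate $f$ by functions $f_n$ satisfying the hypotheses of Theorem \ref{thm-Nemytski-Lip}, note that each induced Nemytski map $F_n$ is continuous (hence Borel), and conclude by pointwise convergence $F_n\to F$ on $\mathcal{R}^{\theta,q}(Y)$. The paper states this in three lines without justifying either the approximation or the pointwise convergence, so your dominated-convergence verification of the Gagliardo seminorm and your explicit caveat that the smooth approximation with uniformly controlled local Lipschitz constants is only routine for finite-dimensional $Y$ are welcome elaborations of the same proof, not a different route.
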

\begin{proof} One can approximate $f$ by a sequence of functions $f_n$
which satisfy the assumptions of Theorem \ref{thm-Nemytski-Lip}. Then each
Nemytski map $F_n$ associated with $f_n$ is continuous and so Borel
measurable. On the other hand, $F_n \to F$ pointwise on $\mathcal{R}^{\theta,q}(Y)$.
\end{proof}

\begin{remark}\label{focusing} Let  $Y=\mathbb{C}\equiv {\mathbb R}^2$ and let   $f: Y\to Y$ be defined by $f(z)=C\, |z|^{2\alpha}\, z$
for some real constants $\alpha \geq \frac{1}{2}$ and  $C$.  Then $f$ is of class ${\mathcal C}^1$  and both $f$ and $f'$ are Lipschitz on balls.
Furthermore, given $\sigma \geq \frac{3}{2}$, $\theta\in (0,1]$ and $q$ such that $\theta d>q$,
 the map $\Phi : \in W^{\theta,q}({\mathbb C}) \to {\mathbb R}$
 defined by
\[ \Phi (u)=\int_M |u(x)|^{2\sigma} dx\]
is of class ${\mathcal C}^2$, and for $u, v_1, v_2\in  W^{\theta,q}({\mathbb C}$, we have
\begin{align*}
\Phi'(u)(v_1)=&\int_M 2 \sigma |u(x)|^{2(\sigma -1)}  Re(u(x) \overline{ v_1(x)}) dx\\
\Phi''(u)(v_1,v_2)=& \int_M \Big[ 4 \sigma (\sigma -1) |u(x)|^{2(\alpha -2)} Re(u(x) \overline{v_1(x)}) Re(u(x)  \overline{ v_2(x)}) \\
&+ 2\sigma |u(x)|^{2(\sigma -1)} Re( v_2(x)  \overline{v_1(x)})\Big] dx.
\end{align*}
\end{remark}

\section{Stochastic Strichartz estimates}\label{sec-Strichartz}

\subsection{Deterministic Strichartz estimates}\label{subsec-Strichartz-det}
We assume the following.

\begin{assumption}\label{ass-spaces}
\begin{trivlist}
\item[(i)] $\rH_0$ is a   separable Hilbert space and $\rE_0$ is a separable  Banach space
 such that
$\rE_0 \cap \rH_0$
 is dense in both $\rE_0$ and $\rH_0$;
\item[(ii)]
There exists a separable Hilbert space ${\mathcal H}_0$ such that    $\rH_0\subset {\mathcal H}_0$ and a ${\mathcal C}_0$   unitary
 group  $\mathbf{U}=\big(U_t\big)_{t\in \mathbb{R}}$ on $\mathcal H_0$   with  the infinitesimal generator $iA$,
where $A$ is self-adjoint in ${\mathcal H}_0$.\\
 Assume that the restriction of $-A$ to $\rH_0$, denoted also by $-A$,  is a positive operator in $\rH_0$.
\item[(iii)] There exists a positive linear  operator $-\tilde {A}$ on the space $\rE_0$ such that
 $D(A)\cap E_0 \subset D(\tilde{A})$, $D(\tilde{A})\cap \rH_0 \subset D(A)$ and $A=\tilde{A}$ on $D(A)\cap D(\tilde{A})$.
 In what follows, unless in a danger of ambiguity,  the operator $\tilde{A}$ will be denoted by $A$.
\item[(iv)]  There exists a number
 $p\in (2,\infty)$ and  a non decreasing
function  $\tilde{C}_p:(0,\infty)\to (0,\infty) $ such that $\tilde{C}_p(0^+)=0$ and,
  for every $v_0\in \rH_0$, there exists a subset $\Gamma(v_0)$ of $\mathbb{R}_+$,
 of full Lebesgue measure, such that for each $t\in \Gamma(v_0)$, $U_t(v_0)\in \rE_0$,  and
 such that for $T>0$, 
 and  every $v_0\in \rH_0$,  one has:
\begin{equation}
\label{Strichartz-hom-0}
 \Big( \int_0^T  |U_tv_0|_{\rE_0}^p dt \Big)^{\frac{1}{p}}  \leq \tilde{C}_p(T) \vert v_0\vert_{\rH_0}.
\end{equation}
\end{trivlist}
\end{assumption}

Given $f\in L^1(0,T;H^0)$,  set $U\ast f = \int_0^\cdot U_{.-r} f(r) \,dr$; then we have the following inhomogeneous
Strichartz inequalities.
\begin{lemma}\label{lem-inhomogenous-Strichartz}
Assume that Assumption \ref{ass-spaces} holds with  $p\in (2,\infty)$. Then
 for  $T>0$   we have
\begin{eqnarray} \label{Strichartz-inhom}
\Big\vert  U\ast f \Big\vert_{L^p(0,T;\rE_0)} &\leq& C \tilde{C}_p(T)\,  \vert f\vert_{L^1(0,T;\rH_0)}, \; f\in L^1(0,T;\rH_0),
\\
\label{ineq-maximal-H}
\Big\vert  U\ast f \Big\vert_{C([0,T];\rH_0)} &\leq&  C \vert f\vert_{L^1(0,T;\rH_0)},\; f\in L^1(0,T;\rH_0).
\end{eqnarray}
 where $C=\sup_{t\in [-T,T]}  |U_t|_{{\mathcal L}(H_0)} \in (0,\infty)$
\end{lemma}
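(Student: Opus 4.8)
The plan is to prove the two inhomogeneous Strichartz inequalities by combining the homogeneous estimate \eqref{Strichartz-hom-0} from Assumption \ref{ass-spaces}(iv) with Minkowski's integral inequality, exploiting the group structure of $\mathbf{U}$. The key observation is that, by definition, $(U\ast f)(t) = \int_0^t U_{t-r} f(r)\,dr$, so the operator acts by integrating the free evolution against the source term $f$; both desired bounds should follow by pulling the $L^p$ (respectively sup) norm inside the $r$-integral and applying the single-point estimates uniformly in $r$.

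\medskip

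For the first inequality \eqref{Strichartz-inhom}, I would start by writing, for the fixed time interval $[0,T]$,
\begin{equation*}
\bigl| U\ast f \bigr|_{L^p(0,T;\rE_0)} = \Bigl( \int_0^T \Bigl| \int_0^t U_{t-r} f(r)\,dr \Bigr|_{\rE_0}^p \, dt \Bigr)^{1/p}.
\end{equation*}
I would first extend the inner integral to run over all of $[0,T]$ by inserting an indicator $1_{\{r\le t\}}$ (equivalently, set $f(r)=0$ for $r$ outside the support), which lets me treat the expression as a genuine convolution-type integral in $r$ over a fixed domain. Then I would apply Minkowski's integral inequality for the $L^p(0,T)\,$-norm in $t$, moving the $dr$-integration outside the $L^p$-norm, to obtain the bound
\begin{equation*}
\bigl| U\ast f \bigr|_{L^p(0,T;\rE_0)} \le \int_0^T \Bigl( \int_0^T \bigl| 1_{\{r\le t\}} U_{t-r} f(r) \bigr|_{\rE_0}^p \, dt \Bigr)^{1/p} dr.
\end{equation*}
For fixed $r$, the inner $t$-integral over $\{t\ge r\}$ becomes, after the substitution $s=t-r$, an integral of $|U_s f(r)|_{\rE_0}^p$ over $s\in[0,T-r]\subset[0,T]$. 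The homogeneous estimate \eqref{Strichartz-hom-0} applied with $v_0=f(r)$ then controls this by $\tilde C_p(T-r)\,|f(r)|_{\rH_0} \le \tilde C_p(T)\,|f(r)|_{\rH_0}$, using monotonicity of $\tilde C_p$. Integrating in $r$ yields exactly $\tilde C_p(T)\,|f|_{L^1(0,T;\rH_0)}$; the constant $C$ in the statement absorbs any fixed factor (in fact one may take $C=1$ here, the role of $C$ being needed for the second estimate).

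\medskip

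For the maximal-in-time bound \eqref{ineq-maximal-H}, I would argue more directly: for each $t\in[0,T]$,
\begin{equation*}
\bigl| (U\ast f)(t) \bigr|_{\rH_0} = \Bigl| \int_0^t U_{t-r} f(r)\,dr \Bigr|_{\rH_0} \le \int_0^t |U_{t-r} f(r)|_{\rH_0}\,dr \le \sup_{s\in[-T,T]} |U_s|_{{\mathcal L}(\rH_0)} \int_0^T |f(r)|_{\rH_0}\,dr,
\end{equation*}
where I have used that $U_s$ maps $\rH_0$ to itself with operator norm bounded by $C = \sup_{s\in[-T,T]}|U_s|_{{\mathcal L}(\rH_0)}$, which is finite because $\mathbf{U}$ is a strongly continuous group and $[-T,T]$ is compact (uniform boundedness on compacts). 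Taking the supremum over $t\in[0,T]$ gives the claim; continuity of $t\mapsto (U\ast f)(t)$ in $\rH_0$ follows from strong continuity of the group and dominated convergence.

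\medskip

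The step I expect to require the most care is the application of \eqref{Strichartz-hom-0} inside the Minkowski estimate: the homogeneous bound is only asserted to hold \emph{for almost every} $t$ via the full-measure set $\Gamma(v_0)$ on which $U_t v_0\in\rE_0$, so I must verify that the composition $(r,s)\mapsto U_s f(r)$ is jointly measurable and that the almost-everywhere membership in $\rE_0$ causes no problem when the two integrations are interleaved (Fubini--Tonelli with values in the Banach space $\rE_0$). Establishing this joint measurability and the validity of the vector-valued Minkowski inequality is the genuine technical obstacle; once measurability is settled, the rest is a routine change of variables and an appeal to the monotonicity of $\tilde C_p$. The boundedness constant $C$ for the $\rE_0$-estimate can be tracked to be $1$, while the $C$ appearing in both displayed inequalities is the common group-norm constant stated in the lemma, so I would phrase the argument to make both share the same $C=\sup_{t\in[-T,T]}|U_t|_{{\mathcal L}(\rH_0)}$.
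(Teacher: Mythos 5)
Your proof is correct and follows essentially the same route as the paper: Minkowski's integral inequality combined with the homogeneous estimate \eqref{Strichartz-hom-0} for \eqref{Strichartz-inhom}, and a direct triangle-inequality bound using the uniform boundedness of the group on $[-T,T]$ for \eqref{ineq-maximal-H}. The only (cosmetic) difference is that you apply \eqref{Strichartz-hom-0} to $v_0=f(r)$ after the substitution $s=t-r$, whereas the paper factors $U_{t-r}=U_tU_{-r}$ and applies it to $v_0=U_{-r}f(r)$, which is why their constant carries the extra factor $C$.
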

\begin{proof}
  Since by assumptions
$\mathcal{U}=\big(U_t\big)_{t\in \mathbb{R}}$ is a ${\mathcal C}_0$   
group on $\rH_0$  we infer that   for $t\in [0,T]$ we have:
\[
[U \ast f](t)
=U_t\Big( \int_{0}^t U_{-r} f(r) \, dr  \Big) .
 \]
Therefore, since the norms of  $U_t$ and  $U_{-\tau}$ are bounded by  $C$,
 the Jensen inequality implies \eqref{ineq-maximal-H}. Following the argument in \cite{Burq+G+T_2004}
 it is easy to deduce  \eqref{Strichartz-inhom}; we sketch the argument for the sake of completeness.
The Minkowski inequality and \eqref{Strichartz-hom-0} imply that
\[ |U*f|_{L^p(0,T;E_0)} \leq \int_0^T \!\! |U_{.-r}f(r)|_{L^p(0,T;E_0)} dr \leq
\tilde{C}_p(T) \int_0^T \!\! |U_{-r}f(r)|_{H_0} dr
\leq C \tilde{C}_p(T) |f|_{L^1(0,T;H_0)}. \]
\end{proof}

In the next lemma we show that once Assumption \ref{ass-spaces} holds for a certain set of objects it holds for a much larger class of sets of objects.

\begin{lemma}\label{lem-spaces+maps} Assume that the spaces
$\rH_0$ and $\rE_0$,   and the operator $A$ satisfy Assumption\ref{ass-spaces} with a number $p$.
Assume that  $\hat{s}\geq 0$ and put $\rH=D((-A)^{\frac{\hat{s}}2}))$, a subspace of  $\rH_0$.
  Assume also that  $\rE \subset E_0$ is a separable Banach space such that  $\rE\supset D((-\tilde{A})^{\frac{\hat{s}}2})$.
 Then  the spaces
$\rH$  and  $\rE$, and the restriction of the operator $A$ to $\rH$ and $\rE$
satisfy Assumption \ref{ass-spaces} with the same number $p$.
\end{lemma}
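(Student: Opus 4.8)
The plan is to verify conditions (i)--(iv) of Assumption \ref{ass-spaces} for the new triple $(\rH,\rE,A)$, keeping the ambient Hilbert space ${\mathcal H}_0$ and the unitary group $\mathbf{U}=(U_t)_{t\in\mathbb{R}}$ unchanged; the point of the whole argument is that $U_t=e^{itA}$ commutes, via the functional calculus in ${\mathcal H}_0$, with the fractional powers $(-A)^{\hat{s}/2}$ used to build $\rH$ and $\rE$. I would first dispose of the structural items (ii)--(iii). Since $\rH=D((-A)^{\hat{s}/2})\subset \rH_0\subset{\mathcal H}_0$, the same group on ${\mathcal H}_0$ with generator $iA$ serves, and the restriction of $-A$ to $\rH$ is again positive because $-A$ and $(-A)^{\hat{s}/2}$ commute, so positivity on $\rH_0$ descends to the fractional-power domain. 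For (iii) one takes the restriction of $\tilde{A}$ to $\rE$; the required domain inclusions and the identity $A=\tilde{A}$ on the common domain are inherited from the corresponding facts on $\rH_0,\rE_0$ together with the consistency of the fractional powers of $A$ and $\tilde{A}$.

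Next I would treat (i). Separability of $\rH$ (a domain inside the separable space $\rH_0$) is automatic and $\rE$ is separable by hypothesis, so only the density of $\rE\cap\rH$ in each of $\rH$ and $\rE$ needs an argument. I would produce a common dense set of regular vectors by smoothing along the group: for $x\in\rE_0\cap\rH_0$ and a mollifier $\phi_n$, the vectors $x_n=\int_{\mathbb R}\phi_n(t)\,U_t x\,dt$ lie in every fractional-power domain of $A$, hence in $D((-\tilde{A})^{\hat{s}/2})\subset\rE$ and in $\rH$, and converge to $x$ in the relevant norms once one notes that $\mathbf U$ restricts to a $C_0$ group on $\rH_0$; combined with the density of $\rE_0\cap\rH_0$ in both $\rH_0$ and $\rE_0$ this yields the claim.

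The main step is (iv). Fix $v_0\in\rH$ and set $w_0:=(-A)^{\hat{s}/2}v_0\in\rH_0$, so that $|w_0|_{\rH_0}\le|v_0|_{\rH}$ and also $|v_0|_{\rH_0}\le|v_0|_{\rH}$. Because $U_t$ commutes with $(-A)^{\hat{s}/2}$ and $A=\tilde{A}$ on the common domain, for $t$ in the full-measure set $\Gamma(w_0)$ one has $U_tw_0\in\rE_0$ and $(-\tilde{A})^{\hat{s}/2}U_tv_0=U_tw_0$, whence $U_tv_0\in D((-\tilde{A})^{\hat{s}/2})\subset\rE$. Using the continuous embedding $D((-\tilde{A})^{\hat{s}/2})\hookrightarrow\rE$,
\[
|U_tv_0|_{\rE}\le C\big(|U_tv_0|_{\rE_0}+|(-\tilde{A})^{\hat{s}/2}U_tv_0|_{\rE_0}\big)=C\big(|U_tv_0|_{\rE_0}+|U_tw_0|_{\rE_0}\big).
\]
Raising to the $p$-th power, integrating over $[0,T]$, and applying the original estimate \eqref{Strichartz-hom-0} to both $v_0\in\rH_0$ and $w_0\in\rH_0$ gives
\[
\Big(\int_0^T|U_tv_0|_{\rE}^p\,dt\Big)^{1/p}\le C\,\tilde{C}_p(T)\big(|v_0|_{\rH_0}+|w_0|_{\rH_0}\big)\le C\,\tilde{C}_p(T)\,|v_0|_{\rH},
\]
which is exactly (iv) for $(\rH,\rE)$, with the new nondecreasing constant $C\tilde{C}_p(T)$ vanishing as $T\to0^+$, and with $\Gamma(w_0)$ playing the role of the full-measure set.

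The delicate point, and the one I would write out most carefully, is the commutation/consistency identity $(-\tilde{A})^{\hat{s}/2}U_tv_0=U_t(-A)^{\hat{s}/2}v_0$: here $(-A)^{\hat{s}/2}$ is formed in ${\mathcal H}_0$, where $A$ is self-adjoint, while $(-\tilde{A})^{\hat{s}/2}$ is formed on $\rE_0$, and one must check that the two fractional powers agree on the vectors in play, invoking Assumption \ref{ass-spaces}(iii). The density argument of (i) via smoothing along the group is the other place needing care, since it relies on $\mathbf U$ behaving as a $C_0$ group on $\rH_0$.
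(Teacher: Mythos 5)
You should first be aware that the paper states Lemma \ref{lem-spaces+maps} without any proof at all (it is followed immediately by ``This yields the following''), so there is no argument of the authors to compare yours against. Your treatment of the substantive item, condition (iv) of Assumption \ref{ass-spaces}, is exactly the argument the surrounding text presupposes (see the proof of Proposition \ref{prop-Strichartz-det}, where the lemma is invoked for $\rH=H^{\sigma+\frac1p,2}(M)$ and $\rE=W^{\sigma,q}(M)$): set $w_0=(-A)^{\hat{s}/2}v_0$, use that $U_t$ commutes with fractional powers of its generator, control the $\rE$-norm by the graph norm of $(-\tilde{A})^{\hat{s}/2}$ in $\rE_0$ (the continuity of the inclusion $D((-\tilde{A})^{\hat{s}/2})\hookrightarrow\rE$, which you use implicitly, follows from the closed graph theorem since both spaces embed continuously into $\rE_0$ and should be stated), and apply \eqref{Strichartz-hom-0} to $v_0$ and $w_0$ separately. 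The consistency identity $(-\tilde{A})^{\hat{s}/2}U_tv_0=U_tw_0$ that you rightly single out is indeed the point the abstract framework leaves open --- Assumption \ref{ass-spaces}(iii) only identifies $A$ and $\tilde{A}$ themselves on $D(A)\cap D(\tilde{A})$, not their fractional powers --- but this is a gap in the paper's formulation rather than in your argument, and it is harmless for the Laplace--Beltrami operator on $L^2(M)$ and $L^q(M)$.

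The one step of yours that does not close as written is the density part of (i). The smoothed vectors $x_n=\int_{\mathbb{R}}\phi_n(t)U_tx\,dt$ are regular for $A$ in ${\mathcal H}_0$ (and in $\rH_0$, granting that $\mathbf{U}$ restricts to a ${\mathcal C}_0$-group there), but nothing in Assumption \ref{ass-spaces} lets you conclude that $x_n\in D((-\tilde{A})^{\hat{s}/2})$: the operator $\tilde{A}$ lives on $\rE_0$, the group is not assumed to act boundedly on $\rE_0$, and condition (iv) only gives $U_tx\in\rE_0$ for a.e.\ $t$ together with an $L^p$-in-time bound, so the Bochner integral defining $x_n$ cannot be read in $\rE_0$ and regularity for $A$ does not transfer to regularity for $\tilde{A}$. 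To repair this one either needs an additional hypothesis (for instance that $D(A^k)\cap D(\tilde{A}^k)$ is dense in $\rE_0\cap\rH_0$, which is trivially true in the application since smooth functions on $M$ are dense in all the spaces involved), or an argument via resolvent smoothing by $\tilde{A}$ on $\rE_0$ combined with consistency of the resolvents of $A$ and $\tilde{A}$ on $\rE_0\cap\rH_0$. Since the paper itself silently skips (i)--(iii) for the new pair of spaces, this is a defect you share with the authors; but your write-up should flag the mollification step as incomplete rather than as merely ``needing care.''
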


This yields the following:
\begin{corollary}\label{cor-group} In the framework of Lemma \ref{lem-spaces+maps},  there exists  a non decreasing
function  ${C}_p:(0,\infty)\to (0,\infty) $ such that ${C}_p(0^+)=0$,  and such that
 for every $u_0 \in L^p(\Omega, \rH)$ and $T>0$ the trajectories of the process
$(U_tu_0 , t\in [0,T])$  belong a.s. to $C([0,T];\rE) \cap L^p(0,T;\rH)$;  moreover
\begin{eqnarray}
\label{ineq-initial-0}
\mathbb{E} \Big( \int_0^T \vert U_t u_0\vert_{\rE}^p\, dt
 +\sup_{t \in [0,T]} \vert U_t u_0\vert^p_{\rH}\Big)  &\leq &  C \big(1+C^p_p(T)\big) \mathbb{E} \vert u_0\vert^p_{\rH}.
\end{eqnarray}
\end{corollary}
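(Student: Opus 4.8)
The plan is to deduce the result by applying Lemma \ref{lem-spaces+maps} to transfer Assumption \ref{ass-spaces} from the triple $(\rH_0,\rE_0,A)$ to $(\rH,\rE,A)$, and then to integrate the resulting deterministic homogeneous Strichartz estimate pathwise in $\omega$. First I would invoke Lemma \ref{lem-spaces+maps}: since $\rH=D((-A)^{\hat{s}/2})\subset\rH_0$ and $\rE\supset D((-\tilde{A})^{\hat{s}/2})$, the spaces $\rH$, $\rE$ and the restriction of $A$ satisfy Assumption \ref{ass-spaces} with the same number $p$. In particular there is a non-decreasing function $C_p:(0,\infty)\to(0,\infty)$ with $C_p(0^+)=0$ such that the homogeneous estimate \eqref{Strichartz-hom-0}, now read with $\rH$ and $\rE$ in place of $\rH_0$ and $\rE_0$, holds for every $v_0\in\rH$.

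Next I would record two pathwise facts for a fixed $\omega$ with $u_0(\omega)\in\rH$ (which holds a.s., since $u_0\in L^p(\Omega,\rH)$). On the one hand, \eqref{Strichartz-hom-0} applied to $v_0=u_0(\omega)$ gives $\int_0^T|U_tu_0(\omega)|_{\rE}^p\,dt\le C_p^p(T)\,|u_0(\omega)|_{\rH}^p$, so the $\rE$-norm of the trajectory is $p$-integrable on $[0,T]$. On the other hand, because $iA$ generates $\mathbf{U}$, the group commutes with $(-A)^{\hat{s}/2}$ and therefore leaves $\rH=D((-A)^{\hat{s}/2})$ invariant; combined with the strong continuity and the uniform bound $\sup_{t\in[-T,T]}|U_t|_{\mathcal{L}(\rH_0)}\le C$ already used in Lemma \ref{lem-inhomogenous-Strichartz}, this shows that $\mathbf{U}$ restricts to a $C_0$-group on $\rH$ with $|U_t|_{\mathcal{L}(\rH)}\le C$ for $t\in[-T,T]$. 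Hence $t\mapsto U_tu_0(\omega)$ is continuous in $\rH$ and $\sup_{t\in[0,T]}|U_tu_0(\omega)|_{\rH}\le C\,|u_0(\omega)|_{\rH}$, which yields the asserted trajectory regularity.

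Finally I would pass to expectations. The map $(\omega,t)\mapsto U_tu_0(\omega)$ is jointly measurable by strong continuity of $\mathbf{U}$ and measurability of $u_0$, so both $\int_0^T|U_tu_0|_{\rE}^p\,dt$ and $\sup_{t\in[0,T]}|U_tu_0|_{\rH}^p$ are genuine random variables. Integrating the two pathwise bounds and adding them yields $\mathbb{E}\big(\int_0^T|U_tu_0|_{\rE}^p\,dt+\sup_{t\in[0,T]}|U_tu_0|_{\rH}^p\big)\le (C_p^p(T)+C^p)\,\mathbb{E}|u_0|_{\rH}^p$, and absorbing $C^p$ and the constant $1$ into a single generic constant $C$ gives \eqref{ineq-initial-0}.

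The main obstacle here is measure-theoretic rather than analytic: one must ensure that the $\rE$-valued path is well defined, since Assumption \ref{ass-spaces}(iv) only guarantees $U_tu_0(\omega)\in\rE$ for $t$ in a full-measure set $\Gamma(u_0(\omega))$ depending on $\omega$, and that the resulting integrand is jointly measurable so that Tonelli's theorem may be applied before taking the expectation. Once this measurability is secured, the estimate is an immediate pathwise consequence of the transferred homogeneous Strichartz inequality together with the uniform boundedness of the group on $\rH$.
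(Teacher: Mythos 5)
Your proposal is correct and follows exactly the route the paper intends: the corollary is stated as an immediate consequence of Lemma \ref{lem-spaces+maps} (which transfers Assumption \ref{ass-spaces} to the pair $(\rH,\rE)$) together with the homogeneous estimate \eqref{Strichartz-hom-0} applied pathwise and the uniform boundedness of the unitary group on $\rH$, and the paper offers no further argument. Your additional care about joint measurability and the $\omega$-dependent full-measure set $\Gamma(u_0(\omega))$ is a reasonable refinement of a point the paper leaves implicit; note also that the spaces in the displayed membership statement of the corollary appear to be swapped relative to the estimate \eqref{ineq-initial-0}, and your reading (continuity in $\rH$, $L^p$ in time with values in $\rE$) is the intended one.
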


\subsection{Stochastic Strichartz estimates}\label{subsec-Strichartz}

 The formulation
of the result from this section
is motivated by an abstract approach to time in-homogenous Strichartz estimates proposed by
\cite{Keel+Tao_1998} and \cite{Burq+G+T_2004}.
To ease notations, let $L^q=L^q(M)$, $W^{\sigma,q}= W^{\sigma,q}(M)$ and $H^{\sigma,q}=H^{\sigma,q}(M)$
denote the spaces of complex-valued functions defined on $M$ and which satisfy the corresponding integrability and
regularity assumptions.

\noindent Let us recall
that a   Banach space $E$ is of  martingale type $2$ 
if there exists a constant $L=L_{2}(E)>0$ such that for every
$E$-valued  finite martingale $\{M_{n}\}_{n=0}^N$  the
following  holds:
\begin{equation} \label{eqn-M type p}
\sup_{n} \mathbb{E} | M_{n} | ^{2} \le  L
\sum_{n=0}^N \mathbb{E}  | M_{n}-M_{n-1} | ^{2},
\end{equation}
where, as  usually, we put  $M_{-1}=0$; see for instance \cite{Brz_1995,Brz_1997} and/or
 \cite{Brz+Elw_2000}.
 It is know that it is possible to extend the classical Hilbert-space valued It\^o integral to
the framework of martingale type $2$ Banach spaces; see for instance \cite{Neidhardt_1978} and \cite{Dettweiler_1990}.
\smallskip

 We make the following assumption.
 \begin{assumption}\label{ass-stochastic basis}
Let $(\Omega, \mathcal{F},\mathbb{F},\mathbb{P})$, where $\mathbb{F}=\big(\mathcal{F}_t , t\geq 0\big)$,
is a filtered probability space satisfying the usual assumptions.
 We assume that $W=\big(W(t) , t\geq 0\big)$ is an $\rK$-cylindrical Wiener process on some
real  separable Hilbert space $\rK$; see Definition 4.1 in \cite{Brz+Pesz_2001}.
 \end{assumption}

  Let us recall the definition of an accessible stopping time.
\begin{definition}\label{def_accessible-stopping-time}
 An $\mathbb{F}$-stopping time $\tau$ is called accessible if there exists an
   increasing sequence $(\tau_n)_{n\in\mathbb{N}}$ of $\mathbb{F}$-stopping
   times such that $\tau_n<\tau$ and
   $\lim_{n\rightarrow\infty}\tau_n=\tau$ a.s. Such a
   sequence $(\tau_n)_{n\in\mathbb{N}}$  will be called   an approximating sequence
   for $\tau$.
\end{definition}
Let us recall the following standard notation. Assume that $X$ is a separable Banach space,  $p\in [1,\infty)$
 and $(\Omega,\mathcal{F}, \mathbb{F},\mathbb{P})$
 is a filtered probability space.
  By ${\mathcal M}^p_{\mathrm{loc}}(\mathbb{R}_+,X)$ we denote the space of all
 $\mathbb{F}$-progressively measurable $X$-valued processes $\xi:\mathbb{R}_+ \times \Omega \to X$
for which there exists a sequence $(T_n)_{n\in {\mathbb N}}$ of bounded stopping times such that $T_n\toup \infty$,
 $\mathbb{P}$-almost surely and
$\mathbb{E}\int_0^{T_n} \vert \xi(t)\vert^p\, dt<\infty$ for every $ n\in \mathbb{N}$.

Recall that if ${\mathrm K}$ and $E$ are separable Hilbert and respectively Banach spaces,
 then the space $R({\mathrm K},E)$ of $\gamma$-radonifying operators consists of all bounded operators
 $\Lambda :{\mathrm K}\to E$ such that the series $\sum_{k=1}^\infty \gamma_k \Lambda(e_k)$
 converges  in $L^2(\Omega,E)$ for some (or any)  orthonormal basis $(e_k)_{k\in {\mathbb N}}$
of ${\mathrm K}$ and some (or any)  sequence $(\gamma_k)_{k\in {\mathbb N}}$ of i.i.d. $N(0,1)$ real random variables.
 We put
 $$
 \Vert  \Lambda \Vert_{{\mathrm R}({\mathrm K}, E)} =
\Big(\tilde{\mathbb{E}} \big\vert \sum_{j\in\mathbb{N}} \beta_j  \Lambda e_j\big\vert^2_E\Big)^{\frac12}.
 $$

 By the Kahane-Khintchin inequality
 and the It\^o-Nisio Theorem, for every Banach space $E$
 there exist a constant $C_p(E)$  such that  for every linear map $\Lambda :{\mathrm K}\to E$,
 \begin{equation}
\label{ineq-K+K}
C_p(E)^{-1} \Vert  \Lambda \Vert_{{\mathrm R}({\mathrm K}, E)} \leq
 \Big(\tilde{\mathbb{E}} \big\vert \sum_{j\in\mathbb{N}} \beta_j  \Lambda e_j\big\vert^p_E\Big)^{\frac1p}
 \leq C_p(E) \Vert  \Lambda \Vert_{{\mathrm R}({\mathrm K}, E)}.
\end{equation}
 Hence the condition of convergence in $L^2(\Omega,E)$ can be replaced by a condition of convergence in
$L^p(\Omega,E)$ for some (any) $p\in (1,\infty)$.
The space $R(H,E)$ was introduced by Neidhard in his PhD thesis \cite{Neidhardt_1978}
 and was then used  to study the existence and regularity of solutions to SPDEs 
 in \cite{Brz_1997} and  \cite{Brz+Pesz_2001}. Recently it has been widely used; 
 see for instance \cite{Ondrejat_2004}, \cite{Brz+vN_2003},  \cite{vNeerv+V+W_2007} and \cite{Bessaih_Millet}.

Furthermore,
 the Burkholder inequality holds in this framework;  see \cite{Brz_1997,Dettweiler_1985,Ondrejat_2004}.
 If $E$ is a martingale type $2$ Banach space,   for every $p\in (1,\infty)$ there exists a constant
$B_p(E)>0$  such that for
each accessible stopping time $\tau>0$ and  $R(\rK,E)$-valued progressively measurable process $\xi$,
\begin{equation}\label{ineq-Burkholder}
\mathbb{E}\sup_{0\le t\leq \tau}\Big\vert  \int_0^t \xi(s) \, dW(s) \Big\vert_E ^p\leq  B_p(E)\,
\mathbb{E}\,\Bigl( \int^\tau_0\Vert \xi (t)\Vert_{R(\rK,E)
} ^2\, dt \Bigr)^{p/2}.
\end{equation}

\begin{corollary}\label{remark-Burkholder2}
Let  $E$ be  a  martingale type $2$ Banach space and $p\in (1,\infty)$. Then
 there exists a constant $\hat B_p(E)$ depending on $E$  such that for every $T\in (0,\infty]$ and every
$L^p(0,T;E)$-valued progressively measurable process $(\Xi_s$, $s\in [0,T))$
\begin{equation}\label{ineq-Burkholder2}
\mathbb{E}\Big\vert  \int_0^T \Xi_s \, dW(s) \Big\vert_{L^p(0,T;E)}^p\leq  \hat B_p(E)\,
\mathbb{E}\,\Bigl( \int^T_0\Vert \Xi_s \Vert_{R(\rK,L^p(0,T; E))}^2\, ds \Bigr)^{p/2}.
\end{equation}
Moreover, for any $T>0$, the above inequality \eqref{ineq-Burkholder2} holds true also
for the space $L^p(0,T;E)$ and the integral over interval $(0,T)$  with the same constant $\hat B_p(E)$.
\end{corollary}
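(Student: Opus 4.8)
The plan is to recognise \eqref{ineq-Burkholder2} as nothing but the Burkholder inequality \eqref{ineq-Burkholder} applied in the Banach space $L^p(0,T;E)$ itself, with the accessible stopping time $\tau=T$ and after discarding the supremum. The only structural input this requires is that $L^p(0,T;E)$ is again of martingale type $2$ with a constant controlled by that of $E$ and independent of $T$; granting this, taking $\xi=\Xi$ and $\tau=T$ in \eqref{ineq-Burkholder} and using that the supremum over $t\le T$ dominates the value at $t=T$ gives \eqref{ineq-Burkholder2} at once, with $\hat B_p(E)=B_p(L^p(0,T;E))$. Since the martingale type $2$ constant of an $L^p$-space built over a martingale type $2$ space depends only on $p$ and on $L_2(E)$, and not on the underlying measure space (hence not on $T$), the same constant serves every $T\in(0,\infty]$, which is the content of the ``Moreover'' clause.

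To keep the argument self-contained within the tools already recorded, I would instead give a hands-on proof invoking only \eqref{ineq-Burkholder} in $E$ and the Kahane--Khintchin inequality \eqref{ineq-K+K}. Writing the $L^p(0,T;E)$-norm as an integral in the time variable $t$ and using Fubini to exchange $\mathbb{E}$ with $\int_0^T\cdots\,dt$,
\[
\mathbb{E}\Big|\int_0^T\Xi_s\,dW(s)\Big|_{L^p(0,T;E)}^p=\int_0^T \mathbb{E}\Big|\Big(\int_0^T\Xi_s\,dW(s)\Big)(t)\Big|_E^p\,dt.
\]
Next I would identify, for almost every $t$, the evaluation at $t$ of the stochastic integral with the $E$-valued stochastic integral of the evaluated integrand,
\[
\Big(\int_0^T\Xi_s\,dW(s)\Big)(t)=\int_0^T\Xi_s(t)\,dW(s)\quad\text{in }E,
\]
where $\Xi_s(t)\in R(\rK,E)$ is the operator $k\mapsto(\Xi_s k)(t)$. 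Applying \eqref{ineq-Burkholder} in $E$ for each fixed $t$ and exchanging integrals once more yields
\[
\mathbb{E}\Big|\int_0^T\Xi_s\,dW(s)\Big|_{L^p(0,T;E)}^p\le B_p(E)\,\mathbb{E}\int_0^T\Big(\int_0^T\|\Xi_s(t)\|_{R(\rK,E)}^2\,ds\Big)^{p/2}dt.
\]

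It then remains to rearrange the deterministic double integral and to recognise the radonifying norm over $L^p(0,T;E)$. Setting $g(s,t)=\|\Xi_s(t)\|_{R(\rK,E)}\ge0$, Minkowski's integral inequality applied to $g^2$ with exponent $p/2$ (this is the step that forces $p\ge2$) gives
\[
\int_0^T\Big(\int_0^T g(s,t)^2\,ds\Big)^{p/2}dt\le\Big(\int_0^T\Big(\int_0^T g(s,t)^p\,dt\Big)^{2/p}ds\Big)^{p/2}.
\]
Finally, applying \eqref{ineq-K+K} to the operator $\Xi_s\in R(\rK,L^p(0,T;E))$ together with Fubini in $t$ shows
\[
\|\Xi_s\|_{R(\rK,L^p(0,T;E))}^2\approx\Big(\tilde{\mathbb{E}}\big|\textstyle\sum_j\beta_j\Xi_s e_j\big|_{L^p(0,T;E)}^p\Big)^{2/p}\approx\Big(\int_0^T\|\Xi_s(t)\|_{R(\rK,E)}^p\,dt\Big)^{2/p},
\]
with constants depending only on $p$ and $E$; substituting this into the previous display closes the estimate and produces a $\hat B_p(E)$ independent of $T$, so the finite-$T$ and $T=\infty$ assertions come together.

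The step I expect to be the genuine obstacle is the ``stochastic Fubini'' identity, that the stochastic integral commutes with evaluation at $t$, together with the measurability bookkeeping it presupposes: one must check that $(s,\omega)\mapsto\Xi_s(t)$ is, for almost every $t$, a progressively measurable $R(\rK,E)$-valued process, and that the $t$-sections of $\int_0^T\Xi_s\,dW(s)$ may be chosen to coincide almost everywhere with $\int_0^T\Xi_s(t)\,dW(s)$. I would handle this by first verifying the identity for elementary adapted step integrands, where it is immediate, and then passing to the limit using \eqref{ineq-Burkholder} and the completeness of $L^p$ in both the time and the probability variables. The Minkowski step is routine but pins down the admissible range $p\ge2$, which is in any case the range $(2,\infty)$ relevant to Assumption \ref{ass-spaces}; the Kahane--Khintchin identification is a direct computation once the integrals have been exchanged.
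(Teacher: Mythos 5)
Your first paragraph is precisely the paper's proof: the authors simply observe that $L^p(\mathbb{R}_+;E)$ is again of martingale type $2$, apply \eqref{ineq-Burkholder} in that space, and obtain the finite-$T$ case by identifying $L^p(0,T;E)$ isometrically with a closed subspace of $L^p(\mathbb{R}_+;E)$ (which is how they get a single constant for all $T$, rather than by appealing to measure-space independence of the type constant as you do — both observations are correct). Your second, ``hands-on'' argument is a genuinely different and sound route: Fubini in $t$, a stochastic-Fubini identification of the $t$-section of the integral, the scalar Burkholder inequality \eqref{ineq-Burkholder} in $E$ pointwise in $t$, Minkowski's integral inequality with exponent $p/2$, and the Kahane--Khintchin equivalence \eqref{ineq-K+K} to recognise $\Vert\Xi_s\Vert_{R(\rK,L^p(0,T;E))}$. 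What the paper's route buys is brevity and the avoidance of the stochastic-Fubini measurability bookkeeping you rightly identify as the delicate point; what your route buys is that it never invokes the martingale type $2$ property of the function space $L^p(0,T;E)$ itself, only that of $E$, and it makes explicit where the restriction $p\ge 2$ enters (in the Minkowski step and in the type-$2$ claim for $L^p$), a restriction the paper leaves implicit even though the corollary is nominally stated for $p\in(1,\infty)$; in the sequel only $p\in(2,\infty)$ is used, so nothing is lost.
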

\begin{proof}
Since the space $L^p(\mathbb{R}_+;E)$ is a martingale type $2$, the above inequality holds with $T=\infty$.
The second half is a consequence of the fact that  $L^p(0,T;E)$ can be isometrically  identified
 with a closed subspace of $L^p(\mathbb{R}_+;E)$.
\end{proof}

Before we state the main result in this section, let us
 prove an  auxiliary result which has an  interest in its own.
Let us introduce the following notation.
 For an $R(\rK,H)$-valued  process $\xi $ and
  define a progressively measurable $R(\rK,L^p(0,T;E))$-valued process
 $(\Xi_r)_{r\in [0,T]}$  as follows:
 \begin{equation}
 \label{eqn-Xi}
 \Xi_r:=\big\{[0,T]\ni t\mapsto 1_{[r,T ]}(t)U_{t-r}\xi(r) \big\},\;\; r\in [0,T].
 \end{equation}

\begin{lemma}\label{lem-Strichartz-stoch} Let Assumption \ref{ass-spaces} be satisfied. Then   for each $r\in [0,T]$
\begin{equation}
   \Vert \Xi_r \Vert_{{\mathrm R}({\mathrm K}, L^p(0,T;\rE_0))}
\leq C \tilde{C}_p(T)C_p(\rE_0)C_p(\rH_0) \Vert \xi(r) \Vert_{{\mathrm R}(\rK, \rH_0)}.
\end{equation}
\end{lemma}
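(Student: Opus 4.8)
The plan is to recognise $\Xi_r$ as the composition of the radonifying operator $\xi(r)$ with a single \emph{deterministic} bounded operator built from the homogeneous Strichartz estimate, and then to read off the radonifying norm of the composition from the Gaussian-series description of the $R$-norm.

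First I would record the deterministic building block. Fix $r\in[0,T]$ and, using \eqref{Strichartz-hom-0}, define the linear map
\[
\Psi^{(r)}:\rH_0\to L^p(0,T;\rE_0),\qquad \Psi^{(r)}v_0=\big\{[0,T]\ni t\mapsto 1_{[r,T]}(t)\,U_{t-r}v_0\big\}.
\]
For each $v_0\in\rH_0$ the map $t\mapsto U_{t-r}v_0$ is strongly measurable and, by Assumption \ref{ass-spaces}(iv), takes values in $\rE_0$ for a.e.\ $t$; the substitution $s=t-r$, the monotonicity of $\tilde C_p$ and \eqref{Strichartz-hom-0} give
\[
\|\Psi^{(r)}v_0\|_{L^p(0,T;\rE_0)}^p=\int_0^{T-r}|U_sv_0|_{\rE_0}^p\,ds\le\int_0^T|U_sv_0|_{\rE_0}^p\,ds\le \tilde C_p(T)^p\,|v_0|_{\rH_0}^p,
\]
so that $\Psi^{(r)}\in\mathcal L(\rH_0,L^p(0,T;\rE_0))$ with $\|\Psi^{(r)}\|\le\tilde C_p(T)$. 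By the very definition \eqref{eqn-Xi} one has $\Xi_r=\Psi^{(r)}\circ\xi(r)$ as operators from $\rK$ to $L^p(0,T;\rE_0)$.

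Next I would estimate the radonifying norm of this composition. Let $(e_k)$ be an orthonormal basis of $\rK$ and $(\beta_k)$ i.i.d.\ $N(0,1)$ variables on an auxiliary space, and set $\eta:=\sum_k\beta_k\,\xi(r)e_k$, which converges in $L^2(\tilde\Omega;\rH_0)$ because $\xi(r)\in R(\rK,\rH_0)$, with $\tilde{\mathbb E}|\eta|_{\rH_0}^2=\|\xi(r)\|_{R(\rK,\rH_0)}^2$. Since $\Psi^{(r)}$ is bounded and linear, $\sum_k\beta_k\,\Xi_re_k=\Psi^{(r)}\eta$ with convergence in $L^p(0,T;\rE_0)$, whence
\[
\|\Xi_r\|_{R(\rK,L^p(0,T;\rE_0))}^2=\tilde{\mathbb E}\big\|\Psi^{(r)}\eta\big\|_{L^p(0,T;\rE_0)}^2\le\|\Psi^{(r)}\|^2\,\tilde{\mathbb E}|\eta|_{\rH_0}^2\le \tilde C_p(T)^2\,\|\xi(r)\|_{R(\rK,\rH_0)}^2 .
\]
Taking square roots already yields the assertion with the sharper constant $\tilde C_p(T)$; since the group bound $C$ and the Kahane--Khintchin constants $C_p(\rE_0),C_p(\rH_0)$ are all $\ge 1$, this is stronger than the stated inequality.

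If one prefers to stay within the Gaussian-moment formalism used elsewhere in the paper (and thereby recover exactly the stated constants), the operator-ideal step is replaced by: Jensen ($L^2\le L^p$ on a probability space, $p>2$) to pass to $p$-th Gaussian moments, Fubini to exchange $\tilde{\mathbb E}$ with $\int_r^T(\cdots)\,dt$, the Kahane--Khintchin inequality \eqref{ineq-K+K} in $\rE_0$ applied for each fixed $t$ to the operator $U_{t-r}\xi(r)$ (this produces $C_p(\rE_0)$), the estimate \eqref{Strichartz-hom-0} realisation by realisation, and finally \eqref{ineq-K+K} in $\rH_0$ (producing $C_p(\rH_0)$); the factor $C$ enters if one routes the time integral through Lemma \ref{lem-inhomogenous-Strichartz} rather than through \eqref{Strichartz-hom-0}. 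In either route the only genuine obstacle is measure-theoretic: one must justify that $\Psi^{(r)}$ is a bona fide bounded operator into $L^p(0,T;\rE_0)$ — i.e.\ that $t\mapsto U_{t-r}v_0$ admits an $\rE_0$-valued strongly measurable version for a.e.\ $t$, using the full-measure sets $\Gamma(v_0)$ of Assumption \ref{ass-spaces}(iv) and the separability of $\rE_0$ — and that the Gaussian series $\sum_k\beta_k\Xi_re_k$ commutes with $\Psi^{(r)}$ and with the time integral. Once boundedness of $\Psi^{(r)}$ is established, both points reduce to its continuity together with Fubini.
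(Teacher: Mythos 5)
Your main argument is correct, but it takes a genuinely different route from the paper's. You factor $\Xi_r=\Psi^{(r)}\circ\xi(r)$ through the deterministic bounded operator $\Psi^{(r)}:\rH_0\to L^p(0,T;\rE_0)$, $\|\Psi^{(r)}\|\le\tilde C_p(T)$, and invoke the right-ideal property of $\gamma$-radonifying operators (the Gaussian series for $\Xi_r$ is the image under $\Psi^{(r)}$ of the Gaussian series for $\xi(r)$, and the $R$-norm is a second Gaussian moment, so $\Vert\Xi_r\Vert_{R}\le\|\Psi^{(r)}\|\,\Vert\xi(r)\Vert_{R}$). The paper instead works entirely with $p$-th Gaussian moments: it applies the Kahane--Khintchin inequality \eqref{ineq-K+K} in $L^p(0,T;\rE_0)$, uses Fubini and the linearity of $U_{t-r}$ to pull the Gaussian sum inside, applies \eqref{Strichartz-hom-0} realisation by realisation together with the group bound $|U_{-r}|_{\mathcal L(\rH_0)}\le C$, and then converts back with \eqref{ineq-K+K} in $\rH_0$ --- exactly the alternative you sketch in your last paragraph. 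Your route is cleaner and yields the sharper constant $\tilde C_p(T)$, which, since $C,\,C_p(\rE_0),\,C_p(\rH_0)\ge 1$, implies the stated bound; the paper's route is the one whose constants are carried forward into the definition of $\hat C_p(T)$ in Theorem \ref{thm-Strichartz-stoch}, so if you adopt your bound you would simply get a smaller $\hat C_p(T)$ there. Both routes share the same (mild) measure-theoretic prerequisite that $t\mapsto U_{t-r}v_0$ be a strongly measurable $\rE_0$-valued function, which is implicit in Assumption \ref{ass-spaces}(iv) and which you correctly flag.
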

\begin{proof}
Consider a  sequence  $\big(\beta_j ,  j\in\mathbb{N}\big) $ of i.i.d. $N(0,1)$
random variables defined on some auxiliary probability space $({\tilde \Omega},
 \tilde{\mathcal{F}}, \tilde{\mathbb{P}})$
and a sequence $\big(e_j , j\in\mathbb{N})$ which is an  ONB of the Hilbert space $K$.
Hence, by the  Kahane-Khintchin inequality \eqref{ineq-K+K},
 the  Fubini Theorem and  the time-homogenous  inequality \eqref{Strichartz-hom-0}, we infer that
\begin{align*}
\Vert \Xi_r & \Vert^p_{{\mathrm R}({\mathrm K}, L^p(0,T;\rE_0))}
 \leq   C^p_p(\rE_0)\tilde{\mathbb{E}}\!  \int_0^T  \!\! \Big\vert \sum_{j\in\mathbb{N}} \beta_j
\Xi_r( e_j)(t) \Big\vert^p_{\rE_0}\, dt \\
& = C^p_p(\rE_0) \tilde{\mathbb{E}} \! \int_r^T  \!\!
 \Big\vert \sum_{j\in\mathbb{N}} \beta_j U_{t-r}\xi(r)( e_j) \Big\vert^p_{\rE_0}\, dt
=C^p_p(\rE_0) \tilde{\mathbb{E}} \int_r^T   \Big\vert
 U_{t-r} \Big( \sum_{j\in\mathbb{N}} \beta_j\xi(r)( e_j)\Big) \Big\vert^p_{\rE_0}\, dt\\
& \leq  C^p_p(\rE_0)  \tilde{C}^p_p(T)\tilde{\mathbb{E}}
 \Big\vert U_{-r} \sum_{j\in\mathbb{N}} \beta_j\xi(r)( e_j) \Big\vert^p_{\rH_0}
\leq C  C^p_p(\rE_0)  \tilde{C}^p_p(T) \tilde{\mathbb{E}} \Big| \sum_j \beta_j \zeta(r) e_j|_{\rH_0}^p \\
&\leq
 C  C^p_p(\rE_0)  \tilde{C}^p_p(T) C^p_p(\rH_0)  \Vert \xi(r) \Vert_{R(\rK,\rH_0)}^p.
\end{align*}
The proof is complete.
\end{proof}

 We define local $p$-integrable martingales starting from the random time $T_0$ as follows.
\begin{definition}\label{def-shifted-filtration}
Let $T_0$ be a finite accessible $\mathbb{F}$-stopping time.  For $t\geq 0$ set ${\mathcal F}^{T_0}_t=
{\mathcal F}_{T_0+t}$ and set  $\mathbb{F}^{T_0}=\big({\mathcal F}^{T_0}_t\big)_{t\geq 0}$.
Let $ {\mathcal M}^p_{\textrm{loc}}([0,\infty),\mathbb{F}^{T_0},{\mathrm R}({\mathrm K}, \rH_0))$
denote the set of $\mathbb{F}^{T_0}$-predictable, ${\mathrm R}({\mathrm K}, \rH_0)$-valued processes $X=(X_t)_{t\geq 0}$ such that
there exists a sequence $(T_n)$ of finite accessible $\mathbb{F}^{T_0}$-stopping times such that $T_n\to \infty$ a.s. and
 $\E\int_0^{T_n} |X(t)|^p_{{\mathrm R}({\mathrm K}, \rH_0)} dt <\infty$ for every integer $n$.
 To ease notation, set ${\mathcal M}^p_{\textrm loc}([0,\infty),{\mathrm R}(K,H))=
 {\mathcal M}^p_{\textrm loc}([0,\infty),{\mathbb F},{\mathrm R}(K,H))$.
\end{definition}

The following theorem, which is the main result in this section,   extends \eqref{Strichartz-inhom}
 from the deterministic to the stochastic setting and will  be called the stochastic Strichartz   estimate. It is first stated on the time interval $[0,T]$ and  
generalizes the corresponding result of  De Bouard and Debussche from \cite{deBouard+Deb_1999,deBouard+Deb_2003}.

Informally, if $\zeta \in {\mathcal M}^p_{\textrm{loc}}([0,\infty), {\mathrm R}({\mathrm K}, H))$,
 then for $t\geq 0$  the It\^o integral
 $\int_0^t \zeta(r) dW(r)$  can be written as
\[
\int_0^t
\zeta(r)  dW(r)= \sum_{k\geq 1}
\int_0^t
\zeta(r)[e_k]\, dW_k(r),
\]
where $\{e_k\}_{k=1}^\infty$ is an ONB of $\mathrm{K}$ and $(W_k)_{k=1}^\infty$ is
a sequence of i.i.d. real-valued Wiener
processes defined for $  k\in\mathbb{N}^\ast$  and   $t\geq 0$  by
 $W_k(t):=W(t)[e_k]$

\begin{theorem}
\label{thm-Strichartz-stoch}
Assume that Assumption \ref{ass-spaces} is satisfied and $\rE_0$ is a martingale type $2$ Banach space.
For each $T>0$ set $ \hat{C}_p(T): = \tilde{C}^p_p(T)C^p_p(\rE_0)C^p_p(\rH_0)\hat{B}_p(\rE_0)$,
 where  $C_p(\rE_0)$, $C_p(\rH_0)$ and resp.  $ \hat B_p(\rE_0)$ are defined in
\eqref{ineq-K+K} and resp. \eqref{ineq-Burkholder2}. Then   for  every predictable process
$\xi \in {\mathcal M}^p_{\textrm{loc}}([0,\infty),{\mathrm R}({\mathrm K}, \rH_0))$ and every accessible stopping time $\tau$
 satisfying $\tau\leq T$
 and $\mathbb{E}\big(\int_0^\tau \|\xi(t)\|_{R(K,\rH_0)}^2 \,dt\big)^{\frac{p}2} <\infty$,  one has
\begin{equation}
\label{Strichartz-stoch}
\mathbb{E} \int_0^T \vert [J_{[0,\tau)} \xi](t)\vert_{\rE_0}^p\, dt \leq \hat{C}_p(T)
 \mathbb{E} \Big( \int_0^\tau \Vert \xi(t)\Vert^2_{{\mathrm R}({\mathrm K}, \rH_0)}\, dt \Big)^{p/2},
\end{equation}
 where   one puts
\[
[J_{[0,\tau)} \xi](t)= \int_0^t 1_{[0,\tau)}(r) U_{t-r} \xi(r)\, dW(r),\;\; t \geq 0.
\]
\end{theorem}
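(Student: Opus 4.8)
The plan is to recognize the stochastic convolution $t\mapsto [J_{[0,\tau)}\xi](t)$ as the pointwise realization of a \emph{single} $L^p(0,T;\rE_0)$-valued It\^o integral, and then to estimate that integral by the Banach-space Burkholder inequality of Corollary \ref{remark-Burkholder2}, feeding in the pointwise radonifying bound supplied by Lemma \ref{lem-Strichartz-stoch}. The constant $\hat{C}_p(T)$ is precisely the product of the constant from the Burkholder inequality and the $p$-th power of the constant from Lemma \ref{lem-Strichartz-stoch}.

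First I would replace $\xi$ by $\xi^\tau:=1_{[0,\tau)}\xi$. Since $\tau$ is accessible and $\mathbb{E}\big(\int_0^\tau \|\xi(t)\|^2_{R(\rK,\rH_0)}\,dt\big)^{p/2}<\infty$ by hypothesis, the truncated process $\xi^\tau$ belongs to $\mathcal{M}^p([0,T];R(\rK,\rH_0))$ (and not merely to the local class), so every integral below is genuinely defined and has finite $p$-th moment. Associated with $\xi^\tau$ I form the $R(\rK,L^p(0,T;\rE_0))$-valued process $(\Xi_r)_{r\in[0,T]}$ of (\ref{eqn-Xi}), that is $\Xi_r=\{t\mapsto 1_{[r,T]}(t)\,U_{t-r}\,\xi^\tau(r)\}$; its progressive measurability follows from that of $\xi$ together with the strong continuity of the group $\mathbf{U}$, and Lemma \ref{lem-Strichartz-stoch} guarantees that $\Xi_r$ is radonifying for a.e.\ $r$.

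The central step is a stochastic Fubini identity: for a.e.\ $t\in[0,T]$ and almost surely,
\[
\Big(\int_0^T \Xi_r\, dW(r)\Big)(t)=\int_0^t 1_{[0,\tau)}(r)\,U_{t-r}\,\xi(r)\,dW(r)=[J_{[0,\tau)}\xi](t).
\]
Expanding both sides along an ONB $(e_k)$ of $\rK$ and the associated real Wiener processes $W_k$, and using $1_{[r,T]}(t)=1_{[0,t]}(r)$ for $r,t\in[0,T]$, this reduces to interchanging the time integration that defines the $L^p$-value with the It\^o integration in $r$. Once the identity is in hand, I apply Corollary \ref{remark-Burkholder2} to the $L^p(0,T;\rE_0)$-valued integral $\int_0^T \Xi_r\,dW(r)$ and then insert the pointwise bound $\|\Xi_r\|_{R(\rK,L^p(0,T;\rE_0))}\leq C\tilde{C}_p(T)\,C_p(\rE_0)\,C_p(\rH_0)\,\|\xi^\tau(r)\|_{R(\rK,\rH_0)}$ from Lemma \ref{lem-Strichartz-stoch}. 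Raising to the power $p$, integrating in $r$, and noting $\|\xi^\tau(r)\|_{R(\rK,\rH_0)}=1_{[0,\tau)}(r)\,\|\xi(r)\|_{R(\rK,\rH_0)}$ collapses the time integral to one over $[0,\tau)$ and yields exactly (\ref{Strichartz-stoch}) with the stated constant $\hat{C}_p(T)$.

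I expect the main obstacle to be the rigorous justification of the stochastic Fubini identity, because evaluation at a fixed time $t$ is \emph{not} a bounded functional on $L^p(0,T;\rE_0)$, so the commutation of the It\^o integral with the ``evaluate at $t$'' map cannot be read off from the general principle that stochastic integration commutes with bounded operators. I would handle this by first verifying the identity for elementary integrands that are piecewise constant in $r$, where it is immediate, and then upgrading to arbitrary $\xi^\tau$ via the Burkholder bound in $L^p(0,T;\rE_0)$ together with a density argument in $\mathcal{M}^p([0,T];R(\rK,\rH_0))$; the accessibility of $\tau$ and the hypothesized moment bound are exactly what make this approximation converge in $L^p(\Omega;L^p(0,T;\rE_0))$, simultaneously giving the measurability of the version $t\mapsto[J_{[0,\tau)}\xi](t)$ needed for the final estimate.
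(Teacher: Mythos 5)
Your proposal follows essentially the same route as the paper's proof: both identify $[J_{[0,\tau)}\xi](t)$ with the evaluation at $t$ of the single $L^p(0,T;\rE_0)$-valued It\^o integral $\int_0^T 1_{[0,\tau)}(r)\Xi_r\,dW(r)$, then apply the Burkholder inequality of Corollary \ref{remark-Burkholder2} followed by the pointwise radonifying bound of Lemma \ref{lem-Strichartz-stoch}. The only difference is that you flag and sketch a justification (elementary integrands plus density) for the identification step, which the paper simply asserts as ``crucial equalities''; this is a reasonable refinement, not a change of method.
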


\begin{proof}
We use the following crucial equalities for $t\in [0,T]$:
$$
\int_0^t 1_{[0,\tau)}(r) U_{t-r} \xi(r)\, dW(r)=\int_0^T 1_{[0,\tau)}(r) \Xi_r(t) \, dW(r)
= \Big(\int_0^T 1_{[0,\tau)}(r) \Xi_r \, dW(r)\Big)(t),
$$
where $\Xi_r$ is defined by \eqref{eqn-Xi}.
Hence, with   $u=J_{[0,\tau)} \xi$, we have
\[
\int_0^T \vert u(t)\vert_{\rE_0}^p\, dt
 =\int_0^T \Big\vert\Big(  \int_0^T 1_{[0,\tau)}(r)  \Xi_r \, dW(r) \Big) (t)\Big\vert_{\rE_0}^p \,dt
=  \Big\vert \int_0^T  1_{[0,\tau)}(r)  \Xi_r \, dW(r)\Big\vert_{L^p(0,T;\rE_0)}^p.
\]
Next by Corollary \ref{remark-Burkholder2} and Lemma \ref{lem-Strichartz-stoch}, we deduce
\begin{eqnarray*}
 \mathbb{E} \int_0^T \vert J_{[0,\tau)} \xi(t)\vert_{E_0}^p\, dt    &\leq&
\hat B_p(E_0) \mathbb{E}\Big|\int_0^T 1_{[0,\tau)}(r) \Vert  \Xi_r\Vert_{R(\rK;L^p(0,T;E_0))}^2\,dr\Big|^{\frac{p}2}\\
&\leq&  \hat{B}_p(E_0) \tilde{C}^p_p(T)C^p_p(E_0)C^p_p(\rH_0)\mathbb{E}\Big[\int_0^\tau \Vert  \xi(r)\Vert_{R(\rK;\rH_0)}^2\,dr\Big]^{\frac{p}2}.
\end{eqnarray*}
This concludes the proof. 
\end{proof}
Next we  formulate a result which is related with  \eqref{Strichartz-stoch}
 as the inequality \eqref{ineq-maximal-H} is to  \eqref{Strichartz-inhom}.

\begin{proposition}\label{prop-maximal-H-stoch}
Asumme that the assumptions of Theorem \ref{thm-Strichartz-stoch} are satisfied; then there exists a constant $C_p>0$ such that for $\xi$ and $\tau$
as in Theorem \ref{thm-Strichartz-stoch} we have:
\begin{equation}
\label{ineq-maximal-H-stoch}
 \E\Big(\sup_{t\in [0,T]} \Big| [J_{[0,\tau)} \xi](t)  \Big|_{\rH_0}^p \Big)  \leq
 C_p \E\Big[ \int_{0}^{\tau}  \Vert \xi(t) \Vert_{R(K,\rH_0)}^2 d t \Big]^{\frac{p}{2}} .
\end{equation}
\end{proposition}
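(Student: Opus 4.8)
The plan is to mimic the deterministic argument behind \eqref{ineq-maximal-H}: there the group law $U_{t-r}=U_tU_{-r}$ let one factor $U_t$ out of the inhomogeneous term and reduce to a single integral controlled by Jensen's inequality. In the stochastic setting the role of Jensen's inequality is taken by the maximal Burkholder inequality \eqref{ineq-Burkholder}, which applies because $\rH_0$, being a Hilbert space, is of martingale type $2$. Note that this is genuinely different from the proof of Theorem \ref{thm-Strichartz-stoch}: there we needed the $L^p(0,T;\rE_0)$-valued Burkholder inequality of Corollary \ref{remark-Burkholder2} to control an $L^p$-in-time norm, whereas a maximal (supremum-in-time) bound forces us to keep the martingale structure in the time variable, which the factorization below makes transparent.

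First I would use the group property of Assumption \ref{ass-spaces}(ii) together with the fact that $U_t$ is a deterministic bounded operator, independent of the integration variable $r$, to pull it out of the It\^o integral:
\[
[J_{[0,\tau)}\xi](t)=\int_0^t 1_{[0,\tau)}(r)\,U_tU_{-r}\xi(r)\,dW(r)=U_t\,M(t),\qquad M(t):=\int_0^t 1_{[0,\tau)}(r)\,U_{-r}\xi(r)\,dW(r).
\]
The process $r\mapsto U_{-r}\xi(r)$ is again progressively measurable and $R(\rK,\rH_0)$-valued, and the ideal property of $\gamma$-radonifying operators combined with the uniform bound $C:=\sup_{r\in[-T,T]}|U_{-r}|_{\mathcal L(\rH_0)}<\infty$ (the same constant $C$ as in Lemma \ref{lem-inhomogenous-Strichartz}) gives $\|U_{-r}\xi(r)\|_{R(\rK,\rH_0)}\leq C\,\|\xi(r)\|_{R(\rK,\rH_0)}$. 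Hence the integrability hypothesis on $\xi$ ensures that $M$ is a well-defined $\rH_0$-valued martingale.

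Since the indicator $1_{[0,\tau)}$ makes $M(t)=M(t\wedge\tau)$ and $\tau\leq T$, we have $\sup_{t\in[0,T]}|M(t)|_{\rH_0}=\sup_{0\leq t\leq\tau}|M(t)|_{\rH_0}$, so that
\[
\E\sup_{t\in[0,T]}\big|[J_{[0,\tau)}\xi](t)\big|_{\rH_0}^p\leq C^p\,\E\sup_{0\leq t\leq\tau}|M(t)|_{\rH_0}^p.
\]
Applying \eqref{ineq-Burkholder} with $E=\rH_0$ and the stopping time $\tau$, followed by the radonifying bound above, yields
\[
\E\sup_{0\leq t\leq\tau}|M(t)|_{\rH_0}^p\leq B_p(\rH_0)\,\E\Big(\int_0^\tau\|U_{-r}\xi(r)\|_{R(\rK,\rH_0)}^2\,dr\Big)^{p/2}\leq C^p B_p(\rH_0)\,\E\Big(\int_0^\tau\|\xi(r)\|_{R(\rK,\rH_0)}^2\,dr\Big)^{p/2},
\]
and combining the last two displays gives \eqref{ineq-maximal-H-stoch} with $C_p:=C^{2p}B_p(\rH_0)$.

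The only genuinely delicate point is the justification of commuting the fixed bounded operator $U_t$ with the stochastic integral in the first display; this is standard but should be verified carefully, first for simple (elementary) integrands and then by a density/limiting argument, the key observation being that $U_t$ is frozen at the deterministic upper endpoint and therefore does not affect the martingale structure in $r$. All remaining ingredients — the martingale type $2$ property of the Hilbert space $\rH_0$, the uniform operator bound on $U_t$, and the ideal property of $R(\rK,\rH_0)$ — are routine.
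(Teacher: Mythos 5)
Your argument is correct and is essentially identical to the paper's own proof: both factor $U_{t-s}=U_tU_{-s}$, pull the bounded operator $U_t$ out using $\sup_{|t|\leq T}|U_t|_{\mathcal L(\rH_0)}<\infty$, apply the Burkholder inequality \eqref{ineq-Burkholder} in the martingale type $2$ space $\rH_0$ up to the stopping time $\tau$, and absorb $\|U_{-s}\xi(s)\|_{R(\rK,\rH_0)}\leq C\|\xi(s)\|_{R(\rK,\rH_0)}$ into the constant. The extra care you take over commuting $U_t$ with the It\^o integral and the ideal property of $R(\rK,\rH_0)$ only makes explicit what the paper leaves implicit.
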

\begin{proof}
The proof of inequality 
\eqref{ineq-maximal-H-stoch}    is classical;  we include it for the sake of completeness.
Since   $(U_t, t\in {\mathbb R})$ is
a 
${\mathcal C}_0$ group on $\rH_0$ with bounded norms on $[-T,T]$,  the Burkholder inequality yields
\begin{align*}
& \E\Big(\sup_{t\in [0,T]} \Big|\int_{0}^t 1_{[0,\tau)}(s) U_{t-s} \xi(s) dW (s) \Big|_{\rH_0}^p \Big) \leq C
\E\Big(\sup_{t\in [0,T]} \Big|\int_{0}^t  1_{[0,\tau)}(s) U_{-s} \xi(s) dW (s) \Big|_{\rH_0}^p \Big)\\
&\leq   C B_p(H_0)\E\Big[ \int_{0}^{\tau}  \Vert U_{- s} \xi(s) \Vert_{R(K,\rH_0)}^2 ds \Big]^{\frac{p}{2}}
= C_p \E\Big[ \int_{0}^{\tau}  \Vert \xi(s) \Vert_{R(K,\rH_0)}^2 \,ds \Big]^{\frac{p}{2}} .
\end{align*}
\end{proof}

 We next extend the above results replacing the starting time 0 by a random one. 
Let  $T_0$ be a finite accessible $\mathbb{F}$-stopping time, $\xi \in
{\mathcal M}^p_{\textrm{loc}}([0,\infty),\mathbb{F}^{T_0},{\mathrm R}({\mathrm K}, \rH_0))$,
$T>0$
and $\tau$ be a finite accessible $\mathbb{F}^{T_0}$ stopping time bounded from above by $T$  and  such that
${\mathbb E}\big( \int_0^\tau \|\xi(t)\|_{R(K,H_0)}^2 dt\big)^{p/2}<\infty$.  Then since the process $W^{T_0}$ defined by
$W^{T_0}(t):= W(T_0+t) - W(T_0)$, $t\geq 0$ is a $\mathbb{F}^{T_0}$ Wiener process, the operator
 $J^{T_0}_{[0,\tau)}\xi$ defined by
\begin{equation} \label{JT0}
  [J^{T_0}_{[0,\tau)} \xi](t)= \int_0^t 1_{[0,\tau)}(r) U_{t-r} \xi(r)\, dW^{T_0}(r),\;\; t \geq 0,
  \end{equation}
satisfies  the inequality \eqref{Strichartz-stoch}.
Informally, if one lets
$ u(T_0+t) = \xi(t)$ and  $[J_{[T_0,T_0+\tau)} u](t)=
 [J^{T_0}_{[0,\tau)} \xi](t)$ so that $[J_{[T_0,T_0+\tau)} u](t)=\int_{T_0}^{T_0+t} 1_{[T_0,T_0+\tau)}(s) U_{T_0+t-s} u(s)\, dW(s)$,
 Theorem \ref{thm-Strichartz-stoch} yields for $ \hat{C}_p(T): = \tilde{C}^p_p(T)C^p_p(E_0)C^p_p(\rH_0)\hat{B}_p(E_0)$
\begin{equation}
\label{Strichartz-stoch_2}
\mathbb{E} \int_{0}^T \vert J_{[T_0,T_0+\tau)} u(t)\vert_{E_0}^p\, dt \leq  \hat{C}_p(T)
  \mathbb{E} \Big( \int_{T_0}^{T_0+\tau} \Vert u(t)\Vert^2_{{\mathrm R}({\mathrm K}, \rH_0)}\, dt\Big)^{\frac{p}2} .
\end{equation}
Thus we have the following version of Theorem \ref{thm-Strichartz-stoch} using Lemma \ref{lem-spaces+maps}.
\begin{corollary}\label{cor-Strichartz-stoch-flat}
Assume that the assumptions of Lemma \ref{lem-spaces+maps} are satisfied.
Then
 for each $T>0$ there exists a constant $\hat{C}_p(T)$ such that:
 \begin{trivlist}
\item[(i)]  $\lim_{T\to 0} \hat{C}_{p}(T)=0$,
\item[(ii)]  For  every finite accessible $\mathbb{F}$-stopping time  $T_0$,
every $\mathbb{F}^{T_0}$ stopping time $\tau$ bounded by $T$ 
and every process
$\xi \in {\mathcal M}^p_{\textrm{loc}}([0,\infty),\mathbb{F}^{T_0},{\mathrm R}({\mathrm K}, \rH))$
 such that  $\mathbb{E}\big( \int_0^\tau \Vert \xi(t)
\Vert^2_{{\mathrm R}({\mathrm K}, H)} dt\big)^{p/2} <\infty$ 
one has
\begin{equation}  \label{Strichartz-stoch-M}
\mathbb{E} \int_{0}^{T} \vert J^{T_0}_{[0,\tau)} \xi(t)\vert_E^p\, dt \leq
 \hat{C}_p(T)    \mathbb{E} \Big( \int_{0}^\tau \Vert \xi(t)\Vert^2_{{\mathrm R}({\mathrm K}, H 
)}\, dt
\Big)^{\frac{p}2}.
\end{equation}
where  $J^{T_0}_{[0,\tau)}$ be  defined by \eqref{JT0}.
\end{trivlist}
\end{corollary}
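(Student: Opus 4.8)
The plan is to assemble three ingredients that are already in place: Lemma \ref{lem-spaces+maps}, which transfers Assumption \ref{ass-spaces} to the fractional-domain spaces; Theorem \ref{thm-Strichartz-stoch}, the basic stochastic Strichartz estimate with deterministic initial time; and the time-shift device for the random starting time $T_0$ sketched in the paragraph preceding the statement. First I would invoke Lemma \ref{lem-spaces+maps}: by hypothesis the triple $(\rH_0,\rE_0,A)$ satisfies Assumption \ref{ass-spaces} with exponent $p$, so the lemma guarantees that the spaces $\rH=D((-A)^{\hat{s}/2})$ and $\rE$, together with the restriction of $A$, again satisfy Assumption \ref{ass-spaces} with the \emph{same} $p$. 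In particular they inherit a non-decreasing homogeneous Strichartz constant, still denoted $\tilde{C}_p(T)$, satisfying $\tilde{C}_p(0^+)=0$ by clause (iv). I then set $\hat{C}_p(T):=\tilde{C}_p^p(T)\,C_p^p(\rE)\,C_p^p(\rH)\,\hat{B}_p(\rE)$, exactly as in Theorem \ref{thm-Strichartz-stoch} but for the new pair of spaces.

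Property (i) is then immediate: among the four factors defining $\hat{C}_p(T)$, only $\tilde{C}_p^p(T)$ depends on $T$, and it tends to $0$ as $T\to 0$, whence $\lim_{T\to 0}\hat{C}_p(T)=0$. For property (ii) I would treat the random initial time by the strong Markov property. Since $T_0$ is a finite accessible $\mathbb{F}$-stopping time, the increment process $W^{T_0}(t):=W(T_0+t)-W(T_0)$ is a $\mathrm{K}$-cylindrical Wiener process for the shifted filtration $\mathbb{F}^{T_0}$. Because $\xi$ is $\mathbb{F}^{T_0}$-predictable and $\tau$ is an $\mathbb{F}^{T_0}$-stopping time bounded by $T$, the operator $J^{T_0}_{[0,\tau)}$ of \eqref{JT0}, built from $W^{T_0}$, is precisely the operator $J_{[0,\tau)}$ of Theorem \ref{thm-Strichartz-stoch} with $W$ replaced by $W^{T_0}$ and the spaces $\rH_0,\rE_0$ replaced by $\rH,\rE$. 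Applying that theorem verbatim in this setting yields \eqref{Strichartz-stoch-M} with the constant $\hat{C}_p(T)$ above.

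The only genuine point to verify is the claim that $W^{T_0}$ is an $\mathbb{F}^{T_0}$-cylindrical Wiener process and that the stochastic integral against it is well-defined for the $\mathbb{F}^{T_0}$-predictable integrand $\xi$ under the integrability hypothesis $\mathbb{E}\big(\int_0^\tau\|\xi(t)\|^2_{{\mathrm R}({\mathrm K},\rH)}\,dt\big)^{p/2}<\infty$; this rests on the strong Markov property and was already recorded before the statement. I expect this to be the main (indeed the only) obstacle, and a mild one, since it is a standard stopping-time shift; once it is granted the corollary reduces to a direct transcription of Theorem \ref{thm-Strichartz-stoch} through Lemma \ref{lem-spaces+maps}, with (i) supplied by the inherited property $\tilde{C}_p(0^+)=0$, so no further work is needed.
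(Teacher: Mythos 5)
Your proposal is correct and follows essentially the same route as the paper, which likewise obtains the corollary by combining Lemma \ref{lem-spaces+maps} (to transfer Assumption \ref{ass-spaces} to the pair $\rH,\rE$ with the same $p$) with Theorem \ref{thm-Strichartz-stoch} applied to the shifted Wiener process $W^{T_0}(t)=W(T_0+t)-W(T_0)$, and reads off property (i) from $\tilde{C}_p(0^+)=0$. No gaps.
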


\subsection{Examples of the deterministic and the stochastic Strichartz estimates}\label{subsec-concreteSSE}

\label{rem-condition_C}
Let  $M$ be a compact Riemannian manifold $M$ of dimension $d\geq 2$.   According to Burq et all \cite{Burq+G+T_2004},   Assumption \ref{ass-spaces}
is satisfied by  the Hilbert space ${\mathcal H}_0=L^2(M)$, 
the ${\mathcal C}_0$-group of unitary operators $(U_t, t\in {\mathbb R})$ with  infinitesimal generator
 $iA$, where $A:=\Delta $ is the Laplace-Beltrami operator on $M$,
and the spaces $\rH_0=H^{\frac1p,2}(M)$ and $\rE_0=L^q(M)$,  provided the  parameters, $p\in [2,\infty)$ and $q\in (2,\infty)$
 satisfy the so called scaling admissible  condition
\begin{equation}
\label{eqn-scaling}
\frac2p+\frac{d}q=\frac{d}2.
\end{equation}
  Indeed, on $H_0=H^{\frac{1}{p},2}(M)$ we may consider either the norm $\|.\|_{H^{1/p,2}(M)}$ or, since $H_0=D((-\Delta)^{1/(2p)})$ the equivalent
norm $|\Delta^{1/(2p)}. |_{L^2(M)}$ for which $(U_t, t\in {\mathbb R})$ is a group of isometries.

It is well known that when $M$ is replaced by $\mathbb{R}^d$, then Assumption \ref{ass-spaces}
 is satisfied by  the ${\mathcal C}_0$  unitary group   generated by the operator $i \Delta$, and the spaces
${\mathcal H}_0=\rH_0=L^2(\mathbb{R}^d)$ and
 $\rE_0=L^q(\mathbb{R}^d)$ provided $p\in [2,\infty)$ and $q\in (2,\infty)$
 satisfy the  scaling admissible condition \eqref{eqn-scaling}.
In this setting, the identity  \eqref{eqn-scaling} is optimal with these spaces for \eqref{Strichartz-hom-0}
to hold true.

It is shown in \cite[Theorem 4]{Burq+G+T_2004} that when $M=S^2$ is the two-dimensional sphere,
 then    Assumption \ref{ass-spaces}
 is satisfied by the ${\mathcal C}_0$-group of unitary operators generated by the operator
 $i \Delta$ with the following choice parameters: $p=4$, $\rE_0=L^4(M)$
 and $\rH_0=H^{s,2}(M)$ for $s>s_0(2)=\frac18$,  which proves that \eqref{Strichartz-inhom} is not optimal for
$\rH_0=H^{\frac14,2}(M)$ and $\rE_0=L^4(M)$. Note also that \eqref{Strichartz-inhom} does not hold when $s<s_0(2)$.

The following result proves that on compact manifolds, the homogenous
  Strichartz estimates  \eqref{Strichartz-hom-0} and Lemma \ref{lem-inhomogenous-Strichartz}
 hold for the following spaces:  $\rH=H^{\sigma + \frac{1}{p},2}(M)$  and $\rE=W^{\sigma,q}(M)$, for $\sigma\geq 0$.
\begin{proposition}\label{prop-Strichartz-det}
 Let $M$ be compact Riemanian manifold, $\big(U(t)=e^{it\Delta}, t\in {\mathbb R}\big)$,
$(p,q)$ satisfy the scaling admissible condition \eqref{eqn-scaling}. Then
for each $\sigma \geq 0$ and  $T>0$ there exists a constant  $\bar{C}_q(T)>0$  such that
$\lim_{T\todown 0}\bar{C}_q(T)=0$ and
\begin{trivlist}
\item[(i)] For every $v_0\in H^{\sigma+\frac{1}{p},2}(M)$,
\begin{equation} 
\Big(\int_0^T \|U(t) v_0\|_{W^{\sigma ,q}}^p dt \Big)^{1/p}
\leq \bar{C}_q(T) \|v_0\|_{H^{\sigma+\frac{1}{p},2}}.
\end{equation}
\item[(ii)] For every  $g\in L^1(0,T;H^{\sigma + \frac{1}{p},2}(M))$,
\begin{equation}
\Big( \int_0^T
  \|(U\ast g)(t)\big\|_{W^{\sigma,q}}^p  dt \Big)^{1/p}
\leq \bar{C}_q(T) \int_0^T \|g(t)\|_{H^{\sigma + \frac{1}{p},2}} \,dt.
\end{equation}
\end{trivlist}
\end{proposition}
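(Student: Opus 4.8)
Proposition \ref{prop-Strichartz-det} は、$\sigma = 0$ の場合（Assumption \ref{ass-spaces} が $\rH_0 = H^{1/p,2}(M)$, $\rE_0 = L^q(M)$ で成立する）を出発点として、微分の正則度 $\sigma$ だけ両辺をシフトする主張である。

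=== PROOF PROPOSAL (LaTeX) ===

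\begin{proof}
The plan is to deduce both inequalities from the $\sigma=0$ case, which is exactly the homogeneous Strichartz estimate \eqref{Strichartz-hom-0} together with Lemma \ref{lem-inhomogenous-Strichartz}, applied with the spaces $\rH_0=H^{1/p,2}(M)$ and $\rE_0=L^q(M)$ as recalled in Subsection \ref{subsec-concreteSSE}. The key observation is that the Schr\"odinger group $U(t)=e^{it\Delta}$ commutes with every power of $(-\Delta)$, and hence with the fractional operator $(-\Delta)^{\sigma/2}$ which (up to equivalence of norms) is an isometry from $W^{\sigma,q}(M)=H^{\sigma,q}(M)$ onto $L^q(M)$ and from $H^{\sigma+\frac1p,2}(M)$ onto $H^{\frac1p,2}(M)$. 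Thus I would first record that $(-\Delta)^{\sigma/2}U(t)v_0=U(t)\,(-\Delta)^{\sigma/2}v_0$ for $v_0$ in a suitable dense domain, and that this identity persists by density.

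For part (i), given $v_0\in H^{\sigma+\frac1p,2}(M)$, set $w_0:=(-\Delta)^{\sigma/2}v_0$, which lies in $H^{\frac1p,2}(M)$ with $\|w_0\|_{H^{1/p,2}}\simeq\|v_0\|_{H^{\sigma+\frac1p,2}}$ by the definition of the Sobolev norm as $\|(-\Delta)^{\sigma/2}\cdot\|$. Since $\|U(t)v_0\|_{W^{\sigma,q}}=\|H^{\sigma,q}\|$-norm is, again up to a fixed constant, $|(-\Delta)^{\sigma/2}U(t)v_0|_{L^q}=|U(t)w_0|_{L^q}$, the desired estimate is obtained by applying \eqref{Strichartz-hom-0} to $w_0$ with the constant $\tilde{C}_p(T)$; one then absorbs the norm-equivalence constants into $\bar{C}_q(T)$, whose property $\lim_{T\todown 0}\bar{C}_q(T)=0$ is inherited directly from $\tilde{C}_p(0^+)=0$. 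Part (ii) follows by the identical substitution: commuting $(-\Delta)^{\sigma/2}$ through the convolution $U\ast g$ gives $(-\Delta)^{\sigma/2}(U\ast g)=U\ast\big((-\Delta)^{\sigma/2}g\big)$, and then \eqref{Strichartz-inhom} applied to $(-\Delta)^{\sigma/2}g\in L^1(0,T;H^{\frac1p,2}(M))$ yields the claim, the $L^1$ norm of $(-\Delta)^{\sigma/2}g$ in $H^{\frac1p,2}$ being comparable to $\int_0^T\|g(t)\|_{H^{\sigma+\frac1p,2}}\,dt$.

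The one point requiring genuine care — and the step I expect to be the main obstacle — is the rigorous justification that the Sobolev space $W^{\sigma,q}(M)$ (a Besov--Slobodetski / interpolation space for $q\neq 2$) coincides with $H^{\sigma,q}(M)=D((-\Delta_q)^{\sigma/2})$ with equivalent norms, so that $(-\Delta)^{\sigma/2}$ really does act as a bounded isomorphism $W^{\sigma,q}(M)\to L^q(M)$. As noted in Section \ref{sec-Nemytski}, one only has the embedding $H^{\sigma,q}\subset W^{\sigma,q}$ (or the reverse) depending on whether $q>2$ or $q<2$, not equality in general. The clean fix is to phrase the whole argument on the $L^q$ side through $H^{\sigma,q}$ and invoke the Mikhlin--H\"ormander type multiplier theorem (or the boundedness of the imaginary powers of $-\Delta_q$ on compact manifolds) to control $(-\Delta)^{\sigma/2}$ between the relevant scales; alternatively, since the target inequality is stated with the $W^{\sigma,q}$ norm on the left, it suffices to bound $\|\cdot\|_{W^{\sigma,q}}$ by $\|\cdot\|_{H^{\sigma,q}}$ in the range of parameters that actually occur ($d=2$, $q>2$), which is precisely the inclusion $H^{\sigma,q}\subset W^{\sigma,q}$ recorded earlier. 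I would therefore carry out the estimate entirely in $H^{\sigma,q}$ and only at the last line pass to $W^{\sigma,q}$ via that continuous inclusion, thereby sidestepping any need for a two-sided norm equivalence.
\end{proof}
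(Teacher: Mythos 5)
Your argument is correct and is essentially the paper's own proof: the paper invokes Lemma \ref{lem-spaces+maps} (whose content is precisely the commutation of $(-\Delta)^{\sigma/2}$ with $U(t)$ to shift the $\sigma=0$ case, i.e.\ Lemma \ref{lem-inhomogenous-Strichartz} with $\rH_0=H^{1/p,2}(M)$, $\rE_0=L^q(M)$) and then concludes via the inclusion $H^{\sigma,q}(M)=D(A_q^{\sigma/2})\subset W^{\sigma,q}(M)$ valid for $q>2$. Your closing paragraph correctly identifies and resolves the only delicate point (the one-sided inclusion rather than a norm equivalence between $H^{\sigma,q}$ and $W^{\sigma,q}$) exactly as the paper does.
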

\begin{proof}
This result follows  Lemma \ref{lem-inhomogenous-Strichartz} and Lemma \ref{lem-spaces+maps}.   Set ${\mathcal H}_0=L^2(M)$.
 First let us notice that for $\sigma=0$ and
 $\rH_0=H^{\frac1p,2}(M)$ and $\rE_0=L^q(M)$ the above inequalities are satisfied by
 Lemma \ref{lem-inhomogenous-Strichartz} since in view of \cite{Burq+G+T_2004} Assumption \ref{ass-spaces} is satisfied for this choice of spaces.
Let us denote by $A_r$ the version of the operator $A$ on the space $L^r(M)$, $r\in [1,\infty)$.  Then
the space $\rH=H^{\frac1p+\sigma,2}(M)$ is equal to $D(A_2^{\frac1{2p}+\frac\sigma2})$. Moreover,
 $H^{\sigma,q}(M)=D(A_q^{\frac{\sigma}{2}})$;  since
 $q\in (2,\infty)$ we have $H^{\sigma,q}(M)\subset W^{\sigma,q}(M)=:\rE$.
 The proof  is complete.
\end{proof}

Similarly, Corollary \ref{cor-group}  has  the following particular formulation.
\begin{corollary}
 \label{prop-group} In the framework of Proposition \ref{prop-Strichartz-det}, if $\rH=H^{\sigma + \frac{1}{p},2}(M)$ and either $\rE=H^{\sigma,q}(M)$
or $\rE=W^{\sigma,q}(M)$, then  for every $u_0 \in L^p(\Omega, \rH)$ and every $T>0$ the trajectories of the process
$(U_tu_0 , t\in [0,T])$,  belong a.s. to $C([0,T];\rE) \cap L^p(0,T;\rH )$ and moreover
\begin{eqnarray}
\label{ineq-initial}
\mathbb{E} \Big( \int_0^T \vert U_t u_0\vert_{\rE}^p\, dt
 +\sup_{t \in [0,T]} \vert U_t u_0\vert^p_{\rH}\Big)  &\leq & \big(1+\bar{C}^q_p(T)\big) \mathbb{\rE} \vert u_0\vert^p_{\rH}
\end{eqnarray}
\end{corollary}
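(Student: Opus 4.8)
The plan is to obtain this as the concrete instance of the abstract Corollary~\ref{cor-group}: the two summands on the left of \eqref{ineq-initial} are precisely the $L^p(0,T;\rE)$-norm and the $\sup$-in-time $\rH$-norm of the trajectory, so I would control them one at a time. First I would check that the concrete spaces meet the hypotheses of Lemma~\ref{lem-spaces+maps} with $\hat s=\sigma$. Writing $A_r$ for the realisation of $\Delta$ on $L^r(M)$, one has $\rH=H^{\sigma+\frac1p,2}(M)=D(A_2^{\frac1{2p}+\frac\sigma2})$ and $D(A_q^{\frac\sigma2})=H^{\sigma,q}(M)$; since $q>2$ gives $H^{\sigma,q}(M)\subset W^{\sigma,q}(M)$, the inclusion $\rE\supset D(A_q^{\frac\sigma2})$ demanded by Lemma~\ref{lem-spaces+maps} holds for both admissible targets $\rE$, with equality when $\rE=H^{\sigma,q}(M)$. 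This is exactly the verification made in Proposition~\ref{prop-Strichartz-det}, so its homogeneous estimate is at our disposal for either choice of $\rE$.

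For the integral term I would argue pathwise. The homogeneous estimate holds first with $\rE=H^{\sigma,q}(M)=D(A_q^{\frac\sigma2})$, the base case furnished by Lemma~\ref{lem-spaces+maps}, and then, since $H^{\sigma,q}(M)\hookrightarrow W^{\sigma,q}(M)$ for $q>2$, a fortiori with the weaker $W^{\sigma,q}(M)$-norm, which is the form recorded in Proposition~\ref{prop-Strichartz-det}(i). In either case, for $\omega$ with $u_0(\omega)\in\rH$ one has
\[
\int_0^T |U_t u_0(\omega)|_{\rE}^p\,dt \;\leq\; \bar C_q(T)^p\,|u_0(\omega)|_{\rH}^p .
\]
Since $(t,\omega)\mapsto U_t u_0(\omega)$ is jointly measurable (the strongly continuous group composed with the $\rH$-valued variable $u_0$), I would integrate over $\Omega$ and use $u_0\in L^p(\Omega,\rH)$ to obtain $\mathbb{E}\int_0^T|U_t u_0|_{\rE}^p\,dt\leq \bar C_q(T)^p\,\mathbb{E}|u_0|_{\rH}^p$; in particular the trajectory lies a.s.\ in $L^p(0,T;\rE)$.

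For the supremum term I would use that $(U_t)=(e^{it\Delta})$ is a $C_0$-group on $\rH=D(A_2^{\frac1{2p}+\frac\sigma2})$ which, in the equivalent norm $\big|A_2^{\frac1{2p}+\frac\sigma2}\cdot\big|_{L^2(M)}$, acts by isometries, because it commutes with every Borel function of $\Delta$ and is unitary on $L^2(M)$. Hence $t\mapsto U_t u_0(\omega)$ is continuous into $\rH$ with $|U_t u_0(\omega)|_{\rH}=|u_0(\omega)|_{\rH}$ in that norm, so a.s.\ the trajectory lies in $C([0,T];\rH)$ and, up to the norm-equivalence constant $C$, $\mathbb{E}\sup_{t\in[0,T]}|U_t u_0|_{\rH}^p\leq C\,\mathbb{E}|u_0|_{\rH}^p$. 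Adding this to the bound of the previous paragraph yields \eqref{ineq-initial}.

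The point I would flag as the genuine subtlety is that the two regularities cannot be interchanged. Because $(p,q)$ satisfy the scaling identity \eqref{eqn-scaling}, the critical Sobolev embedding $H^{\frac1p,2}(M)\hookrightarrow L^{q^\ast}(M)$ holds only with $\frac{1}{q^\ast}=\frac12-\frac1{pd}$, i.e.\ $q^\ast<q$, so after lifting by $A_q^{\sigma/2}$ one finds $\rH\not\hookrightarrow\rE$; consequently $u_0(\omega)$ itself need not belong to $\rE$, and $U_\cdot u_0(\omega)$ meets $\rE$ only for almost every $t$, the Strichartz gain being an average-in-time effect, cf.\ Assumption~\ref{ass-spaces}(iv). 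Thus the continuity holds in the $\rH$-norm, in which the group is isometric, while the $\rE$-regularity is only the time-integrated $L^p(0,T;\rE)$ membership, matching exactly the two terms of \eqref{ineq-initial}. With both bounds in hand, the a.s.\ membership in $C([0,T];\rH)\cap L^p(0,T;\rE)$ together with \eqref{ineq-initial} follows, which completes the argument.
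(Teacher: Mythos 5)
Your argument is correct and follows essentially the route the paper intends: the paper derives this corollary as the concrete instance of Corollary~\ref{cor-group} (itself a consequence of Lemma~\ref{lem-spaces+maps} and the homogeneous estimate \eqref{Strichartz-hom-0}), and your pathwise use of Proposition~\ref{prop-Strichartz-det}(i) for the $L^p(0,T;\rE)$ term plus the isometry of $(U_t)$ on $\rH=D(A_2^{\frac{1}{2p}+\frac{\sigma}{2}})$ for the supremum term is exactly that proof written out, including the correct observation that $\rH\not\hookrightarrow\rE$ (cf.\ Remark~\ref{rem-incompatibility}). Note only that you (rightly) establish membership in $C([0,T];\rH)\cap L^p(0,T;\rE)$, which is what the displayed inequality \eqref{ineq-initial} actually controls; the roles of $\rE$ and $\rH$ appear transposed in the wording of the statement.
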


Finally note that Corollary \ref{cor-Strichartz-stoch-flat} holds for $\rH = H^{\sigma + \frac{1}{p},2}(M)$ and
$\rE= W^{\sigma,q}(M)$ for any $\sigma \geq 0$, and for $p\in [2,\infty)$, $q\in (2, \infty)$ satisfying the scaling admissible condition
 $\frac2p+\frac{d}q=\frac{d}2$.
 .

\section{Stochastic NSEs: abstract local existence result}
\label{sec-abstract-local}
The aim of this section is to  prove an abstract local existence result that
 will be used subsequently to prove the local existence  for certain nonlinear Schr\"odinger equations.
 This section is divided into five subsections.

\subsection{Asumptions and truncated equation}

We begin with a description of the main assumptions. The first one of them is just  Assumption \ref{ass-spaces}.

\begin{assumption}\label{ass-maps}
\begin{trivlist}
\item[(i)]
 Assume that the spaces 
 $\rH_0$ and $\rE_0$  and the operator $A$ satisfy
Assumption \ref{ass-spaces} with some number $p\in (2,\infty)$.
Let  $\hat{s}\geq 0$ and put $\rH=D((-A)^{\frac{\hat{s}}2}))\subset \rH_0$.    Assume also that $\rE\subset E_0$
 is a separable Banach space such that  $\rE\supset D((-\tilde{A})^{\frac{\hat{s}}2})$. \\
\item[(ii)]
Assume that   $F$ is a locally Lipschitz map from $\rH \cap \rE$ to $\rH$ in the following sense. There exists positive constants $C$  and
 $\beta \in [ 1,p)$  such that   for all $u,v\in \rH \cap \rE $
\begin{align}
\label{ineqn-growth-F1}
\vert F(u)\vert_{\rH}\leq &\; C\, \big[ \big(1+ | u|_\rE^{\beta}\big) + \big(1+ | u|_\rE^{\beta-1}\big) \vert u\vert_{\rH}\big],
\\
\label{ineq-lip-F}
\big\vert F(u) -F(v) \big\vert_{\rH}
\leq & \;C\, \big[ 1+|u|_\rE ^{(\beta-2)^+}   +  |v|_\rE^{(\beta-2)^+} \big]  \big[ 1+|u|_{\rH}  +  |v|_{\rH} \big]
 |u-v|_{\rE}
\nonumber \\
& \; + C \,\big[ 1+|u|_\rE^{\beta-1}   +  |v|_\rE^{\beta-1} \big] |u-v|_{\rH}.
\end{align}
\item[(iii)] Assume that   $G$ is a locally Lipschitz map from $\rH \cap \rE$ to $R(K,\rH)$ in the following sense.
there exist positive constants $C$ and   $a \in [1, p/2)$
 such that   for all $u,v\in  \rH\cap \rE $
\begin{align}
\label{growth-G}
|G(u)|_{R(K,\rH)} \leq &\; C\, \big[ \big( 1+ \vert u\vert^a_{\rE}\big)+ \big( 1+ \vert u\vert^{a-1}_{\rE}\big) |u|_{\rH} \big],
\\\label{lip-G}
|G(u)-G(v)|_{R(K,\rH)} \leq &\;
C \,\big( 1+|u|_{\rE}^{a-1}   +  |v|_{\rE}^{a-1} \big) |u-v|_{\rH}
\nonumber \\
& \; + C\, \big( 1+|u|_{\rE}^{(a -2)^{+}}  +  |v|_{\rE}^{(a-2)^{+}} \big)
\big( 1+|u|_{\rH}  +  |v|_{\rH}\big)  |u-v|_{\rE} .
\end{align}
\end{trivlist}
\end{assumption}
We use the convention $x^0=1$.
Lemma \ref{lem-spaces+maps} implies that the spaces $\rH$ and $\rE$ satisfy the Assumption \ref{ass-spaces}.
 Although the above growth and local Lipschitz continuity conditions  are a bit unusual one can easily see that,
as in the more typical situations, \eqref{ineq-lip-F} implies  \eqref{ineqn-growth-F1}  and \eqref{lip-G}
implies \eqref{growth-G} if $\beta, a\geq 2$.

In this section we will  consider  the following stochastic
 It\^o nonlinear Schr{\"o}dinger Equation of the following form:
\begin{equation}\label{NLS-abstract}
idu(t) + A u(t)\, dt=F(u)\,dt +G(u)\,dW(t), \quad u(0)=u_0,
\end{equation}
where  the initial data  $u_0$ belongs to the Hilbert 
space $\rH$.
For   $d\geq  c \geq 0$, let us denote
\begin{equation}
\label{eqn-Y_t-space}
\begin{aligned}
Y_{[c,d]}&:={\mathcal C}([c,d];\rH)\cap L^p(c,d;\rE),
\end{aligned}
\end{equation}

Obviously, $Y_{[c,d]}$ is a Banach space with  norm defined by:
\[
 \vert u\vert_{Y_{[c,d]}}^p:= \sup_{r\in [c,d]} \vert u(r)\vert_{\rH}^p+\int_c^d \vert u(r)\vert_{\rE}^p\, ds.
\]
Note that the $(Y_{[c,t]})_{t\geq c}$ is an increasing   family of Banach spaces. More precisely,  if $t>\tau>c$ and $u\in Y_{[c,t]}$,
then $u_{|[c,\tau]}\in Y_{[c,\tau]}$ and $\vert u_{|[c,\tau]}\vert_{Y_{[c,\tau]}} \leq \vert u\vert_{Y_{[c,t]}}$.
 To ease  notation, we will simply write
 $Y_t=Y_{[0,t]}$.

Let $\mathbb{M}^p(Y_{T_1},\mathbb{F}^{T_0}):= \mathbb{M}^p(Y_{[0,T_1]},\mathbb{F}^{T_0})$ denote the Banach space   of continuous  $\rH$-valued
$\mathbb{F}^{T_0}$-adapted local processes $(X_t, t\in [0,T_1])$ which  satisfy
\begin{equation} \label{LpYt}
\|X\|^p_{\mathbb{M}^p(Y_{T_1},\mathbb{F}^{T_0})} = \E\Big( \sup_{r\in [0, T_1]}
 |X(t)|_{\rH}^p + \int_0^{T_1} \|X(r)\|_{\rE}^p dr \Big)
<\infty.
\end{equation}
Similarly, let $\mathbb{M}^p_{\textrm{loc}}(Y_{[0,T_1)},\mathbb{F}^{T_0})$ denote the set of all  $\rH$-valued
$\mathbb{F}^{T_0}$-adapted and continuous local processes $(X_t, t\in [0,T_1))$ such that
$X \in  \mathbb{M}^p(Y_{\tau_n},\mathbb{F}^{T_0}) $
for any sequence of $\mathbb{F}^{T_0}$ stopping times $(\tau_n)$ approximating $T_1$.

Now we will  introduce  definitions of  local and  maximal local solutions;
 they  are modifications  of definitions  used earlier,
such as  in \cite{Brz_1997}, \cite{Brz+Elw_2000} and  \cite{Brz+Masl+S_2005}.

\begin{definition}\label{def-loc-sol-mod}
 Assume that  $T_0$   is   a  finite accessible $\mathbb{F}$-stopping time
 and   $u_0$ is a $\rH$-valued $\mathcal{F}_{T_0}$-measurable random variable.
 A \textbf{local mild solution} to equation \eqref{NLS-abstract} with initial condition $u_0$ at time $T_0$
 is a process $u$ defined as  $u(T_0+t) = X(t) $, $t\in [0, T_1)$, where\\
(i)  $T_1$ is an accessible   $\mathbb{F}^{T_0}$ stopping time,\\
(ii) $X=(X(t), t\in [0, T_1))$ belongs to     $\mathbb{M}^p_{\textrm{loc}}(Y_{[0, T_1)},\mathbb{F}^{T_0})$,\\
(iii) for some approximating sequence $(\tau_n)$ of  $\mathbb{F}^{T_0}$ stopping times for
$T_1$,  one has,
  %
    \begin{align}\label{NLSlocalmild}
     X(t\wedge\tau_n)=U_{t\wedge\tau_n}{u}_0+\int_{0}^{t\wedge\tau_n}U_{t\wedge\tau_n-r}F(X(r))\,dr
    +I_{\tau_n}(G(X))( t),
  \end{align}
  for every  every $n=1, 2, \cdots$ and
$t\geq 0$,
  where $I_{\tau_n}(G(X))$ is  the process defined by
 \begin{align}\label{eqn-I_tau_n}
       I_{\tau_n}(G(X))(t) = 
 \int_{0}^{\infty}1_{[0, t\wedge  \tau_n]}(r)U_{t-r}G(X(r))\,dW^{T_0}(r).
  \end{align}
A local mild solution $u=\big(u(T_0+ t)\, ,\, {0\leq t < T_1}\big)$  to problem
\eqref{NLS-abstract}
 is \textbf{pathwise unique} if for any other local mild solution
$\tilde{u}=\big(\tilde{u}(T_0+t)\, ,\, {0\leq t <\tilde{T}_1}\big)$ for this problem,
       $u(T_0+t,\omega)=\tilde{u}(T_0+t,\omega)$ for almost every $(t,\omega) \in [0,T_1\wedge\tilde{T_1})\times\Omega$.\\
 A local mild solution $u=(u(T_0+t)\, ,\, { t\in [0,T_1)})$ is called  \textbf{maximal} if for any other local mild
   solution $\tilde{u}=(\tilde{u}(T_0+t)\, ,\,  t\in [0,\tilde{T_1}))$
   satisfying $\tilde{T_1}\geq T_1$ a.s. and $\tilde{u}|_{[T_0,T_0+T_1)\times\Omega}\sim
   u$, one has $T_1=\tilde{T}_1$ a.s.
  The  $\mathbb{F}$-stopping time  $T_0+T_1$ will be called the life span of the maximal
local mild solution $u$.  Furthermore,
 a  maximal local mild solution $(u(T_0+t), \, \, { t\in [0,T_1)})$ is called global if
its lifespan is equal to $\infty$ a.s., i.e. $T_1=\infty$ a.s.
\end{definition}

The existence and uniqueness of a local maximal solution to \eqref{NLS-abstract}  will be proved in section \ref{sec-local-sol}.
We at first  prove the existence and the uniqueness of the solution when  its  norm is truncated.
Thus let   $\theta:\mathbb{R}_+\to [0,1]$  be a ${\mathcal C}^\infty_0$ non increasing function
 such that
\begin{equation}\label{eqn-theta} \inf_{x\in\mathbb{R}_+}\theta^\prime(x)\geq -1, \quad \theta(x)=1\;
\mbox{\rm  iff } x\in [0,1]\quad \mbox{\rm  and } \theta(x)=0
\; \mbox{\rm  iff } x\in [2,\infty).
\end{equation}
and for $n\geq 1$ set  $\theta_n(\cdot)=\theta(\frac{\cdot}{n})$.
Let us fix some finite  accessible $\mathbb{F}$-stopping time $T_0$, some constant $T>0$
and some accessible $\mathbb{F}^{T_0}$-stopping time $T_1$ such that $T_1\leq T$.
The rest of this section is devoted to prove existence and uniqueness of the solution $X^n$ to
 the following evolution equation for $t\in [0,T_1]$:
\begin{align}
 X^n(t)= &\, U_{t}u^n({T_0})+\int_{0}^t U_{t-r}\big[ \theta_n( \vert X^n\vert_{Y_{r}}) F(X^n(r)) \big]\, dr
\nonumber  \\
&\;   +\int_{0}^t U_{t-r}\big[ \theta_n( \vert X^n\vert_{Y_{r}}) G(X^n(r)) \big]\, dW^{T_0}(r).
\label{NLSlocal_n}
\end{align}
It is similar to that introduced by de Bouard and Debussche \cite{deBouard+Deb_1999} \cite{deBouard+Deb_2003}.
The first step consists in showing that for small $T_1$ the right handside of \eqref{NLSlocal_n} is a strict contraction.
Norm estimates of the corresponding three terms are studied in separate subsections.
We will often use the following straightforward inequalities.
\begin{lemma}\label{lem-Lip-theta}
If $h:\mathbb{R}_+\to\mathbb{R}_+$ is a non decreasing  function, then for every $x,y\in {\mathbb R}$,
\begin{equation}\label{ineq-theta}
 \theta_n(x)h(x) \leq h(2n),\quad
\vert \theta_n(x)-\theta_n(y)\vert \leq \frac1n |x-y| . 
\end{equation}
\end{lemma}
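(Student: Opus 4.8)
The plan is to read off both inequalities directly from the defining properties \eqref{eqn-theta} of the cut-off function $\theta$ and from the elementary observation that $\theta_n(\cdot)=\theta(\tfrac{\cdot}{n})$ inherits a rescaled version of these properties. No approximation or limiting argument is needed; both bounds are pointwise.

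For the first inequality I would split according to whether $x\geq 2n$ or $0\leq x<2n$. Since $\theta(r)=0$ for $r\in[2,\infty)$, we have $\theta_n(x)=\theta(x/n)=0$ whenever $x\geq 2n$, so that $\theta_n(x)h(x)=0\leq h(2n)$, using $h\geq 0$. When $0\leq x<2n$, the two facts $0\leq\theta_n(x)\leq 1$ and the monotonicity of $h$ give $\theta_n(x)h(x)\leq h(x)\leq h(2n)$. Combining the two cases yields $\theta_n(x)h(x)\leq h(2n)$ for all $x\in\mathbb{R}_+$.

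For the second inequality I would show that $\theta_n$ is globally Lipschitz with constant $\tfrac1n$. Since $\theta$ is non-increasing we have $\theta'\leq 0$, while the standing assumption $\inf_{r}\theta'(r)\geq -1$ gives $\theta'\geq -1$; together these yield $|\theta'(r)|\leq 1$ for every $r$. By the chain rule $\theta_n'(x)=\tfrac1n\theta'(x/n)$, hence $|\theta_n'(x)|\leq\tfrac1n$, and integrating $\theta_n'$ over the interval with endpoints $x,y$ (or the mean value theorem) delivers $|\theta_n(x)-\theta_n(y)|\leq\tfrac1n|x-y|$.

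As each step is an immediate consequence of \eqref{eqn-theta}, there is no genuine obstacle; the only point requiring a moment's care is to use both the sign of $\theta'$ (coming from monotonicity) and its prescribed lower bound, since it is precisely their combination that produces the two-sided estimate $|\theta'|\leq 1$ and hence the sharp Lipschitz constant $\tfrac1n$.
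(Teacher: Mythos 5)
Your proof is correct and is exactly the straightforward argument the paper has in mind (the paper states these inequalities without proof, calling them "straightforward"): the case split at $x=2n$ together with $0\le\theta_n\le1$ and the monotonicity of $h$ gives the first bound, and combining $\theta'\le 0$ with $\inf\theta'\ge -1$ gives $|\theta_n'|\le\tfrac1n$ and hence the Lipschitz estimate. Nothing is missing.
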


\subsection{Estimates for the deterministic term}\label{subsec-ineq-deter}

The following results are modified and extended versions of the argument from \cite{Burq+G+T_2004}.
Since    $p>\beta $ by Assumption \ref{ass-maps} (i), we have
\begin{equation}\label{eqn-gamma}
\gamma:=1-\frac{\beta}{p}>0.
\end{equation}

Let us recall that the space $Y_{[T_0,T_0+T]}$
has  been defined in \eqref{eqn-Y_t-space}.
 For  $X\in Y_T:={Y}_{[0,T]}$ put
\begin{eqnarray}
\label{eqn-Phi_T-det}
[\Phi^{n}_T(X)](t)&=&\int_{0}^t U_{t-r}\big[ \theta_n( \vert X\vert_{Y_{r}}) F(X(r) \big]\, dr ,\;
 t\in [0,T].
\end{eqnarray}
The following two results are formulated and proved for $T_0=0$ but their
 generalization to any $T_0$ is straightforward since the integrals are deterministic.
\begin{lemma}\label{lem-det}
Assume that Assumption \ref{ass-spaces} is satisfied and that  the  map $F$ satisfies  Assumption  \ref{ass-maps}(ii).  Let $n>0$ and $T>0$.
Then the map $\Phi^{n}_T$ defined by \eqref{eqn-Phi_T-det}  maps the space
 $Y_{T}$ into itself.
 Moreover, there exists a generic constant $C>0$ such that for each $X\in {Y}_{T}$,
\begin{align}\Vert \Phi^{n}_T(X)\Vert_{ C([0,T];\rH)} &\leq
C\Big[ T+ \big(T^{\gamma} + T^{\gamma+\frac1p}\big)(2n)^\beta\Big],
 \label{ineq-Phi_T-stronger}\\
\label{ineq-Phi_T_2-stronger}
\Vert \Phi^{n}_T(X)\Vert_{ L^p(0,T;E)} &\leq C \tilde{C}_{p}(T) \Big[ T+ \big(T^{\gamma} + T^{\gamma+\frac1p}\big)(2n)^\beta\Big],
\end{align}
where $\tilde{C}_{p}(T)$ is the constant from Assumption \ref{ass-spaces} with $H$ and $E$ instead of $H_0$ and $E_0$ respectively.
\end{lemma}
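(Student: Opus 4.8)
The plan is to estimate the two norms of $\Phi^n_T(X)$ separately, using the two inhomogeneous Strichartz inequalities from Lemma \ref{lem-inhomogenous-Strichartz} together with the growth bound \eqref{ineqn-growth-F1} on $F$ and the truncation estimate \eqref{ineq-theta}. First I would note that the truncation factor $\theta_n(|X|_{Y_r})$ is nonzero only when $|X|_{Y_r}\le 2n$, so along the trajectory we may replace every occurrence of $|X(r)|_{\rE}$ and $|X(r)|_{\rH}$ by quantities controlled by $2n$; concretely, whenever $\theta_n(|X|_{Y_r})\ne 0$ we have $\sup_{s\le r}|X(s)|_{\rH}\le 2n$ and $\int_0^r|X(s)|_{\rE}^p\,ds\le (2n)^p$. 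The growth assumption then gives, for the integrand $g(r):=\theta_n(|X|_{Y_r})F(X(r))$, a bound of the form
\[
|g(r)|_{\rH}\le C\big[(1+|X(r)|_{\rE}^{\beta})+(1+|X(r)|_{\rE}^{\beta-1})|X(r)|_{\rH}\big]\,1_{\{|X|_{Y_r}\le 2n\}},
\]
where on the indicator set $|X(r)|_{\rH}\le 2n$.

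For the $C([0,T];\rH)$ bound I would apply \eqref{ineq-maximal-H}, which reduces the left-hand side to $C\,|g|_{L^1(0,T;\rH)}$, and then integrate the pointwise bound on $|g(r)|_{\rH}$ in $r$. The constant term $1$ contributes $C\,T$; the term $|X(r)|_{\rE}^{\beta}$ is handled by H\"older's inequality with exponents $p/\beta$ and its conjugate, yielding $\big(\int_0^T|X(r)|_{\rE}^p\,dr\big)^{\beta/p}T^{1-\beta/p}\le (2n)^{\beta}T^{\gamma}$ since $\gamma=1-\beta/p$; and the cross term $|X(r)|_{\rE}^{\beta-1}|X(r)|_{\rH}$, after bounding $|X(r)|_{\rH}\le 2n$, is estimated by H\"older with exponents $p/(\beta-1)$ and $p/(p-\beta+1)$, producing $(2n)^{\beta-1}\cdot 2n\cdot T^{1-(\beta-1)/p}=(2n)^{\beta}T^{\gamma+1/p}$. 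Collecting these three contributions gives exactly the bracket $T+(T^{\gamma}+T^{\gamma+1/p})(2n)^{\beta}$ in \eqref{ineq-Phi_T-stronger}.

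For the $L^p(0,T;\rE)$ bound I would instead invoke \eqref{Strichartz-inhom} (stated for the spaces $\rH,\rE$ via Lemma \ref{lem-spaces+maps}, as the statement indicates), which again reduces the estimate to $C\,\tilde C_p(T)\,|g|_{L^1(0,T;\rH)}$, and the very same $L^1$-in-time computation then yields \eqref{ineq-Phi_T_2-stronger} with the extra factor $\tilde C_p(T)$. In both cases the fact that $\Phi^n_T(X)$ genuinely lands in $Y_T$ follows because the right-hand sides are finite and because $U*g$ is continuous into $\rH$ and $p$-integrable into $\rE$ by the same two inequalities.

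The main obstacle, and the only place requiring genuine care rather than bookkeeping, is the treatment of the truncation: one must verify that $\theta_n(|X|_{Y_r})\ne 0$ forces the \emph{running} norms $\sup_{s\le r}|X(s)|_{\rH}$ and $\big(\int_0^r|X(s)|_{\rE}^p\,ds\big)^{1/p}$ to be $\le 2n$, and that the exponents line up so that the H\"older estimates produce precisely $T^{\gamma}$ and $T^{\gamma+1/p}$. This rests on the definition of $|\cdot|_{Y_r}$ and on the monotonicity of $r\mapsto|X|_{Y_r}$, so that the indicator $1_{\{|X|_{Y_r}\le 2n\}}$ can be used pointwise under the time integral; once this is set up, the inequalities $\beta\ge 1$ and $\beta<p$ guarantee that the H\"older exponents are admissible and that the resulting powers of $T$ are the stated ones.
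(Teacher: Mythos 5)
Your proposal is correct and follows essentially the same route as the paper: both reduce each estimate to the $L^1(0,T;\rH)$ norm of $\theta_n(|X|_{Y_\cdot})F(X(\cdot))$ via \eqref{ineq-maximal-H} and \eqref{Strichartz-inhom} respectively, and both exploit the monotonicity of $r\mapsto |X|_{Y_r}$ to confine the integration to the region where the running norms are at most $2n$ (the paper phrases this with the stopping time $T^\ast=\inf\{t:|X|_{Y_t}\geq 2n\}\wedge T$ rather than your pointwise indicator, which is equivalent), followed by the identical H\"older computations producing $T^{\gamma}$ and $T^{\gamma+1/p}$.
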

\begin{proof}
 It is sufficient  to prove inequalities (\ref{ineq-Phi_T-stronger}-\ref{ineq-Phi_T_2-stronger}).
For this aim let  us fix  $X\in {Y}_{T}$.

\textbf{ Step 1.} We at first prove  \eqref{ineq-Phi_T-stronger}. The inequality \eqref{ineq-maximal-H} from Lemma \ref{lem-inhomogenous-Strichartz}
 yields
\begin{eqnarray}\label{ineq-L^1-C}
\sup_{t\in [0,T]} \vert [\Phi^{n}_T(X)](t) \vert_{\rH} &\leq & \int_{0}^{T} \theta_n( \vert X\vert_{Y_{t}})
\vert  F(X(t)) \vert_{\rH}\, dt  .
\end{eqnarray}
 Thus,  it is enough to  estimate the $L^1(0,T;\rH)$-norm of %
$
\theta_n( \vert X\vert_{Y_{\cdot}}) F(u(\cdot))
$.
Let us define
$ T^\ast:=\inf\{t
\geq 0 :\vert X\vert_{Y_{t}}\geq 2n\} \wedge T
$ 
and note that $\theta_n(|X|_{Y_{t}})=0$ for $|X|_{Y_{t}} \geq 2n$.
Then since $\tau\to |X|_{Y_{t}}$ is non decreasing on $[0,T]$, $\theta_n(|X|_{Y_{t}})=0$ for $t \geq T^\ast$.
 Using   Assumption \eqref{ineqn-growth-F1} and   H\"older's inequality we infer that for some $C>0$:
\begin{align}
\nonumber
\int_{0}^{T}\!  & \theta_n( \vert X\vert_{Y_{t}}) \vert  F(X(t))\vert_{\rH}\, dt
 \leq
C\Big[ T^\ast+ \int_{0}^{T^\ast}\!\!
 \big[| X(t)|_{E}^{\beta} + | X(t)|_{E}^{\beta-1} \vert X(t)\vert_{\rH}\big]\, dt \Big]
\nonumber \\
&\leq C \Big[ T+ \int_{0}^{T^\ast} \!\! | X(t)|_{E}^{\beta} dt
  + \sup_{t \in [0,T^\ast]} \vert X(t)\vert_{\rH} \int_{0}^{T^\ast}\! \vert X(t)\vert_{E}^{\beta-1}  dt\Big]
\nonumber \\
 &\leq  C\Big[ T+ T^{\gamma}\Big( \int_{0}^{T^\ast}\!\! \vert X(t)\vert_{E}^p
 dt\Big)^{\frac{\beta}p}
+T^{\gamma+\frac1p}\sup_{t \in [0,T^\ast]}\vert X(t)\vert_{\rH} \Big(\int_{0}^{T^\ast}\!\! \vert X(t)\vert_{E}^p
 dt\Big)^{\frac{\beta-1}p}\Big]
 \nonumber\\
 &\leq  C\Big[T+ \big(T^{\gamma} + T^{\gamma+\frac1p}\big)(2n)^\beta\Big].
\label{ineq-L^1-H^s2}
\end{align}
The inequalities \eqref{ineq-L^1-C} and \eqref{ineq-L^1-H^s2} conclude the proof of \eqref{ineq-Phi_T-stronger}.

\noindent \textbf{Step 2. } 
We turn to the proof of  \eqref{ineq-Phi_T_2-stronger};
  Assumption \ref{ass-maps}(i) implies $\hat{s}+\frac1p=s$  and,  by \eqref{eqn-Phi_T-det},
$\Phi^{n}_T(X)=U\ast\big[ \theta_n(\vert X\vert_{Y_{\cdot}}) F(X(\cdot))\big]$.
 Hence   Corollary  \ref{cor-group} shows that it is enough to upper estimate   
$\int_{0}^{T} \theta_n( \vert X\vert_{Y_{t}}) \vert F(X(t))\vert_{\rH}\, dt 
$,
which has been done in  \eqref{ineq-L^1-H^s2}.
 This completes the proof.
\end{proof}

The next  result establishes  the Lipschitz properties of  $\Phi^{n}_T$ as a map acting on $Y_{T}$
with some explicit bound of the Lipschitz constant. This the main result of this subsection.

\begin{proposition}\label{prop-det-Lip} Assume that Assumption \ref{ass-spaces} is satisfied and that $F$ satisfies Assumption \ref{ass-maps}(ii). Then
the map   $\Phi^{n}_T$  defined in  \eqref{eqn-Phi_T-det}
is  Lipschitz from  the space ${Y}_T$ into itself. More precisely, for some generic constant $C>0$
 and all  $X_1,X_2\in {Y}_{T}$ we have
\begin{align*}
\Vert \Phi^{n}_T(X_2)-\Phi^{n}_T(X_1)\Vert_{ C([0,T];\rH)} &\leq
C \, \big[ T+ nT^{\frac{p-1}{p}} + n^{\beta} (T^\gamma + T^{\gamma + 1/p})\big] |X_1-X_2|_{{Y}_{T}},\\
\Vert \Phi^{n}_T(X_2)-\Phi^{n}_T(X_1)\Vert_{L^p(0,T;E)} &\leq
C \, \tilde{C}_{p}(T)\big[ T+ nT^{\frac{p-1}{p}} + n^{\beta} (T^\gamma + T^{\gamma + 1/p})\big] |X_1-X_2|_{Y_{T}} ,
\end{align*}
where $\tilde{C}_{p}(T)$ is  the constant from Assumption \ref{ass-spaces}. Furthermore, given any $T>0$
there exists a positive constant $L_{n}(T)$ such that  $L_n(.)$ is non decreasing,
$\lim_{T\to 0} L_{n}(T)=0 $ locally uniformly in $n$,  and such that
\begin{equation}\label{ineq-Phi_T}
\vert \Phi^{n}_T(X_2)-\Phi^{n}_T(X_1)\vert_{Y_{T}} \leq
L_{n}(T) |X_1-X_2|_{{Y}_{T}} .
\end{equation}
\end{proposition}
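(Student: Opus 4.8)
The plan is to reduce both displayed estimates to a single $L^1(0,T;\rH)$ bound on the integrand. By the definition \eqref{eqn-Phi_T-det} one has $\Phi^n_T(X_2)-\Phi^n_T(X_1)=U\ast\Psi$, where $\Psi(r):=\theta_n(|X_2|_{Y_r})F(X_2(r))-\theta_n(|X_1|_{Y_r})F(X_1(r))$. Applying the inhomogeneous Strichartz inequalities of Lemma \ref{lem-inhomogenous-Strichartz} with $\rH,\rE$ in place of $\rH_0,\rE_0$ (legitimate by Lemma \ref{lem-spaces+maps}), estimate \eqref{ineq-maximal-H} gives $\|\Phi^n_T(X_2)-\Phi^n_T(X_1)\|_{C([0,T];\rH)}\le C\|\Psi\|_{L^1(0,T;\rH)}$ and \eqref{Strichartz-inhom} gives $\|\Phi^n_T(X_2)-\Phi^n_T(X_1)\|_{L^p(0,T;\rE)}\le C\tilde C_p(T)\|\Psi\|_{L^1(0,T;\rH)}$, which already explains the presence of the extra factor $\tilde C_p(T)$ in the second line only. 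Thus it suffices to prove $\|\Psi\|_{L^1(0,T;\rH)}\le C[T+nT^{(p-1)/p}+n^\beta(T^\gamma+T^{\gamma+1/p})]\,|X_1-X_2|_{Y_T}$.

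I would split $\Psi(r)=\theta_n(|X_2|_{Y_r})[F(X_2(r))-F(X_1(r))]+[\theta_n(|X_2|_{Y_r})-\theta_n(|X_1|_{Y_r})]F(X_1(r))$ and estimate the two summands using \eqref{ineq-lip-F} and \eqref{ineqn-growth-F1} respectively. \textbf{The main obstacle} is that each cut-off $\theta_n(|X_i|_{Y_r})$ controls only the $Y_r$-norm of its own argument (bounding $\sup_{s\le r}|X_i(s)|_\rH$ and $\int_0^r|X_i|_\rE^p$ by roughly $2n$ on its support), whereas \eqref{ineq-lip-F} and \eqref{ineqn-growth-F1} involve both $X_1$ and $X_2$. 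I resolve this through the elementary inequality $|X_1|_{Y_r}\le|X_2|_{Y_r}+|X_1-X_2|_{Y_T}$ combined with a dichotomy on the size of $|X_1-X_2|_{Y_T}$.

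If $|X_1-X_2|_{Y_T}>n$, I bound $\|\Psi\|_{L^1(0,T;\rH)}\le\int_0^T\theta_n(|X_2|_{Y_r})|F(X_2)|_\rH\,dr+\int_0^T\theta_n(|X_1|_{Y_r})|F(X_1)|_\rH\,dr$, each summand being at most $C[T+(T^\gamma+T^{\gamma+1/p})(2n)^\beta]$ exactly as in \eqref{ineq-L^1-H^s2}; multiplying by $1\le\frac1n|X_1-X_2|_{Y_T}$ already gives a bound of the claimed form (using $T/n\le T$ and $n^{\beta-1}\le n^\beta$ for $n\ge1$). If instead $|X_1-X_2|_{Y_T}\le n$, then on the support of either cut-off one has $|X_1|_{Y_r}\vee|X_2|_{Y_r}\le 3n$, so both arguments are controlled simultaneously. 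On this good set I estimate the first summand by inserting \eqref{ineq-lip-F}, bounding the $\rH$-factors by $3n$ and integrating the $\rE$-factors by H\"older against $\int_0^r|X_i|_\rE^p\le(3n)^p$ and $\big(\int_0^T|X_1-X_2|_\rE^p\big)^{1/p}\le|X_1-X_2|_{Y_T}$, the computation being identical to the one for Lemma \ref{lem-det}: the constant term yields $T$, the single-$\rH$-factor term yields $nT^{(p-1)/p}$, and the higher-order terms yield $n^\beta T^{\gamma+1/p}$, all times $|X_1-X_2|_{Y_T}$. For the second summand I use $|\theta_n(|X_2|_{Y_r})-\theta_n(|X_1|_{Y_r})|\le\frac1n|X_1-X_2|_{Y_T}$ from Lemma \ref{lem-Lip-theta}, pull this factor out, and bound $\int\theta\,|F(X_1)|_\rH\,dr$ once more by \eqref{ineq-L^1-H^s2} with $3n$; the extra $\frac1n$ turns $n^\beta$ into $n^{\beta-1}$ and produces the remaining $T^\gamma$ contribution.

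Finally, combining the two displayed inequalities and using $|u|_{Y_T}\le\|u\|_{C([0,T];\rH)}+\|u\|_{L^p(0,T;\rE)}$ yields \eqref{ineq-Phi_T} with $L_n(T):=C(1+\tilde C_p(T))[T+nT^{(p-1)/p}+n^\beta(T^\gamma+T^{\gamma+1/p})]$. This $L_n$ is non-decreasing in $T$, being a product of non-negative non-decreasing functions (recall $\tilde C_p$ is non-decreasing and $(p-1)/p,\gamma>0$), and since $\tilde C_p(0^+)=0$ while every power of $T$ occurring is strictly positive, $L_n(T)\to0$ as $T\to0$, uniformly for $n$ in any bounded set because the dependence on $n$ is polynomial. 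The delicate point throughout is the asymmetry of the cut-offs flagged in the second paragraph; once the dichotomy reduces matters to the regime where both arguments are controlled, the remaining work is exactly the H\"older bookkeeping already performed for Lemma \ref{lem-det}.
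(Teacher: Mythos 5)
Your proof is correct and follows the same architecture as the paper's: reduce both estimates to a single $L^1(0,T;\rH)$ bound on the integrand via the deterministic Strichartz inequalities (which is exactly where the extra factor $\tilde C_p(T)$ comes from), split the integrand into a cutoff-difference term handled by Lemma \ref{lem-Lip-theta} together with the growth bound \eqref{ineqn-growth-F1}, and a Lipschitz term handled by \eqref{ineq-lip-F}, then localize to the region where the $Y$-norms are of order $n$ and run the H\"older bookkeeping of Lemma \ref{lem-det}. The one point where you diverge is the treatment of the asymmetry of the cutoffs: you resolve it by a dichotomy on whether $|X_1-X_2|_{Y_T}$ exceeds $n$ (trivial bound in the large-difference regime, control of both arguments by $3n$ in the small-difference regime), whereas the paper assumes WLOG that the stopping times satisfy $T_1\le T_2$ and places the cutoff $\theta_n(|X_1|_{Y_t})$ on the Lipschitz term, so that the integration domain $[0,T_1]$ automatically controls both $|X_1|_{Y_t}$ and $|X_2|_{Y_t}$ by $2n$. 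Both devices are valid and yield the same powers of $n$ and $T$ up to the generic constant; the paper's is slightly more economical, yours has the minor advantage of not requiring a choice of decomposition depending on the ordering of the stopping times.
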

\begin{proof}
Let  $X_1,X_2\in {Y}_{T}$. We at first upper estimate the $C([0,T],\rH)$ and
$L^{p}(0,T;E)$-norms  of
the difference $\Phi^{n}_T(X_2)-\Phi^{n}_T(X_1)$ in terms of $A_T$ defined by
\[ 
A_T = \int_{0}^{T}  \big\vert \theta_n( \vert X_2\vert_{Y_{t}}) F(X_2(t))
- \theta_n( \vert X_1\vert_{Y_{t}}) F(X_1(t)) \big \vert_{\rH}\, dt .
\] 
Indeed, arguing as in  the proof of Lemma \ref{lem-det},
we have
\begin{align} \label{eqn-LipCPhi}
\Vert \Phi^{n}_T(X_2)-\Phi^{n}_T(X_1)\Vert_{ {\mathcal C}([0,T];\rH)}  \leq A_{T},\quad
\vert \Phi^{n}_T(X_2)-\Phi^{n}_T(X_1)\vert_{L^p(0,T;E)} \leq C \tilde{C}_{p}(T) A_T,
\end{align}
where  $\tilde{C}_{p}(T)$ is the constant introduced in \eqref{Strichartz-hom-0}. For $i=1,2$ set  
$ T_i:=\inf\{t \geq 0  
:\vert X_i\vert_{Y_{t}}\geq 2n\} \wedge T$;
then  for $i=1,2$ we have
\[ 
\sup_{t \in [0,T_i]} \vert X_i(t)\vert_{\rH}^p
+ \int_{0}^{T_i} \vert X_i(t)\vert_{E}^p  dt \leq (2n)^p .
\] 
Without loss of generality we may assume that $T_1\leq T_2$.
Using once more the fact that the functions $[0,T]\ni t\mapsto |X_i|_{Y_t}$ are non decreasing,
we deduce that
$\theta_n( \vert X_i\vert_{Y_{\tau}})=0$ for $\tau \geq T_i$, $i=1,2$, and hence
\[ 
A_T \leq
\int_{0}^{T_2} \!\! \big\vert \theta_n( \vert X_1 \vert_{Y_{t}})-\theta_n( \vert X_2 \vert_{Y_{t}})  \big\vert
 \vert  F(X_2(t)) \vert_{\rH}  dt
 +    \int_{0}^{T_1}  \!\! \!\theta_n( \vert X_1 \vert_{Y_{t}}) \big\vert F(X_2(t)) -F(X_1(t)) \big\vert_{\rH}  dt.
\] 
Therefore, in view of the conditions \eqref{ineqn-growth-F1} and  \eqref{ineq-lip-F}, since $2\leq \beta <p$,  H\"older's inequality
 yields the existence of a constant $C>0$ such that, for $\gamma$ defined by \eqref{eqn-gamma},  we have
\begin{align*}
 A_T &\leq     \int_{0}^{T_2}    \frac{C}{n}
  \big\vert \vert X_1\vert_{Y_{t}}-  \vert X_2\vert_{Y_{t}}  \big\vert
\big[ 1+|X_2(t)|_{E}^{\beta } + \big( 1+ |X_2(t)|_{E}^{\beta-1}\big) |X_2(t)|_{\rH}  \big]  dt
\nonumber \\
&\quad  +   C \int_{0}^{T_1} \!\!
\big( 1+ |X_1(t)|_{E}^{(\beta -2)^+} + |X_2(t)|_{E}^{(\beta -2)^+} \big)
 \vert X_1(t) -X_2(t)  \vert_{\rE} 
 \big[1+ |X_1(t)|_{\rH} +  \vert X_2(t))  \vert_{\rH} \big] dt \nonumber  \\
&\quad +   C \int_{0}^{T_1} \!\!  
 \big( 1+ |X_1(t)|_{\rE}^{\beta -1} +
 |X_2(t)|_{E}^{\beta -1} \big)
 \vert X_1(t)) -X_2(t)  \vert_{H} dt \nonumber \\
 &\leq  \frac{C}{n}  |X_1-X_2|_{{Y}_{T}} \Big\{
 T + T^\gamma \Big( \int_0^{T_2} |X_2(t)|_{E}^p dt \Big)^{\frac{\beta}{p}} \\
&\qquad \qquad \qquad +  \sup_{t\in [0,T_2]} |X_2(t)|_{H} \;  \Big[ T+ T^{\gamma + 1/p}\;
 \Big( \int_{0}^{T_2}    |X_2(t)|_{E}^p  dt  \Big)^{\frac{\beta-1}{p}}\Big] \Big\}
 \nonumber \\
 & \quad   + C \sup_{t \in [0, T_1]} \big( 1+ \big| X_1(t)\big|_{H} +
\big| X_2(t)\big|_{\rH} \big) \\
&\qquad\qquad \times \Big( \int_{0}^{ T_1}
 \Big[ 1+\sum_{i=1,2}  |X_i(t)|_{E}^{\frac{(\beta -2)^+ p}{p-1}}\Big] dt\Big)^{\frac{p-1}{p}}
\Big( \int_0^{T_1} |X_1(t) - X_2(t)|_{E}^p  dt \Big)^{\frac{1}{p}} \nonumber \\
 & \quad + C   \sup_{t\in [0,T_1]}  \big| X_1(t) - X_2(t)\big|_{\rH} \Big[ T+  T^\gamma \Big(
\int_0^{T_1} \sum_{i=1,2} |X_i(t)|_{E}^p dt\Big)^{\frac{\beta -1}{p}}\Big].
\end{align*}
Recall that $p>\beta $; thus
$p>\frac{p}{p-1}(\beta -2)$.
Hence, since $T_1\leq T_2$, H\"older's inequality yields
\begin{eqnarray}
 A_T &\leq &     \frac{C}{n}
|X_1-X_2|_{{Y}_{T}}    \big[ T+T^{\gamma} (2n)^\beta + (2n) \big\{ T+T^{\gamma + 1/p} (2n)^{\beta -1}\big\}\big]
\nonumber \\
&& + C (1+4n)  |X_1-X_2|_{L^p(0,T ; E)} \big( T^{\frac{p-1}{p}}  +  T^{\gamma + 1/p}(2n)^{(\beta -2)^+} \big)\nonumber \\
&& + C   \sup_{t\in [0,T_1]}  \big| X_1(t) - X_2(t)\big|_{\rH}
\big( T+  T^\gamma (2n)^{\beta -1}\big) \nonumber \\
&\leq & C \,|X_1-X_2|_{{Y}_{T}} \big[ T+ n T^{\frac{p-1}{p}} +  n^{\beta -1}  T^\gamma + n^\beta T^{\gamma+ 1/p} \big] .
\label{majoAT}
\end{eqnarray}
The inequalities \eqref{eqn-LipCPhi}-\eqref{majoAT} conclude the proof of the two first upper estimates of the difference
$\Phi^{n}_{T}(X_{2})- \Phi_{T}^{n}(X_{1})$ in the proposition.
Finally, let
\[ L_{n}(T)= C \big(1+\tilde{C}_p(T) \big)\big[ T+ nT^{\frac{p-1}{p}} + n^{\beta} (T^\gamma + T^{\gamma + 1/p})\big] .\]
Then since $\lim_{T\to 0} \tilde{C}_{p}(T)= 0$, it remains bounded for $T\in (0,1]$ and we deduce that
 $\lim_{T\to 0} L_{n}(T)= 0$ locally uniformly in $n$. This completes the proof of the proposition.
\end{proof}

\subsection{Estimates for  the stochastic term}\label{subsec-ineq-stoch}

Let us recall that according to Assumption \ref{ass-stochastic basis},  we assume that $K$ is a separable Hilbert space and $W=(W(t) ,t\geq 0)$ is a $K$-valued cylindrical Brownian motion on a filtered probability
space $(\Omega, {\mathcal F},\mathbb{F}, {\mathbb P})$.
We  fix   a finite accessible $\mathbb{F}$-stopping time $T_0$ and keep the notation introduced in Definition
\ref{def-shifted-filtration}.
Recall that
 ${\mathcal F}^{T_0}_t = {\mathcal F}_{T_0+t}$
for $t\geq 0$ and $(W^{T_0}(t), t\geq 0)$ is the $\mathbb{F}^{T_0}$ Brownian motion defined by $W^{T_0}(t)=W(T_0+t)-
W(T_0)$. Finally $T$ will  be some positive constant and $T_1$ a $\mathbb{F}^{T_0}$
accessible stopping time such that
$T_1\leq T$.  Recall that the space  $\mathbb{M}^p(Y_{T_1}, {\mathbb F}^{T_{0}})$ has been defined in \eqref{LpYt}.
As usual, we let $a^0=1$ for any $a\geq 0$.

Let $X\in {\mathbb M}^{p}(Y_{T}, {\mathbb F}^{T_{0}})$.
Then   for every $n\geq 1$   we set
\[ \xi^n(t)=\theta_n(|X|_{Y_{t}}) G(X(t)), \; t\in [0,T], \]
 and  put  $ \Psi^{T_0,n}_T(X)=J^{T_0}\xi^n$ with  $J^{T_0}$ defined by
 \begin{equation} \label{eqn-Psi_T}
 \big[J^{T_0}  \xi^n\big] (t)=
 \int_{0}^t U_{t-r} \xi^n(r) \, dW^{T_0}(r),
\quad t\in [0,T].
\end{equation}

We at first prove that $\Psi^{T_0,n}_T$ maps $\mathbb{M}^p(Y_{T},\mathbb{F}^{T_0})$ into itself.
More precisely, we have the following result.
\begin{lemma}  \label{PsiT_on_Lp} Assume that Assumption \ref{ass-spaces} is satisfied, $\rE$ is a martingale type $2$ Banach space  and that
 the map $G$ satisfies Assumption
\ref{ass-maps}(iii).
Then  $\Psi^{T_0,n}_T$  maps $\mathbb{M}^p(Y_{T},\mathbb{F}^{T_0})$ into itself.
Moreover, one can find a constant $C_p>0$ such that for every  $X\in \mathbb{M}^p(Y_{T},\mathbb{F}^{T_0})$
and the constant $\hat{C}_{p}(T)$ defined in \eqref{Strichartz-stoch-M}, we have:
\begin{align}
&\E \int_{0}^{T} \vert\Psi^{T_0,n}_T(X)(t)\vert_{E}^p  dt \leq C_p\, \hat{C}_{p}(T)\Big[ T^{p/2} +T^{p/2-a}  \big( 1 + T) n^{pa}
+ T^{p/2}  n^p   \Big] , \label{majo_PsiT_shatq} \\
&\E\Big(\sup_{t\in [0,T]} |\Psi^{T_0,n}_T(X)(t)|_{\rH}^p \Big)   \leq C_p \Big[  T^{p/2} +T^{p/2-a}  \big( 1 + T) n^{pa}
+ T^{p/2}  n^p     \Big] . \label{majo_PsiT_s2}
\end{align}
\end{lemma}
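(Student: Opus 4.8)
The plan is to recognize $\Psi^{T_0,n}_T(X)=J^{T_0}\xi^n$ as a stochastic convolution to which the stochastic Strichartz estimate of Corollary~\ref{cor-Strichartz-stoch-flat} and the maximal inequality of Proposition~\ref{prop-maximal-H-stoch} (in its $\mathbb{F}^{T_0}$-shifted form) apply directly. First I would introduce the truncation time $T^\ast:=\inf\{t\ge 0:\vert X\vert_{Y_t}\ge 2n\}\wedge T$; since $t\mapsto\vert X\vert_{Y_t}$ is non-decreasing and continuous and $\theta_n$ vanishes on $[2n,\infty)$, one has $\xi^n(t)=0$ for $t\ge T^\ast$, so the integrand is supported on $[0,T^\ast)$ and there the truncation forces the deterministic bounds $\sup_{t\le T^\ast}\vert X(t)\vert_\rH\le 2n$ and $\int_0^{T^\ast}\vert X(t)\vert_\rE^p\,dt\le (2n)^p$. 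In particular $\E\big(\int_0^T\Vert\xi^n(t)\Vert_{R(\rK,\rH)}^2\,dt\big)^{p/2}$ is finite, so both quoted estimates are applicable with the deterministic time $\tau=T$, which avoids having to argue that $T^\ast$ is accessible.

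Both \eqref{Strichartz-stoch-M} and \eqref{ineq-maximal-H-stoch} bound their left-hand sides by the \emph{same} quantity $\E\big(\int_0^T\Vert\xi^n(t)\Vert_{R(\rK,\rH)}^2\,dt\big)^{p/2}$, with prefactor $\hat C_p(T)$ in the first case and $C_p$ in the second. Hence the whole lemma reduces to producing one deterministic bound for this quantity. Using $\theta_n\le 1$ together with Assumption~\ref{ass-maps}(iii), I would estimate pointwise $\Vert\xi^n(t)\Vert_{R(\rK,\rH)}^2\le C\big[1+\vert X(t)\vert_\rE^{2a}+\vert X(t)\vert_\rH^2+\vert X(t)\vert_\rE^{2(a-1)}\vert X(t)\vert_\rH^2\big]$ and integrate over $[0,T^\ast]$.

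The remaining work is the H\"older bookkeeping, which I expect to be the only delicate point. Since $a<p/2$ we have $2a<p$, so H\"older with exponent $p/(2a)$ gives $\int_0^{T^\ast}\vert X\vert_\rE^{2a}\,dt\le (2n)^{2a}T^{1-2a/p}$ and likewise $\int_0^{T^\ast}\vert X\vert_\rE^{2(a-1)}\vert X\vert_\rH^2\,dt\le (2n)^{2a}T^{1-2(a-1)/p}$, while the $L^\infty$-in-$\rH$ bound handles $\int_0^{T^\ast}\vert X\vert_\rH^2\,dt\le (2n)^2 T$ and the constant term contributes $T$. Collecting and using $1-2(a-1)/p=(1-2a/p)+2/p$, this yields $\int_0^{T^\ast}\Vert G(X(t))\Vert_{R(\rK,\rH)}^2\,dt\le C\big[T+n^{2a}T^{1-2a/p}(1+T^{2/p})+n^2T\big]$. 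Raising to the power $p/2$ and using $(u+v+w)^{p/2}\le C_p(u^{p/2}+v^{p/2}+w^{p/2})$ collapses this to $C_p\big[T^{p/2}+n^{pa}T^{p/2-a}(1+T)+n^pT^{p/2}\big]$, which is exactly the bracket on the right of \eqref{majo_PsiT_shatq}-\eqref{majo_PsiT_s2}; multiplying by $\hat C_p(T)$ resp. $C_p$ gives the two inequalities. Finally, to conclude that $\Psi^{T_0,n}_T$ maps $\mathbb{M}^p(Y_T,\mathbb{F}^{T_0})$ into itself, adaptedness is immediate and continuity of the trajectories in $\rH$ follows by writing $[J^{T_0}\xi^n](t)=U_t\int_0^t U_{-r}\xi^n(r)\,dW^{T_0}(r)$, a strongly continuous unitary family applied to a continuous $\rH$-valued martingale.
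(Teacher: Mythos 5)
Your proof is correct and follows essentially the same route as the paper: apply the stochastic Strichartz estimate \eqref{Strichartz-stoch-M} and the maximal inequality \eqref{ineq-maximal-H-stoch} with $\tau=T$, reduce both bounds to $\E\big(\int_0^T\Vert\xi^n(t)\Vert_{R(\rK,\rH)}^2\,dt\big)^{p/2}$, and control that via the truncation time $T^\ast$ together with the growth condition \eqref{growth-G} and H\"older's inequality. The exponent bookkeeping matches the paper's computation exactly.
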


To ease notation, in the proof below, as well as in the other proofs in this section,
 we will omit the subscript ${T_0}$. For instance we will simply write
$\mathbb{F}$ and $J$ instead of $\mathbb{F}^{T_0}$ and $J^{T_0}$.
\begin{proof} Let us take
 $X\in \mathbb{M}^p(Y_{T},\mathbb{F})$; 
first we will   prove  \eqref{majo_PsiT_shatq}.
 Using inequality \eqref{Strichartz-stoch-M} from  Corollary \ref{cor-Strichartz-stoch-flat}  with $\tau =T$ we  deduce that
\[ \E \int_{0}^{T} \vert J \xi^n(t)\vert_{E}^p  dt \leq \hat C_p(T)
 \E \Big(\int_{0}^{T} \Vert\xi^n(t)\Vert_{R(K,\rH)}^2 \,dt\Big)^{p/2}.\]
Let
$T^\ast:=\inf\{ t\geq 0 : |X|_{Y_{t}} \geq 2n\}\wedge T$.
Then   $\theta_n(|X|_{Y_t})=0$  for $ t\in [T^\ast, T]$ and
 \[  \sup_{t\in [0,T^\ast]} |X(t)|_{\rH}^p + \int_{0}^{T^\ast}  \vert X(t)\vert_{E}^p dt
 \leq (2n)^p.
\]
 Hence the growth condition \eqref{growth-G} on $G$ and H\"older's inequality imply  that
   \begin{align} \label{majoxiRadon}
&\int_{0}^{T} \Vert\xi^n(t)\Vert_{R(K,\rH)}^2 \,dt \leq  C  \int_{0}^{T^\ast} \Big[ \big( 1+ \vert X(t)\vert^{2a}_{E}\big)+
 \big( 1+ \vert X(t)\vert^{2a-2}_{E}\big) |X(t)|^2_{\rH} \Big] \,
dt \nonumber \\
\nonumber
&\leq C\Big[ T + T^{1-\frac{2a}p} \Big( \int_{0}^{T^\ast}\!\!
 \vert X(t)\vert^{p}_{E} \, dt\Big)^{\frac{2 a}p}+  \sup_{t \in [0,T^\ast] } |X(t)|^2_{\rH}
\Big\{ T + T^{1-\frac{2a-2}p} \Big( \int_{0}^{T^\ast} \!\!\vert X(t)\vert^{p}_{E} \, dt\Big)^{\frac{2 a-2}p} \Big\}  \Big]
\\
&\leq C\Big[ T + T^{1-\frac{2a}p} (2n)^{2 a}+  (2n)^2\Big(T + T^{1-\frac{2a-2}p} (2n)^{2 a-2} \Big)  \Big].
\end{align}
This completes the proof of \eqref{majo_PsiT_shatq}.
To  prove  \eqref{majo_PsiT_s2} we apply inequality \eqref{ineq-maximal-H-stoch} to get
\begin{equation*}
 \E\Big(\sup_{t\in [0,T]} \Big|\int_{0}^t U_{t-r } \xi^n(r) dW (r) \Big|_{\rH}^p \Big)
\leq  C_p \E\Big[ \int_{0}^{T}  \Vert \xi^n(r) \Vert_{R(K,\rH)}^2 dr \Big]^{\frac{p}{2}} .
\end{equation*}
Combining the above with  inequality \eqref{majoxiRadon} we deduce \eqref{majo_PsiT_s2}. This completes the proof of the proposition.
\end{proof}

We next  result proves that  the map $\Psi^{T_0,n}_T$ is Lipschitz on $\mathbb{M}^p(Y_{T},\mathbb{F}^{T_0})$
and gives an upper bound of its Lipschitz constant.

\begin{proposition} \label{prop-Lip-Psi_T} Assume that Assumption \ref{ass-spaces} is satisfied and that
 the map $G$ satisfies Assumption \ref{ass-maps}(iii). Then for every $T>0$
there exists a constant $\hat{L}_n(T)>0$ such that $\hat{L}_n(.)$ is non decreasing, $\lim_{T\to 0} \hat{L}_n(T)=0$ locally uniformly in $n$, and
for  $X_1,X_2\in \mathbb{M}^p(Y_{T},\mathbb{F}^{T_0})$
and the constant $\hat{C}_{p}(T)$ defined in \eqref{Strichartz-stoch-M},
\begin{equation}
\Vert\Psi^{T_0,n}_T(X_2)-\Psi^{T_0,n}_T(X_1)\Vert_{\mathbb{M}^p(Y_{T},\mathbb{F}^{T_0})}
 \leq \big(1+\hat C_p(T)\big) \hat{L}_n(T) \, \Vert X_1-X_2\Vert_{\mathbb{M}^p(Y_{T},\mathbb{F}^{T_0})}.
\end{equation}
\end{proposition}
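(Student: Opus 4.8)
The plan is to reduce both terms of the $\mathbb{M}^p(Y_T,\mathbb{F}^{T_0})$-norm of the difference to a single stochastic quantity, and then to estimate that quantity exactly as in the proof of Proposition~\ref{prop-det-Lip}, with H\"older's inequality in time producing the positive powers of $T$. Write $\xi_i^n(t)=\theta_n(|X_i|_{Y_t})G(X_i(t))$, $i=1,2$, so that $\Psi^{T_0,n}_T(X_2)(t)-\Psi^{T_0,n}_T(X_1)(t)=[J^{T_0}(\xi^n_2-\xi^n_1)](t)$. First I would apply the stochastic Strichartz estimate \eqref{Strichartz-stoch-M} with $\tau=T$ to bound $\E\int_0^T|\Psi^{T_0,n}_T(X_2)(t)-\Psi^{T_0,n}_T(X_1)(t)|_\rE^p\,dt$ by $\hat C_p(T)\,\mathcal A$, and the maximal inequality \eqref{ineq-maximal-H-stoch} to bound $\E\sup_{t\in[0,T]}|\cdots|_\rH^p$ by $C_p\,\mathcal A$, where
\[ \mathcal A:=\E\Big(\int_0^T\|\xi^n_2(t)-\xi^n_1(t)\|_{R(K,\rH)}^2\,dt\Big)^{p/2}. \]
The integrability needed to apply these results is exactly the finiteness of $\mathcal A$, which the estimate below establishes. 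Adding the two bounds gives $\|\Psi^{T_0,n}_T(X_2)-\Psi^{T_0,n}_T(X_1)\|^p_{\mathbb{M}^p(Y_T,\mathbb{F}^{T_0})}\le (C_p+\hat C_p(T))\,\mathcal A$, so the whole proposition reduces to an estimate of $\mathcal A$.

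To estimate $\mathcal A$ I split the integrand as in Proposition~\ref{prop-det-Lip},
\[ \xi^n_2(t)-\xi^n_1(t)=\big[\theta_n(|X_2|_{Y_t})-\theta_n(|X_1|_{Y_t})\big]G(X_2(t))+\theta_n(|X_1|_{Y_t})\big[G(X_2(t))-G(X_1(t))\big], \]
and introduce the stopping times $T_i=\inf\{t\ge 0:|X_i|_{Y_t}\ge 2n\}\wedge T$. Working pathwise and splitting $\Omega$ into $\{T_1\le T_2\}$ and its complement (which is legitimate because the final bound is symmetric in $X_1,X_2$), the first summand is supported on $[0,T_1\vee T_2]$ and the second on $[0,T_1]$; on these intervals the truncation forces $\sup_{t\le T_i}|X_i(t)|_\rH^p+\int_0^{T_i}|X_i(t)|_\rE^p\,dt\le (2n)^p$. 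For the first summand the Lipschitz bound \eqref{ineq-theta} gives $|\theta_n(|X_2|_{Y_t})-\theta_n(|X_1|_{Y_t})|\le \tfrac1n|X_2-X_1|_{Y_T}$ and the growth condition \eqref{growth-G} controls $\|G(X_2(t))\|_{R(K,\rH)}$; for the second summand the local Lipschitz condition \eqref{lip-G} applies. In each case the truncation together with H\"older's inequality in time turns the $L^p$-bounds on $|X_i|_\rE$ into positive powers of $T$, exactly as in \eqref{majoxiRadon} and in the bound for $A_T$ in Proposition~\ref{prop-det-Lip}; a useful point is that, after truncation, the integrals $\int_0^{T_i}\|G(X_i(t))\|^2_{R(K,\rH)}\,dt$ are dominated by a deterministic expression, so that the only random factor which survives is $\E|X_1-X_2|_{Y_T}^p=\|X_1-X_2\|^p_{\mathbb{M}^p(Y_T,\mathbb{F}^{T_0})}$.

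Collecting the exponents yields $\mathcal A\le C_p\,\Theta_n(T)^p\,\|X_1-X_2\|^p_{\mathbb{M}^p(Y_T,\mathbb{F}^{T_0})}$, where $\Theta_n(T)$ is a finite sum of terms $n^bT^c$ in which every $T$-exponent $c$ is strictly positive; this positivity rests on $a<\tfrac p2$ (so that $\tfrac12-\tfrac ap>0$ and the like), which is guaranteed by Assumption~\ref{ass-maps}(iii). Taking $p$-th roots, using $C_p+\hat C_p(T)\le (C_p+1)(1+\hat C_p(T))$, and setting $\hat L_n(T):=C_p\,\Theta_n(T)$ (enlarged so as to be non decreasing) gives the asserted inequality, with $\lim_{T\to 0}\hat L_n(T)=0$ locally uniformly in $n$ since each power of $T$ is positive. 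The main obstacle is precisely this last bookkeeping in $\mathcal A$: one must carry out the two-summand splitting honestly over the correct random time intervals while tracking both the $n$-dependence (the truncation level $2n$ and the factor $1/n$ from the Lipschitz constant of $\theta_n$) and, crucially, the strict positivity of every resulting power of $T$, since it is this positivity that forces the Lipschitz constant to vanish as $T\to 0$ and thereby makes the subsequent fixed-point argument run.
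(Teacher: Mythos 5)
Your proposal follows essentially the same route as the paper's proof: reduce both components of the $\mathbb{M}^p(Y_T,\mathbb{F}^{T_0})$-norm to the single quantity $\mathcal A=\E\big(\int_0^T\|\xi^n_2-\xi^n_1\|^2_{R(K,\rH)}\,dt\big)^{p/2}$ via the stochastic Strichartz estimate \eqref{Strichartz-stoch-M} and the Burkholder/maximal inequality, then split $\xi^n_2-\xi^n_1$ into the $\theta_n$-difference term and the $G$-difference term, localize with the stopping times $T_i$, and use \eqref{ineq-theta}, \eqref{growth-G}, \eqref{lip-G} and H\"older in time to extract strictly positive powers of $T$ (resting on $a<p/2$ and $p>2$). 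The argument is correct and matches the paper's, up to the final explicit form of $\hat L_n(T)$ which you leave as bookkeeping.
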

\begin{proof}
For $i=1,2$ set $\xi^n_i(t)=\theta_n(|X_i|_{Y_{t}}) G(X_i(t))$.
Using once again inequality \eqref{Strichartz-stoch-M} from  Corollary \ref{cor-Strichartz-stoch-flat},  we deduce that
\begin{equation} \label{Stri_BT}
\E\int_{0}^{T} \Big\vert \int_{0}^t U_{t-r}
\big[ J \xi_1^n(r) - J \xi_2^n(r) \big] dr\Big\vert_{E}^p dt
\leq  \hat C_p(T) B_T,
\end{equation}
where $ B_T=  \E \big( \int_{0}^{T} |\xi^n_2(t) - \xi^n_1(t)|_{R(K,\rH)}^2 dt\big)^{\frac{p}2}$.
Furthermore, the Burkholder-Davis-Gundy inequality \eqref{ineq-Burkholder2}  yields 
\begin{equation} \label{BDG_BT}
\E\Big( \sup_{t\in [0,T]} \vert J \xi^n_1(t)- J\xi_2^n(t) \vert_{\rH}^p \Big)
\leq \hat{B}_p(H) B_T.
\end{equation}
 For $i=1,2$ let    $T_i=\inf\{ t\geq 0 : |X_{i}|_{Y_{t}} \geq 2n\}\wedge T$.
Then one has 
\begin{eqnarray*}  
B_T
&\leq &   C_p  \sum_{\{i,j\} = \{1,2\}}\!\!
\mathbb{E} \Big( \int_{0}^{T} \!\! \big\vert \theta_n( \vert X_i \vert_{Y_{t}})-\theta_n( \vert X_j \vert_{Y_{t}})  \big\vert^2  \;
 \Vert  G(X_i(t)) \Vert_{R(K,\rH)}^p \, 1_{\{ T_j\leq T_i\}} \; dt\Big)^{\frac{p}2}
\nonumber
\\
&& + C_p \sum_{\{i,j\} = \{1,2\}}  \mathbb{E} \Big(  \int_{0}^{T}   \theta^2_n( \vert X_j \vert_{Y_{t}})
\Vert G(X_i(t)) -G(X_j(t)) \Vert_{R(K,\rH)}^2\,
 1_{\{ T_j\leq T_i\}} \; dt  \Big)^{\frac{p}2} .
\end{eqnarray*}
Furthermore,
$\theta_n( \vert X_i\vert_{Y_{t}})=0$ provided that $t \geq T_i$, $i=1,2$. Therefore, using the property
\eqref{ineq-theta} on $\theta$, the growth and  Lipschitz conditions \eqref{growth-G} and \eqref{lip-G} on
$G$, we deduce that for some constant $C>0$ 
\begin{equation} \label{majo_BT}
 B_T \leq  C   \sum_{\{i,j\} = \{1,2\}} \sum_{l=1,2,3}  \big( B_T^l(i,j)\big)^{p/2},
\end{equation}
where
  \begin{align*}
B^1_T(i,j)= & \frac1{n^2} \E\Big( \int_{0}^{T_i}  \!  1_{\{ T_j\leq T_i\}}
  \big\vert \vert X_i\vert_{Y_{t}}-  \vert X_j\vert_{Y_{t}}  \big\vert^2\\
  &\hspace{2truecm}
  \times
 \big[ \big( 1+ \vert X_i(t)\vert^{2a}_{E}\big)+ \big( 1+ \vert X_i(t)\vert^{2a-2}_{E}\big)
|X_i(t)|^2_{\rH} \big]  dt \Big),
 \\
B_T^2(i,j)= &   \E  \Big( \int_{0}^{T_i\wedge T_j} \big( 1+|X_i(t)|_{E}^{2a-2}
  +  |X_j(t)|_{E}^{2a-2} \big) |X_i(t)-X_j(t)|_{\rH}^2
  dt\Big),   \\
B^3_T(i,j)=&   \E  \Big( \int_{0}^{T_i\wedge T_j}
\big( 1+|X_i(t)|_{E}^{2(a -2)^+}  +  |X_j(t)|_{E}^{2(a-2)^+} \big)
\big( 1+|X_i(t)|_{\rH}^2  +  |X_j(t)|_{\rH}^2\big)\\
&\hspace{2truecm}
  \times  |X_i(t)-X_j(t)|_{E}^2
 dt \Big).
 \nonumber
  \end{align*}
Using  H\"older's inequality since $a< p/2$ we obtain
 \begin{align*}
 n^2 B_T^1(i,j)&
 \leq   \big\vert  X_1-  X_2  \big\vert_{Y_{T}}^2 \int_{0}^{T_i}
 \big[ \big( 1+ \vert X_i(t )\vert^{2a}_{E}\big)+
\big( 1+ \vert X_i(t)\vert^{2a-2}_{E}\big) |X_i(t)|^2_{\rH} \big]  dt\\
 &\leq  C
 \big\vert  X_1-  X_2  \big\vert_{Y_{T}}^2
\Big[ T + T^{1-\frac{2a}p} n^{2 a}+  n^2\Big(T + T^{1- \frac{2a-2}p} n^{2 a-2} \Big)  \Big].
 \end{align*}
Using once more H\"older's inequality we deduce
\begin{align*}
B_T^2(i,j) &\leq
    \sup_{t \in [0,T]}|X_i(t)-X_j(t)|_{\rH}^2 \int_{0}^{T_i\wedge T_j}\!\!
 \big( 1+|X_i(t)|_{E}^{2 a-2}   +  |X_j(t)|_{E}^{2 a-2} \big)  dt  \\
    &\leq  \sup_{t  \in [0,T]}|X_i(t)-X_j(t)|_{\rH}^2 \Big(T + 2 T^{1-\frac{2 a-2}p} n^{2 a-2} \Big),
\end{align*}
and
 \begin{align*}
 B_T^3(i,j)& \leq
 C  \sup_{t  \in [0,T_i\wedge T_j]} \big( 1+|X_i(t)|_{\rH}^2  +  |X_j(t)|_{\rH}^2\big)
\Big[\int_{0}^{T}   |X_i(t)-X_j(t)|_{E}^p \, dt \Big]^{\frac2p}\\
&\qquad \times  \Big[T^{1-\frac{2}{p}}  + T^{1-\frac{a}{p} } \Big(\int_{0}^{T_i\wedge T_j}
 \big( |X_i(t)|_{E}^p + |X_j(t)|_{E}^p\big) \,dt \Big)^{\frac{(a-2)^+}p}\Big] \\
 &\leq \Big(1+8n^2\Big)\Big(T^{1-\frac{2}p} + 2T^{1-\frac{2( a-1)}p} n^{2(a -2)^+}\Big)
 \Big[\int_{0}^{T}   |X_i(t)-X_j(t)|_{E}^p \, dt \Big]^{\frac2p}.
\end{align*}
Therefore, $B_T \leq  \hat{L}_n(T) \Vert X_1-X_2\Vert^p_{{\mathbb{M}^p(Y_{T},\mathbb{F}^{T_0}})}$
 if one lets
\[ \hat{L}_n(T):=  C\big[ T^{p/2} + T^{p/2-1} n^p +T^{p/2-a} (1+T) n^{(a-1)p} + n^{(a\vee 2) p/2} T^{p/2-a+1}  \big]   .\]
 Since $a<p/2$ and $p>2$ we have $\lim_{T\to 0} \hat{L}_n(T)=0$ locally uniformly in $n$; thus
 the inequalities \eqref{Stri_BT} and \eqref{BDG_BT} conclude the proof.
\end{proof}

\subsection{The estimates on the free term}\label{subsec-free term}

In order to prove the existence and uniqueness of a solution to the approximating equation \eqref{NLSlocal_n}
we have also to show that the first term on the RHS of that equation belongs to the space
$\mathbb{M}^p(Y_{T}, {\mathbb F}^{T_0})$. This in fact follows from Corollary \ref{cor-group} and Lemma \ref{lem-spaces+maps}. Thus we have.
\begin{lemma} \label{lem-initial}
Let $T_0$ be a finite accessible $\mathbb{F}$-stopping time and  $u(T_0) \in L^p(\Omega,\mathcal{F}_{T_0},\rH)$
 and $d>c>0$. Define a  process
${\mathcal U}_{[c,d]}(u_0)$  by $\big( [{\mathcal U}_{[c,d]}(u(T_0))](t)=U(t-c)(u(T_0))\, ,\, t\in [c,d]\big)$.
Then the process $X$ defined by $X(t)={\mathcal U}_{[c,d]}(u(T_0))(t+c)$, $t\in [0,d-c]$,
 belongs to  $\mathbb{M}^p(Y_{d-c},\mathbb{F}^{T_0})$.
\end{lemma}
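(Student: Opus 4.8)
The plan is to reduce the statement to Corollary~\ref{cor-group}, which itself rests on the deterministic Strichartz estimates through Lemma~\ref{lem-spaces+maps}; beyond that citation only the measurability of the shifted process and a direct reading of the $\mathbb{M}^p(Y_{d-c},\mathbb{F}^{T_0})$-norm are needed.

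First I would unwind the definitions. Since $[{\mathcal U}_{[c,d]}(u(T_0))](s)=U_{s-c}u(T_0)$ for $s\in[c,d]$, the shifted process is simply $X(t)=U_t u(T_0)$ for $t\in[0,d-c]$. As $u(T_0)$ is $\mathcal{F}_{T_0}$-measurable and $\mathcal{F}^{T_0}_0=\mathcal{F}_{T_0}\subseteq\mathcal{F}^{T_0}_t$ for every $t\ge 0$, while each $U_t$ is a fixed bounded linear operator on $\rH_0$ (and, by Lemma~\ref{lem-spaces+maps}, on $\rH=D((-A)^{\hat{s}/2})$ as well), the random variable $X(t)$ is $\mathcal{F}^{T_0}_t$-measurable; hence $X$ is $\mathbb{F}^{T_0}$-adapted. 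For path continuity I would use that, by Lemma~\ref{lem-spaces+maps}, the restriction of $(U_t)$ to $\rH$ is again a ${\mathcal C}_0$-group, so for $\mathbb{P}$-almost every $\omega$ (those with $u(T_0)(\omega)\in\rH$, which holds a.s.\ since $u(T_0)\in L^p(\Omega,\rH)$) the trajectory $t\mapsto U_t u(T_0)(\omega)$ is continuous into $\rH$.

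The quantitative step is then immediate. Applying the moment inequality \eqref{ineq-initial-0} of Corollary~\ref{cor-group} with the spaces $\rH,\rE$ of Assumption~\ref{ass-maps}(i) and $T=d-c>0$ gives
\[
\E\Big( \int_0^{d-c} |U_t u(T_0)|_{\rE}^p\, dt + \sup_{t\in[0,d-c]} |U_t u(T_0)|_{\rH}^p\Big) \le C\big(1+C_p^p(d-c)\big)\, \E|u(T_0)|_{\rH}^p,
\]
which is finite precisely because $u(T_0)\in L^p(\Omega,\mathcal{F}_{T_0},\rH)$. The left-hand side is by definition $\|X\|^p_{\mathbb{M}^p(Y_{d-c},\mathbb{F}^{T_0})}$, so together with the adaptedness and continuity established above this yields $X\in\mathbb{M}^p(Y_{d-c},\mathbb{F}^{T_0})$.

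I do not expect a genuine obstacle here, since the lemma is essentially a reformulation of Corollary~\ref{cor-group}; the only point requiring care is to match the roles of $\rH$ and $\rE$ correctly — $Y_{d-c}$ demands continuity in $\rH$ together with $p$-integrability in $\rE$, and this is exactly the combination controlled by \eqref{ineq-initial-0} — and to remember that, $u(T_0)$ being random, Corollary~\ref{cor-group} must be invoked in its $L^p(\Omega,\rH)$ form rather than pathwise.
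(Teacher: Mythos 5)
Your proposal is correct and follows exactly the route the paper takes: the paper dispenses with the lemma by remarking that it ``follows from Corollary \ref{cor-group} and Lemma \ref{lem-spaces+maps}'', and your write-up simply makes explicit the adaptedness, continuity, and the application of \eqref{ineq-initial-0} that this remark leaves implicit. No discrepancy to report.
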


\subsection{Existence and uniqueness of a global solutions to approximating equations}\label{subsec-global approx}
A direct consequence of all the results proved in sections \ref{subsec-ineq-deter}-\ref{subsec-free term}
(which follows by applying the Banach-Cacciopoli
 Fixed Point Theorem) is presented below.
 \begin{theorem}\label{thm-exis-approx}
Assume that the  Assumptions \ref{ass-stochastic basis}, \ref{ass-spaces} and  \ref{ass-maps} satisfied with  $ \beta \in [2,p)$
and $a\in [1,\frac{p}2)$.
 Assume also that $\rE$ is a martingale type $2$ Banach space. Let   $T_0$ be a   finite  and accessible
 $\mathbb{F}$-stopping time  $T_0$ and   $u(T_0)\in L^p(\Omega,\mathcal{F}_{T_0},\rH)$.
 Then   for every positive integer $n$,
there exists a unique process $u^n=(u^n(t) \, , t\, \in [T_0,\infty))$, such that $u^n(t+T_0)=X^n(t)$
for $t\geq 0$ and for every  $T>0$, $X^n$ belongs to the space $\mathbb{M}^p(Y_{T},\mathbb{F}^{T_0})$ and
  $X^n={\mathcal U}_{[0,T]}(u(T_0))+\Phi^{n}_{T}(X^n)+\Psi^{T_0,n}_{T}(X^n)$.
    Moreover, given any $T>0$ the process $X^n$  is the unique solution to the  evolution equation
 \eqref{NLSlocal_n} on the  time interval $[0,T]$.
 Moreover, if a local process $v=\big(v(T_0+t)=\tilde{X}(t)\, ,\, t\in [0,\tau)\big)$
 is a local solution to \eqref{NLSlocal_n},  then the processes $X^n$ and
$\bar{X}:=\tilde{X}_{\big\vert [0,\tau)\times \Omega}$ are equivalent.
\end{theorem}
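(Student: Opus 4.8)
The plan is to realise $X^n$ as the unique fixed point of the map
\[
\mathcal{K}^n_T(X):=\mathcal{U}_{[0,T]}(u(T_0))+\Phi^{n}_T(X)+\Psi^{T_0,n}_T(X)
\]
on the Banach space $\mathbb{M}^p(Y_{T},\mathbb{F}^{T_0})$, first on a short interval and then globally by iteration. I would begin by recording that, since $\rH$ and $\rE$ satisfy Assumption \ref{ass-spaces} by Lemma \ref{lem-spaces+maps}, all the cited estimates apply to the pair $(\rH,\rE)$. Then I would check that $\mathcal{K}^n_T$ maps $\mathbb{M}^p(Y_{T},\mathbb{F}^{T_0})$ into itself: the free term lies there by Lemma \ref{lem-initial}, the deterministic term by Lemma \ref{lem-det} (whose bounds are deterministic and depend only on $n$ and $T$, so that taking the $p$-th moment keeps us in $\mathbb{M}^p$), and the stochastic term by Lemma \ref{PsiT_on_Lp}; adaptedness and measurability of each summand are built into those statements.

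Next I would establish the contraction. By Proposition \ref{prop-det-Lip} and Proposition \ref{prop-Lip-Psi_T} the Lipschitz constant of $\mathcal{K}^n_T$ on $\mathbb{M}^p(Y_{T},\mathbb{F}^{T_0})$ is bounded by $L_{n}(T)+\big(1+\hat{C}_p(T)\big)\hat{L}_n(T)$, and for fixed $n$ both $L_{n}(T)$ and $\hat{L}_n(T)$ tend to $0$ as $T\to 0$ (in fact locally uniformly in $n$). Hence there is a $T^\ast=T^\ast(n)>0$, \emph{independent of the datum} $u(T_0)$, for which this constant is at most $\tfrac12$; as $\mathbb{M}^p(Y_{T^\ast},\mathbb{F}^{T_0})$ is complete, the Banach fixed-point theorem yields a unique $X^n\in\mathbb{M}^p(Y_{T^\ast},\mathbb{F}^{T_0})$ solving \eqref{NLSlocal_n} on $[0,T^\ast]$.

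The heart of the argument is the global extension, and the only real obstacle is the \emph{memory} in the cut-off: $\theta_n(|X|_{Y_{r}})$ depends on the whole trajectory on $[0,r]$, not only on $X(r)$. I would iterate with the fixed step $T^\ast$. Given $X^n$ on $[0,kT^\ast]$, one has $X^n(kT^\ast)\in L^p(\Omega,\mathcal{F}^{T_0}_{kT^\ast},\rH)$, and the group identity $U_{t-r}=U_{t-kT^\ast}U_{kT^\ast-r}$ rewrites \eqref{NLSlocal_n} on $[kT^\ast,(k+1)T^\ast]$ as a fixed-point problem whose free term is $U_{t-kT^\ast}X^n(kT^\ast)$, which lies in the relevant $\mathbb{M}^p$-space by Lemma \ref{lem-initial}. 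On this interval the cut-off is $\theta_n$ evaluated at the accumulated norm $|X^n|_{Y_{r}}$, which now carries a known, $\mathcal{F}^{T_0}_{kT^\ast}$-measurable additive contribution from the past; since $\theta_n$ is still bounded by $1$, is $\tfrac1n$-Lipschitz, and vanishes once $|X^n|_{Y_{r}}\geq 2n$, the proofs of Propositions \ref{prop-det-Lip} and \ref{prop-Lip-Psi_T} go through verbatim and return the \emph{same} contraction constant. Thus $\mathcal{K}$ is again a $\tfrac12$-contraction on $\mathbb{M}^p(Y_{[kT^\ast,(k+1)T^\ast]},\mathbb{F}^{T_0})$ and extends $X^n$ uniquely. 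Because $T^\ast$ depends only on $n$, finitely many steps cover any $[0,T]$; letting $T\to\infty$ produces a single process $X^n$ on $[0,\infty)$ whose restriction to each $[0,T]$ solves the required equation, consistency of these restrictions being forced by uniqueness.

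Finally I would address uniqueness and the comparison with an arbitrary local solution. Uniqueness on a general $[0,T]$ follows by induction over the subintervals $[kT^\ast,(k+1)T^\ast]$ from the local uniqueness supplied by the contraction. For the last assertion, let $v=(\tilde X(t),t\in[0,\tau))$ be a local solution of \eqref{NLSlocal_n} with approximating sequence $(\tau_m)$; comparing the stopped processes $\tilde X$ and $X^n$ on $[0,\tau_m\wedge T]$ and invoking the uniqueness just proved on that bounded horizon (localised through the stopping times) gives $\tilde X=X^n$ a.e. on $[0,\tau_m\wedge T]\times\Omega$, whence, letting $m\to\infty$ and $T\to\infty$, $\bar X=\tilde X_{\big\vert[0,\tau)\times\Omega}$ is equivalent to $X^n$. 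Throughout, the one step that warrants genuine care is the transfer of the Lipschitz estimates to the shifted intervals, i.e. verifying that the accumulated-norm cut-off does not spoil the uniform-in-$k$ contraction constant.
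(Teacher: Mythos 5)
Your proposal is correct and follows essentially the same route as the paper: a Banach fixed-point argument for the map $X\mapsto \mathcal{U}_{[0,T]}(u(T_0))+\Phi^n_T(X)+\Psi^{T_0,n}_T(X)$ on a short interval of length depending only on $n$ (via Propositions \ref{prop-det-Lip} and \ref{prop-Lip-Psi_T}), followed by iteration over subintervals of that fixed length and an inductive uniqueness argument with the stopping times $\tau\wedge(k\delta)$. If anything, you are more explicit than the paper about the one delicate point — that the cut-off $\theta_n(|X|_{Y_r})$ carries memory of the trajectory on $[0,r]$, so the restarted fixed-point problems on $[kT^\ast,(k+1)T^\ast]$ must be checked to retain the same contraction constant despite the accumulated, $\mathcal{F}^{T_0}_{kT^\ast}$-measurable contribution to the $Y$-norm — a verification the paper leaves implicit when it concatenates the processes $X^{(n,k)}$.
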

\begin{proof}
The second statement is obvious in view of the definitions of the maps ${\mathcal U}_{[0,T]}$, $\Phi^{n}_{T}$ and $\Psi^{T_0,n}_{T}$.
 To prove the first part let us fix $T>0$.
It follows from Propositions \ref{prop-det-Lip} and \ref{prop-Lip-Psi_T} that for every   $t >0$, the map
\begin{equation}\label{eqn-Lambda}
\Lambda^{n}_{t}:\mathbb{M}^p(Y_{t},\mathbb{F}^{T_0})\ni X
\mapsto {\mathcal U}_{[0,t]}(u(T_0)) +\Phi^{n}_{t}(X)+\Psi^{T_0,n}_{t}(X)\in
   \mathbb{M}^p(Y_{t},\mathbb{F}^{T_0})
\end{equation}
is well defined and Lipschitz. Moreover, if $t$ is sufficiently small
 (depending on $n$),
this map is a strict contraction. Thus, there exists \label{page-delta} $\delta>0$ such that
 if $t \leq \delta$ then $\Lambda^{n}_{t}$ is a $\frac12$-contraction in  the space
 $\mathbb{M}^p(Y_{t},{\mathbb F}^{T_0})$.  Let us define a sequence  $(T_k)_{k=0}^\infty$
 by   $T_k=T_0+k\delta$, $k\in\mathbb{N}^\ast$.
By the previous conclusion there exists
a process $X^{(n,1)}\in \mathbb{M}^p(Y_{\tau}, {\mathbb F}^{T_0})$ such that
 $X^{(n,1)}={\mathcal U}_{[0,\delta]}(u(T_0)) +\Phi^{n}_{\delta}(X^{(n,1)})
+\Psi^{T_0,n}_{\delta}(X^{(n,1)})$. By the definition of the space $ \mathbb{M}^p(Y_{\delta},\mathbb{F}^{T_0})$,
 $X^{(n,1)}(\delta)$ belongs to the space $L^p(\Omega,\mathcal{F}_{T_0+\delta},\rH)$. By Propositions \ref{prop-det-Lip}
 and \ref{prop-Lip-Psi_T} we can find  a  unique process $X^{(n,2)}\in \mathbb{M}^p(Y_{\delta},\mathbb{F}^{T_1})$ such that
 $X^{(n,2)}={\mathcal U}_{[0,\delta]}(X^{(n,1)}(T_1)) +\Phi^{n}_{\delta}(X^{(n,2)})+\Psi^{T_1,n}_{\delta}(X^{(n,2)})$.
 By induction, we can find a sequence
 $(X^{(n,k)})_{k=1}^\infty$, such that $X^{(n,k)}\in \mathbb{M}^p(Y_{\delta},\mathbb{F}^{T_{k-1}})$ and
 $X^{(n,k)}={\mathcal U}_{[0,\delta)}(X^{(n,k-1)}(T_{k-1})) +\Phi^{n}_{\delta}(X^{(n,k)})+\Psi^{T_{k-1},n}_{\delta}(X^{(n,k)})$.
 Next, we define a process $u^n$ as follows: let  $u^n(T_0+t)=X^{(n,1)}(t)$, $t\in [ 0,\delta)$,
and
 for  $k= [\frac{t-T_0}{\delta} ]+1$ and  $0\leq t< \delta$, let  $u^n(T_k+t)=X^{(n,k)}(t)$.
The proof is concluded by observing that for every $T>0$, $u^n(T_0+t)=X^n(t)$
for some  $X^n\in \mathbb{M}^p(Y_{T}, {\mathbb F}^{T_0})$ and
 $X^n={\mathcal U}_{[0,T]}(u_0)+\Phi^{n}_{T}(X^n)+\Psi^{T_0,n}_{T}(X^n)$.

Finally,to prove the uniqueness,
let us choose $\delta>0$ as above 
and put $\sigma_1=\tau \wedge \delta$.
Then the fixed point Theorem used above 
implies that  the processes $X_{\big\vert [0,\sigma_1)\times \Omega}$  and $\tilde{X}_{\big\vert [0,\sigma_1)\times \Omega}$ are equivalent.
 By an inductive argument, if $k\in\mathbb{N}^\ast$ and $\sigma_k=\tau \wedge (k\delta)$,
then the processes $X^n_{\big\vert [0,\sigma_k)\times \Omega}$
 and $\tilde{X}_{\big\vert [0,\sigma_k)\times \Omega}$ are equivalent. Since $\sigma_k \toup \tau$, the result follows.

\end{proof}

\section{Existence of a local  maximal mild solution to problem \eqref{NLS-abstract}}
\label{sec-local-sol}
Our first aim is to prove the following result about the existence and uniqueness of a local maximal solution to problem to \eqref{NLS-abstract}.
Let $T_0$ be a finite accessible $\mathbb{F}$-stopping time
 and  suppose that the assumption \ref{ass-maps}  on  $F$ and $G$  are satisfied.
Let $u(T_0)$ be a $p$-integrable  $\rH$-valued $\mathcal{F}_{T_0}$ random variable.
In the previous section (see Theorem \ref{thm-exis-approx})  we proved  for every $n\in \mathbb{N}$ the existence of a unique solution
$u^n(T_0+.)=X^n(.)$ on $[0,\infty)$    to the problem  \eqref{NLSlocal_n}.
Let   $\tau_n$ and $\hat{\tau}_n$  denote the ${\mathbb F}^{T_0}$ stopping times defined by
\begin{equation}
 \label{eqn-tau_n}
 \tau_{n}= \inf \, \{ t>0: |X^{n}|_{Y_{t}}\geq n\}\wedge n,\qquad
 \hat{\tau}_{n}= \inf \, \{ t>0: |X^{n}|_{Y_{t}}\geq 2n\}\wedge n,
\end{equation}
The following result establishes the existence and uniqueness  of a local solution to \eqref{NLSlocal_n}.
\begin{proposition}\label{prop-mild} Let  $(X^n(t), t\geq 0)$ be the process introduced  in Theorem \ref{thm-exis-approx}.
Then the process $(X^n(t)\, ,\, t <  \tau_n)$
is a local mild solution to problem \eqref{NLS-abstract}   with the filtration is ${\mathbb F}^{T_0}$ and the Brownian Motion  $W^{T_0}$.
\end{proposition}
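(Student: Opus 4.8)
The plan is to set $T_1=\tau_n$ and to verify the three requirements of Definition \ref{def-loc-sol-mod} directly from the fixed point identity $X^n=\mathcal{U}_{[0,T]}(u(T_0))+\Phi^{n}_{T}(X^n)+\Psi^{T_0,n}_{T}(X^n)$ supplied by Theorem \ref{thm-exis-approx}. The decisive point is that the truncation is inactive strictly before $\tau_n$: by \eqref{eqn-theta} one has $\theta_n(x)=\theta(x/n)=1$ exactly when $x\leq n$, so for every $r<\tau_n$, where $|X^n|_{Y_r}<n$ by \eqref{eqn-tau_n}, the factor $\theta_n(|X^n|_{Y_r})$ equals $1$. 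Consequently, on $[0,\tau_n)$ the truncated equation \eqref{NLSlocal_n} coincides with the genuine mild equation \eqref{NLSlocalmild} with initial datum $u(T_0)$, and the task is to make this reduction rigorous after stopping.

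First I would note that $t\mapsto |X^n|_{Y_t}$ is continuous and non-decreasing, the term $\sup_{r\in[0,t]}|X^n(r)|_{\rH}^p$ being continuous since $X^n\in C([0,T];\rH)$ and $t\mapsto\int_0^t|X^n(r)|_{\rE}^p\,dr$ being continuous since $X^n\in L^p(0,T;\rE)$. Using this continuity I would produce an approximating sequence for $\tau_n$, for instance
\[
\sigma_k=\inf\{t\geq 0: |X^n|_{Y_t}\geq n-\tfrac1k\}\wedge\big(n-\tfrac1k\big),
\]
which are $\mathbb{F}^{T_0}$-stopping times satisfying $\sigma_k<\tau_n$ and $\sigma_k\toup\tau_n$ a.s.; this both shows that $\tau_n$ is accessible and provides the sequence needed in (iii). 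Requirement (ii), namely $X^n\in\mathbb{M}^p_{\textrm{loc}}(Y_{[0,\tau_n)},\mathbb{F}^{T_0})$, is then immediate, since $|X^n|_{Y_{\sigma_k}}\leq n$ forces $\E\big(\sup_{t\in[0,\sigma_k]}|X^n(t)|_{\rH}^p+\int_0^{\sigma_k}|X^n(r)|_{\rE}^p\,dr\big)\leq n^p<\infty$.

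It remains to establish \eqref{NLSlocalmild} at the times $t\wedge\sigma_k$. Evaluating \eqref{NLSlocal_n} at $t\wedge\sigma_k$ and using $\theta_n(|X^n|_{Y_r})=1$ for $r\leq\sigma_k$, the free term and the drift take the required form at once from the group property of $(U_t)$. For the stochastic term, write $\xi^n(r)=\theta_n(|X^n|_{Y_r})G(X^n(r))$ and use $U_{t-r}=U_tU_{-r}$ to factor $\Psi^{T_0,n}_{T}(X^n)(t)=U_tM(t)$, where $M(t)=\int_0^tU_{-r}\xi^n(r)\,dW^{T_0}(r)$ is a genuine $\rH$-valued martingale. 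Optional stopping for stochastic integrals gives $M(t\wedge\sigma_k)=\int_0^t1_{[0,\sigma_k]}(r)U_{-r}\xi^n(r)\,dW^{T_0}(r)$, whence, reassembling and using once more that $\theta_n=1$ on $[0,\sigma_k]$,
\[
\Psi^{T_0,n}_{T}(X^n)(t\wedge\sigma_k)=\int_0^{t\wedge\sigma_k}U_{(t\wedge\sigma_k)-r}\xi^n(r)\,dW^{T_0}(r)=I_{\sigma_k}(G(X^n))(t).
\]

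The step I expect to be the main obstacle is exactly this last commutation: to pass from $U_{t\wedge\sigma_k}M(t\wedge\sigma_k)$ to $\int_0^{t\wedge\sigma_k}U_{(t\wedge\sigma_k)-r}\xi^n\,dW^{T_0}$ one must pull the random, $\mathcal{F}_{t\wedge\sigma_k}$-measurable operator $U_{t\wedge\sigma_k}$ back inside the It\^o integral, which is not a formal step because $U_{t\wedge\sigma_k}$ is non-adapted in the integration variable. I would justify it by approximating $\sigma_k$ by simple $\mathbb{F}^{T_0}$-stopping times with finitely many values, on each level of which $U_{t\wedge\sigma_k}$ is a fixed deterministic bounded operator that commutes with the stopped stochastic integral, and then passing to the limit via continuity of $M$ and strong continuity of the group. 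This is also the juncture at which the operator appearing in $I_{\sigma_k}$ must be read with the consistently stopped exponent $U_{(t\wedge\sigma_k)-r}$, so that the right-hand side of \eqref{NLSlocalmild} depends on $t$ only through $t\wedge\sigma_k$, in agreement with the left-hand side $X^n(t\wedge\sigma_k)$.
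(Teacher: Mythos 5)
Your argument follows the paper's proof in all essentials: both start from the fixed-point identity of Theorem \ref{thm-exis-approx}, replace the deterministic time $t$ by the stopped one, observe that the truncation $\theta_n(\vert X^n\vert_{Y_r})$ equals $1$ up to $\tau_n$ so that \eqref{NLSlocal_n} reduces to the untruncated mild equation, and then commute the stopping time with the stochastic convolution. The only real difference is that the paper disposes of that last commutation by citing \cite[\S 4.3]{Carroll_1999} and \cite[Lemma A.1]{Brz+Masl+S_2005}, whereas you prove it directly by discretizing the stopping time (which is exactly how the cited lemma is established), and you are more explicit than the paper about the approximating sequence $\sigma_k$, the verification of condition (ii), and the consistent reading $U_{(t\wedge\sigma_k)-r}$ of the exponent in \eqref{eqn-I_tau_n}.
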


\begin{proof}
Obviously $\tau_n$ is an accessible ${\mathbb F}^{T_0}$ stopping time and the process $X^n$
 satisfies
for all $t\geq 0$, $\mathbb{P}$-a.s., %
\begin{equation} \label{NLSlocal_n2}
 X^n(t) -  \, U_t[ u(T_0)] -\int_{0}^t U_{t-r}\big[ \theta_n( \vert X^n\vert_{Y_{r}}) F(X^n(r)) \big]\, dr
 = I(t),
\end{equation}
where $I(t)=\int_{0}^t U_{t-r}\big[ \theta_n( \vert X^n\vert_{Y_{r}}) G(X^n(r)) \big]\, dW^{T_0}(r)$.
Since the processes on both sides of the above equality are continuous, the equality  still holds  when the fixed deterministic
 time $t$ is replaced by the random one $t\wedge \tau_n$. The definitions of
 $\theta_n$ and  $\tau_n$ imply   $\theta_n( \vert X^n\vert_{Y_{r\wedge \tau_n}})= 1$,
 and hence the  deterministic convolution above stopped at $t\wedge \tau_n$ is equal to
 $\int_{0}^t U_{t-r} 1_{[0,\tau_n)}(r) F(X^n(r)) \, dr$.
Moreover by \cite[\S 4.3]{Carroll_1999}  and \cite[Lemma A.1]{Brz+Masl+S_2005} we infer
\begin{align}
I(t\wedge \tau_n)&=
\int_{0}^t U_{t-r}\big[ 1_{[0,\tau_n)}(r) \theta_n( \vert X^n\vert_{Y_{r\wedge \tau_n}})
 G(X^n(r\wedge \tau_n)) \big]\, dW^{T_0}(r).
\label{eqn_conv_stopped}
\end{align}
Finally, as above
$1_{[0,\tau_n)}(s) G(X^n(s\wedge \tau_n))=G (X^n(s)) $, so that
$I(t)= I_{\tau_n}(G(X^n))( t)$ where  $I_{\tau_n}(G(X^n))( t)$ is defined in \eqref{eqn-I_tau_n}.
This concludes the proof.
\end{proof}

The  previous existence result
will be supplemented  by   the following
local uniqueness result in which we will use the notation introduced in \cite{Elw_1982} (see Theorem VI.5).

\begin{lemma}\label{Lem:loc-uniq}
Suppose that  the assumptions of Proposition  \ref{prop-mild} are satisfied.
Assume that $k,n$ are two natural numbers such that $n\leq k$.
 Then
$
\tau_n \leq \tau_k$  a.s.
and the processes
$u^n|_{[T_0,T_0+\tau_n)\times\Omega}$ and $ u^k|_{[T_0,T_0+\tau_n)\times\Omega}$
are equivalent.
\end{lemma}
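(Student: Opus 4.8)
The plan is to reduce everything to the uniqueness clause of Theorem \ref{thm-exis-approx} applied to the $k$-th truncated equation \eqref{NLSlocal_n}. The key elementary observation is that the two truncations coincide on the range relevant before $\tau_n$: since $\theta_n(x)=1$ for $x\le n$ and $\theta_k(x)=1$ for $x\le k$, and, by \eqref{eqn-tau_n}, $\vert X^n\vert_{Y_r}<n\le k$ whenever $r<\tau_n$, one has
\[
\theta_n(\vert X^n\vert_{Y_r})=\theta_k(\vert X^n\vert_{Y_r})=1,\qquad r<\tau_n .
\]

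First I would check that the restriction $(X^n(t),\,t<\tau_n)$ is a local mild solution of \eqref{NLSlocal_n} with index $k$, in the sense of Definition \ref{def-loc-sol-mod}, using the approximating sequence of the accessible stopping time $\tau_n$. Indeed, $X^n$ satisfies the fixed-point identity $X^n={\mathcal U}_{[0,T]}(u(T_0))+\Phi^{n}_{T}(X^n)+\Psi^{T_0,n}_{T}(X^n)$ on every $[0,T]$; evaluated at a time $t<\tau_n$, both convolutions integrate only over $r\in[0,t]\subset[0,\tau_n)$, so the displayed identity lets me replace $\theta_n$ by $\theta_k$ in the two integrands without changing their values. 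For the deterministic convolution this is immediate, and for the stochastic one it follows from the localization property of the It\^o integral (integrands agreeing on $[0,\tau_n)$ produce integrals agreeing on $[0,\tau_n)$), the stopping being handled exactly as in the proof of Proposition \ref{prop-mild} via \cite[\S 4.3]{Carroll_1999} and \cite[Lemma A.1]{Brz+Masl+S_2005}. Hence $X^n(t)={\mathcal U}_{[0,T]}(u(T_0))(t)+\Phi^{k}_{T}(X^n)(t)+\Psi^{T_0,k}_{T}(X^n)(t)$ for $t<\tau_n$.

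Next, since $X^n|_{[0,\tau_n)}$ and $X^k$ are both local solutions of the $k$-truncated equation with the common initial datum $u(T_0)$, the uniqueness clause of Theorem \ref{thm-exis-approx} (for index $k$) yields that $X^n$ and $X^k$ are equivalent on $[0,\tau_n)$; equivalently, $u^n$ and $u^k$ agree on $[T_0,T_0+\tau_n)\times\Omega$, which is the second assertion. To obtain the ordering I would use that $t\mapsto\vert X^i\vert_{Y_t}$ is continuous and non-decreasing, so the equivalence gives $\vert X^n\vert_{Y_t}=\vert X^k\vert_{Y_t}$ for every $t<\tau_n$, a.s. For such $t$ we have simultaneously $t<\tau_n\le n\le k$ and $\vert X^k\vert_{Y_t}=\vert X^n\vert_{Y_t}<n\le k$, so neither condition defining $\tau_k$ in \eqref{eqn-tau_n} holds at time $t$, whence $t<\tau_k$. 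Letting $t\uparrow\tau_n$ gives $\tau_n\le\tau_k$ a.s.

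I expect the only delicate point to be the first step, namely verifying in the precise sense of Definition \ref{def-loc-sol-mod} that the restriction of $X^n$ solves the $k$-truncated equation, in particular the correct stopping of the stochastic convolution; everything else is a direct application of the already-established uniqueness in Theorem \ref{thm-exis-approx} together with an elementary comparison of the two stopping times.
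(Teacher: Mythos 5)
Your proof is correct and follows essentially the same route as the paper's: both rest on the observation that $\theta_n$ and $\theta_k$ agree (and equal $1$) below level $n$, and both conclude by invoking the uniqueness clause of Theorem \ref{thm-exis-approx}. The only difference is cosmetic — the paper stops $u^k$ at the first time its $Y$-norm reaches $n$ and recognizes the stopped process as a local solution of the $n$-truncated equation, whereas you restrict $u^n$ to $[0,\tau_n)$ and recognize it as a local solution of the $k$-truncated equation; the two directions are mirror images, and your derivation of $\tau_n\leq\tau_k$ from the resulting equivalence is if anything spelled out more carefully than in the paper.
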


\begin{proof} 
Let us fix $k,n\in\mathbb{N}$ such that $n\leq k$ and put
\[
 \tau_{k,n}= \inf \, \{ t>0: |u^{k}|_{Y_{[0,t]}}\geq n\} \wedge k.
\]
Then obviously $\tau_{k,n} \leq \tau_{k}$ and  by repeating the argument from the proof of Proposition \ref{prop-mild}
the process  $\big(u^k(t)\, ,\, t < T_0+ \tau_{k,n}\big)$  is a local solution of the problem \eqref{NLSlocal_n2}.
 But by Proposition \ref{prop-mild}, the process   $\big(u^n(t)\, , \, t\leq T_0+ \tau_{n}\big)$
is also a local solution of the problem \eqref{NLSlocal_n}.
The proof is thus completed by applying the uniqueness part of Theorem \ref{thm-exis-approx}.
\end{proof}

In the following Theorem we will prove   the existence of a unique local mild solution to \eqref{NLS-abstract}.
An important   feature  of this result is that we can estimate from below the length of the existence time
interval with  a lower bound, which depends on the $p$-th moment of the $\rH$-norm of the initial data, on a "large" subset of
 $\Omega$ whose probability does not
depend on this moment.
 This property is used later on in proving that the $\rH$-norm of solution converges
to $\infty$ as the time converges to the lifespan, provided it is finite.

\begin{theorem}\label{thm_local} Let us assume that  Assumptions \ref{ass-spaces} and \ref{ass-maps} be satisfied.
Then for every    $\mathcal{F}_{T_0}$-measurable $\rH$-valued $p$-integrable
random variable $u(T_0)$    there exits  a  local  process $X=\big(X(t), t\in[0,T_1) \big) $  which is the
  unique local mild solution to the  problem \eqref{NLS-abstract} with the filtration ${\mathbb F}^{T_0}$ and the Brownian
$W^{T_0}$.
Moreover,  given $R>0$ and  $\varepsilon >0$ there exists $\tau(\varepsilon,R)>0$,  such that for every   $\mathcal{F}_{T_0}$-measurable $\rH$-valued
random variable $u(T_0)$ satisfying  $\mathbb{E}|u(T_0) |^{p}_{\rH} \leq R^{p}$, one has
${\mathbb P}\big(T_1\geq \tau(\varepsilon,R)\big) \geq
1-\varepsilon$.
 \end{theorem}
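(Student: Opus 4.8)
The plan is to construct the maximal solution by patching together the truncated solutions $X^n$ from Theorem \ref{thm-exis-approx} along the stopping times $\tau_n$ of \eqref{eqn-tau_n}, and then to read off the quantitative lower bound on the lifetime from the a~priori estimates of the preceding subsections combined with Chebyshev's inequality.

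First I would record that $(\tau_n)$ is strictly increasing. By Lemma \ref{Lem:loc-uniq} we have $X^k=X^n$ on $[0,\tau_n)\times\Omega$ for $k\geq n$, and $t\mapsto|X^n|_{Y_t}$ is continuous and nondecreasing; hence on $\{\tau_n<n\}$ one has $|X^{n+1}|_{Y_{\tau_n}}=|X^n|_{Y_{\tau_n}}=n<n+1$, forcing $\tau_{n+1}>\tau_n$, while on $\{\tau_n=n\}$ continuity gives $|X^{n+1}|_{Y_n}=|X^n|_{Y_n}\leq n<n+1$, so again $\tau_{n+1}>n$. Thus $T_1:=\sup_n\tau_n=\lim_n\tau_n$ is an accessible $\mathbb{F}^{T_0}$-stopping time admitting $(\tau_n)$ as an approximating sequence. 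Consistency (Lemma \ref{Lem:loc-uniq}) makes $X(t):=X^n(t)$ for $t<\tau_n$ well defined, and Proposition \ref{prop-mild} shows that $X=(X(t),\,t<T_1)$ is a local mild solution of \eqref{NLS-abstract}, equation \eqref{NLSlocalmild} holding along $(\tau_n)$. Pathwise uniqueness follows by localization: if $\tilde X$ is any local mild solution and $\tilde\tau_n:=\inf\{t:|\tilde X|_{Y_t}\geq n\}\wedge n$, then on $[0,\tilde\tau_n)$ one has $\theta_n(|\tilde X|_{Y_t})=1$, so $\tilde X$ solves the truncated equation \eqref{NLSlocal_n}; the uniqueness clause of Theorem \ref{thm-exis-approx} gives $\tilde X=X^n$ there, whence $\tilde\tau_n=\tau_n$ and $\tilde X=X$ on $[0,\tau_n)$ for every $n$, i.e.\ on $[0,T_1\wedge\tilde T_1)$.

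The quantitative bound is the heart of the matter. Since $T_1\geq\tau_N$ for every $N$ and $t\mapsto|X^N|_{Y_t}$ is continuous and nondecreasing, for $t<N$ one has $\{\tau_N\leq t\}\subseteq\{|X^N|_{Y_t}\geq N\}$, so Chebyshev's inequality gives
\begin{equation*}
\mathbb{P}(T_1<t)\leq\mathbb{P}(\tau_N\leq t)\leq N^{-p}\,\E\,|X^N|_{Y_t}^p .
\end{equation*}
Writing $X^N={\mathcal U}_{[0,t]}(u(T_0))+\Phi^{N}_{t}(X^N)+\Psi^{T_0,N}_{t}(X^N)$ and using the triangle inequality in $\mathbb{M}^p(Y_{t},\mathbb{F}^{T_0})$ together with \eqref{ineq-initial-0} from Corollary \ref{cor-group}, the deterministic bounds \eqref{ineq-Phi_T-stronger}--\eqref{ineq-Phi_T_2-stronger} of Lemma \ref{lem-det}, and the stochastic bounds \eqref{majo_PsiT_shatq}--\eqref{majo_PsiT_s2} of Lemma \ref{PsiT_on_Lp}, I would obtain a bound of the schematic form
\begin{equation*}
\E\,|X^N|_{Y_t}^p\leq C\Big[(1+C^p_p(t))R^p+(1+\tilde C^p_p(t))\big(t+(t^\gamma+t^{\gamma+\frac1p})(2N)^\beta\big)^p+(1+\hat C_p(t))\big(t^{\frac p2}+t^{\frac p2-a}(1+t)N^{pa}+t^{\frac p2}N^p\big)\Big],
\end{equation*}
where only $\E|u(T_0)|_{\rH}^p\leq R^p$ is used, so the bound depends on the initial datum solely through $R$.

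The decisive point is the order of the two choices. Dividing by $N^p$, the first summand contributes $C(1+C^p_p(t))R^p/N^p$, which for $t$ small is $\leq 2CR^p/N^p$; I would therefore first fix $N=N(\varepsilon,R)$ so large that this is $\leq\varepsilon/2$. With $N$ now frozen, every remaining summand carries a strictly positive power of $t$ (note $\gamma>0$ by \eqref{eqn-gamma} and $\tfrac p2-a>0$ since $a<\tfrac p2$) or a factor $\tilde C_p(t),\hat C_p(t),C_p(t)$ tending to $0$ as $t\todown0$; hence they all vanish as $t\todown0$ \emph{even though} they grow polynomially in $N$. I would then choose $\tau(\varepsilon,R)>0$ small enough that their sum, divided by $N^p$, is $\leq\varepsilon/2$, giving $\mathbb{P}(T_1\geq\tau(\varepsilon,R))\geq1-\mathbb{P}(\tau_N\leq\tau(\varepsilon,R))\geq1-\varepsilon$. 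The main obstacle is precisely this tension: the a~priori estimate for $\E|X^N|_{Y_t}^p$ blows up in $N$ (through the factors $N^\beta$ and $N^{a}$), so one cannot simply send $N\to\infty$. The resolution is to absorb the $N$-independent initial-data term by the first choice of $N$, and then exploit the vanishing in $t$ — which is uniform once $N$ is fixed — by a second, $N$-dependent choice of $\tau(\varepsilon,R)$; the uniformity of the probability level $1-\varepsilon$ is what makes the estimate useful for the later blow-up alternative.
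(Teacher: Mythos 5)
Your proposal is correct and follows the same overall architecture as the paper's proof: work with the truncated solutions $X^n$ of Theorem \ref{thm-exis-approx}, use the inclusion $\{\tau_N\leq t\}\subseteq\{|X^N|_{Y_t}\geq N\}$ together with Chebyshev's inequality, and resolve the tension between the $N$-growth of the estimates and their smallness in $t$ by fixing $N$ first and $\tau(\varepsilon,R)$ second. The one point where you genuinely diverge is the moment bound fed into Chebyshev. You use the crude a priori estimates of Lemmas \ref{lem-det} and \ref{PsiT_on_Lp}, which give a bound on $\E|X^N|_{Y_t}^p$ consisting of an $N$-independent term of order $R^p$ plus terms that grow like $N^{p\beta}$ and $N^{pa}$ but carry positive powers of $t$; consequently you must take $N\gtrsim R\,\varepsilon^{-1/p}$. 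The paper instead sets $n=NR$ and exploits the self-map property of the contraction: for $T$ small (depending on $n$), $\Lambda^n_T$ maps $\mathbb{M}^p(Y_T,\mathbb{F}^{T_0})$ into the ball of radius $(3C+1)R/2$, so the fixed point satisfies $\E|X^n|_{Y_{\tau}}^p\leq\big((3C+1)R/2\big)^p$ with no residual $n$-dependence, and Chebyshev yields $\big((3C+1)/2\big)^pN^{-p}$, so that $N$ depends on $\varepsilon$ alone. Both routes are valid; yours is marginally more elementary (no ball-invariance refinement needed), while the paper's separates the roles of $\varepsilon$ and $R$ more cleanly. Two minor remarks: your construction of $T_1$ as $\sup_n\tau_n$ does more than the theorem requires (it anticipates Theorem \ref{thm_maximal-abstract}; the paper simply takes $T_1=\tau_n$ for the chosen $n$), and the claimed strict monotonicity of $(\tau_n)$ needs the caveat $|u(T_0)|_{\rH}<n$ (otherwise several consecutive $\tau_n$ may vanish), though only monotone convergence is actually used.
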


\begin{proof}
The first part
follows from  Proposition \ref{prop-mild}.
The proof of the second part seems to be new with respect to the existing literature.

 Let us fix  $\varepsilon >0$ ; choose  $N$
 such that $N\geq 2 \varepsilon^{-1/p}$.

Fix $\omega\in \Omega$ such that $|u(T_0)(\omega)|_{\rH}<\infty$. Then
for $C:=\sup_{t\in {\mathbb R}} |U(t)|_{L(H)}$, we have $|U_t u(T_0)(\omega)|_{\rH} \leq C |u(T_0)(\omega)|_{\rH}$
and by Assumption \ref{ass-spaces}  we deduce that
$\big( \int_0^T \|U_t u(T_0)(\omega) \|_E^p dt\big)^{1/p} \leq C^p\, \tilde{C}_p(T)\;  |u(T_0)(\omega)|_{\rH}$. This implies that
\[ \mathbb{E} \Big( \sup_{t\in [0,T]} |U_t u(T_0)|_{\rH}^p +
\int_0^T \|U_t u(T_0)\|_{\rE}^p \,dt \Big) \leq C^p\, [1+\tilde{C}_p(T)^p]\;  \mathbb{E}|u(T_0)|_{\rH}^p.\]

 Moreover,  Lemmas \ref{lem-det} and  \ref{PsiT_on_Lp}
imply that given $X\in {\mathbb M}^p(Y_T, {\mathbb F}^{T_0})$, one has
  $\vert \Phi_T^n(X)+\Psi_T^{T_0,n}(X) \vert_{{\mathbb M}^p(Y_T, {\mathbb F}^{T_0})} \leq K_n(T) $,
with  $\sup_{n\leq n_0} K_n(T)\to 0$  as $T\to 0$ for every $n_0 \in\mathbb{N}^\ast$.
Furthermore, the map $\tilde{C}_p(.)$ is non decreasing and   $\tilde{C}_p(T)\to 0$  as $T\to 0$. Hence
one may choose $\delta_{1}(\varepsilon)$ such that
$[1+\tilde{C}_p(\delta_{1}(\varepsilon))^p]^{1/p} \leq \frac32$.

 Let us put $n=N R$    for some "large" $N$ to be chosen later  and
 choose $\delta_{2}(\varepsilon) >0$ such that
$K_{n}(\delta_{2}(\varepsilon)) \leq \frac12 R $.
 Let  $\Lambda_T^{n}$ be the map defined by \eqref{eqn-Lambda}. Since  $ \mathbb{E}(|u(T_0)|_{\rH}^p)^{1/p}\leq R$,
we deduce that for $T\leq \delta_{1}(\varepsilon) \wedge \delta_{2}(\varepsilon)$,
 the range of $\Lambda_T^{n}$ is included in the ball of
${\mathbb M}^p(Y_T, {\mathbb F}^{T_0})$ of radius  $(3C+1)R/2$.
Furthermore, Propositions \ref{prop-det-Lip} and \ref{prop-Lip-Psi_T}  show that there exists
 $\delta_{3}(\varepsilon)$ such that $\Lambda^{n}_T$
is a strict contraction of ${\mathbb M}^p(Y_T, {\mathbb F}^{T_0})$ if $T \leq \delta_{3}(\varepsilon)$. Hence, if one lets
$\tau(\varepsilon,R)= \delta_{1}(\varepsilon) \wedge\delta_{2}(\varepsilon) \wedge \delta_{3}(\varepsilon)\wedge \frac{n}{2}$,
 the unique fixed point
$X^{n}$ of the map $\Lambda^{n}_{\tau(\varepsilon,R)}$
is such that $ \mathbb{E}\big( |X^{n}|_{Y_{\tau(\varepsilon,R)}}^p\big)^{1/p} =
 \|X^{n}\|_{M^p(Y_{\tau(\varepsilon,R)},
 {\mathbb F}^{T_0})} \leq (3C+1) R/2 $.
Proposition \ref{prop-mild} shows that the process $(X^{n}(t) , t\leq \tau_{n})$ is a local mild solution
to problem \eqref{NLS-abstract}. Furthermore, by the definition of the stopping time $\tau_n$,
 the set $\{\tau_{n} <\tau(\varepsilon,R)\}$ is contained in the set $\{\|X^{n} \|_{Y_{\tau(\varepsilon,R)}} \geq n\}$.
 Since by the Chebyshev's inequality
${\mathbb P}\big( \|X^{n} \|_{Y_{\tau(\varepsilon,R)}} \geq n\big) \leq
 \big((3C+1)/2\big)^p N^{-p} \leq \varepsilon$ provided that $N$ is chosen large enough, we infer that
$ {\mathbb P}\big( \tau_{n} <\tau(\varepsilon,R) \big) \leq \varepsilon $.

Therefore,  ${\mathbb P}\big(\tau_{n}\geq \tau(\varepsilon,R)\big) \geq 1-\varepsilon$
 and the stopping time $T_1=\tau_{n}$ satisfies the requirements of the Theorem; this  concludes the proof.
\end{proof}

 Assume that $T_0=0$, $u_0\in L^p(\Omega,\mathcal{F}_0,\rH)$  and let $\tau_n$ be
 defined by \eqref{eqn-tau_n}. Set
\[ \tau_{\infty}(\omega) := \lim_{n\toup \infty}\tau_n(\omega), \; \omega \in \Omega,\]
Then $U^n=X^n$ and
Lemma \ref{Lem:loc-uniq}
 implies that the following identity uniquely defines a local process
$(u(t)\, ,\, t< \tau_{\infty})$ as follows:
\begin{equation} \label{eqmax}
 u(t,\omega):=
  u^n(t,\omega), \; \mbox{ if } t < \tau_n(\omega),\; \omega \in \Omega.
\end{equation}

We can now prove the existence and uniqueness of a maximal solution to our abstract evolution equation \eqref{NLS-abstract};
this is the main result of this section.
\begin{theorem}\label{thm_maximal-abstract}
 Assume that the  Assumptions \ref{ass-stochastic basis}, \ref{ass-spaces} and  \ref{ass-maps}  are
satisfied with  $2\leq \beta<p$
and $a\in [1,\frac{p}2)$.
 Assume also that $\rE$ is a martingale type $2$ Banach space. Then for every    finite  and accessible
 $\mathbb{F}$-stopping time  $T_0$  and every  $u_0\in L^p(\Omega,\mathcal{F}_0,\rH)$,
 the process $u=(u(t)\, ,\, t<  \tau_\infty) $ defined by \eqref{eqmax}
   is the unique local maximal solution to equation \eqref{NLS-abstract}.
  Moreover,
$ {\mathbb P }\big(\{\tau_\infty <\infty\} \cap \{\sup_{ t<\tau_\infty} |u(t)|_{\rH}<\infty \}\big)=0$
and on $\{ \tau_\infty<\infty \}$, $\limsup_{t\to \tau_\infty} |u(t)|_{\rH} = +\infty$ a.s.
\end{theorem}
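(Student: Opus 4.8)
The plan is to build $u$ from the approximating solutions $X^n$ and to read off every assertion from the two stopping times in \eqref{eqn-tau_n}. First I would record that $u$ defined by \eqref{eqmax} is a local mild solution: on each $[0,\tau_n)$ it coincides with $X^n$, which is a local mild solution by Proposition \ref{prop-mild}, and the consistency of the family $(X^n)$ provided by Lemma \ref{Lem:loc-uniq} together with $\tau_n\toup\tau_\infty$ shows that $u$ belongs to $\mathbb{M}^p_{\textrm{loc}}(Y_{[0,\tau_\infty)},\mathbb{F}^{T_0})$ and satisfies \eqref{NLSlocalmild} with approximating sequence $(\tau_n)$. Pathwise uniqueness is exactly the content of Lemma \ref{Lem:loc-uniq}.

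Next I would establish that the full $Y$-norm blows up, which simultaneously yields maximality. The map $t\mapsto |X^n|_{Y_t}$ is continuous and non decreasing (the supremum part is continuous since $X^n\in {\mathcal C}([0,T];\rH)$ and the integral part is absolutely continuous). Hence on $\{\tau_\infty<\infty\}$, for every $n>\tau_\infty$ one has $\tau_n\leq\tau_\infty<n$, so $\tau_n$ is a genuine hitting time and $|X^n|_{Y_{\tau_n}}=n$; since $u=X^n$ on $[0,\tau_n)$ this forces $\sup_{t<\tau_\infty}|u|_{Y_t}=+\infty$, i.e. $\lim_{t\toup\tau_\infty}|u|_{Y_t}=+\infty$ a.s. on $\{\tau_\infty<\infty\}$. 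Maximality then follows: any local mild solution $\tilde u$ agreeing with $u$ on $[0,\tau_\infty)$ with lifetime $\tilde T_1>\tau_\infty$ on a set of positive probability would, by membership in $\mathbb{M}^p_{\textrm{loc}}$, have $|\tilde u|_{Y_{\tau_\infty}}<\infty$ there, contradicting the blow-up.

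It remains to prove the $\rH$-norm statement, which is the heart of the matter. Writing $\sup_{t<\tau_\infty}|u|_{Y_t}^p=\sup_{t<\tau_\infty}|u(t)|_{\rH}^p+\int_0^{\tau_\infty}|u(r)|_{\rE}^p\,dr$, the blow-up above shows only that one of the two summands explodes; the difficulty is that, because $F$ and $G$ grow superlinearly in $|\cdot|_{\rE}$ (Assumption \ref{ass-maps}), a naive Strichartz/Gronwall estimate of $\int_0^{\tau_\infty}|u|_{\rE}^p$ in terms of $\sup|u|_{\rH}$ is circular and cannot be closed. I would instead rule out the bad event directly. It suffices to show $\mathbb{P}(\Gamma_{R,T})=0$ for every $R,T\in\mathbb{N}$, where $\Gamma_{R,T}=\{\tau_\infty\leq T\}\cap\{\sup_{t<\tau_\infty}|u(t)|_{\rH}\leq R\}$. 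Fix $\varepsilon>0$ and set $\delta=\tau(\varepsilon,R)>0$ from Theorem \ref{thm_local}. For each $n$ I would restart the equation at the finite accessible stopping time $T_0+\tau_n$ with the truncated datum $\eta_n:=X^n(\tau_n)\,1_{\{|X^n(\tau_n)|_{\rH}\leq R\}}$, which is $\mathcal{F}_{T_0+\tau_n}$-measurable and satisfies $\mathbb{E}|\eta_n|_{\rH}^p\leq R^p$. Theorem \ref{thm_local} produces, on an event $E_n$ with $\mathbb{P}(E_n)\geq 1-\varepsilon$, a solution of lifetime at least $\delta$; on $\Gamma_{R,T}$ continuity gives $\eta_n=X^n(\tau_n)=\lim_{t\toup\tau_n}u(t)$, so by pathwise uniqueness this restarted solution concatenates with $u$ and maximality yields $\tau_\infty\geq\tau_n+\delta$ on $\Gamma_{R,T}\cap E_n$. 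Since $\tau_n\toup\tau_\infty$, on $\Gamma_{R,T}$ one eventually has $\tau_n>\tau_\infty-\tfrac{\delta}2$, which is incompatible with $\tau_\infty\geq\tau_n+\delta$; hence $\Gamma_{R,T}\cap\{\tau_n>\tau_\infty-\tfrac{\delta}2\}\subseteq E_n^c$, giving $\mathbb{P}\big(\Gamma_{R,T}\cap\{\tau_n>\tau_\infty-\tfrac{\delta}2\}\big)\leq\varepsilon$. Letting $n\to\infty$ and then $\varepsilon\to 0$ gives $\mathbb{P}(\Gamma_{R,T})=0$, and a union over $R,T$ proves $\mathbb{P}\big(\{\tau_\infty<\infty\}\cap\{\sup_{t<\tau_\infty}|u|_{\rH}<\infty\}\big)=0$. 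The $\limsup$ statement on $\{\tau_\infty<\infty\}$ then follows from continuity of $t\mapsto u(t)$ in $\rH$ on $[0,\tau_\infty)$, since a continuous $\rH$-valued path whose supremum is infinite over $[0,\tau_\infty)$ must have $\limsup_{t\to\tau_\infty}|u(t)|_{\rH}=+\infty$.

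The main obstacle is the restart step in the last paragraph, and in particular the crucial fact that the lower bound $\tau(\varepsilon,R)$ in Theorem \ref{thm_local} depends on the initial condition only through the bound $R$ on its $p$-th $\rH$-moment and not on any $\rE$-norm; this is precisely what defeats the superlinear circularity. The delicate points I would need to handle with care are the correct truncation of the restart datum (to control its $p$-th moment on all of $\Omega$ while still matching $u$ on $\Gamma_{R,T}$), the accessibility of $T_0+\tau_n$ as a stopping time, and the identification of the restarted solution with the continuation of $u$ through pathwise uniqueness and maximality.
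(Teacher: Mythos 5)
Your proposal is correct and follows the same overall strategy as the paper: the first two parts (local mild solution via Proposition \ref{prop-mild} and Lemma \ref{Lem:loc-uniq}, maximality via blow-up of the $Y$-norm) are identical, and the $\rH$-norm blow-up is in both cases a restart argument whose engine is the key uniformity in Theorem \ref{thm_local}, namely that the lower bound $\tau(\eps,R)$ on the lifetime depends on the initial datum only through the bound $R$ on its $p$-th $\rH$-moment. The one genuine difference is the restart point. The paper restarts at $\sigma_R=\inf\{t:|u(t)|_{\rH}\geq R\}\wedge\tau_\infty$, which on the bad event equals $\tau_\infty$ itself, so its restart datum is $u(\sigma_R)=\lim_{t\toup\tau_\infty}u(t)$; the existence of this limit in $\rH$ is not justified by mere boundedness of $\sup_{t<\tau_\infty}|u(t)|_{\rH}$. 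You restart instead at the strictly earlier times $\tau_n$ with the truncated data $X^n(\tau_n)1_{\{|X^n(\tau_n)|_{\rH}\leq R\}}$ and recover the contradiction in the limit $n\to\infty$ using $\tau_n\toup\tau_\infty$ and the fact that $\delta=\tau(\eps,R)$ is fixed along that limit. This is slightly longer but avoids the limit-at-$\tau_\infty$ issue entirely, so it is, if anything, the more careful version of the same argument; the remaining technical points you flag (accessibility of $T_0+\tau_n$, gluing the restarted solution to $u$ via pathwise uniqueness and maximality) are handled the same way the paper handles its process $v$.
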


\begin{proof} 
We will follow some ideas from  \cite[Theorem 4.10]{Brz_1997}.
The process $(u(t), t<\tau_\infty)$ is such that   $\mathbb{P}$-a.s. we have $\vert u\vert_{Y_{t}}\to \infty $ as $t\toup \tau_\infty$
on the set $\{\tau_\infty <\infty\}$ and this solution is hence maximal.
We now prove the last part which is not obvious since it requires to prove that the $\rH$ norm of $u(t)$
does not remain bounded.

   Let us argue by contradiction and assume
that ${\mathbb P}\big( \{\tau_\infty <\infty\} \cap \{\sup_{ t<\tau_\infty} |u(t)|_{\rH}<\infty \}\big) =4\varepsilon >0$.
Choosing $R$ large enough, we may and do assume that
\[ {\mathbb P}\big( \{\tau_\infty <\infty\} \cap \{\sup_{ t<\tau_\infty} |u(t)|_{\rH}\leq R \}\big) =3\varepsilon >0.\]
Let $\sigma_{R}= \inf\{ t\geq 0 : |u(t)|_{\rH}\geq R\} \wedge \tau_{\infty}$; then $\sigma_{R}$ is
a predictable stopping time and  the ${\mathcal F}_{\sigma_{R}}$-measurable set
 $\tilde{\Omega}=\{ \sigma_{R}= \tau_{\infty} <\infty\}$ is such that ${\mathbb P}(\tilde{\Omega}) \geq 3\eps$.\\
 Let $v_{0}$ denote the ${\mathcal F}_{\sigma_{R}}$-measurable variable defined by
 $v_{0}= u(\sigma_{R})$ on $\tilde{\Omega}$ and $v_{0}=0$ otherwise.
 Then Theorem \ref{thm_local} implies the existence of a positive time $\tau_{\eps,R}$ such that the
 ${\mathcal F}^{T_{0}}_{t}$ solution $X(t)$
 to the evolution equation $i dX(t)+ \Delta X(t) dt = F(X(t)) dt + G(X(t)) dW^{T_{0}}(t)$ with initial
 condition $v_{0}$ has a solution on some time interval $[0,T_{1})$ with ${\mathbb P}(T_{1}\geq \tau_{\eps,R})
 \geq 1-\eps$.\\
 Let $v$ be the predictable process defined by:
 \[ v(t,\cdot)= \left\{
 \begin{array}{lll}
 u(t,\cdot) &\mbox{\rm if} & t \leq  \sigma_{R} (\cdot)<  \tau_{\infty}(\cdot),\\
  u(t,\cdot)  & \mbox{\rm on}&  \{ t>\sigma_{R}(\cdot)\} \cap \tilde{\Omega}^{c}, \\
   X(t-\sigma_{R}(\cdot)) & \mbox{\rm on}&
 \{ t>\sigma_{R}(\cdot)\} \cap \tilde{\Omega}  \end{array} \right.  .
 \]
 Therefore,  $0< \mathbb{E} \big( |v|_{Y_{\tau_{\infty}+ \frac{1}{2} \delta_{\eps,R}}} 1_{\tilde{\Omega}}\big)<\infty$,
 which contradicts the definition of $\tau_{\infty}$; this concludes the proof.
\end{proof}

\section{Abstract Stochastic NLS in the Stratonovich form}\label{sec-Stratonovich}

Multiplying equation \eqref{NLS-abstract} by $- i$, we obtain the following form of it:
\[ du(t)=i \, \Big[ A u(t)- F(u)\Big]\,dt + (-i)\, G(u) \, dW(t), \;t\geq 0.
\]
Now we suppose that the stochastic term is in the Stratonovich form, i.e. formally

\begin{equation}\label{NLS-abstract-Stranonovich}
 du(t)=i \, \Big[ A u(t)- F(u)\Big]\,dt + (-i)\, G(u)\,\circ \, dW(t),  \;t\geq 0.
\end{equation}
Below we will present a rigorous approach to equation \eqref{NLS-abstract-Stranonovich}.
 We assume that the assumptions of Theorem \ref{thm_maximal-abstract} are satisfied.
 In order for this problem to make sense,  we need to make stronger assumption on the map $G$.
To be precise, we require the following assumptions.
\begin{assumption}\label{ass-K} The Hilbert space $\rK$ is such that
\[ \rK \subset \mathcal{R}:=\rH \cap \rE,\]
and  the natural embedding $\Lambda: \rK \embed \mathcal{R}$ is $\gamma$-radonifying.
 \end{assumption}
\begin{assumption}\label{ass-G}
 The map
$G : \mathcal{R}  \to \mathcal{L}(\mathcal{R} ,\mathcal{R} )$
 is   of real ${\mathcal C}^1$-class.
 \end{assumption}

  Note that the above  Assumptions \ref{ass-K} and \ref{ass-G}  imply that the naturally induced map
\[{G}:\cR \ni u \mapsto G(u)\circ \Lambda \in  \rR(\rK,\cR),\]
which can be identified with   the original   map $G : \mathcal{R}  \to \mathcal{L}(\mathcal{R} ,\mathcal{R} )$,
is of real ${\mathcal C}^1$-class and satisfies
\[
|{G}(u)|_{R(\rK,\cR)}\leq |G(u)|_{{\mathcal L}(\cR,\cR)}
\, |\Lambda|_{R(K,\cR)}.\]
 Furthermore,   for $u\in \cR$,
the Fr{\'e}chet derivative $G'(u) = d_uG \in \mathcal{L}\big(
\cR , {\mathcal L}(\cR, \cR)\big)$
 is  $\mathbb{R}$-linear.

By the Kwapie\'n-Szyma\'nski Theorem \cite{Kwapien+Szym_1980} we can assume  that an
 ONB $\{e_j\}_{j \geq 1}$ of $\rK$ can be  chosen (and fixed for the remainder of the article) in such a way   that
\begin{equation}\label{eqn-Kwapien+Szym-0}
\sum_{j\geq 1}  \vert   \Lambda e_j \vert^2_{\mathcal{R}} <\infty.
\end{equation}
Let us recall that  for a bilinear $\phi \in \mathbb{L}_2(\cR ;\cR )$, we put
\begin{equation}\label{eqn-trace}
\mathrm{tr}_{\rK}(\phi):= \sum_{j\geq 1}  \  \phi (\Lambda e_j, \Lambda e_j) \in \cR.
\end{equation}

Following \cite{Brz+Elw_2000} we can define the Stratonovich differential  $-i G(u)\circ\,dW(t)$  as follows:
\begin{eqnarray}\nonumber
-i G(u)\circ\,dW(t)&=&-i G(u)\,dW(t)+\frac{1}2 \mathrm{tr}_{\rK}\big(-i
G^\prime(u)\big)\big(-iG(u)\big)\, dt\\
&=&-i G(u)\,dW(t)+\frac{1}2 \mathrm{tr}_{\rK}\big(i
G^\prime(u)\big)\big(iG(u)\big)\, dt.
\end{eqnarray}
If for $u\in \cR$, we denote by ${\mathcal M}(u)=(iG^\prime(u)) (iG(u))$ the element
of $  \mathcal{L}\big(\cR ,\mathcal{L}(\cR,\cR)\big)\equiv  \mathbb{L}_2(\cR ;\cR )$ defined by
\begin{equation}\label{eqn-M} {\mathcal M}(u)(h_1,h_2)=(iG^\prime(u)) (iG(u))(h_1,h_2)=(iG'(u)(iG(u) h_1) h_2, \quad h_1,h_2\in \cR,
\end{equation}
then equation \eqref{NLS-abstract-Stranonovich} can be reformulated in the following way:
\begin{equation}\label{NLS-abstract-Stranonovich-Ito}
d u =\Big[i A u - i F(u) + \frac{1}{2}  \mathrm{tr}_{\rK}\big({\mathcal M}(u)\big) \Big] \, dt
+ i G(u) dW(t).
\end{equation}

Given real-valued maps  $\phi,\psi$ we write $\phi \lesssim \psi$
if there  exists a constant $c$ such that $\phi\leq c \psi$.
We write $\phi \eqsim \psi$ to
express that $\phi \lesssim \psi$ and $\psi \lesssim \phi$.

 Let us recall that  although $\mathcal{R}$ is a complex Banach space,  below we will treat it as a real Banach space.
The following result states the equivalence of the $\mathbb{L}_2(\cR ;\cR )$ norm of ${\mathcal M}(u)$ and of $G'(u)G(u)$. Its proof, which is
is straightforward, is omitted.
\begin{lemma}\label{lem-i-multiplication}
Assume that
the multiplication by $i$ is a bounded real linear map in the real Banach space $\mathcal{R}$. Then  we have, for all $u,v\in\mathcal{R}$,
\begin{eqnarray*}
\vert {\mathcal M}(u)\vert_{\mathbb{L}_2(\cR ;\cR )} &\eqsim& \vert G^\prime(u)(G(u))\vert_{\mathbb{L}_2(\cR ;\cR )},\\
\vert {\mathcal M}(u)-{\mathcal M}(v)\vert_{\mathbb{L}_2(\cR ;\cR )} &
\eqsim& \vert G^\prime(u)(G(u))-G^\prime(v)(G(v))\vert_{\mathbb{L}_2(\cR ;\cR )},
\end{eqnarray*}
where $G^\prime(u)(G(u)) (h_1,h_2)= G'(u) \big( G(u) h_1\big) h_2$ for $h_1,h_2\in {\mathcal R}$.
\end{lemma}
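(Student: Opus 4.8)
The plan is to exploit that the only difference between the bilinear maps $\mathcal{M}(u)$ and $G'(u)(G(u))$ is the insertion of two factors of multiplication by $i$ — one applied to the output and one applied to the argument $G(u)h_1$ of $G'(u)$ — together with the standing hypothesis that multiplication by $i$ is a bounded $\mathbb{R}$-linear isomorphism of $\cR$. All constants in the equivalences $\eqsim$ will depend only on the norm of this multiplication.

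First I would record the elementary consequences of the hypothesis. Writing $J$ for multiplication by $i$ and $c:=|J|_{\mathcal{L}(\cR)}<\infty$, the identity $i^2=-1$ gives $J^{-1}=-J$, so $J^{-1}$ is bounded with the same norm $c$; consequently $c^{-1}|x|_{\cR}\le |Jx|_{\cR}\le c|x|_{\cR}$ for every $x\in\cR$, and $J$ carries the closed unit ball $B$ of $\cR$ onto a set $JB$ with $c^{-1}B\subset JB\subset cB$.

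Then I would peel off the two factors of $i$ one at a time inside the definition of the $\mathbb{L}_2(\cR;\cR)$ norm, which is the supremum of $|\,\cdot\,(h_1,h_2)|_{\cR}$ over the unit ball in each variable. The outer factor is immediate: since $\mathcal{M}(u)(h_1,h_2)=J\big(G'(u)(JG(u)h_1)(h_2)\big)$, the two-sided bound on $J$ gives $|\mathcal{M}(u)(h_1,h_2)|_{\cR}\eqsim |G'(u)(JG(u)h_1)(h_2)|_{\cR}$ uniformly in $h_1,h_2$. For the inner factor, I would use that for fixed $h_2$ the map $v\mapsto G'(u)(v)(h_2)$ is $\mathbb{R}$-linear and that $JG(u)h_1=G(u)(Jh_1)$ under the complex structure; a change of variable in the supremum (replacing $h_1$ by $Jh_1$), together with $c^{-1}B\subset JB\subset cB$, then yields $\sup_{|h_1|,|h_2|\le1}|G'(u)(JG(u)h_1)(h_2)|_{\cR}\eqsim |G'(u)(G(u))|_{\mathbb{L}_2(\cR;\cR)}$. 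Combining the two steps gives the first equivalence.

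For the second equivalence I would observe that $u\mapsto\mathcal{M}(u)$ and $u\mapsto G'(u)(G(u))$ enter only through the same two linear insertions of $J$, which are independent of the base point; hence $\mathcal{M}(u)-\mathcal{M}(v)$ is obtained from $G'(u)(G(u))-G'(v)(G(v))$ by exactly the same outer and inner multiplication by $i$, so the identical two steps apply verbatim. The one place deserving care is the inner factor: it is there that one must use that multiplication by $i$ commutes with $G(u)$ — equivalently, that it is a genuine $\mathbb{R}$-linear isomorphism of $\cR$ — so that the change of variable in the first slot is admissible. Once that is granted, the remainder is routine bookkeeping with the constant $c$.
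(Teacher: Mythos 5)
The paper omits its own proof of this lemma as ``straightforward'', so there is nothing to compare against line by line; your plan of peeling off the outer and the inner factor of $i$ separately is certainly the intended route, the outer step is correct, and the reduction of the second equivalence to the same two insertions of $i$ is also fine.

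The genuine problem is in the inner step. You invoke the identity $i\,G(u)h_1=G(u)(ih_1)$ ``under the complex structure'' and then claim this is ``equivalently'' the hypothesis that multiplication by $i$ is a bounded $\mathbb{R}$-linear isomorphism of $\cR$. It is not. The latter is a property of the space $\cR$ alone, and is all that the lemma assumes; the former says that the operator $G(u)\in\mathcal{L}(\cR,\cR)$ commutes with the complex structure, i.e.\ is $\mathbb{C}$-linear, which is an additional hypothesis on $G$ that does not follow from the stated one (Assumption \ref{ass-G} only requires $G(u)$ to be $\mathbb{R}$-linear). Without that commutation the asserted equivalence can fail outright: take $\cR=\mathbb{C}$ with $J=i$ an isometry, $G(w)h=\mathrm{Re}(h)+\mathrm{Im}(w)\,h$ and $u=0$; then $G'(0)(G(0))(h_1,h_2)=\mathrm{Im}\big(\mathrm{Re}(h_1)\big)h_2=0$, while $\mathcal{M}(0)(h_1,h_2)=i\,\mathrm{Im}\big(i\,\mathrm{Re}(h_1)\big)h_2=i\,\mathrm{Re}(h_1)h_2$ has norm one. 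So the commutation $i\,G(u)=G(u)\,i$ is exactly where the content of the lemma sits, and it must be imported from the context rather than derived from the boundedness of $J$: in the paper's application $G(u)$ is pointwise multiplication by the complex-valued function $g(u)$, hence genuinely $\mathbb{C}$-linear, and your change of variable $h_1\mapsto ih_1$ in the first slot is then legitimate. Once you state that commutation as an explicit hypothesis (or restrict to the Nemytski setting), the rest of your argument --- the two-sided bound $c^{-1}|x|\leq|ix|\leq c|x|$, the substitution in the supremum, and the observation that $\mathcal{M}(u)-\mathcal{M}(v)$ and $G'(u)(G(u))-G'(v)(G(v))$ differ by the same base-point-independent insertions of $i$ --- does close both equivalences with constant $c^2$.
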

 Since $\cR  \embed \rH$ continuously and  the trace
\[\mathrm{tr}_{\rK} : \phi\in {\mathbb L}_2(\cR ;\cR) \to \mathrm{tr}_{\rK}\big(\phi\big)\in \cR\]
is a linear and bounded map,  we can find $C>0$ such that
  \[
\big|  \mathrm{tr}_{\rK} {\mathcal M}(u)-   \mathrm{tr}_{\rK} {\mathcal M}(v) \big|_{H}\leq C
\vert {\mathcal M}(u) -{\mathcal M}(v) \vert_{{\mathbb L}_2(\cR ;\cR)},\;\; u,v \in \cR.
\]

\begin{definition}\label{def-sol-Stratonovich}
We say that a process $u$ is a  local (resp. local maximal, global) solution to equation \eqref{NLS-abstract-Stranonovich}
 if and only if it is a local (resp. local maximal, global) solution to the It\^o equation \eqref{NLS-abstract-Stranonovich-Ito},
that is equation \eqref{NLS-abstract}, with  the map $F$ being  replaced by  $F_1=F+\frac{i}{2}  \mathrm{tr}_{\rK}\Big[ \mathcal{M}\Big]$.
\end{definition}

We now state the stronger version of Assumption \ref{ass-maps}(iii).
\begin{assumption}\label{ass-maps-b}
Let $p\in (2,\infty)$, $a\in [1, \frac{p}{2})$ and $\gamma \in [1,p)$.
The  map $G:  \cR \to   \mathcal{L}(\cR,\cR)$  is of class ${\mathcal C}^1$ and such that for some  positive constant $C$,
and  all $u,v\in  \cR  $, with $M$ having been defined in \eqref{eqn-M}
\begin{align}
\label{growth-G_H^12}
&|G(u)|_{\mathcal{L}(\cR,\rH)}
\leq C  \Big[1+ \| u\|_{\rE}^{a}+ \big(1+\| u\|_{\rE}^{a-1}\big) \vert u\vert_{\rH}\Big]
\\
\label{lip-G_H^12}
&|G(u)-G(v)|_{\mathcal{L}(\cR ,\rH)} \leq
C \big( 1+\|u\|_{\rE}^{a-1}   +  \|v\|_{\rE}^{a-1} \big) |u-v|_{\rH}  \nonumber
\\
&\qquad \qquad  + C \big( 1+\|u\|_{\rE}^{(a -2)^+}  +  \|v\|_{\rE}^{(a-2)^+} \big)
\big( 1+|u|_{\rH}  +  |v|_{\rH}\big)
 \|u-v\|_{\rE} , \\
\label{growth-G'G_H^12}
&\Big| {\mathcal M}(u)\Big|_{\mathbb{L}_2(\cR ;\rH)} \leq C
\Big[1+ \| u\|_{\rE}^{\gamma}+ \big(1+\| u\|_{\rE}^{\gamma-1}\big) \vert u\vert_{\rH}\Big] ,
\\\nonumber
&\Big|
{\mathcal M}(u)-{\mathcal M}(v)\Big|_{\mathbb{L}_2(\cR;\rH)} \leq
C   \big( 1+\|u\|_{\rE}^{\gamma-1}   +  \|v\|_{\rE}^{\gamma-1} \big) |u-v|_{\rH}
\\
&\qquad\qquad + \;\;C \big( 1+\|u\|_{\rE}^{(\gamma -2)^+}  +  \|v\|_{\rE}^{(\gamma-2)^+} \big)
\big( 1+|u|_{\rH}  +  |v|_{\rH}\big)
 \|u-v\|_{\rE} .
\label{lip-G'G_H^12}
\end{align}
\end{assumption}

Theorem \ref{thm_maximal-abstract} immediately yields the following result.
\begin{theorem}\label{thm_maximal-abstract-Stratonovich}
 Assume that the  Assumptions \ref{ass-stochastic basis}, \ref{ass-spaces} and  \ref{ass-maps}  are
satisfied with  $2\leq \beta<p$
and $a\in [1,\frac{p}2)$ and also Assumptions  \ref{ass-K},  \ref{ass-G}  and
\ref{ass-maps-b} are  satisfied.
 Assume also that $\rE$ is a martingale type $2$ Banach space and
  $W=(W_t)_{t\geq 0}$ is an $\mathcal{R}=\rH\cap\rE$-valued Wiener process defined on some filtered probability space
$(\Omega, \mathcal{F},\mathbb{F},\mathbb{P})$ satisfying the usual assumptions.
 Then  for every  $u_0 \in L^p(\Omega,\mathcal{F}_0,\rH)$ there exists a unique
  local  process $u=\big(u(t)\, , \, t<  \tau_\infty \big)$
 which is the  local maximal solution to equation  \eqref{NLS-abstract-Stranonovich}. Moreover,
$ {\mathbb P }\big(\{\tau_\infty <\infty\} \cap \{\sup_{ t<\tau_\infty} |u(t)|_{\rH}<\infty \}\big)=0$
and on $\{ \tau_\infty<\infty \}$, $\limsup_{t\to \tau_\infty} |u(t)|_{\rH} = +\infty$ a.s.
\end{theorem}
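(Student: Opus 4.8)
The plan is to reduce the Stratonovich problem to the It\^o framework already settled in Theorem~\ref{thm_maximal-abstract}. By Definition~\ref{def-sol-Stratonovich}, a process $u$ is a (maximal local) solution of the Stratonovich equation \eqref{NLS-abstract-Stranonovich} if and only if it is a (maximal local) solution of the It\^o equation \eqref{NLS-abstract} with the drift $F$ replaced by $F_1:=F+\frac{i}{2}\mathrm{tr}_{\rK}[\mathcal{M}]$, where $\mathcal{M}$ is the bilinear map defined in \eqref{eqn-M}; Assumptions~\ref{ass-K} and \ref{ass-G} are exactly what guarantee that $\mathcal{M}(u)$ and its trace are well defined, with $\mathrm{tr}_{\rK}:\mathbb{L}_2(\cR;\cR)\to\cR$ linear and bounded. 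Hence it suffices to check that $F_1$ still satisfies Assumption~\ref{ass-maps}(ii) with an admissible exponent in $[2,p)$; once this is done, Theorem~\ref{thm_maximal-abstract} applies directly to \eqref{NLS-abstract-Stranonovich-Ito}, since the map $G$ and the spaces $\rH$, $\rE$ are left unchanged, and it yields at once the existence and uniqueness of the maximal local solution together with the stated blow-up alternative.

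First I would record that $F$ itself satisfies the growth bound \eqref{ineqn-growth-F1} and the local Lipschitz bound \eqref{ineq-lip-F} with exponent $\beta\in[2,p)$ by hypothesis, so only the Stratonovich correction $\frac{i}{2}\mathrm{tr}_{\rK}[\mathcal{M}(u)]$ requires control. Since multiplication by $i$ is an isometry of $\mathcal{R}$, Lemma~\ref{lem-i-multiplication} identifies the $\mathbb{L}_2(\cR;\rH)$-norm of $\mathcal{M}(u)$ and of its increments with those of $G'(u)(G(u))$, and Assumption~\ref{ass-maps-b}, namely \eqref{growth-G'G_H^12} and \eqref{lip-G'G_H^12}, furnishes precisely a growth and a local Lipschitz estimate for $\mathcal{M}$ in the $\mathbb{L}_2(\cR;\rH)$-norm, with exponent $\gamma\in[1,p)$ and in exactly the structural form of \eqref{ineqn-growth-F1}--\eqref{ineq-lip-F}. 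Composing with the bounded trace map and invoking the continuous embedding $\cR\embed\rH$, I would transfer these estimates to $\frac{i}{2}\mathrm{tr}_{\rK}[\mathcal{M}(u)]$, viewed as a map from $\cR=\rH\cap\rE$ into $\rH$.

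Adding the two contributions, $F_1$ satisfies \eqref{ineqn-growth-F1} and \eqref{ineq-lip-F} with exponent $\beta_1:=\beta\vee\gamma$: a bound carrying the smaller exponent $\gamma$ is absorbed into one carrying $\beta_1$, since $1+|u|_{\rE}^{\gamma}\lesssim 1+|u|_{\rE}^{\beta_1}$ and likewise for the mixed and Lipschitz terms. As $\beta\in[2,p)$ and $\gamma\in[1,p)$, we have $\beta_1\in[2,p)$, which lies in the admissible range required by Theorem~\ref{thm_maximal-abstract}. All hypotheses of that theorem therefore hold for \eqref{NLS-abstract-Stranonovich-Ito} with drift $F_1$ and diffusion $G$, and its conclusion gives the unique maximal local solution $u=(u(t),\,t<\tau_\infty)$, the identity $\mathbb{P}(\{\tau_\infty<\infty\}\cap\{\sup_{t<\tau_\infty}|u(t)|_{\rH}<\infty\})=0$, and $\limsup_{t\to\tau_\infty}|u(t)|_{\rH}=+\infty$ a.s.\ on $\{\tau_\infty<\infty\}$.

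The only point demanding care is the verification in the second and third paragraphs that, after passing through the trace and the embedding $\cR\embed\rH$, the $\mathbb{L}_2(\cR;\rH)$-estimates on $\mathcal{M}$ match the precise term-by-term structure of Assumption~\ref{ass-maps}(ii) and that the combined exponent $\beta_1$ stays strictly below $p$. Since the inequalities \eqref{growth-G'G_H^12}--\eqref{lip-G'G_H^12} were tailored to have exactly this shape, this is routine bookkeeping rather than a genuine difficulty, and everything else is a direct invocation of Theorem~\ref{thm_maximal-abstract}.
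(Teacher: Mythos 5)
Your proposal is correct and follows the same route as the paper: the paper's proof consists precisely of the observation that, by Definition \ref{def-sol-Stratonovich}, the Stratonovich problem is the It\^o problem \eqref{NLS-abstract-Stranonovich-Ito} with drift $F_1=F+\frac{i}{2}\mathrm{tr}_{\rK}[\mathcal{M}]$, to which Theorem \ref{thm_maximal-abstract} applies once Assumption \ref{ass-maps-b} supplies the growth and local Lipschitz bounds for the correction term. You merely make explicit the bookkeeping (boundedness of the trace, absorption of the exponent $\gamma$ into $\beta_1=\beta\vee\gamma\in[2,p)$) that the paper compresses into the phrase ``immediately yields.''
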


\section{Stochastic NSEs: the local existence}
\label{sec-local}

In this section we will formulate  results about the existence and the uniqueness of solutions to the stochastic NLS equation,
  about an  equation in the It\^o (respectively Stratonovich) formulation. The former one will be based on Theorem \ref{thm_maximal-abstract}
 and the latter on Theorem \ref{thm_maximal-abstract-Stratonovich}.  For simplicity we formulate it for $d=2$.
 One can also prove a similar result for $d >2$ but since in the latter case we do not know whether the solution is global
or blows  up in finite time, we have decided to leave it out.
Thus we  assume that $M$ is a $2$-dimensional,  compact riemannian manifold and  $\Delta$
is the Laplace Beltrami operator on $M$. We assume that some numbers $p,q$ satisfy the scaling admissible condition  (with $d=2$)
\begin{equation}\label{eqn-compatibility}
\frac2p+\frac2q=1.
\end{equation}
We choose  $s\in (1-\frac1p,1]$ and put $\hat{s}:=s-\frac1p$. Since $s-\frac1p>\frac{d}q$,   we infer that the Sobolev space $W^{\hat{s},q}(M)$
is embedded into the space ${\mathcal C}(M)$ of  continuous (and hence bounded) functions on $M$;
the latter is a Banach space equipped with the $L^\infty$-norm.

  In this section we let $\rH=H^{s,2}(M):=H^{s,2}(M,{\mathbb C}) $ and
$\rE=W^{\hat{s},q}(M):=W^{\hat{s},q}(M,{\mathbb C}) $,  where ${\mathbb C}$ is identified with ${\mathbb R}^2$. Finally,  as in Assumption \ref{ass-K}, we denote by $\mathcal{R}$ the following real Banach space
\[  \mathcal{R}=H^{s,2}(M) \cap W^{\hat{s},q}(M) = \rH\cap \rE.\]
In order to study the diffusion operator $G$ we need the following result which follows from Corollary \ref{cor-algebra} and
Theorem \ref{thm-Nemytski-Lip}.
\begin{lemma}\label{lem-R} Under the above assumptions  the pointwise multiplication map
\begin{equation}\label{eqn-Pi}
\Pi: \cR \times \cR \ni (u,h)\mapsto uh \in \cR
\end{equation}
is   bilinear and continuous. The same assertion holds for $\mathcal{R}=H^{s,2}(M) \cap L^\infty(M)$.
\end{lemma}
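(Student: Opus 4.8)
The plan is to reduce the statement to the algebra estimate of Corollary~\ref{cor-algebra}, applied separately on the two Sobolev scales that make up $\cR$, with the common $L^\infty$ control supplied by a Sobolev embedding.

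First I would record the relevant embedding. The scaling condition~\eqref{eqn-compatibility} with $d=2$ forces $p,q\in(2,\infty)$, and the choice $s\in(1-\frac1p,1]$ gives $\hat{s}=s-\frac1p\in(0,1)$ together with $\hat{s}>1-\frac2p=\frac2q=\frac dq$. Hence the Sobolev embedding theorem provides a continuous inclusion $W^{\hat{s},q}(M)\hookrightarrow{\mathcal C}(M)\subset L^\infty(M)$, and therefore a continuous inclusion $\cR\hookrightarrow L^\infty(M)$; in particular there is a constant $c$ with $|u|_{L^\infty}\leq c\,\|u\|_{\cR}$ for every $u\in\cR$.

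Bilinearity of $\Pi$ is immediate, so only continuity, i.e. the bound $\|uh\|_{\cR}\lesssim\|u\|_{\cR}\|h\|_{\cR}$, needs proof, and I would establish it by estimating the two constituent norms of $\|\cdot\|_{\cR}$ separately. Using $H^{s,2}(M)=W^{s,2}(M)$ for $s\in(0,1]$, Corollary~\ref{cor-algebra} with $(\theta,q)=(s,2)$ and then with $(\theta,q)=(\hat{s},q)$ yields
\[
\|uh\|_{H^{s,2}}\leq |u|_{L^\infty}\|h\|_{H^{s,2}}+|h|_{L^\infty}\|u\|_{H^{s,2}},\qquad
\|uh\|_{W^{\hat{s},q}}\leq |u|_{L^\infty}\|h\|_{W^{\hat{s},q}}+|h|_{L^\infty}\|u\|_{W^{\hat{s},q}}.
\]
Inserting the embedding from the first step to bound the $L^\infty$ factors by $c\,\|u\|_{\cR}$ and $c\,\|h\|_{\cR}$, and then summing the two displays, gives $\|uh\|_{\cR}\lesssim\|u\|_{\cR}\|h\|_{\cR}$, which is the claim.

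For the variant $\cR=H^{s,2}(M)\cap L^\infty(M)$ the $L^\infty$ factor is built into the norm, so the argument is even shorter: the $H^{s,2}$ part is controlled exactly as above by Corollary~\ref{cor-algebra} with $(\theta,q)=(s,2)$, while $|uh|_{L^\infty}\leq|u|_{L^\infty}|h|_{L^\infty}$ handles the $L^\infty$ part directly. The only step that requires any care is the verification that $\hat{s}\in(0,1)$ and $\hat{s}>d/q$, so that Corollary~\ref{cor-algebra} applies on both scales and the embedding into $L^\infty$ is available; both facts are pure arithmetic consequences of~\eqref{eqn-compatibility} and the admissible range of $s$, so there is no genuine obstacle here beyond this bookkeeping.
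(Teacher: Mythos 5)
Your proof is correct and follows essentially the route the paper intends: the paper offers no written argument beyond citing Corollary~\ref{cor-algebra}, and your fleshed-out version — applying the algebra estimate separately on the $H^{s,2}=W^{s,2}$ and $W^{\hat{s},q}$ scales, with the $L^\infty$ factors controlled by the embedding $W^{\hat{s},q}(M)\hookrightarrow L^\infty(M)$ that the paper records from $\hat{s}>d/q$ — is exactly the intended argument, including the correct handling of the endpoint $\theta=1$ and of the variant $H^{s,2}(M)\cap L^\infty(M)$.
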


The following assumption will play an essential r\^ole in the next section as well as in the last part of
 Theorem \ref{thm_maximal-manifold}, which is the main result of the current section.
\begin{assumption}\label{ass-tildeg} There exists a function  $\tilde{g}:[0,+\infty)\to \RR$   of class ${\mathcal C}^1$
 such that
\begin{equation}\label{eqn_global-tg}
 g(z)=\tg(|z|^2) z, \; z\in \mathbb{C}.
\end{equation}
\end{assumption}
We will consider the  "generalized" Nemytski map $\tilde{G}$ associated with $g$ (see \cite{Brz+Elw_2000}), that is with $\cR=\rH\cap \rE$,
  \begin{equation} \label{generalizedG}
 \tilde{ G}:\cR \ni u\mapsto \big\{ h\mapsto \Pi(g(u),h)  \big\} \in \mathcal{L}(\cR,\cR).
\end{equation}
The aim of this section is to prove the existence and uniqueness of a maximal solution to
 \begin{equation}\label{NLS}
idu(t) + \Delta u(t)\, dt=f(u)\,dt +g(u)\,dW(t), \quad u(0)=u_0,
\end{equation}
or to its Stratonovich formulation
 \begin{equation}\label{NLS-Stratonovich}
idu(t) + \Delta u(t)\, dt=f(u)\,dt +g(u)\circ dW(t), \quad u(0)=u_0.
\end{equation}
 By a solution of equation \eqref{NLS} (resp.  \eqref{NLS-Stratonovich}), we mean a solution to
its abstract version  \eqref{NLS-abstract} (resp. \eqref{NLS-abstract-Stranonovich-Ito}) defined in terms of the  Nemytski map $\tilde{G}$.
 As proved in the previous section, the Stratonovich formulation requires to identify $\mathcal{M}$ as the Nemytski map corresponding to function  $z\mapsto (ig'(z))(ig(z))$.
This will be a consequence of the following general result.
\begin{lemma}
\label{lem-j}
Assume  that $(H,\vert \cdot \vert)$ is a real separable Hilbert space and that
   ${\mathcal I}:H\to H$ is  a bounded linear operator such that
${\mathcal I}^2=-Id,\; \langle z,  {\mathcal I}z \rangle=0, z\in H$.
Let $\varphi :H \to H$ be  function of the form $\varphi(z)=\tilde{\varphi}(|z|^2) {\mathcal I} z$, $z\in H$,
where $\tilde{\varphi}:\mathbb{R} \to \mathbb{R}$ is differentiable.   Then, $\varphi$ is $\mathbb{R}$-differentiable and
\begin{equation}\label{eqn-B03}
 \big(\varphi^\prime (z)\big)\big(\varphi(z))=\big(d_z \varphi \big)\big(\varphi(z))=-|\tilde{\varphi}(|z|^2)|^2\, z,\; \forall
z\in H.
\end{equation}
\end{lemma}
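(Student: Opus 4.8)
The plan is to compute the Fr\'echet derivative $\varphi'(z)$ explicitly by the product and chain rules, and then to evaluate it at the vector $h=\varphi(z)$, at which point the two structural hypotheses on $\mathcal{I}$ produce the stated identity almost immediately. The computation is short, so the emphasis should be on getting the real-linear derivative of the scalar factor right.

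First I would regard $\varphi(z)=\tilde{\varphi}(|z|^2)\,\mathcal{I}z$ as the product of the scalar-valued map $z\mapsto\tilde{\varphi}(|z|^2)$ and the bounded linear map $z\mapsto\mathcal{I}z$. Since $z\mapsto|z|^2=\langle z,z\rangle$ is a smooth real quadratic form with derivative $h\mapsto 2\langle z,h\rangle$, and $\tilde{\varphi}$ is differentiable, the composite $z\mapsto\tilde{\varphi}(|z|^2)$ is $\mathbb{R}$-differentiable with derivative $h\mapsto 2\tilde{\varphi}'(|z|^2)\langle z,h\rangle$; as $\mathcal{I}$ is bounded and linear it is its own derivative. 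The product rule then gives, for every $h\in H$,
\begin{equation*}
\varphi'(z)h=2\,\tilde{\varphi}'(|z|^2)\,\langle z,h\rangle\,\mathcal{I}z+\tilde{\varphi}(|z|^2)\,\mathcal{I}h.
\end{equation*}

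Next I would substitute $h=\varphi(z)=\tilde{\varphi}(|z|^2)\mathcal{I}z$. The first term carries the factor $\langle z,\varphi(z)\rangle=\tilde{\varphi}(|z|^2)\langle z,\mathcal{I}z\rangle$, which vanishes by the hypothesis $\langle z,\mathcal{I}z\rangle=0$, so the entire first term disappears. The second term becomes $\tilde{\varphi}(|z|^2)\,\mathcal{I}\big(\tilde{\varphi}(|z|^2)\mathcal{I}z\big)=\tilde{\varphi}(|z|^2)^2\,\mathcal{I}^2z=-\tilde{\varphi}(|z|^2)^2\,z$ using $\mathcal{I}^2=-\mathrm{Id}$. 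Since $\tilde{\varphi}$ is real-valued, $\tilde{\varphi}(|z|^2)^2=|\tilde{\varphi}(|z|^2)|^2$, which yields exactly $\varphi'(z)\varphi(z)=-|\tilde{\varphi}(|z|^2)|^2z$.

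The only point requiring a little care is the justification of $\mathbb{R}$-differentiability of the scalar factor: one must use the real (not complex) structure of $H$, so that the derivative of $|z|^2$ is the genuine real-linear map $h\mapsto 2\langle z,h\rangle$ rather than a conjugate-linear object. This is precisely why the statement and the surrounding framework insist on treating $H$ as a real Hilbert space, and it is also the feature that makes the orthogonality relation $\langle z,\mathcal{I}z\rangle=0$ (the analogue of $\mathrm{Re}(z\overline{\mathcal{I}z})=0$) available to annihilate the first term. Everything else is a routine application of the product and chain rules.
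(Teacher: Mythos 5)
Your proof is correct and follows exactly the same route as the paper: compute $\varphi'(z)h=2\tilde{\varphi}'(|z|^2)\langle z,h\rangle\,\mathcal{I}z+\tilde{\varphi}(|z|^2)\mathcal{I}h$ by the product and chain rules, then substitute $h=\varphi(z)$ and use $\langle z,\mathcal{I}z\rangle=0$ and $\mathcal{I}^2=-\mathrm{Id}$. Your added remark on why real-differentiability of the scalar factor is the only delicate point is a nice touch but does not change the argument.
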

\begin{proof}
Let $y,z \in H$; then we have $[\varphi^\prime (z)](y)=2\tilde{\varphi}^\prime(|z|^2)\langle z,y\rangle  {\mathcal I}z +
\tilde{\varphi}(|z|^2) {\mathcal I} y$. Therefore, given $z\in H$, we deduce
\begin{align*}
[\varphi^\prime(z)](\varphi(z))=&
2\tilde{\varphi}^\prime(|z|^2)\langle z,\tilde{\varphi}(|z|^2)  {\mathcal I}z\rangle  {\mathcal I}z
 +\tilde{\varphi}(|z|^2){\mathcal I} \varphi(z)\\
=&2\tilde{\varphi}^\prime(|z|^2) \tilde{\varphi}(|z|^2)\langle z,  {\mathcal I}z\rangle  {\mathcal I}z
 |\tilde{\varphi}(|z|^2)|^2  {\mathcal I}^2z  =-|\tilde{\varphi}(|z|^2)|^2 z .
\end{align*}
This completes the proof of \eqref{eqn-B03}.
\end{proof}
Since we identify $\mathbb{C}$ with $\mathbb{R}^2$   and the operator of multiplication   by $i$
 with the operator ${\mathcal I}:\mathbb{R}^2 \ni (x,y)\mapsto (-y,x)\in \mathbb{R}^2$, we deduce the following result.
\begin{corollary}
\label{cor-g^prime}
Assume that  a function
$g:\mathbb{C} \to \mathbb{C}$ satisfies Assumption \ref{ass-tildeg}
for a  differentiable function $\tilde{g}:\mathbb{R}\to \mathbb{R}$.
Then  $g$ is $\mathbb{R}$-differentiable and, with  $\langle\cdot,\cdot\rangle$  (resp.  $\vert \cdot \vert$)
 denoting the scalar product (resp. the euclidian norm),
in $\mathbb{C}\cong\mathbb{R}^2$, we have for all $z\in \mathbb{C}$,
\begin{equation}\label{eqn-B02}
m(z):=\big((ig)^\prime (z)\big)\big(ig(z)\big)=-\big( \tilde{g}(\vert z\vert ^2) \big)^2 z,\quad
\langle \big((ig)^\prime (z)\big)\big(ig(z)\big),z\rangle=-\vert g(z)\vert^2.
\end{equation}
\end{corollary}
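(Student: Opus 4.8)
The plan is to deduce this statement directly from Lemma \ref{lem-j} by choosing the Hilbert space, the complex structure, and the scalar function appropriately. Concretely, I would take $H=\mathbb{R}^2$ endowed with its standard inner product $\langle\cdot,\cdot\rangle$, and let ${\mathcal I}:\mathbb{R}^2\ni(x,y)\mapsto(-y,x)\in\mathbb{R}^2$ be the operator corresponding, under the identification $\mathbb{C}\cong\mathbb{R}^2$, to multiplication by $i$. The first thing to check is that ${\mathcal I}$ meets the hypotheses of Lemma \ref{lem-j}: a one-line computation gives ${\mathcal I}^2(x,y)=(-x,-y)=-(x,y)$, so that ${\mathcal I}^2=-\mathrm{Id}$, and $\langle(x,y),{\mathcal I}(x,y)\rangle=-xy+yx=0$, i.e.\ $\langle z,{\mathcal I}z\rangle=0$ for all $z\in H$. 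These are exactly the two algebraic properties required of ${\mathcal I}$ in the lemma.

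Next I would set $\tilde{\varphi}:=\tilde{g}$, which is differentiable by hypothesis, and consider the associated map $\varphi(z)=\tilde{\varphi}(\vert z\vert^2){\mathcal I}z$. Under the identification above one has ${\mathcal I}z=iz$, hence $\varphi(z)=\tilde{g}(\vert z\vert^2)(iz)=i\,g(z)$, where we used Assumption \ref{ass-tildeg} in the form $g(z)=\tilde{g}(\vert z\vert^2)z$; thus $\varphi=ig$. Lemma \ref{lem-j} then asserts that $\varphi$ is $\mathbb{R}$-differentiable, and since multiplication by $i$ is a real-linear isomorphism of $\mathbb{R}^2$ it follows that $g=-i\varphi$ is $\mathbb{R}$-differentiable as well, which is the first claim. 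Moreover, the same linearity gives $\varphi'(z)=i\,g'(z)$, so that the identity \eqref{eqn-B03} reads
\[
m(z)=\big((ig)'(z)\big)\big(ig(z)\big)=\big(\varphi'(z)\big)\big(\varphi(z)\big)=-\vert\tilde{g}(\vert z\vert^2)\vert^2\,z=-\big(\tilde{g}(\vert z\vert^2)\big)^2z,
\]
where the final equality uses that $\tilde{g}$ is real-valued. This is precisely the first displayed identity of the corollary.

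Finally, for the second identity I would simply pair $m(z)$ with $z$: from the formula just obtained,
\[
\langle m(z),z\rangle=-\big(\tilde{g}(\vert z\vert^2)\big)^2\langle z,z\rangle=-\big(\tilde{g}(\vert z\vert^2)\big)^2\vert z\vert^2,
\]
while Assumption \ref{ass-tildeg} gives $\vert g(z)\vert^2=\vert\tilde{g}(\vert z\vert^2)\vert^2\vert z\vert^2=\big(\tilde{g}(\vert z\vert^2)\big)^2\vert z\vert^2$, again because $\tilde{g}$ takes real values; comparing the two expressions yields $\langle m(z),z\rangle=-\vert g(z)\vert^2$, as required. I do not expect any genuine obstacle here: the entire analytic content is packaged in Lemma \ref{lem-j}, and the only points that need care are the verification of the two algebraic identities for ${\mathcal I}$ and the bookkeeping of the identification $\mathbb{C}\cong\mathbb{R}^2$ together with the relation $\varphi=ig$.
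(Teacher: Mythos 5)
Your proposal is correct and follows exactly the route the paper takes: the paper deduces the corollary from Lemma \ref{lem-j} by identifying multiplication by $i$ with ${\mathcal I}(x,y)=(-y,x)$ on $\mathbb{R}^2$ and taking $\tilde{\varphi}=\tilde{g}$, so that $\varphi=ig$; the second identity then follows by pairing with $z$, just as you do. Your write-up in fact supplies more detail (the verification of ${\mathcal I}^2=-\mathrm{Id}$ and $\langle z,{\mathcal I}z\rangle=0$, and the derivation of $\langle m(z),z\rangle=-|g(z)|^2$) than the paper, which leaves these steps implicit.
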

In  particular we get  the  formulation of $ \mathrm{tr}_{\rK}\big({\mathcal M}(u)\big)$, where ${\mathcal M}$ is defined by \eqref{eqn-M}.
 Let $\Pi$ be the bilinear map  defined in \eqref{eqn-Pi} and    $\Lambda : K \to \cR$
denotes the natural embedding (which is  a gamma-radonifying operator) and $\big(e_j\big)_{ j\geq 1}$ is a complete orthonormal
system of $K$ satisfying \eqref{eqn-Kwapien+Szym-0} and consisting of real valued functions.
Then it follows from the definition \eqref{eqn-trace}  of the trace  and the Kwapie\'n-Szyma\'nski result \eqref{eqn-Kwapien+Szym-0} that
\begin{equation}\label{eqn-trPi}
\mathfrak{p}:=\mathrm{tr}_{\rK}(\Pi) = \sum_{j\geq 1} (\Lambda e_j)^2 \in H^{s,2}(M,\mathbb{R})\cap W^{\hat{s},q}(M,\mathbb{R}) \subset \cR .
\end{equation}

Let us make another  useful observation.
Let $m$ be defined in \eqref{eqn-B02} and $\mathbb{M}$  be   the Nemytski map corresponding to the function $m:\mathbb{C}\to\mathbb{C}$,
that is $\mathbb{M}(u)=  m \circ u,  \;\;   u\in \cR$. Then
\begin{equation}
{\mathcal M}(u)(h_1,h_2)=\mathbb{M}(u)h_1h_2, \;\; \mbox{ for }u,h_1,h_2\in {\mathcal R}.
\end{equation}
 Furthermore, we have the following:
\begin{lemma}\label{lem-B03}
Assume that  a function
$g:\mathbb{C} \to \mathbb{C}$ satisfies Assumption \ref{ass-tildeg}
for a differentiable function $\tilde{g}:\mathbb{R}\to \mathbb{R}$. Then the maps $G$ and $\mathbb{M}$ transform  $\cR$
 to $\mathcal{L}(\cR,\cR)$ and  $\cR $ respectively,  and,
for every $u\in \cR$,
\begin{align}\label{eqn-B06}
  \tr_{\rK}({\mathcal M}(u))
&= \Pi\big(  \mathfrak{p}, \mathbb{M}(u)\big)  ,\\
\label{eqn-B07}
 \Re \langle \tr_{\rK} {\mathcal M}(u) ,u\rangle_{L^2(M)}&=
 - \int_M \vert g(u(x))\vert^2 \mathfrak{p}(x)\, dx
= - \Vert G(u)\Lambda \Vert^2_{R(\rK,L^2(M))} , \\
\label{eqn-B08}
 \Re \langle \nabla \tr_{\rK} {\mathcal M}(u),\nabla u\rangle  & =
-\int_M \vert \tilde{g}(\vert u(x)\vert^2)\vert^2 \Re \langle u(x) \nabla \mathfrak{p}(x), \nabla u(x)\rangle \, dx
\\
 -   4 \int_M (\tilde{g}^\prime \tilde{g})(|u(x)|^2)  &
 \Re \langle u(x) \nabla u(x), \nabla u\rangle \mathfrak{p}(x) \, dx
-2 \int_M  \tilde{g}(|u(x)|^2)^2  \mathfrak{p}(x) \vert \nabla u(x)\vert^2 \, dx.\nonumber
\end{align}
\end{lemma}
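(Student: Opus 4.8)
The plan is to reduce everything to the pointwise identities of Corollary~\ref{cor-g^prime}, the already established relation $\mathcal{M}(u)(h_1,h_2)=\mathbb{M}(u)\,h_1h_2$, and the description $\mathfrak{p}=\sum_{j\ge1}(\Lambda e_j)^2$ of the trace of $\Pi$ from \eqref{eqn-trPi}. First I would record that $G$ and $\mathbb{M}$ are well defined with the stated targets. Since $\tg\in\mathcal{C}^1$, the maps $z\mapsto g(z)=\tg(|z|^2)z$ and $z\mapsto m(z)=-\big(\tg(|z|^2)\big)^2z$ are $\mathcal{C}^1$, hence Lipschitz on balls, so by the Nemytski results of Section~\ref{sec-Nemytski} (Proposition~\ref{prop-Nemytski}, in its $H^{s,2}\cap W^{\hat s,q}$ form) the associated Nemytski maps send $\cR$ into $\cR$; thus $g(u),\,m(u)\in\cR$ for $u\in\cR$, and because $\cR$ is a multiplication algebra (Lemma~\ref{lem-R}) the multiplication operator $G(u)=\Pi(g(u),\cdot)$ belongs to $\mathcal{L}(\cR,\cR)$ while $\mathbb{M}(u)=m(u)\in\cR$.

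For \eqref{eqn-B06} I expand the trace \eqref{eqn-trace}: using $\mathcal{M}(u)(h_1,h_2)=\mathbb{M}(u)h_1h_2$ one gets $\mathrm{tr}_{\rK}(\mathcal{M}(u))=\sum_{j\ge1}\mathbb{M}(u)(\Lambda e_j)^2$, and since $\mathbb{M}(u)$ does not depend on $j$ and $\Pi$ is continuous and bilinear, the series may be summed inside $\Pi$ to give $\Pi\big(\mathbb{M}(u),\sum_{j\ge1}(\Lambda e_j)^2\big)=\Pi(\mathfrak{p},\mathbb{M}(u))$, the convergence of $\sum_j(\Lambda e_j)^2$ in $\cR$ being guaranteed by \eqref{eqn-Kwapien+Szym-0} together with \eqref{eqn-trPi}. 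Next, for \eqref{eqn-B07} I use \eqref{eqn-B06} and the fact that $\mathfrak{p}$ is real valued to write pointwise $\mathrm{tr}_{\rK}\mathcal{M}(u)(x)=\mathfrak{p}(x)\,m(u(x))$; the second identity in \eqref{eqn-B02} gives $\Re\langle m(u(x)),u(x)\rangle=-|g(u(x))|^2$, whence $\Re\langle\mathrm{tr}_{\rK}\mathcal{M}(u),u\rangle_{L^2(M)}=-\int_M|g(u(x))|^2\mathfrak{p}(x)\,dx$. For the last equality I compute the radonifying norm in the real orthonormal basis $(e_j)$: since $G(u)\Lambda e_j=g(u)\,\Lambda e_j$ and $\Lambda e_j$ is real valued, one has $\|G(u)\Lambda\|^2_{R(\rK,L^2(M))}=\sum_{j\ge1}\int_M|g(u(x))|^2(\Lambda e_j(x))^2\,dx$, and monotone convergence (all integrands non-negative) lets me exchange sum and integral and recognise $\sum_j(\Lambda e_j)^2=\mathfrak{p}$.

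Finally, \eqref{eqn-B08} is the computational heart. Combining \eqref{eqn-B06} with \eqref{eqn-B02} I have $\mathrm{tr}_{\rK}\mathcal{M}(u)=-\big(\tg(|u|^2)\big)^2\,u\,\mathfrak{p}$, a product of the real scalar $a:=(\tg(|u|^2))^2$, the $\mathbb{C}$-valued $u$, and the real scalar $\mathfrak{p}$. Differentiating by the product rule gives $\nabla\big(a\,u\,\mathfrak{p}\big)=(\nabla a)\,u\,\mathfrak{p}+a\,(\nabla u)\,\mathfrak{p}+a\,u\,(\nabla\mathfrak{p})$, and the chain rule with $\nabla(|u|^2)=2\Re(\bar u\,\nabla u)$ yields $\nabla a=4(\tg'\tg)(|u|^2)\,\Re(\bar u\,\nabla u)$. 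Taking $\Re\langle\cdot,\nabla u\rangle$ of the three terms and integrating then produces exactly the three integrals on the right of \eqref{eqn-B08}: the $(\nabla a)$-term gives the $(\tg'\tg)$-integral, the middle term the $\mathfrak{p}\,|\nabla u|^2$-integral, and the $(\nabla\mathfrak{p})$-term the first integral, after collecting the scalar coefficients.

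The main obstacle is not the algebra but the rigorous justification of these pointwise product- and chain-rule manipulations in $\cR=H^{s,2}(M)\cap W^{\hat s,q}(M)$, namely that $\tg(|u|^2)^2\,u\,\mathfrak{p}$ is weakly differentiable with the classical gradient and that $\nabla\mathfrak{p}$ is meaningful. I would settle this through the Nemytski regularity of Section~\ref{sec-Nemytski} (so that $a$ and $a\,u\,\mathfrak{p}$ lie in $\cR\hookrightarrow H^{1,2}$), the algebra property of Lemma~\ref{lem-R}, and a density argument reducing the identities to smooth $u$, for which they are elementary.
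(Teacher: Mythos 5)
Your proof follows essentially the same route as the paper's: well-definedness of $G$ and $\mathbb{M}$ via the Nemytski results, expansion of the trace over the basis $(\Lambda e_j)$ to get \eqref{eqn-B06}, and the pointwise identity $\langle m(z),z\rangle=-|g(z)|^2$ from Corollary \ref{cor-g^prime} integrated over $M$ for \eqref{eqn-B07}. The paper dispatches \eqref{eqn-B08} with the single remark that it ``is similar'', so your explicit product-- and chain--rule computation is, if anything, more complete than the published argument.

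One caveat: your claim that the product rule ``produces exactly the three integrals on the right of \eqref{eqn-B08}'' should be checked against the printed coefficients. Writing $\tr_{\rK}{\mathcal M}(u)=-\mathfrak{p}\,\tg(|u|^2)^2u$ and differentiating gives the term $-\int_M\mathfrak{p}\,\tg(|u(x)|^2)^2|\nabla u(x)|^2\,dx$ with coefficient $1$, whereas \eqref{eqn-B08} carries a factor $2$; likewise your middle term comes out as $-4\int_M(\tg^\prime\tg)(|u(x)|^2)\,|\Re(\overline{u(x)}\nabla u(x))|^2\,\mathfrak{p}(x)\,dx$, which matches the paper's $\Re\langle u\nabla u,\nabla u\rangle$ only under a charitable reading of that notation. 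Since the paper gives no proof of \eqref{eqn-B08}, the discrepancy is most plausibly a typo in the stated formula rather than an error in your computation, but you should flag it explicitly rather than assert exact agreement with the statement as printed.
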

\begin{proof}[Proof of Lemma \ref{lem-B03}] Since both $g$ and $m$ are functions of ${\mathcal C}^1$-class and  the point-wise multiplication
in $\cR$ is a bounded bilinear map,  Proposition \ref{prop-Nemytski} implies that ${G}$ and $\mathbb{M}$
 are well defined maps from $\cR$  to $\mathcal{L}(\cR,\cR)$ and  $\cR $.
Using \eqref{eqn-M}  we deduce that for $u\in \cR$,
\[ \tr_{\rK}\Big[\mathcal{M}(u)\Big]=\sum_{j\geq 1}
\big[\big((ig)^\prime (ig)\big)\circ u\big](\Lambda e_j)^2\in \cR.\]
Then the definition \eqref{eqn-B02} of the function $m$  concludes the proof of identity \eqref{eqn-B06}.
 Moreover   the second identity in    \eqref{eqn-B02} yields
\begin{align*} 
 \Re& \langle \tr_{\rK}  \mathcal{M}(u),u\rangle_{L^2(M)}
=\sum_{j} \int_M \!   \Re \langle (ig)^\prime (u(x))\big(ig(u(x))\big)\big( (\Lambda e_j)(x)\big)^2,u(x)\rangle dx
 \\
& =-\sum_{j} \int_M  \big( (\Lambda e_j)(x)\big)^2 |g(u(x))|^2 \, dx
=-\sum_{j} |\tilde{G}(u)\Lambda e_j|_{L^2(M)}^2 .
\end{align*}
This concludes the proof of \eqref{eqn-B07}; that of \eqref{eqn-B08} is similar.
\end{proof}

Recall that $m$ is defined by \eqref{eqn-B02}. The above results show that the Stratonovich equation \eqref{NLS-Stratonovich}
can be written in the following It\^o form:
 \begin{equation}\label{NLS-Stratonovich-Ito2}
du(t) = \Big[ i A u(t) - i f(u(t)) + \frac{1}{2} \mathfrak{p}\, m(u(t))  \Big]\,dt  - i g(u(t))  \,dW(t).
\end{equation}
We now prove the existence and uniqueness of a maximal solution to equations \eqref{NLS} and  \eqref{NLS-Stratonovich}
- or \eqref{NLS-Stratonovich-Ito2}.
This is the main result of this section. \\
\begin{theorem}\label{thm_maximal-manifold}
Assume that $M$ is a compact riemannian manifold of dimension $d=2$.
 Assume that $f: \mathbb{C}\to \mathbb{R}$ is of real ${\mathcal C}^1$-class satisfying, for some $\beta \geq 2$ and all $y,z\in \mathbb{C}$,
\begin{equation}\label{ineq-growth}
\vert f(y)\vert \leq C(1+\vert y\vert^{\beta}),\; \vert f^\prime(y)\vert \leq C(1+\vert y\vert^{\beta -1}),\;
\vert f^{\prime}(y)- f^{\prime}(z)\vert \leq C(1+\vert y\vert^{\beta -2 }+ \vert z\vert^{\beta -2}) |y-z|.
\end{equation}
Assume that $g: \mathbb{C}\to \mathbb{R}$ is of real ${\mathcal C}^1$-class satisfying, for some $a \geq 1$ and all $y,z\in \mathbb{C}$,
\begin{equation}\label{ineq-growth-g}
\vert g(y)\vert \leq C(1+\vert y\vert^{a}),\;
\vert g^\prime(y)\vert \leq C(1+\vert y\vert^{a -1}),\; \vert g^{\prime}(y) - g^{\prime}(z)\vert \leq C(1+\vert y\vert^{(a -2)^+}+
|z|^{(a-2)^+}) |y-z|.
\end{equation}
Assume that $p>\beta \vee (2a) $ and $q>2$ satisfy the scaling admissible  condition \eqref{eqn-compatibility}.
Assume that $s\in (1-\frac1p,1]$
 and let  $W=(W(t)\, ,\, t\geq 0)$ be an $H^{s,2}(M)\cap W^{s-\frac1p,q}(M)$-valued Wiener process defined on some filtered probability space
$(\Omega, \mathcal{F},\mathbb{F},\mathbb{P})$ satisfying the usual assumptions.
 \\
 Then for every $u_0\in L^p(\Omega,\mathcal{F}_0,H^{s,2}(M))$   there exist a local  process
$u=(u(t)\, ,\, t<  \tau_\infty) $ whose trajectories are  $H^{s,2}(M)$-valued continuous  and locally $p$-integrable with values in
$W^{s-\frac1p,q}(M)$,
 that  is the unique local maximal solution to \eqref{NLS}
  Moreover,
$ {\mathbb P }\big(\{\tau_\infty <\infty\} \cap \{\sup_{ t<\tau_\infty} |u(t)|_{H^{s,2}(M)}<\infty \}\big)=0$
and
\[\limsup_{t\to \tau_\infty} |u(t)|_{H^{s,2}(M)} = +\infty \mbox{ a.s. on } \{ \tau_\infty<\infty \}.
\]
Suppose furthermore that $p>\beta \vee (2a)\vee \gamma $, that  $g$ satisfies Assumption \ref{ass-tildeg} and that
$m(z)=-\tilde{g}(|z|^2)^2 z$ satisfies the following  condition for some $\gamma \geq 2$ and all $y,z\in \mathbb{C}$:
\begin{equation}\label{ineq-growth-m}
\vert m(y)\vert \leq C(1+\vert y\vert^{\gamma}),\;
\vert m^\prime(y)\vert \leq C(1+\vert y\vert^{\gamma -1}),\; \vert m^{\prime}(y) - m^{\prime}(z)\vert \leq C(1+\vert y\vert^{(\gamma -2)}+
|z|^{(\gamma-2)}) |y-z|.
\end{equation}
 Then the same conclusion as above holds for the equation \eqref{NLS-Stratonovich} in the Stratonovich form.
\end{theorem}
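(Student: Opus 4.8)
The plan is to obtain Theorem~\ref{thm_maximal-manifold} as a concrete instance of the abstract well-posedness results already proved: Theorem~\ref{thm_maximal-abstract} for the It\^o equation~\eqref{NLS}, and Theorem~\ref{thm_maximal-abstract-Stratonovich} for the Stratonovich equation~\eqref{NLS-Stratonovich} (equivalently its It\^o form~\eqref{NLS-Stratonovich-Ito2}). The entire proof therefore reduces to verifying that the concrete data $\rH=H^{s,2}(M)$, $\rE=W^{\hat s,q}(M)$, $A=\Delta$, $F$ the Nemytski map of $f$, and $G$ the multiplication map~\eqref{generalizedG} built from $g$, satisfy the hypotheses of those theorems. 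First I would record the arithmetic consequence of the scaling condition~\eqref{eqn-compatibility} together with $s>1-\frac1p$: one gets $\hat s=s-\frac1p>1-\frac2p=\frac2q=\frac dq$, so Sobolev embedding yields $\rE\hookrightarrow{\mathcal C}(M)\hookrightarrow L^\infty(M)$. This single fact drives everything, since it supplies the $L^\infty$-control that $\rH=H^{s,2}$ alone lacks, and it is exactly the hypothesis under which $\mathcal{R}=\rH\cap\rE$ is a Banach algebra by Lemma~\ref{lem-R}.

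The deterministic Strichartz structure, Assumption~\ref{ass-spaces}, I would settle by invoking Proposition~\ref{prop-Strichartz-det} and Lemma~\ref{lem-spaces+maps}. Starting from the base data $\rH_0=H^{1/p,2}(M)$, $\rE_0=L^q(M)$, ${\mathcal H}_0=L^2(M)$, valid by~\cite{Burq+G+T_2004}, the choice $\sigma=\hat s$ gives $\rH=D((-A)^{\hat s/2})$ applied to $\rH_0$, which is $H^{s,2}(M)$, while $\rE=W^{\hat s,q}(M)\supset H^{\hat s,q}(M)=D((-\tilde A)^{\hat s/2})$. I would also note that $\rE=W^{\hat s,q}(M)$ with $q\in(2,\infty)$ is of martingale type~$2$, as required by both abstract theorems, since it inherits this geometric property from $L^q(M)$.

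Next I would verify Assumption~\ref{ass-maps}(ii)--(iii). For the drift, $f$ is ${\mathcal C}^1$ with $f'$ Lipschitz on balls by~\eqref{ineq-growth}; since $F$ is only required to map $\mathcal{R}$ into $\rH$, I would apply Theorem~\ref{thm-Nemytski-Lip} in the space $\mathcal{R}^{s,2}=H^{s,2}(M)\cap L^\infty(M)$ (the $L^\infty$-factor always being furnished by the $\rE$-component of $\mathcal{R}$). Translating the ball-constants $K_1(f,\cdot),K_2(f,\cdot)$ into the polynomial bounds $1+|u|_\rE^{\beta-1}$ and $1+|u|_\rE^{(\beta-2)^+}$ — legitimate because $|u|_\infty\lesssim|u|_\rE$ and $f,f'$ grow like $|y|^\beta,|y|^{\beta-1}$ — produces exactly~\eqref{ineqn-growth-F1}--\eqref{ineq-lip-F} with exponent $\beta$. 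For the diffusion I would view $G(u)$ as multiplication by $g(u)$; the Nemytski estimates of Theorem~\ref{thm-Nemytski-Lip} (applied to $g$, with exponent $a$ from~\eqref{ineq-growth-g}) give $g\colon\mathcal{R}\to\mathcal{R}$ with the appropriate growth and ball-Lipschitz bounds, the algebra estimate~\eqref{eqn-multiplication-1} of Corollary~\ref{cor-algebra} bounds $|G(u)|_{\mathcal{L}(\mathcal{R},\rH)}$ by $\|g(u)\|_{\mathcal{R}}$, and the ideal property together with Assumption~\ref{ass-K} converts this into $|G(u)|_{R(\rK,\rH)}\le|G(u)|_{\mathcal{L}(\mathcal{R},\rH)}\,|\Lambda|_{R(\rK,\mathcal{R})}$, yielding~\eqref{growth-G}--\eqref{lip-G}. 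Since $p>\beta\vee(2a)$ forces $\beta\in[2,p)$ and $a\in[1,p/2)$, Theorem~\ref{thm_maximal-abstract} then delivers the unique local maximal solution to~\eqref{NLS} with the stated blow-up alternative, proving the first half.

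For the Stratonovich statement I would verify the extra Assumptions~\ref{ass-K}, \ref{ass-G} and~\ref{ass-maps-b} and then apply Theorem~\ref{thm_maximal-abstract-Stratonovich}. Assumption~\ref{ass-K} is built into the hypothesis that $W$ is $\mathcal{R}$-valued, and Assumption~\ref{ass-G} follows from $g\in{\mathcal C}^1$ and the bilinear continuity of pointwise multiplication on $\mathcal{R}$ (Lemma~\ref{lem-R}). The crucial input is the Stratonovich correction: by Corollary~\ref{cor-g^prime} it is governed by $m(z)=-\tilde g(|z|^2)^2z$, and by Lemma~\ref{lem-B03}, identity~\eqref{eqn-B06}, one has $\mathrm{tr}_{\rK}({\mathcal M}(u))=\Pi(\mathfrak p,\mathbb{M}(u))$ with $\mathfrak p\in\mathcal{R}$ fixed and $\mathbb{M}$ the Nemytski map of $m$; so the $\mathbb{L}_2(\mathcal{R};\rH)$-bounds~\eqref{growth-G'G_H^12}--\eqref{lip-G'G_H^12} reduce, via the algebra estimate and the growth hypotheses~\eqref{ineq-growth-m} on $m$, to the Nemytski bounds for $m$ with exponent $\gamma$. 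Consequently the modified drift $F_1=F+\tfrac i2\,\mathrm{tr}_{\rK}({\mathcal M})$ satisfies Assumption~\ref{ass-maps}(ii) with exponent $\beta\vee\gamma$, and since $p>\beta\vee(2a)\vee\gamma$ all exponent constraints hold, giving the conclusion for~\eqref{NLS-Stratonovich}. I expect the main obstacle to be precisely the bookkeeping in the third paragraph: the abstract growth and Lipschitz conditions are stated in the $\rH=H^{s,2}$-norm, yet $H^{s,2}$ with $s\le1$ is neither an algebra nor embedded in $L^\infty$, so every estimate must route its $L^\infty$-control through the $\rE$-component and must reproduce the exact split between $|u|_\rE^{\cdot}$ and $|u|_\rH$ factors demanded by~\eqref{ineq-lip-F} and~\eqref{lip-G}; matching the exponents $(\beta-2)^+,\beta-1,(a-2)^+,a-1$ coming out of the ball-Lipschitz constants of Theorem~\ref{thm-Nemytski-Lip} is the delicate part, while everything else is a direct appeal to the abstract machinery.
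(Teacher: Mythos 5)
Your proposal is correct and follows essentially the same route as the paper: the paper's proof likewise takes ${\mathcal H}_0=L^2(M)$, $\rH_0=H^{\frac1p,2}(M)$, $\rE_0=L^q(M)$ from \cite{Burq+G+T_2004}, lifts to $\rH=H^{s,2}(M)$, $\rE=W^{\hat s,q}(M)$ via Lemma~\ref{lem-spaces+maps}, verifies Assumption~\ref{ass-maps} through the Nemytski results of Section~\ref{sec-Nemytski} (using $\hat s>\frac2q$ for the $L^\infty$-control), and then invokes Theorems~\ref{thm_maximal-abstract} and~\ref{thm_maximal-abstract-Stratonovich}, the Stratonovich part reducing to checking Assumption~\ref{ass-maps-b} for ${\mathcal M}$ via the Nemytski bounds on $m$ with exponent $\gamma$. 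Your exponent bookkeeping for \eqref{ineqn-growth-F1}--\eqref{lip-G} is in fact more explicit than what the paper records, but it is the same argument.
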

\begin{remark}\label{example-tg}
 Let $\tilde{g}$ be of class ${\mathcal C}^2$ such that for some  constants $C>0$,   $\alpha_0\geq 0$, and some constants $\alpha_i$,
$i=1,2$ one has for all $r\geq 0$:
\[ |\tg(r)|\leq C(1+r^{\alpha_0}), \quad |\tg'(r)|\leq C(1+r^{\alpha_1}), \quad |\tg''(r)|\leq C(1+r^{\alpha_2}).\]
Then if $g(z)=\tg(|z|^2)z$ and $m(z)=\tg(|z|^2)^2z$, the function $g$ satisfies condition \eqref{ineq-growth-g} with $a\geq (2\alpha_0+1)\vee
(2\alpha_1+3)\geq 1 $ and $a\geq 2\alpha_2+5$ if $a>2$. Furthermore, the function $m$ defined by  $m(z)=-\tilde{g}(|z|^2)^2 z$ satisfies
\eqref{ineq-growth-m} with $\gamma = ( 2a-1) \vee 2$.
\end{remark}
\begin{proof}[Proof of Theorem \ref{thm_maximal-manifold}]
 We only consider the case $s\in (1-\frac1p,1)$. The case $s=1$  can be dealt with  analogously.\\
 We put ${\mathcal H}_0=L^2(M)$,
$\rE_0=L^q(M)$, $\rH_0=H^{\frac1p,2}(M)$, $A=\Delta$ with $D(A)=H^{2,2}(M)$ and
 $\mathbf{U}=\big(U_t\big)_{t\in \mathbb{R}}$, the unitary group on
 $L^2(M)$ generated by $iA$. Then Assumption \ref{ass-spaces} is satisfied. Next we put $\hat{s}=s-\frac1p$.
Since by the assumptions $s-\frac1p>1-\frac2p=\frac2q>0$, we infer that $\hat{s}>0$. Finally we  put
 ${\mathcal H}=A^{-\hat{s}}({\mathcal H}_0)=H^{s,2}(M)$,
$\rH=A^{-\hat{s}}(\rH_0)=H^{s+\frac1p,2}(M)$ and $\rE=W^{\hat{s},q}(M)$.
Then Assumption \ref{ass-maps}(i) is satisfied and by Lemma \ref{lem-spaces+maps},
 Assumption \ref{ass-spaces} is satisfied as well.
 Note that since $q>2$, the space $W^{\hat{s},q}(M)$ is bigger that $H^{\hat{s},2}(M)$.
 Moreover, since again $\hat{s}>\frac2q$, in view of  Theorem \ref{thm-Nemytski-Lip},
 the Nemytski maps $F$ satisfies Assumption  \ref{ass-maps}(ii).\\
 As above   we infer that $G$ satisfies Assumption \ref{ass-maps}(iii);
see also Proposition 6.4 in \cite{Brz+Elw_2000}, where a weaker version of our results from section 2 was used. \\
 We conclude that the problem \eqref{NLS} is a special case of the problem \eqref{NLS-abstract} for the
 above choice of  spaces $\rH$ and $\rE$. Therefore, the first result follows by applying Theorem \ref{thm_maximal-abstract}.\\

We now turn to the Stratonovich evolution equation \eqref{NLS-Stratonovich} written in terms of an It\^o integral in \eqref{NLS-Stratonovich-Ito2}.
We only need to show that the function ${\mathcal M}$ defined in \eqref{eqn-M}
satisfies Assumption \ref{ass-maps-b},
 and in particular inequalities \eqref{growth-G'G_H^12} and \eqref{lip-G'G_H^12}.
First notice that if $g$ satisfies \eqref{ineq-growth-g}
then $G$ satisfies the first part of Assumption \ref{ass-maps-b} with the same spaces $\rE$ and $\rH$.
If Assumption \ref{ass-tildeg} is satisfied, the map $\mathbb{M}:\cR\to\cR$ is Lipschitz on balls and
 since $m$  satisfies the assumption \eqref{ineq-growth}
 with some parameter  $\gamma=2a-1$, we deduce that  $\mathbb{M}$ satisfies the second  part of Assumption \ref{ass-maps-b} with
the same  choice of  spaces $\rE$ and $\rH$.
This completes the proof.
\end{proof}
\begin{remark}\label{rem-incompatibility}
 Although $q>2$ and $s>\hat{s}$, since $s-1< \hat{s}-\frac{2}q$, we cannot deduce
that $H^{1,2}(M) \subset W^{\hat{s},q}(M) $; see e.g. Theorem \cite[Theorem 4.6.1]{Triebel_1978}.
 In fact,  in view of the Sobolev embedding Theorem, $H^{s,2}(M) $ is not a subset of $H^{\hat{s},q}(M) $.
 On the other hand, we believe  that although $ \hat{s}-\frac{d}q> s-\frac{d}2$,
 but $q>2$ and $s>\hat{s}$, it is not true that $H^{\hat{s},q}(M)  \subset H^{s,2}(M) $.
 Hence, the two Banach spaces  $H^{s,2}(M)$ and $ H^{\hat{s},q}(M) $ are not included in one another.
\end{remark}

\section{Existence of a global solution $H^{1,2}$-valued solution to the Stochastic NLS in the Stratonovich form}
\label{sec-global}

\subsection{Preliminaries}\label{sec-global-prel}

As in  section \ref{sec-local},  we  assume below that $M$ is a $2$-dimensional,  compact riemannian manifold and  $\Delta$
is the Laplace Beltrami operator on $M$.
In the previous sections we considered the stochastic nonlinear Schr\"odinger equation \eqref{NLS}
with the initial data $u_0$
belonging to the Sobolev space $H^{s,2}(M)$ for some $s\leq 1$. In this section we will consider the problem
of global existence for $s=1$.
We at first rewrite the non linear Schr\"odinger equation \eqref{NLS} with a Stratonovich integral
and then prove that  the $L^2(M)$ norm of the solution is preserved.
We finally conclude by means of the Khashmiski Theorem with the energy function playing the r\^ole of the  Lyapunov function.

The following notations already, used in section \ref{sec-local} for any $s\in (1-\frac{1}{p}, 1]$, will be used in the entire section for $s=1$.
Given $\theta\in (0,1]$ and $r\in [1,+\infty)$, we put $W^{\theta,r}(M):= W^{\theta,r}(M,{\mathbb C})$
where  $\mathbb{C}$ is identified to ${\mathbb R}^2$. Let $(p,q)$ be a pair of positive numbers which satisfies the scaling admissible condition \eqref{eqn-compatibility},  that  is
$\frac{2}{p}+\frac{2}{q}=1$.   We set $s=1$,  $\hat{s}=1-\frac{1}{p}$, $\rH=H^{1,2}(M)$, $\rE=W^{\hat{s},q}(M)$, $\|u\|_{\hat{s},q}=
\|u\|_{W^{\hat{s},q}(M)}$ and $\mathcal{R}= \rH\cap \rE$. Since $q>2$,  $\hat{s}q > 2$ and hence we deduce that $W^{\hat{s},q}(M)\subset L^\infty(M)$.
We use the notation for the scalar product in $L^2(M):=L^2(M;{\mathbb C})$:
\[ \langle u,v\rangle = \int_M \Re\, \big( u(x) \overline{v(x)}\big) dx, \;\; u,v\in L^2(M).\]

We will consider the stochastic NLS equation  in Stratonovich form, that is for $u_0\in H^{1,2}(M)$,
\begin{equation} \label{NLS-Strat_01}
idu(t) + \Delta u(t) dt = f(u) dt + g(u)\circ dW(t),\quad  u(0)=u_0.
\end{equation}

To prove the global existence of the solution to the the NLS equation \eqref{NLS-Strat_01}, we need to impose conditions on the noise
$W$, on the diffusion coefficient $g$ and on the non-linearity $f$ stonger that those made in the previous section.
 \begin{assumption} \label{ass-W}
Thus we suppose that $(W(t), t\geq 0)$  is  a real  $W^{1, 2 s_0}(M,{\mathbb R})\cap W^{\hat{s},q}(M,{\mathbb R})$-valued Wiener process,
for some $s_0> 1$.
\end{assumption}
Let $\rK$ be  the reproducing kernel Hilbert space of the law of the $H^{1, 2 s_0}(M,{\mathbb R})\cap W^{\hat{s},q}(M,{\mathbb R})$-valued
random variable ${W}(1)$.
Then  the embedding $ \Lambda : \rK \to H^{1, 2 s_0}(M,{\mathbb R})\cap W^{\hat{s},q}(M,{\mathbb R})$ is
 $\gamma$ radonifying.
\begin{lemma} \label{lem-W}
For $x\in M$ let $\mathfrak{p}(x)={\rm tr}_K(\Pi)(x) = \sum_j |\Lambda e_j(x)|^2$ and $
\mathfrak{q}(x)= \sum_{j\geq 1} \vert \nabla \Lambda e_j(x) \vert^2$.
Then $ \mathfrak{p} \in L^\infty(M)$ and $ \mathfrak{q}\in L^1(M)$. Furthermore, $\sum_{j\geq 1}  \|\nabla \Lambda e_j\|_{L^{2 s_0}}^2 <\infty$.
\end{lemma}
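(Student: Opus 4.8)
The plan is to deduce all three assertions from a single square-summability estimate provided by the Kwapie\'n-Szyma\'nski theorem, after which each statement closes with an elementary Sobolev embedding and the finiteness of $\mathrm{vol}(M)$.

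First I would record the key input. Writing $Z:=H^{1,2s_0}(M,{\mathbb R})\cap W^{\hat{s},q}(M,{\mathbb R})$, Assumption \ref{ass-W} guarantees that the embedding $\Lambda:\rK\to Z$ is $\gamma$-radonifying. Hence, exactly as in \eqref{eqn-Kwapien+Szym-0}, the Kwapie\'n-Szyma\'nski theorem \cite{Kwapien+Szym_1980} allows the orthonormal basis $(e_j)_{j\geq1}$ of $\rK$ to be chosen so that
\[ \sum_{j\geq1}\|\Lambda e_j\|_Z^2<\infty. \]
Since $2s_0\geq2$ and $M$ is compact we have $H^{1,2s_0}(M)\embed H^{1,2}(M)$, so $Z\embed \rH\cap\rE=\cR$ and this basis is compatible with \eqref{eqn-Kwapien+Szym-0}; there is no conflict with the earlier choice. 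I regard this summability as the heart of the matter, the remaining steps being bookkeeping.

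For the last assertion I would simply observe that $\|\nabla\Lambda e_j\|_{L^{2s_0}}\leq\|\Lambda e_j\|_{H^{1,2s_0}}\leq\|\Lambda e_j\|_Z$, so that $\sum_{j}\|\nabla\Lambda e_j\|_{L^{2s_0}}^2\leq\sum_{j}\|\Lambda e_j\|_Z^2<\infty$. To obtain $\mathfrak{q}\in L^1(M)$ I would use Tonelli's theorem to write $\int_M\mathfrak{q}\,dx=\sum_{j}\int_M|\nabla\Lambda e_j|^2\,dx=\sum_{j}\|\nabla\Lambda e_j\|_{L^2}^2$; since $\mathrm{vol}(M)<\infty$ and $2\leq2s_0$, H\"older's inequality yields $\|\nabla\Lambda e_j\|_{L^2}\lesssim\|\nabla\Lambda e_j\|_{L^{2s_0}}$, whence $\int_M\mathfrak{q}\,dx\lesssim\sum_{j}\|\nabla\Lambda e_j\|_{L^{2s_0}}^2<\infty$. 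For $\mathfrak{p}\in L^\infty(M)$ I would invoke the embedding $W^{\hat{s},q}(M)\embed{\mathcal C}(M)$ (valid because $\hat{s}q>2=d$), which gives $\|\Lambda e_j\|_{{\mathcal C}(M)}\lesssim\|\Lambda e_j\|_{W^{\hat{s},q}}\leq\|\Lambda e_j\|_Z$ and therefore $\sum_{j}\|\Lambda e_j\|_{{\mathcal C}(M)}^2<\infty$. Passing to continuous representatives, the series $\sum_{j}|\Lambda e_j(x)|^2$ then converges uniformly on $M$, dominated by $\sum_{j}\|\Lambda e_j\|_{{\mathcal C}(M)}^2$; hence $\mathfrak{p}$ is continuous and $\|\mathfrak{p}\|_{L^\infty}\leq\sum_{j}\|\Lambda e_j\|_{{\mathcal C}(M)}^2<\infty$.

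The only genuine subtlety, and the sole point where more than Sobolev embeddings is needed, is the selection of the basis. For an arbitrary orthonormal basis the basis-dependent sum $\sum_{j}\|\nabla\Lambda e_j\|_{L^{2s_0}}^2$ may well diverge: because $L^{2s_0}$ with $2s_0>2$ is not of cotype $2$, the $\gamma$-radonifying square-function bound only delivers $\mathfrak{q}\in L^{s_0}(M)$ rather than the stronger per-term estimate. It is precisely the Kwapie\'n-Szyma\'nski choice of basis, applied to the richer target space $Z$ made available by $s_0>1$ in Assumption \ref{ass-W}, that overcomes this; since $\mathfrak{p}$ and $\mathfrak{q}$ are themselves independent of the basis, once the good basis is fixed all three conclusions follow simultaneously.
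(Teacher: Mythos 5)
Your proof is correct and follows essentially the same route as the paper: the Kwapie\'n--Szyma\'nski choice of basis making $\sum_j\|\Lambda e_j\|^2_{H^{1,2s_0}\cap W^{\hat{s},q}}$ finite, the embedding $W^{\hat{s},q}(M)\subset L^\infty(M)$ for $\mathfrak{p}$, and H\"older on the compact manifold for $\mathfrak{q}$ and the $L^{2s_0}$ sum. If anything you are slightly more careful than the paper at the $\mathfrak{q}$ step, correctly identifying $\sum_j\|\nabla\Lambda e_j\|_{L^2}^2$ (rather than $\sum_j\|\nabla\Lambda e_j\|_{L^1}^2$) as the quantity controlling $\int_M\mathfrak{q}\,dx$.
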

\begin{proof}
By the  Kwapie{\'n}-Szyma{\'n}ski Theorem
\cite{Kwapien+Szym_1980}, we can assume that the ONB $\{e_j\}_{j=1}^\infty$ is chosen in such a way that
$\sum_{j\geq 1}  \|  \Lambda e_j \|^2_{H^{1, 2 s_0}(M) \cap W^{\hat{s},q}(M)} <\infty$. Since $ W^{\hat{s},q}(M)\subset L^\infty(M)$
we deduce that $\mathfrak{p}$ is bounded. Furthermore,
 $\sum_{j\geq 1}  \|  \nabla \Lambda e_j \|^2_{L^1(M)}\leq
\sum_{j\geq 1}  \|  \nabla \Lambda e_j \|^2_{L^{s_0}(M)} <\infty$ and therefore, the series
$\sum_{j\geq 1}   |\nabla \Lambda e_j| ^2$ is absolutely convergent in $L^1(M)$ as claimed; this concludes the proof.
\end{proof}

In this section, we suppose that $g$ satsfies the following stronger version of Assumption \ref{ass-tildeg}.
\begin{assumption}\label{ass-tildeg-final} There exists a bounded function  $\tilde{g}:[0,+\infty)\to \RR$
 of class ${\mathcal C}^1$  such that
\begin{equation}\label{eqn_global-tg-final}
 g(z)=\tg(|z|^2) z, \; z\in \mathbb{C},\\
\end{equation}
Furthermore, we assume that the function $g$ satisfies the conditions \eqref{ineq-growth-g} with $a=1$ and the function $m:\mathbb{C}\to \mathbb{C}$
defined by $m(z)=-\tilde{g}(|z|^2)^2 \, z$ satisfies condition \eqref{ineq-growth-m} with $\gamma \geq 2$.
\end{assumption}

An example of function  $\tilde{g}$ such that the function $g$ defined by \eqref{eqn_global-tg-final} satisfies Assumption \ref{ass-tildeg-final} is a bounded function
of class ${\mathcal C}^2$ such that $\sup_{r>0} (1+r) |\tg'(r)|<\infty$
and $\sup_{r>0} r^{\frac{3}{2}} |\tg''(r)|<\infty$, for instance,   $\tilde{g}(r)=\frac{\ln(1+r)}{C+\ln(1+r)}$ for $r>0$ and $C>0$. Indeed,
the conditions in Remark \ref{example-tg} are satisfied with $\alpha_0=0$, $\alpha_1=-1$ and $\alpha_2=-\frac{3}{2}$, which yields \eqref{ineq-growth-g}
for $g$ with $a=1$ while $m$ satisfies \eqref{ineq-growth-m} with $\gamma =2$.
\bigskip

We study the global existence for two types of equation \eqref{NLS-Strat_01} depending on the non-linear term $f$, which is defocusing or focusing.
Precise assumptions will be described below, but let us mention that a typical example of the former is when $f(u)=\vert u\vert^2u$
while  a typical example of the latter
is when $f(u)=-\vert u\vert u$.
\begin{assumption}\label{ass-focusing}
We assume that   $f: \C \to \R$ is of the  form
\begin{equation}\label{eqn_global-tf}
 f(z)=\tilde{f}(|z|^2) z,\; z\in \mathbb{C},
\end{equation}
where the function  $\tilde{f}:[0,+\infty)\to \RR$ satisfies one of the following two  cases.
\begin{trivlist}
\item[\textbf{Case 1: defocusing nonlinearity.}] The function $\tilde{f}$ satisfies either (a) or (b):\\
 \textbf{(a)} There exist a natural number $N$  and real number $a_k$, $k=0,\cdots,N$,  with $a_N>0$,
such that   $\tilde{f}(r)=\sum_{k=0}^N a_k r^k$ for every $r\in {\mathbb R}$.\\
 \textbf{(b)}
There exist $C>0$ and $\sigma \in  [\frac{1}{2},\infty)$ such that  $\tilde{f}(r)=C r^\sigma$ for every $r\in {\mathbb R}$.
\item[\textbf{Case 2: focusing nonlinearity.}] There exist $C>0$ and
 $\sigma \in [ \frac{1}{2},1)$ such that
 for every $r\in {\mathbb R}$, $\tilde{f}(r)=-C r^\sigma$.
 \end{trivlist}
\end{assumption}
This assumption yields the following result, whose straightforward proof is omitted.
\begin{lemma}\label{lem-f} If Assumption \ref{ass-focusing} is satisfied, then function $f$ satisfies inequalities \eqref{ineq-growth} with
$\beta=2N+1\geq 2$ in the defocusing case 1(a) and with $\beta=2\sigma+1\geq 2$ in the focusing case or the defocusing case 1(b).
\end{lemma}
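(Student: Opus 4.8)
The plan is to reduce the three bounds in \eqref{ineq-growth} to elementary scalar estimates on $\tilde f$ and its derivatives, exploiting the explicit form $f(z)=\tilde f(|z|^2)z$ from \eqref{eqn_global-tf}. First I would record the real Fr\'echet derivative of $f$. Identifying $\mathbb{C}$ with $\mathbb{R}^2$ and writing $\langle\cdot,\cdot\rangle$ for the euclidean inner product (so that $\langle z,h\rangle=\Re(\bar z h)$), one has for $z,h\in\mathbb{C}$
\[ [f'(z)](h)=2\tilde f'(|z|^2)\,\langle z,h\rangle\, z+\tilde f(|z|^2)\,h, \]
whence the operator norm obeys $|f'(z)|\le 2|\tilde f'(|z|^2)|\,|z|^2+|\tilde f(|z|^2)|$. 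Next I would collect the scalar bounds implied by Assumption \ref{ass-focusing}: in case 1(a) one has $|\tilde f(r)|\le C(1+r^N)$, $|\tilde f'(r)|\le C(1+r^{N-1})$ and $|\tilde f''(r)|\le C(1+r^{(N-2)^+})$, while in case 1(b) and in the focusing case $\tilde f(r)=\pm Cr^\sigma$ gives $|\tilde f(r)|=Cr^\sigma$, $|\tilde f'(r)|=C\sigma r^{\sigma-1}$ and $|\tilde f''(r)|=C\sigma|\sigma-1|\,r^{\sigma-2}$ on $r>0$. The decisive structural remark is that the admissible exponent $\beta=2N+1$, resp. $\beta=2\sigma+1$, always satisfies $\beta\ge 2$, so that $\beta-2\ge 0$; this nonnegativity is exactly what the hypotheses $a_N>0$ and $\sigma\ge\tfrac12$ guarantee.

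With these bounds the first two inequalities in \eqref{ineq-growth} are immediate. Since in all cases $|z|^2|\tilde f'(|z|^2)|\le C(1+|z|^{\beta-1})$ and $|\tilde f(|z|^2)|\le C(1+|z|^{\beta-1})$, the identity above yields $|f(z)|=|\tilde f(|z|^2)|\,|z|\le C(1+|z|^{\beta})$ and $|f'(z)|\le C(1+|z|^{\beta-1})$.

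For the third, Lipschitz-type, estimate I would argue by a mean value inequality controlled by the second derivative away from the origin. On $\mathbb{C}\setminus\{0\}$ the function $f$ is of class ${\mathcal C}^2$ (each relevant $\tilde f$ is ${\mathcal C}^\infty$ on $(0,\infty)$), and differentiating the formula for $f'$ once more and inserting the scalar bounds gives
\[ |f''(w)|\le C\big[\,|\tilde f''(|w|^2)|\,|w|^3+|\tilde f'(|w|^2)|\,|w|\,\big]\le C\big(1+|w|^{\beta-2}\big),\qquad w\ne 0. \]
Because $\beta-2\ge 0$, the right-hand side is locally bounded; integrating $f''$ along the segment $[z,y]$ then produces
\[ |f'(y)-f'(z)|\le \sup_{w\in[z,y]}|f''(w)|\;|y-z|\le C\big(1+|y|^{\beta-2}+|z|^{\beta-2}\big)|y-z|, \]
which is precisely the last inequality in \eqref{ineq-growth}.

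I expect the only genuine subtlety, and hence the main obstacle, to be the behaviour at the origin in the power cases 1(b) and 2, where $f$ need not be twice differentiable and the segment $[z,y]$ may pass through $0$. The resolution is again the sign condition $\sigma\ge\tfrac12$: it forces $\beta-2\ge 0$, so $|f''|$ stays locally bounded off the origin, whence $f'$ (which is continuous everywhere) is genuinely Lipschitz on bounded sets with the stated growth of its Lipschitz constant; the exceptional configurations in which $[z,y]$ meets $0$ follow by continuity of both sides in $(y,z)$. Finally I would dispose of the degenerate subcase $N=0$ of 1(a), in which $f(z)=a_0 z$ is $\mathbb{R}$-linear and all three bounds hold trivially with $\beta=2$.
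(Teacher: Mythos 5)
Your proof is correct. The paper omits the proof of this lemma as ``straightforward,'' and your argument --- computing the real Fr\'echet derivatives of $z\mapsto\tilde f(|z|^2)z$, power-counting against $\beta=2N+1$ (resp.\ $2\sigma+1$), and handling the Lipschitz bound for $f'$ via the second derivative on $\mathbb{C}\setminus\{0\}$ together with a density/continuity argument for segments through the origin --- supplies exactly the intended details, including the correct treatment of the only delicate points ($\sigma\in[\tfrac12,1)$ at the origin, and the degenerate case $N=0$).
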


Thus, Lemma \ref{lem-B03} and equation \eqref{NLS-abstract-Stranonovich-Ito} imply that
 in this framework we can reformulate equation \eqref{NLS-Strat_01} as
\begin{equation}\label{NLS-Strat_03}
\displaystyle
du(t)=i \Big[ \Delta u(t)- F(u)\Big]\,dt  + \frac{1}2
\mathfrak{p} \mathbb{M}(u)  \,dt +(-i)\tilde{G}(u)\,dW(t),
\end{equation}
where $\tilde{G}$ is the generamized Nemytski map defined by \eqref{generalizedG},
 $\mathbb{M}(u)=m\circ u$ is the Nemytski map associatied with $m(z)=-\tilde{g}(|z|^2)^2 z$ and
$\mathfrak{p}= \sum_j (\Lambda e_j)^2$ is defined in Lemma \ref{lem-W}.

 Let us fix  $u_0\in L^p(\Omega, {\mathcal F}_0, H^{1,2}(M))$ and observe that the  assumptions of Theorem \ref{th_global}
 imply  that  Theorem \ref{thm_maximal-manifold}
can be applied.
Hence   equation  \eqref{NLS-Strat_03} has a unique local maximal solution
 $u=\big(u(t), t\in [0, \tau_\infty)\big)$.
We will  show that our assumptions  on $f$, $g$ and on the noise $W$ are  sufficient  to ensure that the explosion time  $\tau_\infty$
 is a.s. infinite. This will be achieved by proving some conservation laws in the next  two subsections.\\
 Let us recall that according to Theorem \ref{thm_maximal-manifold} $\lim_{t \toup  \tau_\infty} \vert u(t)\vert_{H^{1,2}} =\infty$
 $\mathbb{P}$-a.s. on $\{ \tau_\infty <\infty\}$. Hence, the following stopping times are well defined (and finite on $\{ \tau_\infty <\infty\}$):
 \begin{equation}\label{eqn-tau_k-explosion}
 \tilde{\tau}_k:=\inf\big\{ t \in [0, \tau_\infty):  \vert u(t)\vert_{H^{1,2}} \geq k\big\}.
 \end{equation}

The aim of this section is to prove that $\tau_\infty =\infty$ a.s.
The proof of this result will be given in several steps. Recall that $\rH=H^{1,2}(M)$ and $\rE=W^{1-\frac{1}{p},q}(M)$.
 The main two steps are described in the following two sections. The first one is the a.s.  conservation of the $L^2(M)$
norm due to the Stratonovich integral and the deterministic conservation law. The second one is the use of a Lyapounov function. Note that
unlike the deterministic case, the It\^o-Stratonovich correction term implies that the expected value of this Lyapounov function
is not preserved.  However, it remains bounded  and this  implies that the expected value of the
$\rH$-norm of the maximal solution remains bounded, which  forbids the
explosion time to be finite.

\subsection{Preservation of the $L^2$-norm}
\label{subsec-L^2norm}

We at first prove that the $L^2(M)$-norm of this solution is almost surely constant in time
This extends classical results for the deterministic NLS equation (see \cite{Burq+G+T_2004} for the case of compact manifolds)
as well as \cite{deBouard+Deb_2003} deBouard Debussche for the flat stochastic  NLS equation.

\begin{lemma}\label{lem-L^2} Assume that $f$ and $g$  satisfy the Assumptions \ref{ass-focusing} and \ref{ass-tildeg-final}  respectively,
$p$ and $q$ satisfy the scaling admissibility condition $\frac{2}{p} + \frac{2}{q}=1$.
Let $(W(t), t\geq 0)$ be an $\rH\cap \rE$-valued Wiener process  and $u_0\in H$. Then
 $\vert u(t)\vert_{L^2(M)}=\vert u_0\vert_{L^2(M)}$, for all $t\in [0,\tau_\infty)$, $\mathbb{P}$-almost surely.
\end{lemma}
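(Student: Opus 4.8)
The plan is to pass to the It\^o reformulation \eqref{NLS-Strat_03} and apply the It\^o formula to the functional $\phi(u)=|u|_{L^2(M)}^2$ within the Gelfand triple $H^{1,2}(M)\hookrightarrow L^2(M)\hookrightarrow H^{-1,2}(M)$. To keep all quantities integrable I would first localise with the stopping times $\tilde\tau_k$ from \eqref{eqn-tau_k-explosion}: up to $\tilde\tau_k$ one has $|u(t)|_{H^{1,2}}\leq k$, so $i\Delta u(t)$ stays bounded in $H^{-1,2}(M)$, the deterministic terms $f(u)$ and $\tfrac12\mathfrak{p}\,\mathbb{M}(u)$ stay bounded in $\rH\subset L^2(M)$, and the diffusion $-i\tilde{G}(u)$ stays bounded in $R(\rK,\rH)\subset R(\rK,L^2(M))$. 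With $\phi'(u)(h)=2\langle u,h\rangle$ and $\phi''(u)(h,h')=2\langle h,h'\rangle$, the It\^o formula yields, for $t\leq\tilde\tau_k$,
\begin{align*}
|u(t)|_{L^2}^2 = |u_0|_{L^2}^2
&+ 2\int_0^t \Big\langle u(r),\, i\Delta u(r) - i f(u(r)) + \tfrac12 \mathfrak{p}\, m(u(r)) \Big\rangle \, dr \\
&+ 2\int_0^t \big\langle u(r),\, -i\,\tilde{G}(u(r))\, dW(r) \big\rangle
+ \int_0^t \big\| \tilde{G}(u(r)) \big\|_{R(\rK, L^2(M))}^2 \, dr .
\end{align*}

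Next I would verify that every term except $|u_0|_{L^2}^2$ vanishes. Since multiplication by $i$ is skew-adjoint for the real inner product (so $\langle v,iv\rangle=0$ for every $v$), Green's formula on the closed manifold $M$ gives, in the $H^{-1,2}$--$H^{1,2}$ duality, $\langle u,i\Delta u\rangle=\langle \Delta u,-iu\rangle=-\langle\nabla u,\nabla(-iu)\rangle=\langle\nabla u,i\nabla u\rangle=0$. Because $f(z)=\tilde{f}(|z|^2)z$ and $g(z)=\tg(|z|^2)z$ are pointwise real multiples of $z$, one finds $\langle u,if(u)\rangle=\int_M\tilde{f}(|u|^2)\,\Re\big(i|u|^2\big)\,dx=0$ and, for the real-valued basis $(e_j)$, $\langle u,-i\tilde{G}(u)\Lambda e_j\rangle=\int_M\tg(|u|^2)\,\Lambda e_j\,\Re\big(i|u|^2\big)\,dx=0$; hence the stochastic integral is identically $0$.

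Finally, the It\^o correction term is exactly cancelled by the Stratonovich drift correction. Using \eqref{eqn-B06}, namely $\tr_{\rK}\mathcal{M}(u)=\mathfrak{p}\,\mathbb{M}(u)=\mathfrak{p}\,m(u)$, together with identity \eqref{eqn-B07},
\[
\Big\langle u,\, \tfrac12\,\mathfrak{p}\,m(u)\Big\rangle\cdot 2 = \big\langle u,\, \tr_{\rK}\mathcal{M}(u)\big\rangle = -\int_M |g(u(x))|^2\,\mathfrak{p}(x)\,dx = -\,\big\|\tilde{G}(u)\Lambda\big\|_{R(\rK,L^2(M))}^2 ,
\]
which is precisely the negative of the trace term $\int_M|g(u)|^2\mathfrak{p}\,dx=\|\tilde{G}(u)\|_{R(\rK,L^2)}^2$. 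Therefore $|u(t\wedge\tilde\tau_k)|_{L^2}^2=|u_0|_{L^2}^2$ a.s.\ for every $k$, and since $\tilde\tau_k\toup\tau_\infty$ and $t\mapsto u(t)$ is $L^2(M)$-continuous, letting $k\to\infty$ gives $|u(t)|_{L^2}=|u_0|_{L^2}$ for all $t<\tau_\infty$, $\mathbb{P}$-almost surely.

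The main obstacle is the rigorous justification of the It\^o formula, since the maximal solution is only a \emph{mild} $H^{1,2}(M)$-valued solution, so $\Delta u$ lives merely in $H^{-1,2}(M)$ and the computation $\langle u,i\Delta u\rangle=0$ is a duality pairing rather than a genuine $L^2$ integral. I would resolve this either by invoking the It\^o formula in the variational (Gelfand-triple) framework, after checking that the localised mild solution is also a weak solution, or by regularising: applying the resolvents $R_\lambda=\lambda(\lambda-\Delta)^{-1}$ (or a Galerkin projection), establishing the identity for the smoothed problem where all pairings are honest integrals and the above cancellations persist, and then passing to the limit $\lambda\to\infty$ using the uniform bounds available up to $\tilde\tau_k$.
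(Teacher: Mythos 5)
Your proposal is correct and follows essentially the same route as the paper: localise with the stopping times $\tilde{\tau}_k$, apply the It\^o formula to the square of the $L^2(M)$-norm, observe that the $i\Delta u$, $if(u)$ and stochastic-integral contributions vanish because multiplication by $i$ is skew for the real $L^2$ pairing and $f,g$ are real multiples of $u$ pointwise, and cancel the It\^o correction against the Stratonovich drift via the identity \eqref{eqn-B07}. The paper likewise defers the rigorous justification of the It\^o formula to a Yosida-approximation argument, which matches your proposed regularisation step.
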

\begin{proof}
Let $\big(\tilde{\tau}_k\big)_k$ denote the approximating  sequence  of the stopping  time $\tau_\infty$ defined by \eqref{eqn-tau_k-explosion}.
 Suppose that we have proved that  for each $t\geq 0$ and $k\in {\mathbb N}$,
 $\vert u(t\wedge \tilde{\tau}_k) \vert_{L^2(M)}=\vert u_0\vert_{L^2(M)}$ $\mathbb{P}$-almost surely.
 Then it follows that there exists a set
 $\hat{\Omega}\subset \Omega$ of full $\mathbb{P}$-measure such that for each $\omega \in \hat{\Omega}$,
$\vert u(t,\omega)\vert_{L^2(M)}=\vert u_0\vert_{L^2(M)}$ for all $t \in \mathbb{Q}\cap [0,\tau(\omega))$. Thus, since for all
$\omega\in\hat{\Omega}$ the map $[0,\tau(\omega)) \ni t \mapsto u(t,\omega) \in L^2(M)$ is continuous,  the result will follow.

To prove the conservation of the $L^2(M)$-norm, let us consider  the  functional
\[
\varPhi: L^2(M) \ni u \mapsto   \frac12 \vert u \vert_{L^2(M)}^2= \frac12\int_M u(x)\overline{u(x)}\, dx \in \mathbb{R}.
\]
where  $dx$  denotes the integration with respect to the riemannian volume measure on $M$.
The function $\varPsi$ is of real-${\mathcal C}^\infty$ class and for all  $u , v,v_1,v_2 \in L^2(M)$, we have
\begin{eqnarray*}
\varPhi^\prime(u)(v)&=&d_u\varPhi(v)=\Re \langle u,v \rangle_{L^2}
 = \int_M \Re \big( u(x) \overline{v (x)} \big)\, dx,\\
\varPhi^{\prime\prime}(u)(v_1,v_2)&=&d^2_u\varPhi(v_1,v_2)
=\Re \langle v_1,v_2 \rangle_{L^2}= \int_M \Re \big(  v_1(x)   \overline{v_2 (x)} \big) \, dx.
\end{eqnarray*}
Let us now assume, for purely pedagogical reasons, that $u$ is a strong solution.
Applying the It\^o formula we obtain  for each $t\in \mathbb{R}_+$ and every $k\in\mathbb{N}$, $\mathbb{P}$-almost surely,
\begin{align*}
\varPhi&(u(t \wedge \tilde{\tau}_k))-\varPhi(u_0)=-\int_0^t 1_{[0,\tilde{\tau}_k)}(s) \langle \varPhi^\prime(u(s)), iG(u(s)) \rangle_{L^2}\,dW(s) \\
&\quad +\int_0^t  1_{[0,\tilde{\tau}_k)}(s) \Big\langle \varPhi^\prime(u(s)),  i \big[ \Delta u(s)- F(u(s))\big]
  +  \frac12 \mathrm{tr}_{K}(\Pi)  {\mathbb M}(u(s))  \Big\rangle_{L^2} \,ds
\\ &\quad +
\frac12 \int_0^t 1_{[0,\tilde{\tau}_k)}(s) \mathrm{tr}_{K}\Big[ \varPhi^{\prime\prime}(u(s))\big(iG(u(s)), iG(u(s))\big)\Big]\,      ds
\\
&= \int_0^t 1_{[0,\tilde{\tau}_k)}(s) \Re \langle u(s),  i  \Delta u(s)\rangle_{L^2}\,ds
 - \int_0^t 1_{[0,\tilde{\tau}_k)}(s) \Re \langle u(s),  i  F(u(s) \rangle_{L^2}\,ds
\\ &\qquad  + \frac12   \mathfrak{p}\int_0^t 1_{[0,\tilde{\tau}_k)}(s) \Re \langle u(s),
 {\mathbb M}(u(s))  \rangle_{L^2}\,ds
\\
&\qquad - \int_0^t 1_{[0,\tilde{\tau}_k)}(s) \Re \langle u(s), iG(u(s)) \rangle_{L^2}\,dW(s)
+ \frac12 \int_0^t 1_{[0,\tilde{\tau}_k)}(s)\sum_{j\geq 1} \vert G(u(s))\Lambda e_j\vert_{L^2}^2\,ds.
\end{align*}
Next we make the following three observations.
\begin{enumerate}
\item
Since $\Delta $ is self-adjoint in $L^2(M)$, we have  $\Re \langle u(s),  i  \Delta u(s)\rangle_{L^2(M)}=0$.
\item If $H$ be the Nemytski map associated with $h$ of the form  $h(z)=\tilde{h}(|z|^2) z$, where  $\tilde{h}:{\mathbb R}\to {\mathbb R}$,    then
\[
\Re \langle u(s),  i  H(u(s) \rangle_{L^2}
=
\int_M \tilde{h}(|u(s,x)|^2) \Re \big[ u(s,x)  \overline{i u(s,x)}\big] \, dx=0.
\]
\item Lemma \ref{lem-B03} implies that
  $ \mathfrak{p} \Re \langle u(s),  {\mathbb M}(u(s))  \rangle_{L^2}= -\sum_{j\geq 1} \vert G(u(s))\Lambda e_j\vert_{L^2}^2 $.
\end{enumerate}

 Therefore, we infer that
 that for each $t\geq 0$ and every $k\in\mathbb{N}$, $\mathbb{P}$-almost surely,
$  \varPhi(u(t\wedge \tau_k))-\varPhi(u_0)=0 $, that is
  $\vert u(t\wedge \tilde{\tau}_k) \vert_{L^2(M)}=\vert u_0\vert_{L^2(M)}$ $\mathbb{P}$-almost surely
 and the result follows.

A full proof can be made by replacing $u$ by its Yosida approximation as it has been done for instance in \cite{Brz+Masl+S_2005};
see also \cite{deBouard+Deb_1999} and \cite{deBouard+Deb_2003} for a similar approach.
\end{proof}

\subsection{The Lyapounov function}\label{subsec-global-Lyapunov}

As in the deterministic case, we will use some   Lyapunov function. Let $\tilde{F}$ denote the antiderivative of
$\tilde{f}$ such that $\tilde{F}(0)=0$. In this section, we will consider two cases as in Assumption \ref{ass-focusing}. \\

\textbf{Case 1(a).} We assume that  $\tilde{f}$ is a polynomial of degree  $ N$ with a positive leading coefficient.
 Hence  $\tilde{F}(r)=a_{N+1} r^{N+1} +Q(r)$, where $Q$ is a polynomial function
of degree at most  $ N$ such that $Q(0)=0$ and $a_{N+1}>0$.  We have the following result.
\begin{lemma}\label{lemQ}
Let $\tilde{F}$ and $Q$ be polynomial functions  as above.
 Then there exists a constant $C>0$, and for every $\varepsilon >0$, there exists a constant $C(\varepsilon)>0$
such that for all  $u\in L^{2N+2}(M)\supset {\mathcal R}$,
\begin{align}
\label{majotildeF} \int_M |u(x)|^{2N+2} dx &\leq C \int_M \tilde{F}(|u(x)|^2 dx + C  \int_M |u(x)|^2 dx,
\end{align}
\begin{align} \label{majoQ}
\Big\vert \int_M  Q(|u(x)|^2) \, dx \Big\vert &\leq \varepsilon \int_M |u(x)|^{2N+2} dx + C(\varepsilon) \int_M |u(x)|^2 dx.
\end{align}
Finally for $u\in H^{1,2}(M)$ we have
\begin{equation}\label{ineq-tildef-tildeF}
\int_M \vert \tilde{f}(\vert u(x)\vert^2) \vert u(x)\vert^2  \, dx \leq C \int_M \vert \tilde{F}(\vert u(x)\vert^2) \, dx
+ C \int_M \vert u(x)\vert^2 \, dx.
\end{equation}
\end{lemma}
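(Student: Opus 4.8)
The plan is to reduce all three inequalities to a single pointwise interpolation estimate for powers of a nonnegative real number. The starting point is the structural fact, immediate from the definition of the antiderivative, that $\tilde{F}(r) = a_{N+1}\, r^{N+1} + Q(r)$ with $a_{N+1} = a_N/(N+1) > 0$ and $Q(r) = \sum_{k=0}^{N-1} \frac{a_k}{k+1} r^{k+1}$ a polynomial of degree at most $N$ with $Q(0)=0$. The strict positivity of $a_{N+1}$ (equivalently, the defocusing assumption $a_N>0$ of Case 1(a)) is what will let me control the top-order integral $\int_M |u|^{2N+2}\,dx$ by $\int_M \tilde{F}(|u|^2)\,dx$ rather than conversely.

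First I would record the elementary consequence of Young's inequality: for every integer $k$ with $1\le k\le N$ and every $\delta>0$ there is a constant $C(\delta)$ with $r^{k} \le \delta\, r^{N+1} + C(\delta)\, r$ for all $r\ge 0$. Indeed, writing $k = \lambda(N+1) + (1-\lambda)\cdot 1$ with $\lambda = (k-1)/N \in [0,1)$ and applying weighted Young's inequality to $r^{k} = (r^{N+1})^{\lambda}\,(r)^{1-\lambda}$ gives this at once. Putting $r = |u(x)|^2$ and integrating over $M$ yields $\int_M |u|^{2k}\,dx \le \delta \int_M |u|^{2N+2}\,dx + C(\delta)\int_M |u|^2\,dx$ for each $1\le k\le N$.

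The three inequalities then follow in order. For \eqref{majoQ}, write $Q(r)=\sum_{k=1}^N b_k r^k$, so that $\big|\int_M Q(|u|^2)\,dx\big| \le \sum_{k=1}^N |b_k|\int_M |u|^{2k}\,dx$, and apply the previous display with $\delta$ chosen so that $\delta\sum_k|b_k|\le \varepsilon$. For \eqref{majotildeF}, rearrange $a_{N+1}\int_M |u|^{2N+2}\,dx = \int_M \tilde{F}(|u|^2)\,dx - \int_M Q(|u|^2)\,dx$, bound the last term by \eqref{majoQ} with $\varepsilon = a_{N+1}/2$, and absorb the resulting $\frac{a_{N+1}}{2}\int_M |u|^{2N+2}\,dx$ into the left-hand side. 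For \eqref{ineq-tildef-tildeF}, note that $\tilde{f}(r)\,r = a_N r^{N+1} + R(r)$ with $R$ of degree at most $N$ and $R(0)=0$, estimate $\int_M R(|u|^2)\,dx$ exactly as $Q$ above, and then substitute \eqref{majotildeF} for $\int_M |u|^{2N+2}\,dx$; since $M$ is two-dimensional and compact, $H^{1,2}(M)\hookrightarrow L^{2N+2}(M)$, so every integral is finite for $u\in H^{1,2}(M)$.

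I do not expect a serious obstacle: the entire content is the polynomial interpolation estimate, and the only delicate point is bookkeeping of the sign of the leading coefficient $a_{N+1}$, which is precisely what makes the absorption step in \eqref{majotildeF} legitimate. The same argument fails in the focusing case (negative leading coefficient), which is exactly why that case is handled separately under the restriction $\sigma\in[\frac12,1)$.
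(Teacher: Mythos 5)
Your proof is correct and follows essentially the same route as the paper: interpolate the intermediate powers $\int_M|u|^{2k}\,dx$ between $\int_M|u|^{2N+2}\,dx$ and $\int_M|u|^2\,dx$ to get \eqref{majoQ}, absorb the lower-order terms using the positivity of the leading coefficient of $\tilde F$ to get \eqref{majotildeF}, and combine the two for \eqref{ineq-tildef-tildeF}. The only cosmetic difference is that you apply Young's inequality pointwise to $r^k=(r^{N+1})^{\lambda}r^{1-\lambda}$ before integrating, whereas the paper applies H\"older to the integrals first and then Young; the two are equivalent.
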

\begin{proof}
Since $\dim(M)=2$,    the Gagliardo-Nirenberg inequality implies
$H^{1,2}(M)\subset \bigcap_{r\geq 2}L^r(M)$ and hence ${\mathcal R}\subset L^{2N+2}(M)$.
  Let us fix $\alpha >0$ and  $u\in L^{2N+2}(M)$. Then  for $k=2, \cdots, N$ by  the H\"older and
Young inequalities yield
\begin{align*}
\int_M |u(x)|^{2k} dx &\leq \Big( \int_M |u(x)|^{2(N+1)} dx \Big)^{\frac{k-1}{N}} \;  \Big( \int_M |u(x)|^{2} dx \Big)^{\frac{N+1-k}{N}}\\
&\leq \alpha \; \frac{k-1}{N} \int_M |u(x)|^{2(N+1)} dx + \frac{N+1-k}{N} \; \alpha^{-\frac{N+1-k}{N}}  \int_M |u(x)|^{2} dx.
\end{align*}
This concludes the proof of \eqref{majoQ}. Since  $a_N$ is positive,   we have
\[ \int_M  |u(x)|^{2N+2} dx \leq \frac{1}{a_{N+1}}\int_M  \tilde{F}(|u(x)|^2) dx -\frac{1}{a_{N+1}} \int_M Q(|u(x)|^2) dx.\]
Thus applying  \eqref{majoQ} with  $\varepsilon= \frac{1}{2a_{N+1}} $ concludes the proof of \eqref{majotildeF}.
Finally, $\tilde{f}(r)r=a_N r^{N+1} + \tilde{Q}(r)$ where $\tilde{Q}$ is a polynomial of degree $N$. Hence \eqref{majoQ} and \eqref{majotildeF}
yield \eqref{ineq-tildef-tildeF}.
\end{proof}
 I have put the corollary inside the lemma

\textbf{Case 1(b).} We assume that $\tilde{f}(r)=C r^\sigma$ for $C>0$ and $\sigma\geq \frac12$.
 Then  $\tilde{F} \geq 0$ and thus
 \[\int_M \tilde{F}(|u(x)|^2) dx  \geq 0, \mbox{ for any } u\in \cR  .\]
Furthermore,  since $r\tilde{f}(r) = C \tilde{F}(r)$, there exists some positive constant $C$ such that
\begin{equation}\label{ineq-tildef-tildeF_1b}
\int_M  \tilde{f}(\vert u(x)\vert^2) \vert u(x)\vert^2  \, dx \leq C \int_M \vert \tilde{F}(\vert u(x)\vert^2)\vert \, dx, \quad \forall u\in H^{1,2}(M).
\end{equation}

\textbf{Case 2.} We assume that $\tilde{f}(r)=-C r^\sigma$ for $C>0$ and $\sigma \in [\frac12,1)$.
Then $\tilde{F}(r)=-\frac{C}{\sigma+1} r^{\sigma+1}$. The following lemma will be used to deal with this case.
\begin{lemma}  \label{lemGagliardo}
Assume that  $\alpha \in (1,3)$. Then for any $\varepsilon >0$ there exists $C(\varepsilon)>0$ such that for any
 $u\in H^{1,2}(M)$ (and for any  $u\in \cR $),
\begin{equation} \label{Gagliardo}
\int_M |u(x)|^{\alpha+1} dx \leq \varepsilon |\nabla u|_{L^2}^2 + C(\varepsilon) |u|_{L^2}^{\frac{4}{3-\alpha}}.
\end{equation}
Furthermore, for $\sigma \in [\frac{1}{2},1)$ there exists  $\tilde{C}>0$  such that for  every $u\in H^{1,2}(M)$,
\begin{equation}\label{ineq-tildef-tildeF_2}
\int_M  \vert \tilde{f}(\vert u(x)\vert^2)\vert  \vert u(x)\vert^2  \, dx \leq
 \frac{1}{2} |\nabla u|_{L^2}^2 + \frac{1}{2} \int_M \tilde{F}(|u(x)|^2) dx+ \tilde{C}  \vert u\vert_{L^2}^{\frac2{1-\sigma}} .
\end{equation}
\end{lemma}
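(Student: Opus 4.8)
The plan is to prove the interpolation bound \eqref{Gagliardo} first, and then to read off \eqref{ineq-tildef-tildeF_2} as a special case of it with a suitable choice of the parameter.

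For \eqref{Gagliardo} I would start from the Gagliardo--Nirenberg inequality on the compact surface $M$, which was already invoked in the proof of Lemma \ref{lemQ}. Since $\dim(M)=2$, writing $r=\alpha+1\in(2,4)$ and $\theta=\frac{\alpha-1}{\alpha+1}\in(0,1)$, the scaling identities $(\alpha+1)\theta=\alpha-1$ and $(\alpha+1)(1-\theta)=2$ give
\[
\int_M |u(x)|^{\alpha+1}\,dx \le C\, |\nabla u|_{L^2}^{\alpha-1}\,|u|_{L^2}^{2}+C\,|u|_{L^2}^{\alpha+1}, \qquad u\in H^{1,2}(M),
\]
the second term accounting for the compact-manifold correction (constants are not annihilated by the gradient). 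Because $\alpha\in(1,3)$ we have $\alpha-1<2$, so Young's inequality with the conjugate pair $\big(\tfrac{2}{\alpha-1},\tfrac{2}{3-\alpha}\big)$ converts the first term into $\varepsilon\,|\nabla u|_{L^2}^{2}+C(\varepsilon)\,|u|_{L^2}^{\frac{4}{3-\alpha}}$. For the second term one checks $\frac{4}{3-\alpha}-(\alpha+1)=\frac{(\alpha-1)^2}{3-\alpha}\ge 0$, so $|u|_{L^2}^{\alpha+1}$ is dominated by $|u|_{L^2}^{\frac{4}{3-\alpha}}$ up to a fixed additive constant; since $\mathrm{vol}(M)<\infty$ this constant is harmless and may be absorbed into $C(\varepsilon)\,|u|_{L^2}^{\frac{4}{3-\alpha}}$, which yields \eqref{Gagliardo}.

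To deduce \eqref{ineq-tildef-tildeF_2} in Case 2, I would use that $\tilde f(r)=-Cr^{\sigma}$ gives $\tilde F(r)=-\frac{C}{\sigma+1}r^{\sigma+1}$, hence $|\tilde f(|u|^2)|\,|u|^2=C|u|^{2\sigma+2}$ and $\int_M \tilde F(|u(x)|^2)\,dx=-\frac{C}{\sigma+1}\int_M|u(x)|^{2\sigma+2}\,dx\le 0$. Moving this nonpositive term to the left-hand side, the claimed inequality is equivalent to
\[
C\Big(1+\tfrac{1}{2(\sigma+1)}\Big)\int_M |u(x)|^{2\sigma+2}\,dx \le \tfrac12\,|\nabla u|_{L^2}^{2}+\tilde C\,|u|_{L^2}^{\frac{2}{1-\sigma}}.
\]
This is precisely \eqref{Gagliardo} applied with $\alpha=2\sigma+1$: since $\sigma\in[\tfrac12,1)$ we have $\alpha\in[2,3)\subset(1,3)$, the exponent $\alpha+1=2\sigma+2$ matches the integrand, and $\frac{4}{3-\alpha}=\frac{2}{1-\sigma}$. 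Choosing $\varepsilon$ so small that $C\big(1+\tfrac{1}{2(\sigma+1)}\big)\varepsilon\le\tfrac12$ and setting $\tilde C=C\big(1+\tfrac{1}{2(\sigma+1)}\big)C(\varepsilon)$ completes the argument.

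The only genuinely delicate point is the first step: fixing the correct Gagliardo--Nirenberg exponents in dimension two and treating the compact-manifold correction term correctly. Once \eqref{Gagliardo} is in place, \eqref{ineq-tildef-tildeF_2} is a bookkeeping exercise in the exponent arithmetic $\frac{4}{3-\alpha}=\frac{2}{1-\sigma}$ together with the observation that $\tilde F\le 0$ in the focusing case, which is exactly what lets the negative energy term be moved to the left and reinforce the coefficient of $\int_M|u|^{2\sigma+2}$.
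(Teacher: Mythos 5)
Your route is the same as the paper's: Gagliardo--Nirenberg in dimension two followed by Young's inequality for \eqref{Gagliardo}, and then the identity $\int_M|\tilde f(|u|^2)|\,|u|^2\,dx-\tfrac12\int_M\tilde F(|u|^2)\,dx = C\bigl(1+\tfrac{1}{2(\sigma+1)}\bigr)\int_M|u|^{2\sigma+2}\,dx$ together with the specialization $\alpha=2\sigma+1$, $\tfrac{4}{3-\alpha}=\tfrac{2}{1-\sigma}$, for \eqref{ineq-tildef-tildeF_2}. The exponent bookkeeping in both halves is correct and matches the paper's proof essentially line for line.

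The one step that does not work is your disposal of the compact-manifold correction term. You rightly observe that on compact $M$ the Gagliardo--Nirenberg inequality produces an extra $C|u|_{L^2}^{\alpha+1}$ (constants are not annihilated by the gradient), and that $\tfrac{4}{3-\alpha}\ge\alpha+1$; but the conclusion you draw --- that $|u|_{L^2}^{\alpha+1}\le 1+|u|_{L^2}^{4/(3-\alpha)}$ and the resulting fixed additive constant ``may be absorbed into $C(\varepsilon)|u|_{L^2}^{4/(3-\alpha)}$'' --- is false: as $|u|_{L^2}\to 0$ the term $|u|_{L^2}^{4/(3-\alpha)}$ vanishes while the constant does not, so no choice of $C(\varepsilon)$ achieves the absorption. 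Indeed, testing \eqref{Gagliardo} on small nonzero constant functions $u\equiv c$ shows that the homogeneous form cannot hold on compact $M$, because $c^{\alpha+1}\gg c^{4/(3-\alpha)}$ as $c\to0$ when $\alpha>1$. (The paper's own one-line proof has the same defect: the asserted bound $\int_M|u|^{\alpha+1}\,dx\le C|\nabla u|_{L^2}^{\alpha-1}|u|_{L^2}^2$ fails for nonzero constants.) The honest repair is to allow a lower-order term, e.g.\ $C(\varepsilon)\bigl(|u|_{L^2}^{4/(3-\alpha)}+|u|_{L^2}^{2}\bigr)$, on the right of \eqref{Gagliardo} and correspondingly in \eqref{ineq-tildef-tildeF_2}; this costs nothing in the only place the lemma is used, namely the Lyapunov/Gronwall estimate in Theorem \ref{th_global}, where $|u(t)|_{L^2}$ is conserved. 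So: same approach, correct arithmetic, but the absorption step as written is a genuine (if repairable) gap rather than a harmless remark.
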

\begin{proof}
The Gagliardo-Nirenberg and Young inequalities imply that for $u\in H^{1,2}(M)$ and $\varepsilon >0$,
\[
\int_M |u(x)|^{\alpha+1} dx \leq C \, |\nabla u|_{L^2}^{\alpha-1} \, |u|_{L^2}^2
\leq \varepsilon |\nabla u|_{L^2}^2 + C \varepsilon^{- \frac{2(\alpha-1)}{3-\alpha}}\, |u|_{L^2}^{\frac{4}{3-a}}.
\]
This concludes the proof of \eqref{Gagliardo}. Finally, $\vert \tilde{f} (r)\vert r=c r^{\sigma +1}$, $r\geq 0$ and since
$\sigma <1$ we have
\[ \int_M  \vert \tilde{f}(\vert u(x)\vert^2)\vert  \vert u(x)\vert^2  \, dx   - \frac{1}{2} \int_M \tilde{F}(|u(x)|^2) dx
 =C\Big(1+\frac{1}{2(\sigma+1)}\Big)\int_M \vert u(x)\vert^{2+2\sigma}\, dx.
\]
Hence using \eqref{Gagliardo} with $\eps=\frac{1+\sigma}{2C(2\sigma+3)}$ concludes the proof.
\end{proof}

Let us assume that $\tilde{f} $ satisfy Assumption \ref{ass-focusing}, hence either the conditions of Case \textbf{1(a)}, \textbf{1(b)}  or
 \textbf{2} above.  Let us define  the  map
\begin{equation}  \label{Lyapunov}
\varPsi: \cR \ni u \mapsto \frac{1}{2} |\nabla u|_{L^2}^2 + \frac{1}{2} \int_M \tilde{F}(|u(x)|^2) dx \in \mathbb{R}.
\end{equation}
Using Lemmas \ref{lemQ} or \ref{lemGagliardo}, it is easy to see that there exists a constant $C>0$ such that
$\varPsi(u) + C |u|_{L^2}^2 \geq 0$   for all  $u\in \cR $. This proves the following
\begin{corollary}\label{cor-Psi-H^1} There exists a constant $c \geq 0$ such that
\[\vert u\vert^2_{H^{1,2}} \leq 2 \Psi(u)+ c\vert u\vert^2_{L^{2}}, \;\; u\in \cR .
\]
\end{corollary}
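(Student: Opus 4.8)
The plan is to reduce the asserted inequality to a single ``zeroth order'' coercivity estimate on the potential term and then to verify that estimate separately in each of the three cases of Assumption~\ref{ass-focusing}. Taking $|u|_{H^{1,2}}^2=|\nabla u|_{L^2}^2+|u|_{L^2}^2$ and recalling from \eqref{Lyapunov} that
\[
2\varPsi(u)=|\nabla u|_{L^2}^2+\int_M\tilde F(|u(x)|^2)\,dx,
\]
the two occurrences of $|\nabla u|_{L^2}^2$ cancel, so that $|u|_{H^{1,2}}^2-2\varPsi(u)=|u|_{L^2}^2-\int_M\tilde F(|u(x)|^2)\,dx$. Hence the claimed bound $|u|_{H^{1,2}}^2\le 2\varPsi(u)+c|u|_{L^2}^2$ is \emph{equivalent} to the lower bound
\[
\int_M\tilde F(|u(x)|^2)\,dx\ \ge\ -(c-1)\,|u|_{L^2}^2 ,\qquad u\in\cR .
\]
Thus the entire statement amounts to bounding the potential energy $\int_M\tilde F(|u|^2)\,dx$ from below by a multiple of $|u|_{L^2}^2$, the only remaining freedom being the choice of $c$.

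I would establish this lower bound case by case. In the defocusing polynomial case 1(a), inequality \eqref{majotildeF} of Lemma~\ref{lemQ} gives at once $\int_M\tilde F(|u|^2)\,dx\ge \tfrac1C\int_M|u|^{2N+2}\,dx-|u|_{L^2}^2\ge -|u|_{L^2}^2$, since the first term is nonnegative; hence $c=2$ works. In the defocusing power case 1(b) one has $\tilde F(r)=\tfrac{C}{\sigma+1}r^{\sigma+1}\ge 0$, so $\int_M\tilde F(|u|^2)\,dx\ge 0$ and even $c=1$ is admissible. In both subcases the coercivity estimate holds verbatim with a constant $c$ independent of $u$, which yields the corollary exactly as stated.

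The delicate point, which I expect to be the real obstacle, is the focusing Case 2, where $\tilde F(r)=-\tfrac{C}{\sigma+1}r^{\sigma+1}\le 0$, so that $-\int_M\tilde F(|u|^2)\,dx$ grows like $\int_M|u|^{2\sigma+2}\,dx$ and is \emph{not} controlled by $|u|_{L^2}^2$ alone. Here I would invoke the Gagliardo--Nirenberg inequality \eqref{Gagliardo} with $\alpha:=2\sigma+1\in(1,3)$ (admissible since $\sigma\in[\tfrac12,1)$): for every $\varepsilon>0$,
\[
\int_M|u|^{2\sigma+2}\,dx\ \le\ \varepsilon\,|\nabla u|_{L^2}^2+C(\varepsilon)\,|u|_{L^2}^{\frac{2}{1-\sigma}} .
\]
Choosing $\varepsilon$ so small that $\tfrac{C}{\sigma+1}\varepsilon\le\tfrac12$ and moving the resulting $\tfrac12|\nabla u|_{L^2}^2$ back to the left-hand side produces a coercivity bound of the shape $|u|_{H^{1,2}}^2\le C_1\varPsi(u)+C_2|u|_{L^2}^{2/(1-\sigma)}+|u|_{L^2}^2$. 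Since $\tfrac{2}{1-\sigma}>2$, the literal form $2\varPsi(u)+c|u|_{L^2}^2$ is available \emph{verbatim} only in the defocusing cases; in the focusing case the natural lower-order term carries the higher power $|u|_{L^2}^{2/(1-\sigma)}$. This causes no difficulty for the way the corollary is used, since it is applied along the solution and Lemma~\ref{lem-L^2} guarantees that $|u(t)|_{L^2}=|u_0|_{L^2}$ is conserved, so the extra power collapses to a constant fixed by the initial datum; I would accordingly record the focusing version with that conserved quantity displayed explicitly rather than with a bare $c|u|_{L^2}^2$.
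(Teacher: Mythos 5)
Your argument follows the same route as the paper's own (one-line) justification, which simply invokes Lemmas \ref{lemQ} and \ref{lemGagliardo} to assert $\varPsi(u)+C|u|_{L^2}^2\ge 0$: you reduce the claim to a lower bound for $\int_M\tilde F(|u(x)|^2)\,dx$ in terms of $-|u|_{L^2}^2$ and settle the two defocusing cases exactly as intended, via \eqref{majotildeF} in Case 1(a) and the positivity of $\tilde F$ in Case 1(b). Your diagnosis of the focusing case is also correct, and it exposes a genuine imprecision in the corollary as stated: for $\tilde F(r)=-\tfrac{C}{\sigma+1}r^{\sigma+1}$ the required inequality $\tfrac{C}{\sigma+1}\int_M|u(x)|^{2\sigma+2}\,dx\le (c-1)|u|_{L^2}^2$ fails (test on constants $u=\lambda 1_M$ with $\lambda\to\infty$), and since $\int_M|u|^{2\sigma+2}\,dx$ is not controlled by any function of $|u|_{L^2}$ alone, the coefficient in front of $\varPsi(u)$ cannot be kept equal to $2$ either. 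What your Gagliardo--Nirenberg computation \eqref{Gagliardo} actually delivers in Case 2 is $|u|^2_{H^{1,2}}\le 4\varPsi(u)+c\big(|u|_{L^2}^2+|u|_{L^2}^{2/(1-\sigma)}\big)$ (any coefficient strictly larger than $2$ would do, at the price of enlarging $c$), and, as you note, this amended form is all that the proof of Theorem \ref{th_global} requires, because $|u(t)|_{L^2}=|u_0|_{L^2}$ by Lemma \ref{lem-L^2} so the extra power of the $L^2$ norm is a constant along the solution. In short: your proposal is correct and coincides with the paper's method where the corollary is true, and your proposed modification is the right repair of the statement in the focusing case.
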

We will need the following result about the regularity of  $\Psi$ and some of its properties.
\begin{lemma}\label{lem-Psi} The function $\Psi$ defined by \eqref{Lyapunov} is of real ${\mathcal C}^2$-class with the second derivative bounded on balls;
 for all $u, v_1, v_2 \in  \cR $, we have
\begin{eqnarray*}
\varPsi^\prime(u)(v)
&=& \,\mathrm{Re}\, \int_M   \nabla u(x) \overline{\nabla v (x)} \, dx
 + \int_M  \tilde{f} (|u(x)|^2) \,\mathrm{Re}\, [ u(x) \overline{v (x)}] \, dx , \\
\varPsi^{\prime\prime}(u)(v,v)
&=&  \int_M   \vert \nabla v(x)\vert^2 \, dx  +
 \int_M  \tilde{f} (|u(x)|^2) \vert v(x) \vert^2 dx   \nonumber
\\
&&  +   2\int_M  \tilde{f}^{\prime} (|u(x)|^2) \Big( \,\Re\, [ u(x) \overline{v(x)}]\Big)^2\, dx.
\end{eqnarray*}
Moreover,
\begin{eqnarray} \label{eqn-Psiprime-drift}
 \langle \varPsi^\prime ( u), i[\Delta u - F(u)] \rangle &=& 0, \;\;  u\in H^{2,2}(M),\\
\label{eqn-Psiprime-diff} \langle \varPsi^\prime ( u), i G(u) \rangle& = &\int_M \Re\, \big( \nabla u(x) \overline{\nabla i g(u(x))}\big) dx, \;\;  u\in\cR,\\
 \label{eqn-Phibis-3}
 \mathrm{tr}_{K} \varPsi^{\prime\prime}(u)\big(iG(u), iG(u)\big)
&\leq & 2 \int_M   \vert g^\prime(u(x)) \nabla u(x)  \big) \vert^2  \mathfrak{p} (x)\, dx
+ 2 \int_M   \vert g(u(x)) \vert^2 \mathfrak{q}(x)  \, dx
\nonumber\\
&& +
\int_M  \tilde{f} (|u(x)|^2)  \vert g(u(x)) \vert^2 \mathfrak{p} (x)\, dx \;\;  u\in\cR ,
\end{eqnarray}
where
\end{lemma}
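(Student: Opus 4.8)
The plan is to split $\Psi = \Psi_1 + \Psi_2$, where $\Psi_1(u) = \frac12|\nabla u|_{L^2}^2$ and $\Psi_2(u) = \frac12\int_M\tilde F(|u(x)|^2)\,dx$, and treat the two pieces separately. The map $\Psi_1$ is a continuous real quadratic form on $\rH = H^{1,2}(M)$, hence of class ${\mathcal C}^\infty$, with $\Psi_1'(u)(v) = \Re\int_M\nabla u\,\overline{\nabla v}\,dx$ and $\Psi_1''(u)(v,v) = \int_M|\nabla v|^2\,dx$. For $\Psi_2$ I would differentiate the scalar function $z\mapsto\tilde F(|z|^2)$, using $\tilde F' = \tilde f$; this gives the $\mathbb R$-derivatives $2\tilde f(|z|^2)\Re(z\bar w)$ and $2\tilde f(|z|^2)|w|^2 + 4\tilde f'(|z|^2)(\Re(z\bar w))^2$, from which the stated formulas for $\Psi'$ and $\Psi''$ follow after halving and integrating. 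That $\Psi_2$ is genuinely ${\mathcal C}^2$ with second derivative bounded on balls, and that one may differentiate under the integral, follows from the Nemytski-operator results of Section \ref{sec-Nemytski} (cf. Remark \ref{focusing}) together with the embedding $\cR\hookrightarrow L^\infty(M)$ (valid since $\hat s q>2$): on a ball of $\cR$ the quantity $|u|_\infty$ stays bounded, so $\tilde f(|u|^2)$ and $\tilde f'(|u|^2)$ stay bounded and dominated convergence applies.

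For the two identities I would first integrate by parts. Since $M$ is closed and $u\in H^{2,2}(M)$, one has $\Re\int_M\nabla u\,\overline{\nabla v}\,dx = \langle -\Delta u, v\rangle$, while the potential term equals $\langle F(u), v\rangle$ because $F(u) = \tilde f(|u|^2)u$ pointwise; hence $\Psi'(u)(v) = \langle -(\Delta u - F(u)), v\rangle$. Taking $v = i(\Delta u - F(u))$ and writing $w = \Delta u - F(u)$, identity \eqref{eqn-Psiprime-drift} is then the elementary antisymmetry $\langle w, iw\rangle = \Re\int_M w\,\overline{iw}\,dx = \Re\big(-i|w|_{L^2}^2\big) = 0$. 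For \eqref{eqn-Psiprime-diff} I would evaluate $\Psi'(u)$ on $ig(u)$: the potential contribution is $\int_M\tilde f(|u|^2)\Re\big(u\,\overline{ig(u)}\big)\,dx$, and it vanishes because $u\,\overline{g(u)} = \tilde g(|u|^2)|u|^2$ is real, so $\Re\big(u\,\overline{ig(u)}\big) = \Re\big(-i\tilde g(|u|^2)|u|^2\big) = 0$; only the gradient term $\Re\int_M\nabla u\,\overline{\nabla(ig(u))}\,dx$ survives, which is the claim.

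The main work, and the main obstacle, is the trace inequality \eqref{eqn-Phibis-3}. Since $\tilde G(u)\Lambda e_j = g(u)\,\Lambda e_j$ and the $\Lambda e_j$ may be taken real-valued, I would set $v_j := iG(u)\Lambda e_j = i g(u)\,\Lambda e_j$ and expand $\mathrm{tr}_{K}\Psi''(u)(iG(u),iG(u)) = \sum_j\Psi''(u)(v_j,v_j)$ mode by mode using the formula for $\Psi''$. Three observations organize the computation: (i) the $\tilde f'$-term vanishes, since $\Re(u\,\overline{v_j}) = \Lambda e_j\,\Re\big(u\,\overline{ig(u)}\big) = 0$ by the reality argument above; (ii) the $\tilde f$-term sums, via $\mathfrak p = \sum_j(\Lambda e_j)^2$, to $\int_M\tilde f(|u|^2)|g(u)|^2\mathfrak p\,dx$; (iii) for the gradient term I would use the chain rule $\nabla(g\circ u) = g'(u)\nabla u$ (legitimate since $g$ is ${\mathcal C}^1$ and $u\in\cR$) to write $\nabla v_j = i\big[(\Lambda e_j)\,g'(u)\nabla u + g(u)\nabla\Lambda e_j\big]$, and then bound $|\nabla v_j|^2 \leq 2(\Lambda e_j)^2|g'(u)\nabla u|^2 + 2|g(u)|^2|\nabla\Lambda e_j|^2$ by $(a+b)^2\leq 2a^2+2b^2$. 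Summing over $j$ and using $\mathfrak p = \sum_j(\Lambda e_j)^2$ and $\mathfrak q = \sum_j|\nabla\Lambda e_j|^2$ produces the first two terms $2\int_M|g'(u)\nabla u|^2\mathfrak p\,dx + 2\int_M|g(u)|^2\mathfrak q\,dx$. The factor $2$ in the statement is precisely the cost of this crude Young bound, which conveniently absorbs the cross term $\langle g'(u)\nabla u, g(u)\nabla\mathfrak p\rangle$ that an exact identity would produce. The delicate point is the interchange of summation and integration and the finiteness of each integral; these I would justify using Lemma \ref{lem-W} ($\mathfrak p\in L^\infty(M)$, $\mathfrak q\in L^1(M)$, and $\sum_j\|\nabla\Lambda e_j\|_{L^{2s_0}}^2<\infty$) together with $u\in L^\infty(M)$, $\nabla u\in L^2(M)$ and the boundedness of $g$ and $g'$ under Assumption \ref{ass-tildeg-final} (where $a=1$).
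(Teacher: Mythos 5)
Your proposal is correct and follows essentially the same route as the paper: regularity and the derivative formulas are obtained from the Nemytski results of Section \ref{sec-Nemytski} (Remark \ref{focusing}), the two vanishing statements rest on the identity $\Re\big(z\,\overline{i\phi(|z|^2)z}\big)=0$, and the trace bound comes from a mode-by-mode expansion with the Young inequality $(a+b)^2\leq 2a^2+2b^2$ producing the factors of $2$. The only cosmetic difference is that you integrate by parts once to reduce \eqref{eqn-Psiprime-drift} to the antisymmetry $\langle w,iw\rangle=0$ with $w=\Delta u-F(u)$, whereas the paper treats the gradient and potential contributions separately.
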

\begin{proof}
 The regularity of $\Psi$ and the explicit expressions of its first and second derivatives follow from section \ref{sec-Nemytski},
in particular from Remark  \ref{focusing}.
Note that if $\phi : {\mathbb R}\to {\mathbb R}$, we have
\begin{equation} \label{zero}
 \Re \big( z \; \overline{ i \phi(|z|^2) z}\, \big) =0, \quad \forall z\in {\mathbb C}.
\end{equation}
Integration by parts implies that for every $u\in H^{2,2}(M)$, we have
\[ \Re \int_M \nabla u(x) \overline{\nabla [i \Delta u(x)]} \, dx = - \Re \int_M \Delta u(x) \overline{ i \Delta u(x)}\, dx =0.\]
Thus \eqref{zero} applied to $\phi=\tilde{f}$ yields  \eqref{eqn-Psiprime-drift}.
Equality \eqref{eqn-Psiprime-diff} is a consequence of  \eqref{zero} applied with $\phi=\tilde{g}$.

We now prove the last assertion.
The inclusion   $\cR \subset L^\infty(M)$ implies that for every $u\in \cR$, $g\circ u\in L^\infty(M)$,  and the integral
$\int_M   \vert g(u(x)) \vert^2 \mathfrak{q}(x)  \, dx $ exists since $\mathfrak{q}\in L^1(M)$ as proved in Lemma \ref{lem-W}.\\

 By the definition of the trace, since $\Lambda e_j(x)\in  {\mathbb R}$, using the Cauchy-Schwarz inequality and
\eqref{zero} with $\phi = \tilde{g}$, we deduce
\begin{eqnarray}\nonumber
&&\hspace{-1truecm}\lefteqn{ \mathrm{tr}_{K} \varPsi^{\prime\prime}(u)\big(iG(u), iG(u)\big)
= \sum_{j\geq 1} \; \varPsi^{\prime\prime}(u)\big( i(g\circ u) e_j, i (g\circ u) \Lambda e_j\big)}
\\
\nonumber
&=&  \sum_{j\geq 1} \;\int_M   \Big| \nabla\Big( g(u(x)) \Lambda e_j(x)\Big) \Big|^2 \, dx  +
\sum_{j\geq 1} \;  \int_M  \tilde{f} (|u(x)|^2) \, \vert   g(u(x)) \Lambda e_j(x) \vert^2\, dx   \\
&&+   2 \sum_{j\geq 1} \; \int_M  \tilde{f}^{\prime} (|u(x)|^2) \,\Big(\Re\, [ u(x) \overline{i u (x)\tilde{g}(\vert u(x)\vert^2)} \Lambda e_j(x)] \Big)^2\, dx.
\nonumber\\
\nonumber
&\leq &  2 \int_M   \vert g^\prime(u(x)) \nabla u(x) \vert^2 \Big(\sum_{j\geq 1} \vert  \Lambda e_j(x) \vert^2 \Big) \, dx
+  2\int_M   \vert g(u(x)) \vert^2 \Big( \sum_{j\geq 1} \vert \nabla \Lambda e_j(x) \vert^2 \Big)\, dx \\
\nonumber
&& +
 \int_M  \tilde{f} (|u(x)|^2)  \vert g(u(x)) \vert^2 \Big( \sum_{j\geq 1}\vert  \Lambda e_j(x) \vert^2 \Big)\, dx.
\end{eqnarray}
This proves  \eqref{eqn-Phibis-3} and concludes the proof of the Lemma.
\end{proof}

The following lemma gives an explicit expression of $\varPsi(u(t))$, where  $\big(u(t), t\in [0,\tau_\infty)\big)$ denotes
 the local maximal solution to \eqref{NLS-Strat_03}.
Note that, unlike in the deterministic case, the It\^o-Stratonovich correction term yields that
 $\mathbb{E} ( \varPsi(u(t)))$ is not time invariant.

\begin{lemma}\label{lem-energy}
 Assume that $(W(t), t\geq 0)$ is an $\cR$-valued Wiener process. Then in the framework above,
  for every $t\geq 0$ and every $k\in \mathbb{N}^\ast$,  we have
\begin{equation} \label{ItoPsi}
 \varPsi(u(t \wedge \tilde{\tau}_k))  =  \varPsi(u_0)  -\int_0^{t \wedge \tilde{\tau}_k} \int_M \Re\,
\big( \nabla u(s,x) \overline{\nabla i g(u(s,x))}\big)\, dx \,dW(s) + T(t\wedge \tilde{\tau}_k),
\end{equation}
where
  \begin{align} \label{majo-ItoPsi}
  T(t\wedge \tilde{\tau}_k) \leq  &
\int_0^{t \wedge \tilde{\tau}_k} \!\! \int_M \vert g^\prime(u(s,x))\vert^2 \, |\nabla u(s,x)|^2 \,
\mathfrak{p}(x)
\, dx\, ds 
+ \int_0^{t \wedge \tilde{\tau}_k} \!\! \int_M \vert g(u(s,x))\vert^2 \, \mathfrak{q}(x) \,dx \, ds \nonumber \\
& + \frac{1}{2}
 \int_0^{t \wedge \tilde{\tau}_k} \!\! \int_M  \tilde{f} (|u(s,x)|^2)  \vert g(u(s,x)) \vert^2 \mathfrak{p}(x)\, dx\, ds
\end{align}
\end{lemma}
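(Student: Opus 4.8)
The statement is an application of the Itô formula to the functional $\varPsi$ along the maximal solution $u$, stopped at the predictable stopping times $\tilde\tau_k$, followed by identifying and estimating the three contributions (drift from the deterministic part, Stratonovich correction, and the Itô trace correction). The key inputs are already assembled earlier in this subsection: the regularity of $\varPsi$ (Lemma \ref{lem-Psi}), the vanishing of the ``conservative'' drift term (equation \eqref{eqn-Psiprime-drift}), the identification of the martingale part (equation \eqref{eqn-Psiprime-diff}), and the bound on the trace of the second derivative (equation \eqref{eqn-Phibis-3}). So the proof is essentially a bookkeeping exercise once the Itô formula is justified.

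\medskip

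First I would, as in the proof of Lemma \ref{lem-L^2}, pass to the truncated process $u(\cdot\wedge\tilde\tau_k)$, which by the definition \eqref{eqn-tau_k-explosion} of $\tilde\tau_k$ stays in a ball of $H^{1,2}(M)$ and hence lives in $\cR=\rH\cap\rE$ with uniformly bounded norm. Since $\varPsi$ is of real $\mathcal C^2$-class with second derivative bounded on balls (Lemma \ref{lem-Psi}), I would apply the Itô formula for the Stratonovich equation in its Itô form \eqref{NLS-Strat_03}, exactly as done for $\varPhi$ in the proof of Lemma \ref{lem-L^2}. Writing the drift of \eqref{NLS-Strat_03} as $i[\Delta u-F(u)]+\tfrac12\mathfrak p\,\mathbb M(u)$ and the diffusion as $-i\tilde G(u)$, the Itô formula produces three groups of terms:
\begin{align*}
\varPsi(u(t\wedge\tilde\tau_k))-\varPsi(u_0)
=& \int_0^{t\wedge\tilde\tau_k} \langle \varPsi'(u(s)),\, i[\Delta u(s)-F(u(s))]\rangle\,ds \\
& + \frac12\int_0^{t\wedge\tilde\tau_k} \mathfrak p\,\langle \varPsi'(u(s)),\,\mathbb M(u(s))\rangle\,ds \\
& - \int_0^{t\wedge\tilde\tau_k} \langle \varPsi'(u(s)),\, iG(u(s))\rangle\,dW(s) \\
& + \frac12\int_0^{t\wedge\tilde\tau_k} \mathrm{tr}_K\big[\varPsi''(u(s))(iG(u(s)),iG(u(s)))\big]\,ds.
\end{align*}
The first integral vanishes by \eqref{eqn-Psiprime-drift}; the stochastic integral is exactly the martingale term appearing in \eqref{ItoPsi} by \eqref{eqn-Psiprime-diff}. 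It then remains to combine the $\mathbb M$-drift with the trace term and show that their sum is bounded above by the right-hand side of \eqref{majo-ItoPsi}, which is the content of $T(t\wedge\tilde\tau_k)$.

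\medskip

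For the deterministic drift $\tfrac12\mathfrak p\langle\varPsi'(u),\mathbb M(u)\rangle$ I would use the explicit form of $\varPsi'$ from Lemma \ref{lem-Psi} together with the identity $m(z)=-\tilde g(|z|^2)^2z$ from \eqref{eqn-B02} and the product structure $\mathbb M(u)=m\circ u$; the $L^2$-gradient pairing against $\mathbb M(u)$ produces the term $-\tfrac12\int_M\tilde f(|u|^2)|g(u)|^2\mathfrak p\,dx$ (after an integration by parts moving $\nabla$ onto $\nabla\mathbb M(u)$, cf.\ \eqref{eqn-B08}), which is dominated in absolute value by the third term of \eqref{majo-ItoPsi}. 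The trace term is bounded directly by \eqref{eqn-Phibis-3}, giving the first two integrals of \eqref{majo-ItoPsi} plus a further $\int_M\tilde f(|u|^2)|g(u)|^2\mathfrak p\,dx$ contribution. Collecting these, the combined $\tilde f|g(u)|^2\mathfrak p$ terms assemble into the single coefficient $\tfrac12$ displayed in \eqref{majo-ItoPsi}, while the $|g'(u)\nabla u|^2\mathfrak p$ and $|g(u)|^2\mathfrak q$ terms appear with the stated constants.

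\medskip

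The main obstacle is the rigorous justification of the Itô formula for $\varPsi$, since $u$ is only a mild (not strong) solution and $\varPsi$ involves the unbounded gradient term $|\nabla u|_{L^2}^2$. I would follow precisely the device used in the proof of Lemma \ref{lem-L^2}: first argue formally as if $u$ were a strong solution to obtain \eqref{ItoPsi}--\eqref{majo-ItoPsi}, and then indicate that a full proof proceeds by replacing $u$ by its Yosida approximation $R_\lambda u$ (with $R_\lambda=\lambda(\lambda-A)^{-1}$), applying the finite-dimensional Itô formula to the regularized process, and passing to the limit $\lambda\to\infty$ using the uniform $H^{1,2}$-bound on $[0,\tilde\tau_k]$ and the continuity of all the integrands in $\cR$; this is the same approximation scheme as in \cite{Brz+Masl+S_2005}, \cite{deBouard+Deb_1999} and \cite{deBouard+Deb_2003}. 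A secondary care point is the interchange of the trace sum $\sum_j$ with the spatial integrals in \eqref{eqn-Phibis-3}, which is legitimate because $\sum_j|\Lambda e_j|^2=\mathfrak p\in L^\infty(M)$ and $\sum_j|\nabla\Lambda e_j|^2=\mathfrak q\in L^1(M)$ by Lemma \ref{lem-W}, together with $g\circ u, g'\circ u\in L^\infty(M)$ coming from $\cR\subset L^\infty(M)$.
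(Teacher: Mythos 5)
Your proposal is correct and follows essentially the same route as the paper: the paper's own proof consists precisely of invoking the $\mathcal{C}^2$ regularity of $\varPsi$ from Lemma \ref{lem-Psi}, applying the It\^o formula to Yosida approximations of the solution and passing to the limit (citing \cite{Brz+Masl+S_2005}), and then deducing the bound \eqref{majo-ItoPsi} on the correction term from \eqref{eqn-Phibis-3}. Your write-up is in fact more detailed than the paper's, which does not spell out the bookkeeping of the $\mathbb{M}$-drift against the trace term.
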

\begin{proof}
According to Lemma \ref{lem-Psi} $\Psi$ is of class ${\mathcal C}^2$. Thus the proof of  \eqref{ItoPsi}
is done using  the It\^o Lemma for Yosida approximations of the solution and passing to the limit
(see e.g. \cite{Brz+Masl+S_2005} for a more detailed justification).
The upper estimate \eqref{majo-ItoPsi} of the "quadratic variation" is deduced from
 using \eqref{eqn-Phibis-3}.
\end{proof}

\subsection{Existence of a global solution}

We can now state the main result of this section, proving that the non linear stochastic Schr\"odinger equation
\eqref{NLS-Strat_01}, or \eqref{NLS-Strat_03}, has a unique global solution.
\begin{theorem}\label{th_global} Assume that the function $\tilde{f}$ satisfy  Assumption \ref{ass-focusing},
 that $g$   satisties
  Assumption  \ref{ass-tildeg-final} and that the Wiener process  $(W(t), t\geq 0)$ satisfies Assumption \ref{ass-W}.
Let $\beta$ be defined in Lemma  \ref{lem-f},  $p>\beta \vee \gamma $ where $\gamma$ is defined in Assumption \ref{ass-tildeg-final},
 and let $q$ be such that $(p,q)$ satisfy the scaling admissible condition $\frac{2}{p}+ \frac{2}{q}=1$.
Suppose furthermore that $u_0\in H^{1,2}(M)$.
Then the stochastic NLS equation  \eqref{NLS-Strat_01} has a unique global solution whose trajectories  belong a.s.  to
 ${\mathcal C}([0,\infty),H^{1,2}(M))$.
\end{theorem}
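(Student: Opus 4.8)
The plan is to combine the local theory with two conservation/energy laws so that the explosion time $\tau_\infty$ of the maximal solution is forced to be a.s. infinite. Local existence and uniqueness, together with the blow-up alternative $\limsup_{t\to\tau_\infty}|u(t)|_{H^{1,2}}=+\infty$ a.s. on $\{\tau_\infty<\infty\}$, are already granted by Theorem \ref{thm_maximal-manifold}, since the hypotheses here (with $\beta$ from Lemma \ref{lem-f} and $p>\beta\vee\gamma$) are exactly those required there. Thus it suffices to show $\tau_\infty=\infty$ a.s.; the regularity of trajectories then upgrades automatically from $C([0,\tau_\infty),H^{1,2})$ to $C([0,\infty),H^{1,2})$. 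I work with the approximating stopping times $\tilde\tau_k$ of \eqref{eqn-tau_k-explosion} and aim to bound $\mathbb{P}(\tilde\tau_k\le t)$ in a way that vanishes as $k\to\infty$.

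The heart of the argument is an a priori bound on the energy $\Psi$ of \eqref{Lyapunov}. First, Lemma \ref{lem-L^2} gives $|u(t)|_{L^2}=|u_0|_{L^2}$ for all $t<\tau_\infty$ a.s., so the mass is a free constant in every estimate. Next, Corollary \ref{cor-Psi-H^1} provides the coercivity $|u|_{H^{1,2}}^2\le 2\Psi(u)+c|u_0|_{L^2}^2$, which in particular bounds $\Psi$ from below; here the subcriticality $\sigma<1$ in the focusing case (through Lemma \ref{lemGagliardo}) is exactly what makes this lower bound hold. I then apply the It\^o formula \eqref{ItoPsi}--\eqref{majo-ItoPsi} of Lemma \ref{lem-energy} along $t\wedge\tilde\tau_k$ and take expectations. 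Because the integrand of the stochastic term stays bounded in the relevant norms up to $\tilde\tau_k$, that term is a genuine martingale and drops out, leaving $\mathbb{E}\,\Psi(u(t\wedge\tilde\tau_k))=\Psi(u_0)+\mathbb{E}\,T(t\wedge\tilde\tau_k)$.

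It then remains to dominate the three It\^o--Stratonovich correction integrals in \eqref{majo-ItoPsi} by $C\int_0^t\big(1+\mathbb{E}\,\Psi(u(s\wedge\tilde\tau_k))\big)\,ds$. The decisive structural input is the choice $a=1$ in Assumption \ref{ass-tildeg-final}: by \eqref{ineq-growth-g} the derivative $g'$ is then bounded, so the first integral is controlled by $\|\mathfrak{p}\|_{L^\infty}\|\nabla u\|_{L^2}^2$, hence by $C(1+\Psi)$ via Corollary \ref{cor-Psi-H^1}. For the second, boundedness of $\tilde g$ gives $|g(u)|^2\le C|u|^2$, while Lemma \ref{lem-W} together with Assumption \ref{ass-W} yields $\mathfrak{q}\in L^{s_0}$ with $s_0>1$; H\"older and the two-dimensional Gagliardo--Nirenberg inequality then bound $\int_M|u|^2\mathfrak{q}\,dx$ by $\varepsilon\|\nabla u\|_{L^2}^2+C_\varepsilon$, again $\lesssim 1+\Psi$. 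For the third, in the defocusing case \eqref{ineq-tildef-tildeF}/\eqref{ineq-tildef-tildeF_1b} reduce it to $\int_M\tilde F(|u|^2)\,dx$, which is $\le 2\Psi$, whereas in the focusing case the integrand is nonpositive and may simply be discarded. Gronwall's lemma then gives $\sup_k\mathbb{E}\,\Psi(u(t\wedge\tilde\tau_k))\le C(t,u_0)<\infty$.

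Finally I conclude by a Chebyshev argument. By Corollary \ref{cor-Psi-H^1} the uniform energy bound yields $\mathbb{E}\,|u(t\wedge\tilde\tau_k)|_{H^{1,2}}^2\le C(t,u_0)$, whereas on $\{\tilde\tau_k\le t\}$ continuity in $H^{1,2}$ forces $|u(\tilde\tau_k)|_{H^{1,2}}=k$; hence $k^2\,\mathbb{P}(\tilde\tau_k\le t)\le C(t,u_0)$. Since $\{\tau_\infty\le t\}\subset\{\tilde\tau_k\le t\}$ for every $k$ and $\tilde\tau_k\toup\tau_\infty$, letting $k\to\infty$ gives $\mathbb{P}(\tau_\infty\le t)=0$ for each $t$, i.e. $\tau_\infty=\infty$ a.s. I expect the \textbf{main obstacle} to be the estimate on $T$: one must verify that each correction term is genuinely \emph{subcritical}, that is at most linear in $\Psi$, and it is precisely here that the hypotheses $a=1$, the extra Sobolev regularity of $W$ in Assumption \ref{ass-W}, and, in the focusing regime, $\sigma<1$ are indispensable, since relaxing any of them would produce a superlinear term for which Gronwall fails.
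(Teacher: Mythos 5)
Your proposal is correct and follows essentially the same route as the paper: $L^2$-conservation (Lemma \ref{lem-L^2}), the coercivity of $\Psi$ (Corollary \ref{cor-Psi-H^1}), the It\^o formula of Lemma \ref{lem-energy} with the three correction terms each bounded linearly in $\Psi$ (using $a=1$, the $H^{1,2s_0}$-regularity of the noise via Lemma \ref{lem-W}, and the subcritical focusing estimates), and Gronwall. The only cosmetic difference is that you conclude by an explicit Chebyshev estimate on $\mathbb{P}(\tilde\tau_k\le t)$ where the paper invokes the Khashminskii non-explosion test, and you discard the nonpositive focusing correction term directly instead of estimating its absolute value; both are equivalent to what is done in the paper.
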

\begin{proof}
Let $u=\big(u(t)\, , \, t<  \tau_\infty \big)$
 belonging to $\mathbb{M}^p_{\textrm{loc}}(Y_{[0,\tau_\infty)})$, be the unique local maximal solution to the  problem
\eqref{NLS-Strat_01}.
 Note  that $\limsup_{t\to \tau_\infty} |u(t)|_{H^{1,2}} = +\infty$ a.s. on $\{ \tau_\infty<\infty \}$; for an integer $k\geq 1$
recall that  $\tilde{\tau}_k=\inf\{t\geq 0 : |u(t)|_{H^{1,2}}\geq k\}$.
Using the Khashminskii test for non-explosions (see \cite[Theorem
III.4.1]{Kh_1980} for the finite-dimensional case) and arguing as in \cite[page 7]{Brz+Masl+S_2005} it is sufficient to show that
  each $t>0$, there exists  a constant $C_t>0$ such that
\begin{equation} \label{normfinite_2} \mathbb{E}\Big( |u(t\wedge \tilde{\tau}_k)|_{H^{1,2}}^2 \Big) \leq C_t, \;\;  \mbox{ for every } k\in\mathbb{N}^\ast.
\end{equation}
In view of Corollary \ref{cor-Psi-H^1} and  Lemma \ref{lem-L^2}  it is sufficient to find, for each $t>0$, a constant  $C_t>0$ such that
\begin{equation} \label{Psifinite} \mathbb{E}\Big( \Psi(u(t \wedge \tilde{\tau}_k )) \Big) \leq C_t, \;\; \mbox{for every }   k\in\mathbb{N}^\ast.
\end{equation}
Since $W^{1, 2 s_0}(M,{\mathbb R})\cap W^{\hat{s},q}(M,{\mathbb R})\subset \cR$, the assumptions of Lemma \ref{lem-energy} are satisfied.
Hence, for each $t\in \mathbb{R}_+$ and every $k\in\mathbb{N}$, $\mathbb{P}$-almost surely,
\begin{align} \label{BDG_Psi}
\mathbb{E}  \Psi(t\wedge \tilde{\tau}_k) \leq & \mathbb{E} \Psi(u_0)
 + C  |\mathfrak{p}|_\infty\mathbb{E} \int_0^{t\wedge \tilde{\tau}_k} \int_M |\nabla u(s,x)|^2 \,dx \, ds \\
\nonumber
&  + C \sum_{j\geq 1} \mathbb{E} \int_0^{t\wedge \tilde{\tau}_k} \int_M  |u(s,x)|^2 \vert \nabla \Lambda e_j(x) \vert^2    \,dx \,ds
\\
\nonumber
& +
C |\mathfrak{p}|_\infty \, \mathbb{E}  \int_0^{t \wedge \tilde{\tau}_k} \; \int_M  \tilde{f} (|u(s,x)|^2)  \vert u(s,x) \vert^2 \, dx\, ds.
\end{align}
Let $s_0^\ast$ denote the conjugate exponent to $s_0$. The Gagliardo-Nirenberg  inequality proves that $H^{1,2}(M) \subset L^{2 s_0^*}(M)$;
 H\"older's inequality,  Lemma \ref{lem-W} and Corollary \ref{cor-Psi-H^1} imply that for $u\in H^{1,2}(M)$,
\[
\sum_{j\geq 1} \int_M  |u(x)|^2   \vert \nabla \Lambda e_j(x) \vert^2  \,dx \leq  \|u\|_{L^{2s_0^\ast}}^2
\sum_j \|\nabla \Lambda e_j\|_{L^{2 s_0}}^2
\leq C \big[  \Psi(u) + |u|_{L^2(M)} \big] .
\]
Next, the inequalities  \eqref{ineq-tildef-tildeF}, \eqref{ineq-tildef-tildeF_1b} and \eqref{ineq-tildef-tildeF_2},
imply the existence of positive constants $C$ and $\delta $ such that for all $u\in H^{1,2}(M)$,
\[ \int_M  \tilde{f} (|u(x)|^2)  \vert u(x) \vert^2 \, dx \leq C \big[ \Psi(u)+   \vert u \vert_{L^2(M)}^{\delta}\big] .
\]
Therefore, the conservation of energy proved in Lemma \ref{lem-L^2} and the above estimates  imply the
 existence of  an increasing  function $\phi:\mathbb{R}_+\to \mathbb{R}_+$ and a constant $C_t>0$ such that
\begin{align} \label{BDG_Psi_2}
\mathbb{E}  \Psi(t\wedge \tilde{\tau}_k)
\leq & \mathbb{E} \Psi(u_0)  + C \mathbb{E} \int_0^{t\wedge \tilde{\tau}_k} \Psi (u(s)) \, ds
+ C \mathbb{E} \int_0^{t\wedge \tilde{\tau}_k} \phi (\vert u(s\wedge \tilde{\tau}_k)\vert_{L^2(M)}) \, ds
\\
\nonumber
\leq &\mathbb{E} \Psi(u_0)  + C \mathbb{E} \int_0^{t} \Psi (u(s\wedge \tilde{\tau}_k)) \, ds + C\phi (\vert u_0\vert_{L^2(M)}).
\end{align}
The  Gronwall Lemma yields that for some constant $C>0$ the upper estimate
\begin{align} \label{BDG_Psi_3}
\mathbb{E}  \Psi(t\wedge \tilde{\tau}_k) &\leq \big[ \mathbb{E} \Psi(u_0) + Ct\phi (\vert u_0\vert_{L^2(M)})\big] e^{Ct},\;\; t\geq 0,
\end{align}
holds for every integer $k\geq 1$. This concludes the proof of \eqref{Psifinite} and hence that of the Theorem.
\end{proof}

\noindent\textbf{Acknowledgements:} This paper was initiated during a visit of the first named authour to the Universit\'e  Paris 1 in April 2009
and  partly written in 2010 while both authors were attending the semester
Stochastic Partial Differential Equations  at the Isaac Newton Institute for Mathematical Sciences (Cambridge).
They would like to thank their host institutions for the warm hospitality, the excellent working conditions and the financial support.

\newpage

\end{document}